\documentclass[10pt,a4paper,english]{article}

\usepackage[T1]{fontenc}
\usepackage[utf8]{inputenc}
\usepackage[english]{babel}

\usepackage{newtxtext}
\usepackage{mathtools}
\usepackage{amsthm}
\usepackage{newtxmath}
\usepackage{microtype}

\usepackage[
a4paper,
textwidth=13.5cm,
textheight=21cm,
centering,
headheight=12pt,
headsep=16pt,
footskip=25pt
]{geometry}

\usepackage[colorinlistoftodos,prependcaption,color=yellow,textsize=tiny,textwidth=2cm]{todonotes}

\usepackage{titlesec}

\titleformat{\section}
{\normalfont\large\bfseries}
{\thesection}{0.7em}{}

\titleformat{\subsection}
{\normalfont\normalsize\bfseries}
{\thesubsection}{0.7em}{}

\titleformat{\subsubsection}
{\normalfont\normalsize\itshape}
{\thesubsubsection}{0.7em}{}

\titlespacing*{\section}
{0pt}
{3ex plus 1ex minus 0.5ex}
{1.3ex plus 0.3ex}

\titlespacing*{\subsection}
{0pt}
{2.5ex plus 0.8ex minus 0.4ex}
{1ex plus 0.2ex}

\titlespacing*{\subsubsection}
{0pt}
{2ex plus 0.5ex minus 0.3ex}
{0.8ex plus 0.2ex}

\usepackage{titling}

\pretitle{\begin{center}\Large\bfseries}
	\posttitle{\par\end{center}\vskip 0.5em}

\preauthor{%
	\begin{center}
		\normalsize
		\begin{tabular}[t]{c}%
		}
		\postauthor{%
		\end{tabular}
		\par\end{center}%
}

\date{}

\usepackage[numbers,square,sort&compress]{natbib}
\usepackage{url}
\usepackage{xargs}[2008/03/08]

\numberwithin{equation}{section}
\numberwithin{figure}{section}

\theoremstyle{plain}
\newtheorem{thm}{Theorem}[section]
\newtheorem{lem}[thm]{Lemma}
\newtheorem{cor}[thm]{Corollary}
\newtheorem{prop}[thm]{Proposition}

\theoremstyle{definition}
\newtheorem{defn}[thm]{Definition}
\newtheorem{example}[thm]{Example}

\theoremstyle{remark}
\newtheorem{rem}[thm]{Remark}

\newcommand{\cdl}{c\`{a}dl\`{a}g }

\allowdisplaybreaks[2]

\usepackage[hidelinks]{hyperref}

\begin{document}
	\global\long\def\norm#1{\|#1\|}%
	
	\global\long\def\bnorm#1{\left\Vert #1\right\Vert }%
	
	\global\long\def\of#1{(#1)}%
	
	\global\long\def\eps{\varepsilon}%
	
	\global\long\def\R{\mathbb{R}}%
	
	\global\long\def\T{\mathbb{T}}%
	
	\global\long\def\Z{\mathbb{Z}}%
	
	\global\long\def\N{\mathbb{N}}%
	
	\global\long\def\p{\mathbb{P}}%
	
	\global\long\def\E{\mathbb{E}}%
	
	\global\long\def\I{\mathbb{I}}%
	
	\global\long\def\H{\mathrm{H}}%
	
	\global\long\def\rE{\mathrm{E}}%
	
	\global\long\def\Cf{C}%
	
	\global\long\def\cL{\mathcal{L}}%
	
	\global\long\def\F{\mathcal{F}}%
	
	\global\long\def\cN{\mathcal{N}}%
	
	\global\long\def\cS{\mathcal{S}}%
	
	\global\long\def\cH{\mathcal{H}}%
	
	\global\long\def\rM{\mathcal{\mathrm{M}}}%
	
	\global\long\def\rR{\mathcal{\mathrm{R}}}%
	
	\global\long\def\inn#1{\langle#1\rangle}%
	
	\global\long\def\binn#1{\left\langle #1\right\rangle }%
	
	\global\long\def\lrarr{\leftrightarrow}%
	
	\global\long\def\var{\mathcal{\mathrm{Var\,}}}%
	
	\global\long\def\cK{\mathcal{K}}%
	\global\long\def\cM{\mathcal{M}}%
	\global\long\def\pr{\mathrm{pr}}%
	
	\global\long\def\C{\mathbb{C}}%
	\global\long\def\Leb{\mathrm{Leb}}%
	\global\long\def\i{\mathfrak{i}}%
	\global\long\def\ex{\mathcal{E}}%
	\global\long\def\divv{\mathrm{div}}%
	\global\long\def\cG{\mathcal{G}}%
	\global\long\def\Tr{\mathrm{Tr}}%
	\global\long\def\cov{\mathrm{Cov}}%
	\newcommandx\Hrho[1][usedefault, addprefix=\global, 1=]{H_{#1}(\T^{d};[0,1])}%
	\newcommandx\Hrhoint[1][usedefault, addprefix=\global, 1=]{H_{#1}(\T^{d};(0,1))}%
	\global\long\def\supp{\mathrm{supp}}%
	\global\long\def\Td{\T^{d}}%
	
	\global\long\def\Tnd{\T^{d}_{n}}%
	\global\long\def\Znd{\mathbb{Z}^{d}_{n}}%
	
	\global\long\def\EEP{\{0,1\}^{\Tnd}}%
	\global\long\def\EHE{[0,1]^{\Tnd}}%
	\global\long\def\L{L^{2}}%
	\global\long\def\Ln{L^{2}_{n}}%
	
	\global\long\def\id{\mathrm{id}}%
	\newcommandx\Cd[1][usedefault, addprefix=\global, 1=]{\mathrm{C}^{#1}(\T^{d})}%
	\renewcommandx\H[1][usedefault, addprefix=\global, 1=J]{H^{#1}}%
	\global\long\def\CL{\mathcal{CL}}%
	\newcommandx\MLO[2][usedefault, addprefix=\global, 1=, 2=]{\cL_{#1}(#2)}%
	\newcommandx\CF[2][usedefault, addprefix=\global, 1=, 2=]{\Cf^{#1}(#2)}%
	\newcommandx\CFl[2][usedefault, addprefix=\global, 1=, 2=]{\Cf^{#1}_{l}(#2)}%
	\newcommandx\normC[2][usedefault, addprefix=\global, 1=, 2=]{\|#2\|_{\mathrm{C}^{#1}}}%
	\newcommandx\bnormC[2][usedefault, addprefix=\global, 1=, 2=]{\left\Vert #2\right\Vert _{\mathrm{C}^{#1}}}%
	\newcommandx\normL[1][usedefault, addprefix=\global, 1=]{\|#1\|_{L_{2}}}%
	\newcommandx\bnormL[1][usedefault, addprefix=\global, 1=]{\left\Vert #1\right\Vert _{L_{2}}}%
	\newcommandx\normH[2][usedefault, addprefix=\global, 1=, 2=]{\|#2\|_{H_{#1}}}%
	\newcommandx\bnormH[2][usedefault, addprefix=\global, 1=, 2=]{\left\Vert #2\right\Vert _{H_{#1}}}%
	\global\long\def\tr{\mathrm{tr}}%
	\global\long\def\Image{\mathrm{Im}}%
	\global\long\def\law{\mathrm{Law}}%
	\global\long\def\K{\mathbb{K}}%
	\global\long\def\cP{\mathcal{P}}%
	\global\long\def\Zd{\Z^{d}}%
	\global\long\def\lr{\leftrightarrow}%
	\global\long\def\offdiag{\text{2-off-diag}}%
	\global\long\def\offdiagone{\text{1-off-diag}}%
	\global\long\def\Ber{\textrm{Ber}}%
	\global\long\def\rM{\mathrm{M}}%
	\global\long\def\spann{\mathrm{span}}%
	\global\long\def\cA{\mathcal{A}}%
	\global\long\def\CC{\mathrm{CC}}%
	\global\long\def\cC{\mathcal{C}}%
	\global\long\def\ZdM{\Z^{d}_{M}}%
	\global\long\def\cT{\mathcal{T}}%
	\global\long\def\cD{\mathcal{D}}%
	\global\long\def\bB{\mathbb{B}}%
	\global\long\def\cB{\mathcal{B}}%
	\global\long\def\t{\mathfrak{t}}%
	\global\long\def\cF{\mathcal{F}}%
	\global\long\def\d{\mathrm{d}}%
	\global\long\def\rhoini{\varrho_{*}}%
	\global\long\def\xiini{\xi_{*}}%
	\global\long\def\zetaini{\zeta_{*}}%
	
	\title{Higher-Order Gaussian Approximations on Path Space for the Simple
		Symmetric Exclusion Process}
	
	\author{%
		Benjamin Gess%
		\thanks{%
			Institut f{\"u}r Mathematik,
			Technische Universit{\"a}t Berlin,
			Stra{\ss}e des 17. Juni 136,
			10623 Berlin, Germany.%
		}\, 
		\thanks{%
			Max Planck Institute for Mathematics in the Sciences,
			04103 Leipzig, Germany. \\
			E-mail: \protect\url{benjamin.gess@tu-berlin.de}%
		}
		\and
		Vitalii Konarovskyi%
		\thanks{%
			Faculty of Mathematics, Informatics and Natural Sciences,
			University of Hamburg,
			20146 Hamburg, Germany.\\
			E-mail: \protect\url{vitalii.konarovskyi@uni-hamburg.de}%
		}%
	}
	
	\maketitle
	\begin{abstract}
		Quantitative Gaussian approximation estimates on path space
		are established for the nonequilibrium density fluctuations of the
		simple symmetric exclusion process (SSEP) on a \(d\)-dimensional
		discrete torus of mesh size \(1/n\), started from a product measure
		with a smooth density profile. A novel \(n\)-dependent modified
		Ornstein--Uhlenbeck process is constructed that incorporates
		higher-order lattice information while remaining linear and Gaussian.
		It is shown that, for sufficiently high modification orders,
		this modification yields quantitative Gaussian approximation error bounds
		of order \(n^{-\min\{d/2,2\}}\) for a class of smooth functionals of
		the entire fluctuation trajectory. In addition, it is shown that
		retaining an explicit microscopic correlation correction leaves a
		remainder of order \(n^{-d/2}\), thereby distinguishing contributions
		to the bound due to correlations from those due to spatial
		approximation. For nonconstant initial profiles in dimensions
		\(d\ge3\), a separate argument yields matching upper and lower bounds
		of order \(n^{-2}\) for the direct weak error of suitable quadratic
		test functionals, provided that the modification orders are
		sufficiently high.\\
		The proof is based on a novel path-dependent Trotter formula. Its derivation builds on the introduction of a concept of horizontal absolute continuity for non-anticipative functionals and a corresponding martingale formula for Markov jump processes. This formula is subsequently combined with an integral backward Kolmogorov equation for Gaussian continuation functionals.  A second ingredient of the proof is a refinement of the exchangeable-pair argument introduced by the authors in a previous work, thereby improving the central-limit rate $n^{-\min\{d/2,1\}}$ to \(n^{-d/2}\) for the initial empirical fluctuations.

		\medskip
		
		\noindent\textbf{Keywords.}
		Simple symmetric exclusion process, quantitative central limit theorem,
		fluctuation fields, Gaussian approximation, nonequilibrium fluctuations,
		interacting particle systems, path-space weak approximation,
		microscopic correlations.
		
		\medskip
		
		\noindent\textbf{Mathematics Subject Classification (2020).}
		Primary 60K35; Secondary 60F05, 60F17, 60H15.
	\end{abstract}
	\tableofcontents{}

	\section{Introduction and main results}
	A central aim in the study of fluctuating hydrodynamics is to develop
	effective mesoscopic descriptions of interacting particle systems.
	Typically formulated as stochastic partial differential equations,
	such descriptions aim to capture the evolution of the density and its
	fluctuations with quantitative control of the approximation error.
	Hydrodynamic limits and central limit theorems identify the leading
	deterministic and Gaussian contributions. The development of more accurate continuum
	descriptions requires understanding which features of the microscopic
	dynamics are lost in these limits and how their effects can be retained.
	
	In this paper, we pursue this programme for the simple symmetric
	exclusion process (SSEP) on a $d$-dimensional discrete torus of mesh
	size $1/n$, started from a product measure with a smooth density
	profile. Building on the previous quantitative central limit
	theorem~\citep{Gess_Konarovskyi2026}, we analyse the mechanisms contributing
	to the approximation error and use this analysis to construct
	novel $n$-dependent modified Gaussian dynamics. The main contributions are a higher-order Gaussian approximation incorporating lattice information into its drift and noise coefficients, an error decomposition that isolates the microscopic correlation
	contribution, and quantitative weak approximation estimates for smooth functionals of the entire fluctuation trajectory.
	
	We establish quantitative weak approximation estimates for smooth
	functionals of the entire fluctuation trajectory. This extends the
	fixed-time comparison in our previous work to path-dependent
	observables. To obtain these estimates, we develop a framework
	combining weak horizontal calculus with a path-dependent Trotter
	generator comparison for distribution-valued particle fluctuations
	and Gaussian dynamics. We establish an integral backward Kolmogorov
	equation for Gaussian continuation functionals directly on every
	admissible càdlàg history, using horizontal absolute continuity,
	and prove a corresponding martingale formula for Markov jump
	processes. These results make the backward equation available along
	the microscopic jump trajectories. Together with regularity estimates
	in negative Sobolev spaces that are uniform in the lattice parameter,
	they provide quantitative control of the weak error on path space.

	\subsection{The model and the approximation problem}
	
	Fix $d\in\N:=\{1,2,\ldots\}$. For an integer $n\ge3$, consider
	$\Tnd:=n^{-1}\{0,\ldots,n-1\}^d\subset\Td:=\R^d/\Z^d$,
	a periodic lattice of mesh size $1/n$. Each site contains at most
	one particle. A configuration is therefore a function
	$\eta:\Tnd\to\{0,1\}$, where $\eta(x)=1$ means that $x$ is
	occupied. In the simple symmetric exclusion process, each particle
	attempts to jump to each of its $2d$ nearest neighbours at rate $1/2$.
	A jump is carried out if the target site is empty and is
	suppressed otherwise. Thus the exclusion rule is the only interaction
	between particles.
	
	We accelerate time for the particle system by $n^2$. Then the resulting process $(\eta^n_t)_{t\ge 0}$ has generator
	\begin{equation}
		\cL^{n,0}f(\eta)
		=\frac{n^2}{2}\sum_{j=1}^d\sum_{x\in\Tnd}
		\left[f\left(\eta^{x\lrarr x+e_j^n}\right)-f(\eta)\right],
		\label{eq:introduction_ssep_generator}
	\end{equation}
	for $f:\{0,1\}^{\Tnd}\to\R$. Here $e_j^n:=e_j/n$, where
	$(e_j)_{j=1}^d$ is the canonical basis of $\R^d$, and
	$\eta^{x\lrarr y}$ is obtained by interchanging the occupations
	at $x$ and $y$. Exchanging an occupied and an empty site moves a
	particle, while exchanging equal occupations has no effect.
	
	We assume that the initial occupations are independent, with
	$\p\{\eta_0^n(x)=1\}=\rhoini(x)$ for a smooth profile
	$\rhoini:\Td\to[0,1]$.
	The exact mean density $\rho_t^n(x):=\E\eta_t^n(x)$ solves the
	discrete heat equation
	$\partial_t\rho_t^n=\frac12\Delta_n\rho_t^n$, with
	$\rho_0^n=\rhoini|_{\Tnd}$, where $\Delta_n$ is the usual discrete Laplacian.
	
	The empirical density $n^{-d}\sum_{x\in\Tnd}\eta_t^n(x)\delta_x$
	assigns mass $n^{-d}$ to each occupied site, with $\delta_x$ the
	unit point mass at $x$. As $n\to\infty$, this random measure
	converges to a deterministic density $\varrho_t(x)\,dx$ called the
	hydrodynamic limit of the SSEP~\citep[see, e.g.,]{Kipnis_Landim:1999}.
	The profile solves $\partial_t\varrho_t=\frac12\Delta\varrho_t$ with
	$\varrho_0=\rhoini$. To describe the remaining randomness,
	we center by the exact microscopic mean and use the central-limit
	scaling for spatial averages over $n^d$ sites. Setting
	$\zeta_t^n:=n^{d/2}(\eta_t^n-\rho_t^n)$, $t\ge 0$, we obtain the empirical
	fluctuation field
	\begin{equation}
		\bar\zeta_t^n
		=\frac{1}{n^{d/2}}\sum_{x\in\Tnd}
		\bigl(\eta_t^n(x)-\rho_t^n(x)\bigr)\delta_x.
		\label{eq:introduction_empirical_fluctuation_field}
	\end{equation}
	The centering by $\rho^n$, rather than $\varrho$, separates random
	fluctuations from the deterministic error in approximating the mean.
	
	The field $\bar\zeta^n$ converges to a generalized
	Ornstein--Uhlenbeck process $(\xi_t)_{t\ge 0}$
	\citep{GaKiSp,Ravishankar:1992}, which satisfies the affine SPDE
	\begin{equation}
		\partial_t\xi_t
		=\frac12\Delta\xi_t
		+\divv\!\left(
		\sqrt{\varrho_t(1-\varrho_t)}\,\dot W_t
		\right),\qquad
		\xi_0=\xiini.
		\label{eq:introduction_limiting_ou}
	\end{equation}
	Here $\dot W$ is a $d$-dimensional space--time white noise on $\Td$,
	and $\xiini$ is a centered Gaussian random distribution,
	independent of $\dot W$, with covariance
	\begin{equation}
		\E\left[\binn{\xiini,\varphi}\binn{\xiini,\psi}\right]
		=\int_{\Td}\rhoini(x)(1-\rhoini(x))\varphi(x)\psi(x)\,dx,
		\qquad \varphi,\psi\in\Cf^\infty(\Td),
		\label{eq:introduction_initial_covariance}
	\end{equation}
	where $dx$ is normalized Lebesgue measure on $\Td$.
	The pairing $\inn{\xi_t,\varphi}$ describes a spatial average
	of the distribution-valued fluctuation field. We also use
	$\inn{\cdot,\cdot}$ for the $L^2(\Td)$ inner product.
	The fluctuation limit can be formulated in
	$D([0,T],\H[-I])$ for $I>d/2+1$, where
	$\H[J]:=H^J(\Td)$ and $D([0,T],E)$ denotes the space of
	$E$-valued \cdl paths.
	
	Our previous work~\citep[Theorem~1.1]{Gess_Konarovskyi2026}
	gives, in particular, a quantitative comparison of the fluctuation field
	with these limiting dynamics at a fixed time. For admissible smooth
	test functions depending only on the fluctuation field, the estimate
	takes the following form in the present notation:
	\begin{equation}
		\sup_{t\in[0,T]}
		\left|\E f(\ex_n\zeta_t^n)-\E f(\xi_t)\right|
		\le C_{T,\rhoini,f}\,n^{-\min\{d/2,1\}},
		\label{eq:introduction_previous_rate}
	\end{equation}
	where $\ex_n$ is the trigonometric interpolation introduced in Section~\ref{sec:notation}.
	
	The preceding bound reflects distinct sources of error. For spatial
	averages over $n^d$ sites, $n^{-d/2}$ is the natural benchmark suggested
	by quantitative central limit theorems. Comparison with a continuum
	process also involves approximating the discrete spatial operators
	and the microscopic noise covariance. In the previous estimate, these
	spatial approximation errors are bounded at order $n^{-1}$ and
	determine the rate when $d>2$. Understanding the distinct nature of the contributions to the approximation error is the first key aim  of this work, and provides a route to developing more accurate effective dynamics.
	
	We follow this route by incorporating higher-order lattice information
	into an $n$-dependent Gaussian approximation. For sufficiently high
	approximation orders, we obtain a weak-error bound of order
	$n^{-\min\{d/2,2\}}$ relative to this modified Gaussian reference. At the same time, we extend the comparison from fixed-time observables to smooth functionals of the entire fluctuation trajectory.
	
	Microscopic correlations give a further contribution to the approximation error. In order to develop a definitive decomposition of this error, we retain this
	term as an explicit correction
	in the weak-error formula. By doing so, we prove that the remaining error is of order $n^{-d/2}$, provided the approximation orders are sufficiently high. Therefore, this yields an explicit and clear decomposition of the total approximation error, and separates spatial contributions from microscopic correlation contributions.

	\subsection{The modified Gaussian dynamics}
	
	To retain higher-order information about the lattice, choose
	approximation orders $m_\Delta,m_\partial\in\N$ and
	$m_\tau\in\N_0:=\N\cup\{0\}$, and define the differential operators
	\begin{align*}
		\Delta_n^{(2m_\Delta)}
		&:=2\sum_{j=1}^d\sum_{k=1}^{m_\Delta}
		\frac{n^{2-2k}}{(2k)!}\partial_j^{2k},\\
		\partial_{n,j}^{(m_\partial)}
		&:=\sum_{k=1}^{m_\partial}
		\frac{n^{1-k}}{k!}\partial_j^k,
		\qquad
		\tau_{n,j}^{(m_\tau)}
		:=\sum_{k=0}^{m_\tau}
		\frac{n^{-k}}{k!}\partial_j^k.
	\end{align*}
	These operators are obtained from Taylor
	approximations of the discrete Laplacian, forward discrete derivative, and nearest-neighbour
	shift, respectively. We take $m_\Delta$ odd so that the modified
	Laplacian generates a contraction semigroup on each Sobolev space.
	
	The noise coefficient is based on the mean squared occupation
	difference across a bond under a Bernoulli product measure.
	If the occupations at the two endpoints have means $r$ and $s$,
	this expectation equals $r+s-2rs$. Replacing the spatial shift
	by its Taylor approximation gives
	\begin{equation}
		\Gamma_{n,j}^{(m_\tau)}(\varrho)
		:=\varrho+\tau_{n,j}^{(m_\tau)}\varrho
		-2\varrho\,\tau_{n,j}^{(m_\tau)}\varrho,
		\qquad j\in[d]:=\{1,\ldots,d\}.
		\label{eq:definition_of_Gamma_m_tau-1}
	\end{equation}
	Writing
	\[
	\Phi_j^n(t):=
	\sqrt{\bigl(\Gamma_{n,j}^{(m_\tau)}(\varrho_t^n)\bigr)^+},
	\qquad \Phi^n(t):=(\Phi_j^n(t))_{j\in[d]},
	\]
	we define the modified density profile $\varrho^n$ and fluctuation
	process $\xi^n$ by
	\begin{align}
		\partial_t\varrho_t^n
		&=\frac12\Delta_n^{(2m_\Delta)}\varrho_t^n,
		\label{eq:modified_heat_equation-1}\\
		\partial_t\xi_t^n
		&=\frac12\Delta_n^{(2m_\Delta)}\xi_t^n
		+\frac{1}{\sqrt2}\divv_n^{(m_\partial)}
		\left(\Phi^n(t)\dot W_t\right),
		\label{eq:SPDE_for_OU_process_with_Gamma-1}
	\end{align}
	with $\varrho_0^n=\rhoini$ and $\xi_0^n=\xiini$, where the initial
	Gaussian field has covariance
	\eqref{eq:introduction_initial_covariance} and is independent of
	$\dot W$. Here $a^+:=\max\{a,0\}$, multiplication by $\Phi^n(t)$
	is componentwise, and
	\[
	\divv_n^{(m_\partial)}h
	:=\sum_{j=1}^d\left(\partial_{n,j}^{(m_\partial)}\right)^*h_j,
	\qquad h=(h_j)_{j\in[d]},
	\]
	is the formal $L^2(\Td)$ adjoint of the modified gradient,
	in the convention used below.
	
	The profile $\varrho^n$ approximates the exact lattice mean $\rho^n$
	using the modified heat equation. Since it is deterministic, the
	equation for $\xi^n$ is linear with additive noise and defines an
	$\H[-I]$-valued Gaussian process for $I>d/2+m_\partial$.
	The positive part ensures a nonnegative noise covariance, since
	the higher-order approximations need not preserve positivity.
	No separation of the initial profile from $0$ or $1$ is required.
	
	For $m_\Delta=m_\partial=1$ and $m_\tau=0$, these equations recover
	the hydrodynamic profile $\varrho$ and, in law, the limiting
	Ornstein--Uhlenbeck process in
	\eqref{eq:introduction_limiting_ou}.
	In this case $\divv_n^{(1)}=-\divv$, and the sign is absorbed into
	the white noise. Higher approximation orders retain additional lattice
	information while preserving the Gaussian structure.
	
	\subsection{Main result}
	
	We consider smooth functionals of the fluctuation trajectory.
	For a Banach space $E$, continuity and differentiation on
	$D([0,T],E)$ are understood with respect to the uniform norm,
	while measurability refers to the canonical $\sigma$-algebra
	generated by evaluations. We denote by
	$\cP^3(D([0,T],E))$ the class of measurable continuous functionals
	with bounded, left-continuous vertical path derivatives of orders
	one to three. Here $\nabla^k_{[t,T]}F(\gamma)$ is the $k$-th
	Fr\'echet derivative at $h=0$ of $h\mapsto F(\gamma+h)$,
	for $h\in D([t,T],E)$ extended by zero before $t$.
	The precise left-continuity condition is given in
	Section~\ref{subsec:vertical_derivatives}. We equip this class
	with the seminorm
	\[
	\|F\|_{\cP^3(D([0,T],E))}
	:=
	\sum_{k=1}^3
	\sup_{\gamma\in D([0,T],E)}
	\sup_{t\in[0,T]}
	\|\nabla^k_{[t,T]}F(\gamma)\|,
	\]
	where the norms on the right are multilinear operator norms.
	For $E=\H[-M_1]$, examples include smooth functions of finitely
	many observations $\inn{\gamma_{t_i},\varphi_i}$ and time integrals
	$\int_0^T\inn{\gamma_s,\varphi_i}\,ds$, with smooth spatial tests
	and bounded derivatives of the outer function up to order three.
	
	The correction in our approximation is expressed through the
	Gaussian continuation of $F$. Let $\xi_{t,s}^n(g)$ denote the
	solution of \eqref{eq:SPDE_for_OU_process_with_Gamma-1} at time
	$s\ge t$, started from $g$ at time $t$, and define
	\[
	U_t^n(\gamma)
	:=
	\E F\left(
	\gamma^{[0,t)}
	+\bigl(\xi_{t,s}^n(\gamma_t)\bigr)_{s\in[t,T]}
	\right),
	\qquad \gamma\in D([0,t],\H[-M_1]).
	\]
	The sum denotes concatenation: the observed past is retained,
	and the future follows the modified Gaussian dynamics.
	We write $X^{K}:=(X_s)_{s\in K}$, $K\subseteq[0,T]$, for a restricted trajectory.
	
	The correlation correction couples second derivatives with
	products of fluctuations at neighbouring sites. For a sufficiently
	smooth state function $G$ on $\H[-M_1]$, write
	$D^2G(g;x,y):=D^2G(g)[\delta_x,\delta_y]$ and set
	\[
	\cC^nG(\bar\zeta)
	:=
	\frac{1}{2n^d}\sum_{j=1}^d
	\binn{
		\left(\partial_{n,j}^{(m_\partial)}\right)^{\odot2}
		D^2G(\bar\zeta;\cdot),
		\zeta\,\tau_{n,j}\zeta
	}_n.
	\]
	Here $\bar\zeta:=n^{-d}\sum_{x\in\Tnd}\zeta(x)\delta_x$,
	$\inn{h,k}_n:=n^{-d}\sum_{x\in\Tnd}h(x)k(x)$, and
	$\tau_{n,j}\zeta(x):=\zeta(x+e_j^n)$.
	The superscript $\odot2$ means that
	$\partial_{n,j}^{(m_\partial)}$ acts on each spatial argument
	of the kernel, followed by restriction to the diagonal.
	For a history $\gamma$ with $\gamma_t=\bar\zeta$,
	$\cC^nU_t^n(\gamma)$ denotes the application of this operator
	to the terminal section
	$g\mapsto U_t^n(\gamma^{[0,t)},g)$ at $g=\gamma_t$.
	Thus the past is held fixed when taking these derivatives.
	
	\begin{thm}
		\label{thm:main_result}
		Let $T>0$, $m_\Delta,m_\partial\in\N$, $m_\tau\in\N_0$, and
		$I>d/2+m_\partial$. Assume that $m_\Delta$ is odd and choose
		$M_1,M_2\in\N$ such that
		\[
		M_1>\frac d2+\max\left\{
		\lceil I\rceil+2m_\Delta+2,\,
		m_\partial+\lceil I\rceil,\,
		m_\partial+d+1\right\}
		\]
		and
		\[
		M_2\ge\max\left\{
		m_\tau+2+\lceil d/2\rceil,\,
		m_\partial+1+\lceil d/2\rceil,\,
		\lceil I\rceil+2m_\Delta+d+3\right\}.
		\]
		Let $\rhoini\in\Cf^{M_2}(\Td;[0,1])$ and
		$F\in\cP^3(D([0,T],\H[-M_1]))$, and let $\bar\zeta^n$ and $\xi^n$
		be defined by \eqref{eq:introduction_empirical_fluctuation_field}
		and \eqref{eq:SPDE_for_OU_process_with_Gamma-1}, respectively.
		There exists a constant $C$, depending only on
		$d,T,I,m_\Delta,m_\partial,m_\tau,M_1,M_2$, such that
		\begin{align}
			&\left|\E F(\bar\zeta^{n,[0,T]})-\E F(\xi^{n,[0,T]})
			+\int_0^T\E\left[\cC^nU_t^n(\bar\zeta^{n,[0,t]})\right]dt\right|
			\label{eq:main_estimate_with_correlaton_term}\\
			&\qquad\le Cn^{-\alpha}
			\left(1+\|\rhoini\|_{\Cf^{M_2}(\Td)}^2\right)
			\|F\|_{\cP^3(D([0,T],\H[-M_1]))},\nonumber
		\end{align}
		for all $n\ge3$, where
		\[
		\alpha:=\min\left\{\frac d2,\,2m_\Delta,\,m_\partial,\,m_\tau+1\right\}.
		\]
		In particular,
		\begin{align}
			&\left|\E F(\bar\zeta^{n,[0,T]})-\E F(\xi^{n,[0,T]})\right|
			\label{eq:main_estimate_without_correlation}\\
			&\qquad\le C\left(n^{-\alpha}+n^{-\min\{2,d/2\}}\right)
			\left(1+\|\rhoini\|_{\Cf^{M_2}(\Td)}^2\right)
			\|F\|_{\cP^3(D([0,T],\H[-M_1]))}
			\nonumber
		\end{align}
		for all $n\ge3$.
	\end{thm}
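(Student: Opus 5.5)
The plan is a path-dependent Trotter (Lindeberg-type) interpolation between the particle fluctuations and the modified Gaussian dynamics, using the continuation functional $U_t^n$ as interpolating object. We have $U_T^n(\gamma)=F(\gamma)$ for $\gamma\in D([0,T],\H[-M_1])$. At time $0$,
\[
U_0^n(\gamma_0)=\E F\bigl(\xi^n_{0,\cdot}(\gamma_0)\bigr),
\]
so that $\E U_0^n(\xiini)=\E F(\xi^{n,[0,T]})$. Hence
\[
\E F(\bar\zeta^{n,[0,T]})-\E F(\xi^{n,[0,T]})
=\bigl(\E U_T^n(\bar\zeta^{n,[0,T]})-\E U_0^n(\bar\zeta^n_0)\bigr)
+\bigl(\E U_0^n(\bar\zeta^n_0)-\E U_0^n(\xiini)\bigr).
\]
The second bracket is a fixed-time CLT for the initial product measure. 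Since $g\mapsto U_0^n(g)$ inherits three bounded derivatives from $F$, uniformly in $n$ by the Sobolev-uniform regularity of the modified OU semigroup, the refined exchangeable-pair argument gives a bound $Cn^{-d/2}(1+\|\rhoini\|^2)\|F\|$. The rate is $n^{-d/2}$ rather than $n^{-\min\{d/2,1\}}$ because the Gaussian reference $\xiini$ should be compared with the discrete field directly, tested against smooth kernels, without a spatial discretisation step.

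For the first bracket we need a backward equation for $U^n$ along arbitrary \cdl histories. First, I would prove that $t\mapsto U^n_t(\gamma^{[0,t]})$ is horizontally absolutely continuous, with
\[
\partial_t U^n_t=-\mathcal{G}^n_tU^n_t,
\]
where $\mathcal{G}^n_t$ is the generator of \eqref{eq:SPDE_for_OU_process_with_Gamma-1} acting on the terminal section. It consists of the drift $\frac12\Delta_n^{(2m_\Delta)}$ paired with $DU$, and the diffusion $\frac14\sum_j\inn{(\partial_{n,j}^{(m_\partial)})^{\odot2}D^2U,(\Phi^n_j)^2}$. This should be proved in integral form by conditioning on the Gaussian flow over a short interval $[t,t+h]$ and using its Markov property. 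Second, I would apply the martingale formula for the Markov jump process $\eta^n$ to this non-anticipative functional, that is, a path-dependent Dynkin formula. This gives
\[
\E U_T^n(\bar\zeta^{n})-\E U_0^n(\bar\zeta_0^n)=\int_0^T\E\bigl[(\cL^{n}_t-\mathcal{G}^n_t)U_t^n(\bar\zeta^{n,[0,t]})\bigr]dt,
\]
where $\cL^n_t$ is the time-inhomogeneous generator of $\zeta^n$. This generator includes the deterministic centering drift coming from $\rho^n_t$.

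Third, I would Taylor expand each jump $\zeta\mapsto\zeta^{x\lrarr x+e_j^n}$, which changes $\bar\zeta$ by $n^{-d/2}(\zeta(x+e_j^n)-\zeta(x))(\delta_x-\delta_{x+e_j^n})n^{-d}$-type increments, to third order using the vertical derivatives of $U$. The terms are then matched as follows. The first-order terms combine with the centering drift into the discrete Laplacian $\Delta_n$. Replacing $\Delta_n$ by $\Delta_n^{(2m_\Delta)}$ acting on the smooth kernel $DU$ costs $n^{-2m_\Delta}$. At second order, the squared occupation difference $(\eta(x)-\eta(x+e_j^n))^2$ splits into three parts:
\[
\rho+\tau\rho-2\rho\tau\rho \;+\; \text{linear fluctuation terms} \;+\; -2n^{-d}\zeta\tau_{n,j}\zeta .
\]
The last part is exactly the correlation correction $\cC^nU^n_t$. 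The linear fluctuation terms have size $n^{-d/2}$ after taking expectation with smooth kernels. Replacing $\rho^n$ by $\varrho^n$, $\tau$ by $\tau^{(m_\tau)}$, and discrete differences by $\partial^{(m_\partial)}$ costs $n^{-(m_\tau+1)}$, $n^{-2m_\Delta}$ and $n^{-m_\partial}$, respectively. The positive part is harmless, since $\Gamma^{(m_\tau)}_{n,j}(\varrho^n)$ is within $O(n^{-m_\tau-1}+n^{-2m_\Delta})$ of the nonnegative exact quantity. The third-order remainder is $O(n^{2}\cdot n^d\cdot n^{-3d/2}\cdot n^{-3})=O(n^{-d/2-1})$ times $\|\nabla^3\|$. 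The conditions on $M_1$ and $M_2$ ensure that all the kernels $D^kU(\cdot;x,y,\dots)$ are $C^{m}$ uniformly, so the Taylor remainders are controlled.

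Finally, \eqref{eq:main_estimate_without_correlation} follows by bounding $\E\,\cC^nU_t^n$. Under the product initial law, the two-point correlations of SSEP at distinct sites are of order $n^{-d}\cdot n^{-2}$, via the known correlation equation bound $|\E\zeta(x)\zeta(y)|\lesssim n^{-2}$ for $x\ne y$ combined with $D^2U$ bounded. Together with a decorrelation argument replacing $D^2U(\bar\zeta_t)$ by its expectation, at cost $n^{-d/2}$, this gives $n^{-\min\{2,d/2\}}$. I expect the main obstacle to be the first step: rigorously establishing the integral backward equation for $U^n$ on every \cdl history, with horizontal absolute continuity strong enough to feed into the jump-process martingale formula. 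Path dependence rules out the usual PDE argument, so I would first try Itô's formula for the Gaussian process $\xi^n$ on $[t,t+h]$ combined with left-continuity of the vertical derivatives, and pass $h\downarrow0$.
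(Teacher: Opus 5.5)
Your route to the corrected estimate \eqref{eq:main_estimate_with_correlaton_term} is the paper's route. It uses the continuation functional $U^n$, an integral backward equation obtained from the flow property and a functional It\^o formula on short intervals, the martingale formula for the jump process, and a third-order expansion of the microscopic generator isolating $\cC^n$. The initial time is handled by an exchangeable-pair bound. The slips there are harmless:
\begin{itemize}
\item the costs of replacing $\rho^n$ by $\varrho^n$ and $\tau_{n,j}$ by $\tau^{(m_\tau)}_{n,j}$ are swapped;
\item you omit the quadrature error between $\inn{\cdot,\cdot}_n$ and the $L^2$ pairing in the covariance term (the paper's $S^j_{3,3}$, of order $n^{-(d+1)}$);
\item the initial Stein bound does contain a covariance quadrature error $n^{-m_0}$, with $m_0=m_\partial$ in the paper, which is absorbed into $n^{-\alpha}$.
\end{itemize}

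The genuine gap is in bounding the correlation correction, which is what yields \eqref{eq:main_estimate_without_correlation}. Write $\hat\eta_t:=\eta^n_t-\rho^n_t$ and $Z_{n,j}(t):=(\partial^{(m_\partial)}_{n,j})^{\odot2}\nabla^2U^n_t(\bar\zeta^{n,[0,t]};\cdot)$. The quantity to control is
\[
\frac{1}{n^d}\E\binn{Z_{n,j}(t),\zeta^n_t\tau_{n,j}\zeta^n_t}_n=\frac{1}{n^d}\sum_{x\in\Tnd}\E\bigl[Z_{n,j}(t,x)\,\hat\eta_t(x)\,\hat\eta_t(x+e^n_j)\bigr].
\]
Here $Z_{n,j}(t)$ is random and depends on the whole microscopic history. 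Each summand is of order one in absolute value, so all smallness must come from cancellation in the spatial average. A two-point bound helps only once the coefficient is deterministic. Your step ``replace $D^2U$ by its expectation at cost $n^{-d/2}$'' is therefore the entire difficulty, and you give no mechanism for it. Any such argument has to control
\[
\E\bigl|n^{-d}\textstyle\sum_x\varphi(x)\hat\eta_t(x)\hat\eta_t(x+e^n_j)\bigr|^2,
\]
i.e.\ four-point correlations of the SSEP at positive times, which two-point information cannot supply.

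The paper expands in $\varsigma_k$, $k\in\Znd$, and applies Cauchy--Schwarz with weights $(1+|k|^2)^{\pm I_1}$, $I_1=d/2+1$. This bounds the square of the display by
\[
C\,\E\|\ex_nZ_{n,j}(t)\|^2_{H^{I_1}}\cdot\max_k n^{-2d}\E|\inn{\varsigma_k,\zeta^n_t\tau_{n,j}\zeta^n_t}_n|^2 .
\]
The first factor is at most $C\|F\|^2_{\cP^3(D([0,T],H^{-M_1}))}$ by Corollary~\ref{cor:existence_of_regular_kernel}. The second is bounded using four-point estimates for pairwise distinct sites $x,x+e^n_j,y,y+e^n_j$: Ferrari--Presutti and Jara--Landim for $d=1$, Xu--Zhao for $d\ge2$, giving $n^{-2}$, $n^{-3}\ln^3 n$, $n^{-4}$ respectively. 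The $O(n^d)$ degenerate pairs contribute $O(n^{-d})$. Together this gives $n^{-(d\wedge4)}$, hence $n^{-\min\{d/2,2\}}$ after the square root, with no need to split off the mean. With these bounds your decorrelation step would cost $n^{-\min\{d/2,2\}}$ rather than $n^{-d/2}$, which still suffices.

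Finally, the nearest-neighbour two-point function is of order $n^{-1}$ in $d=1$, and $n^{-2}$ only up to a logarithm in $d=2$. This does not spoil the target rate, but the uniform $n^{-2}$ bound you quote is not correct.
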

	
	The exponent $\alpha$ separates the central-limit contribution
	$d/2$ from the approximation orders of the Laplacian, derivative,
	and shift. Writing
	$\beta:=\min\{2m_\Delta,m_\partial,m_\tau+1\}$ for the spatial
	approximation order, the choice $\beta\ge d/2$ yields a corrected
	error of order $n^{-d/2}$. The direct comparison with the modified
	Gaussian process then has rate $n^{-\min\{d/2,2\}}$.
	It therefore attains the central-limit scale in dimensions $d\le4$,
	while in higher dimensions the correlation contribution determines
	the rate in this bound.
	
	For the quadratic observable
	$F(\gamma)=\frac12\inn{\gamma_{t_0},\varphi}^2$, the weak error
	is a difference of second moments of a spatial average at time $t_0$.
	Its first derivative grows linearly, but it can be treated directly
	since its Gaussian continuation is again quadratic. The generator
	comparison then has no third-order remainder, the terms linear in
	the centered field have zero expectation, and the initial comparison
	involves only covariances. After the explicit correlation correction,
	only spatial approximation errors remain, giving $O(n^{-\beta})$; see
	Proposition~\ref{prop:quadratic_weak_error}. For every nonconstant
	initial profile, a suitable choice of $\varphi\in\Cf^\infty(\Td)$
	and $t_0\in(0,T]$ makes the correction at least of order $n^{-2}$
	by Proposition~\ref{prop:bound_from_belov_of_correlation_correction}.
	When $\beta>2$, this yields a direct weak error of order $n^{-2}$
	in dimensions $d\ge3$, including $d=4$.
	
	The main theorem goes beyond such single-time observations: it allows
	smooth functionals of the whole trajectory, including observations
	at several times and time integrals. For these observables the
	comparison must keep track of the history already observed, as well
	as the current state. We explain this aspect of the proof next.
	
	\subsection{A path-dependent generator comparison}
	
	Established quantitative comparisons of effective mesoscopic continuum models, e.g. in the context of Dean--Kawasaki dynamics \cite{CoFi2023,CoFiInRa2023}  control smooth observables at
	finitely many prescribed times. Here we seek to develop a general toolbox to directly establish error estimates on full path space, that is, for smooth functionals of the entire SSEP fluctuation
	trajectory. Extending the generator comparison to these observables
	requires retaining the history already observed, since the current
	state alone does not determine the value of a path-dependent observable.
	
	The continuation $U_t^n$ resolves this difficulty by retaining the
	observed history and averaging $F$ over a future generated by the
	modified Gaussian dynamics. Along a microscopic trajectory,
	$U_0^n(\bar\zeta_0^n)$ averages $F$ over the future Gaussian noise
	with the microscopic initial field held fixed, whereas
	$U_T^n(\bar\zeta^{n,[0,T]})=F(\bar\zeta^{n,[0,T]})$.
	As $t$ increases, more of the Gaussian continuation is replaced by
	the actual microscopic path. We express the expected change in this
	quantity through the difference of the two generators. A separate
	initial-field comparison accounts for the difference between the
	microscopic and Gaussian initial laws.
	
	In contrast to the finite-dimensional diffusion setting of
	\citep{Kovacs_Lindner2016}, the fluctuation fields considered here take
	values in infinite-dimensional negative Sobolev spaces, and their
	microscopic trajectories may contain jumps. The Gaussian reference
	has continuous paths, so a backward equation established only on the
	support of its law does not cover these discontinuous histories.
	Moreover, smoothness of the path functional does not ensure the
	classical horizontal regularity of its Gaussian continuation.
	We therefore establish horizontal absolute continuity and an integral
	backward Kolmogorov equation directly for every admissible càdlàg
	history. We also establish a martingale formula for horizontally
	absolutely continuous non-anticipative functionals of Markov jump
	processes. Combining these results yields the path-dependent Trotter
	generator comparison. Its quantitative application requires regularity
	and moment estimates in negative Sobolev spaces that are uniform in $n$.
	The relevant results are Theorem~\ref{thm:backward_kolmogorov_equation} and Proposition~\ref{prop:matingale_proplem_for_nonanticipative_functional}.
	
	More generally, let $X$ and $Y$ be \cdl Markov processes in a
	Banach space $E$, with generators $\cL_t$ and $\cG_t$, and let
	$U_t^F(\gamma)$ retain the history before $t$ and average $F$
	over a continuation from $\gamma_t$ following $Y$.
	Under the regularity and integrability assumptions needed for the
	backward equation and the martingale formula, this argument gives
	\begin{equation}
		\begin{aligned}
			\left|\E F(X^{[0,T]})-\E F(Y^{[0,T]})\right|
			\le{}&
			\left|\E U_0^F(X_0)-\E U_0^F(Y_0)\right|\\
			&+\int_0^T
			\E\left|
			(\cL_t-\cG_t)U_t^F(X^{[0,t]})
			\right|\,dt .
		\end{aligned}
		\label{eq:introduction_path_space_weak_error}
	\end{equation}
	Here the generators act on
	$x\mapsto U_t^F(\gamma^{[0,t)},x)$.
	The first term compares the initial laws, while the integral measures
	the accumulated difference between the dynamics. This is a
	path-dependent version of the Trotter generator comparison used in
	our previous work.
	
	For the SSEP and the modified Gaussian dynamics, the signed comparison
	identity, before taking absolute values, is
	\eqref{eq:path_dependent_trotter_formula}.
	Expanding the microscopic generator isolates the correlation
	correction $\cC^n$. The remaining terms are controlled by the
	approximation properties of the modified spatial operators, estimates
	on microscopic correlations, and a quantitative Gaussian
	approximation of the initial fluctuation field.
	
	Our pathwise differential calculus is based on the horizontal and
	vertical derivatives of Dupire~\citep{Dupire:2019} and Cont and
	Fourni\'e~\citep{Cont_Fournie2010,Cont_Fournie:2013}; see also
	\citep{Bally_Cont:2016}. Related path-dependent weak-error
	representations were developed for finite-dimensional diffusions by
	Kov\'acs and Lindner~\citep{Kovacs_Lindner2016}. Weak approximation
	of SPDE trajectories under spectral Galerkin discretization was
	studied by Br\'ehier, Hairer and
	Stuart~\citep{Brehier_Hairer_Stuart2018}. For stochastic Volterra
	equations and parabolic SPDEs, Andersson, Kov\'acs and
	Larsson~\citep{Andersson_Kovacs_Larsson2016} considered observables
	built from integrals of the path against finite measures, including
	observations at finitely many times.
	
	\subsection{Related work}
	
	The fluctuation theory of interacting particle systems goes back to
	the distribution-valued martingale approach of Holley and
	Stroock~\citep{Holley_Stroock:1978,Holley_Stroock:1979} and to the
	Boltzmann--Gibbs principle, which reduces local fluctuations to those
	of the conserved quantities~\citep{BrRo1984}; see the monographs
	\citep{Kipnis_Landim:1999,KoLaOl2012} for comprehensive accounts.
	Nonequilibrium fluctuation limits have subsequently been established
	in many settings, including weakly asymmetric exclusion
	\citep{DiGa1991}, boundary-driven systems and slow bonds
	\citep{LaMiOl2008,ErFrGoNeTa2020}, and tagged particles
	\citep{Jara:2006}. Second-order Boltzmann--Gibbs principles provide
	a further mechanism for deriving fluctuation limits in weakly
	asymmetric exclusion-type systems; see, for instance,
	Gon\c{c}alves, Jara and Simon~\citep{Goncalves_Jara_Simon2017}.
	
	Quantitative results for local particle systems require sufficiently
	precise control of the microscopic correlations. Quantitative
	Boltzmann--Gibbs estimates obtained through polynomial duality appear
	in~\citep{AyCaRe2018} for independent walkers and suitable interacting
	systems, while correlation estimates for exclusion
	processes go back at least to~\citep{Ferrari_Presutti:1991}. More
	recently, Xu and Zhao~\citep{Xu2025} derived quantitative estimates
	for correlations arising from inhomogeneous product measures in their
	study of occupation-time fluctuations. Their correlation bounds are
	used in the present work to estimate the term left after the Gaussian
	generator comparison. Although the correction itself contains only
	products of fluctuations at neighbouring sites, controlling its
	path-dependent coefficient also requires higher-order correlation
	estimates.
	
	Related quantitative questions have been studied for other models
	and observables. At the hydrodynamic level, Marahrens, Menegaki and
	Mouhot~\citep{Marahrens_Menegaki_Mouhot2024} obtain quantitative
	zero-range limits in dimensions one and two, in Monge--Kantorovich
	distance and relative entropy. For interacting diffusions in continuous
	space at equilibrium, Gu, Mourrat and Nitzschner~\citep{GuMoNi2024} quantify
	two-point density correlations and the Green--Kubo formula, building
	on quantitative homogenization results such as
	\citep{GiGuMo2022}. Dirr, Wu and Zimmer~\citep{Dirr_Wu_Zimmer2026}
	study the quantitative recovery of macroscopic mobility from
	microscopic fluctuations, including applications to the SSEP.
	These results concern hydrodynamic limits, correlations, or transport
	coefficients, whereas the present work gives a quantitative Gaussian
	approximation for nonlinear smooth functionals of the fluctuation
	trajectory. For mean-field systems, fixed-time weak expansions for
	smooth functionals of empirical measures can be developed in the inverse
	particle number using differential calculus on spaces of probability
	measures
	\citep{Chassagneux_Szpruch:2022}.
	
	A closely related direction replaces the microscopic particle system
	by nonlinear conservative SPDEs. For the SSEP, Dirr, Fehrman and
	Gess~\citep{Dirr_Fehrman:2020} construct stochastic continuum
	approximations with the same Gaussian fluctuation limit and dynamical
	large-deviation rate function as the particle system. Cornalba and
	Fischer~\citep{CoFi2023}, and Cornalba, Fischer, Ingmanns and
	Raithel~\citep{CoFiInRa2023}, obtain high-order weak approximations
	for independent and weakly interacting diffusing particles,
	respectively, using discretized Dean--Kawasaki equations. These
	results concern observables at finitely many observation times and
	separate the particle-number and spatial discretization errors.
	Fixed-time weak comparisons with regularized Dean--Kawasaki dynamics
	were obtained for independent particles in~\citep{DjKrPe2024} and
	extended to smooth mean-field systems in~\citep{DjJiPe2026}. At the
	continuum level, Gess, Wu and Zhang~\citep{GeWuZh2026} establish
	higher-order fluctuation expansions in joint small-noise and singular
	limits, including conservative SPDEs related to exclusion models.
	In contrast, the approximation used here remains linear and Gaussian:
	higher-order lattice information is incorporated into its spatial
	operators, while the microscopic correlation contribution is retained
	explicitly in the weak-error expansion.
	
	Finally, our estimates belong to the broader theory of quantitative
	Gaussian approximation on function spaces. Stein's method has been
	developed in this setting by, among others,
	Barbour~\citep{Ba1990}, Shih~\citep{Sh2011}, and Coutin and
	Decreusefond~\citep{CoDe2013}. Barbour, Ross and
	Zheng~\citep{Barbour_Ross_Zheng2023} obtain Gaussian approximation
	bounds for finite-dimensional \cdl processes, and their smoothing
	results~\citep{BaRoZh2024} relate estimates for smooth functionals to
	larger classes of observables, with additional regularity estimates
	for the paths and a loss in the rate. In the present argument,
	quantitative normal approximation enters at the initial time through
	the independent
	occupations, using the exchangeable-pairs framework of
	\citep{Meckes:2009,Reinert_Roellin:2009}. The subsequent comparison
	is dynamical and is carried out through the Gaussian continuation and
	the microscopic generator expansion.
	
	\paragraph{Organization of the paper.}
	In Section~\ref{sec:notation}, we fix the functional-analytic notation
	and the discrete and modified differential operators.
	Section~\ref{sec:microscopic_and_modified_gaussian_dynamics} develops
	the microscopic generator expansion, the modified density equation,
	and the Gaussian dynamics. In Section~\ref{sec:non-anticipative_functionals}, we
	establish the path-dependent calculus, the jump-process martingale
	formula, and the regularity and backward equation for the continuation
	functional. The final main section proves
	Theorem~\ref{thm:main_result}, gives examples of admissible path
	functionals, and establishes the correlation lower bound and the
	quadratic weak-error estimate.

	\section{Notation and preliminaries\label{sec:notation}}
	
	In this section, we introduce the notation and collect some preliminary
	material used throughout the paper. In particular, we recall the notation
	for multilinear operators and Fréchet derivatives, introduce the functional-analytic
	framework in which the fluctuation fields take values, and fix the
	notation for the discrete and modified differential operators used
	below.
	
	\subsection{General notation and function spaces}
	
	We write $\N:=\left\{ 1,2,\ldots\right\} $, $\N_{0}:=\N\cup\{0\}$
	and $[d]=\left\{ 1,\ldots,d\right\} $ for $d\in\N$. For $a,b\in\R$,
	we set $a\wedge b:=\min\left\{ a,b\right\} $ and $a\vee b:=\max\{a,b\}$. 
	
	For a sequence $\{s_{k}\}_{k\in\N}\subset\R$ and $s\in\R$, we write
	$s_{k}\to s-$ (resp. $s_{k}\to s+$) if $s_{k}\to s$ and $s_{k}<s$
	(resp. $s_{k}>s$) for every $k$. We write $s_{k}\uparrow s$ (resp.
	$s_{k}\downarrow s$) if $s_{k}\to s$ and $\{s_{k}\}_{k\in\N}$ is
	non-decreasing (resp. non-increasing).
	
	If $E$ is a metric space, we denote its metric by $\d_{E}$. If $E$
	is a normed or an inner product space, we denote its norm and inner
	product by $\norm{\cdot}_{E}$ and $\inn{\cdot,\cdot}_{E}$, respectively.
	The Borel $\sigma$-algebra on $E$ is denoted by $\cB(E)$. If $X$
	is an $E$-valued random variable, $\law(X)$ denotes its law.

	Throughout the paper, $\Td\coloneqq\R^{d}/\Z^{d}$ denotes the $d$-dimensional
	flat torus. For $I\in\R$, we denote by $\H[I]$ the Sobolev space
	of order $I$ on $\Td$, equipped with the norm 
	\[
	\norm f^{2}_{\H[I]}\coloneqq\sum_{k\in\Z^{d}}\left(1+|k|^{2}\right)^{I}\left|\inn{f,\varsigma_{k}}\right|^{2},
	\]
	where $|\cdot|$ is the Euclidean norm on $\R^{d}$ and
	\[
	\varsigma_{k}(x)\coloneqq e^{2\i\pi k\cdot x},\quad x\in\Td,\quad k\in\Zd.
	\]
	We write $\L=L^{2}(\Td)$ for the space of square-integrable real-valued
	functions on $\Td$, whose inner product is denoted by $\inn{\cdot,\cdot}$.
	
	For $I>0$, we identify $\H[-I]$ with the dual of $\H[I]$ and denote
	the corresponding duality pairing again by $\inn{\cdot,\cdot}$. For
	$I>\frac{d}{2}$, signed measures on $\Td$ with bounded total variation
	are naturally identified with elements of $\H[-I]$ through the duality
	relation 
	\[
	\inn{\varphi,\mu}\coloneqq\int_{\Td}\varphi(x)\,\mu(dx),\quad\varphi\in\H[I].
	\]
	
	The sites of the particle systems are located on the $d$-dimensional
	discrete torus $\Tnd\coloneqq\left\{ \frac{k}{n}:k\in\{0,\ldots,n-1\}^{d}\right\}$,
	viewed as a subset of $\Td$. We denote by $\Ln$ the space of all
	functions from $\Tnd$ to $\R$, equipped with the inner product 
	\[
	\binn{f,g}_{n}\coloneqq\frac{1}{n^{d}}\sum_{x\in\Tnd}f(x)g(x).
	\]
	The corresponding norm is denoted by $\norm{\cdot}_{n}$.
	
	The family $\{\varsigma_k\}_{k\in\Zd}$ is the standard Fourier basis
	of the complexification of $L^2(\Td)$. By a slight abuse of notation,
	we use the same convention for $L^2(\Td)$ and, analogously, for $\Ln$.
	
	For a function $f\in\Ln$ the associated empirical distribution is defined by
	\[
	\bar{f}:=\frac{1}{n^{d}}\sum_{x\in\Tnd}f(x)\delta_{x}.
	\]

	Let $\K$ denote either $\Td$ or $\R^{d}$. For $m\in\N_{0}\cup\{\infty\}$,
	the space of $m$ times continuously differentiable functions on $\K$
	is denoted, as usual, by $\Cf^{m}(\K)$.
	For $A\subseteq\R$, we write
	$\Cf^{m}(\K;A)$ for the functions in $\Cf^{m}(\K)$ taking values in $A$.
	We also write $\Cf(\K)\coloneqq\Cf^{0}(\K)$. 
	We denote by $\Cf_{b}(\R^{d})$ the space of bounded continuous functions
	on $\R^{d}$, and by $\Cf^{m}_{b}(\R^{d})$ the space of functions
	in $\Cf^{m}(\R^{d})$ whose derivatives up to order $m$ are bounded.
	The spaces $\Cf^{m}(\Td)$ and $\Cf^{m}_{b}(\R^{d})$ are equipped
	with their usual uniform norms for $m\in\N_{0}$.
	
	The letter $C$, possibly with subscripts, denotes a positive constant
	whose value may change from line to line. Its dependence on relevant
	parameters is indicated by subscripts. Since the dimension $d$ is
	fixed throughout the paper, we suppress the dependence of constants
	on $d$.
	
	\subsection{Multilinear operators and Fréchet derivatives}
	
	Let $E,\tilde{E}$ be Banach spaces and $m\in\N_{0}$. We denote by
	$\Cf^{m}(E;\tilde{E})$ the space of $m$-times continuously Fréchet
	differentiable functions from $E$ to $\tilde{E}$. We write $\Cf(E;\tilde{E}):=C^{0}(E;\tilde{E})$,
	$\Cf^{m}(E):=\Cf^{m}(E;\R)$ and $\Cf(E):=C^{0}(E)$.
	
	For $F\in\Cf^{m}(E;\tilde{E})$, we denote by $D^{k}F$ its $k$-th
	Fréchet derivative. According to \citep[Theorem 5.3.1]{Cartan:1971},
	$D^{k}F(z)$ can be identified with a continuous symmetric $k$-linear
	operator from $E^{k}$ to $\tilde{E}$. We denote the space of all
	such operators by $\cL_{k}(E;\tilde{E})$ and write $\cL_{k}(E)\coloneqq\cL_{k}(E;\R).$
	The space $\cL_{k}(E;\tilde{E})$, equipped with the norm 
	\[
	\norm A_{\cL_{k}(E;\tilde{E})}\coloneqq\sup\left\{ \bnorm{A[z_{1},\ldots,z_{k}]}_{\tilde{E}}:\norm{z_{i}}_{E}\leq1,\ i\in[k]\right\} ,
	\]
	is a Banach space.
	
	If $E=\R^{d}$ or $E=\Td$ and $\alpha=(\alpha_{j})_{j\in[d]}\in\N^{d}_{0}$
	is a multi-index, we use the notation
	\[
	D^{\alpha}F(x)\coloneqq D^{|\alpha|_{1}}F(x)\left[e^{\otimes\alpha_{1}}_{1},\ldots,e^{\otimes\alpha_{d}}_{d}\right],
	\]
	where $(e_{j})_{j\in[d]}$ denotes the canonical basis of $\R^{d}$,
	$|\alpha|_{1}\coloneqq\sum^{d}_{j=1}\alpha_{j}$, and $z^{\otimes k}=(z,\ldots,z)\in E^{k}$.
	Note that if $\tilde{E}=\R$ then $D^{\alpha}$ coincides with the
	usual partial derivative $\partial^{\alpha}\coloneqq\partial^{\alpha_{1}}_{1}\cdots\partial^{\alpha_{d}}_{d}$,
	where $\partial_{j}$ is the partial derivative in the $j$-th coordinate.
	
	For $m\geq1$, we denote by $\Cf^{m}_{\text{der}}(E;\tilde{E})$ the
	space of functions $F\in\Cf^{m}(E;\tilde{E})$ whose derivatives of
	orders $1,\ldots,m$ are bounded, and write $\Cf^{m}_{\text{der}}(E)\coloneqq\Cf^{m}_{\text{der}}(E;\R).$
	We equip $\Cf^{m}_{\text{der}}(E;\tilde{E})$ with the seminorm 
	\[
	\norm F_{\Cf^{m}_{\text{der}}(E;\tilde{E})}\coloneqq\max_{k\in[m]}\sup_{z\in E}\bnorm{D^{k}F(z)}_{\cL_{k}(E;\tilde{E})}.
	\]
	
	For $F\in\Cf^{m}(E)$ we recall Taylor's formula
	\citep[Sections 1.5.5--1.5.6]{Cartan:1971}
	\begin{align}
		F(z) & =F(z_{0})+\sum^{m-1}_{k=1}\frac{1}{k!}D^{k}F(z_{0})\left[(z-z_{0})^{\otimes k}\right]+\frac{1}{m!}D^{m}F(\tilde{z})\left[(z-z_{0})^{\otimes m}\right]\label{eq:taylors_formula}
	\end{align}
	where $\tilde{z}\in E$ belongs to the segment joining $z_{0}$ and
	$z$.
	
	We will often consider functions $F\in\Cf^{\ell}(\H[-I])$, where
	$I>\frac{d}{2}+m$ for some $m\in\N_{0}$. In this case, $DF(f)\in\H[I]$
	for every $f\in\H[-I]$. By the Sobolev embedding theorem, $DF(f)$
	admits a representative $DF(f;\cdot)\in\Cf^{m}(\Td)$ such that
	\[
	DF(f)[\mu]=\binn{DF(f;\cdot),\mu}
	\]
	for every finite signed measure $\mu$ on $\Td$. 
	
	More generally, for $k\in\N$ and $m_{1},\ldots,m_{k}\in\N_{0}$,
	we denote by $\Cf^{m_{1},\ldots,m_{k}}((\Td)^{k})$ the space of functions
	$K:(\Td)^{k}\to\R$ such that $\partial^{\alpha_{1}}_{x_{1}}\cdots\partial^{\alpha_{k}}_{x_{k}}K$
	exists and is continuous for each multi-index $\alpha_{i}\in\N^{d}_{0}$
	satisfying $|\alpha_{i}|_{1}\le m_{i}$, $i\in[k]$. We equip this
	space with the norm
	\[
	\bnorm K_{\Cf^{m_{1},\ldots,m_{k}}((\Td)^{k})}
	:=
	\max\left\{
	\bnorm{
		\partial^{\alpha_{1}}_{x_{1}}\cdots
		\partial^{\alpha_{k}}_{x_{k}}K
	}_{\Cf((\Td)^{k})}
	:
	\alpha_{i}\in\N^{d}_{0},\
	|\alpha_{i}|_{1}\le m_{i},\
	i\in[k]
	\right\}.
	\]
	
	As we will show in Section \ref{subsec:kernels_of_multilinear_functionals},
	for every $f\in\H[-I]$ and $k\in[\ell]$, the symmetric multilinear
	functional $D^{k}F(f)\in\cL_{k}(\H[-I])$ admits a kernel in $\Cf^{m,\ldots,m}((\Td)^{k})$.
	We denote this kernel by
	\[
	D^{k}F(f;x_{1},\ldots,x_{k}),\quad x_{1},\ldots,x_{k}\in\Td.
	\]
	Thus, for finite signed measures $\mu_{1},\ldots,\mu_{k}$ on $\Td$,
	\[
	D^{k}F(f)[\mu_{1},\ldots,\mu_{k}]=\int_{(\Td)^{k}}D^{k}F(f;x_{1},\ldots,x_{k})\prod^{k}_{i=1}\mu_{i}(dx_{i}).
	\]

	\subsection{Trigonometric interpolation}
	
	We next introduce the trigonometric interpolation operator that will
	be used throughout the paper to identify functions on the discrete
	torus $\Tnd$ with smooth functions on $\Td$. We also collect the
	approximation estimates needed below.
	\begin{rem}\label{rem:choice_of_n}
		The results are stated for all integers $n\ge3$. To simplify
		notation in the Fourier calculations, we present the arguments
		for $n=2N+1$, with $N\in\N$, and set
		\[
		\Znd\coloneqq\{-N,\ldots,N\}^{d}.
		\]
		For even $n=2N$, we use the standard real-valued trigonometric
		interpolation convention at the highest frequencies, splitting
		the corresponding coefficients equally between the frequencies
		$N$ and $-N$ in each coordinate. The interpolation and Sobolev
		estimates used below remain valid with the same powers of $n$
		and with constants independent of $n$. We omit the corresponding
		minor modifications of the Fourier formulas.
	\end{rem}
	
	We define the trigonometric interpolation operator
	\[
	\ex_{n}:\Ln\to\Cf^{\infty}(\Td)
	\]
	by
	\begin{equation}
		\ex_{n}f(x)=\sum_{k\in\Znd}\binn{f,\varsigma_{k}}_{n}\varsigma_{k}(x),\quad x\in\Td.\label{eq:ex_definition}
	\end{equation}
	Equivalently,
	\[
	\ex_{n}f=\sum_{k\in\Znd}\inn{\bar{f},\varsigma_{k}}\varsigma_{k}.
	\]
	If $f$ is defined on $\Td$, we interpret $\ex_{n}f$ as the application
	of $\ex_{n}$ to the restriction $f\big|_{\Tnd}$.
	
	\begin{lem}
		\label{lem:approximation_for_ex}Let $m\in\N$. Then there exists
		a constant $C_{m}>0$ such that, for every $f\in\Cf^{m}(\Td)$,
		\[
		\bnorm{f-\ex_{n}f}_{\Cf(\Td)}\le\frac{C_{m}\left(1+\ln^{d}n\right)}{n^{m}}\bnorm f_{\Cf^{m}(\Td)}
		\]
		and 
		\[
		\left|\binn{f,\varsigma_{k}}-\binn{f,\varsigma_{k}}_{n}\right|\le\frac{C_{m}}{n^{m}}\bnorm f_{\Cf^{m}(\Td)}
		\]
		for all $k\in\Znd$.
	\end{lem}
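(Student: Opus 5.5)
The statement to prove is Lemma~\ref{lem:approximation_for_ex}. The plan is to prove the second (coefficient) estimate first, using aliasing, and then derive the uniform estimate from it via a Lebesgue-constant bound for $\ex_n$.

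\textbf{Coefficient estimate.} Expand $f=\sum_{l\in\Zd}\hat f(l)\varsigma_l$ with $\hat f(l)=\inn{f,\varsigma_l}$. The discrete orthogonality relation
\[
\inn{\varsigma_l,\varsigma_k}_n=\mathbf 1\{l-k\in n\Zd\}
\]
gives the aliasing identity
\[
\inn{f,\varsigma_k}_n=\sum_{q\in\Zd}\hat f(k+nq).
\]
This requires absolute convergence of the Fourier series, which holds for smooth $f$. The general case follows by density once the final bound is in hand, since both sides of the estimate are continuous in the $\Cf^m$ norm.

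The difference is therefore $\sum_{q\ne0}\hat f(k+nq)$. For $k\in\Znd$ and $q\ne0$ we have $|k+nq|_\infty\ge n|q|_\infty/2$. Integrating by parts $m$ times in a coordinate $j$ where $|(k+nq)_j|$ is maximal gives
\[
|\hat f(l)|\le C\|f\|_{\Cf^m}|l|_\infty^{-m}.
\]
This alone produces the sum $\sum_{q\ne0}(n|q|)^{-m}$, which converges only when $m>d$. To handle small $m$, I would instead use the Poisson-summation form of the difference,
\[
\inn{f,\varsigma_k}_n-\inn{f,\varsigma_k}=\sum_{q\neq0}\widehat{f\bar\varsigma_k}(nq),
\]
and bound it through an error estimate for the Riemann sum. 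For $m$ small relative to $d$, this step needs care. The cleanest route I would try is to treat the Riemann sum coordinatewise: apply the one-dimensional Euler--Maclaurin formula for periodic functions (the trapezoid rule) in each variable, which gives error $Cn^{-m}$ times the $\Cf^m$ norm of $f\bar\varsigma_k$ in that variable. A direct Leibniz bound on that norm loses powers of $|k|\le n$. To avoid the loss, I would use the identity $\inn{g\varsigma_{-k},1}_n-\inn{g\varsigma_{-k},1}$ together with the exactness of the trapezoid rule on trigonometric polynomials of degree $<n$. Concretely, I would approximate $f$ in each variable by its best trigonometric approximant $p$ of degree $\le N$ (Jackson's theorem gives $\|f-p\|_{\Cf}\le Cn^{-m}\|f\|_{\Cf^m}$). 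For $k\in\Znd$ the quadrature is exact on $p\bar\varsigma_k$ up to the boundary frequencies, and the remainder is bounded by $2\|f-p\|_{\Cf}$. This Jackson-based argument is the step I expect to be the main obstacle, mostly in handling the aliasing of frequencies at the edge of $\Znd$ and, for even $n$, the split coefficients.

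\textbf{Uniform estimate.} Let $p$ be a trigonometric polynomial with frequencies in $\Znd$ and $\|f-p\|_{\Cf}\le C_m n^{-m}\|f\|_{\Cf^m}$; tensorized Jackson approximation in $d$ variables provides one. Since $\ex_n p=p$,
\[
f-\ex_n f=(f-p)-\ex_n(f-p).
\]
Now $\ex_n$ acts as convolution of the grid values with the $d$-fold tensor Dirichlet kernel, so its operator norm from $\ell^\infty(\Tnd)$ to $\Cf(\Td)$ is the Lebesgue constant, bounded by $C(1+\ln n)^d\le C(1+\ln^d n)$. Combining the two bounds gives
\[
\|f-\ex_n f\|_{\Cf(\Td)}\le C_m(1+\ln^d n)n^{-m}\|f\|_{\Cf^m(\Td)}.
\]
The same device also yields the coefficient bound directly, because $|\inn{g,\varsigma_k}_n|\le\|g\|_{\Cf}$ and $|\inn{g,\varsigma_k}|\le\|g\|_{\Cf}$. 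Taking $g=f-p$ and using $\inn{p,\varsigma_k}_n=\inn{p,\varsigma_k}$ for $k\in\Znd$, which is exact by discrete orthogonality since differences of frequencies in $\Znd$ avoid $n\Zd\setminus\{0\}$, gives
\[
\bigl|\inn{f,\varsigma_k}-\inn{f,\varsigma_k}_n\bigr|\le 2\|f-p\|_{\Cf}\le 2C_m n^{-m}\|f\|_{\Cf^m}.
\]
So the whole proof reduces to Jackson's theorem plus the Lebesgue-constant bound, and I would present it in that order. The remaining care is the even-$n$ convention at frequencies $\pm N$, which I would handle by choosing $p$ of degree $\le N-1$.
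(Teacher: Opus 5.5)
Your final argument is correct and is exactly the paper's proof: choose a trigonometric polynomial $p$ with frequencies in $\Znd$ via Jackson's theorem, and for the uniform bound use $\ex_n p=p$ together with the $O\bigl((1+\ln n)^d\bigr)$ Lebesgue constant of $\ex_n$. For the coefficient bound, use that $\inn{p,\varsigma_k}_n=\inn{p,\varsigma_k}$ for $k\in\Znd$ and that $|\inn{g,\varsigma_k}_n|,\,|\inn{g,\varsigma_k}|\le\|g\|_{\Cf(\Td)}$ with $g=f-p$. The opening aliasing/Euler--Maclaurin discussion is unnecessary and can be dropped, since the Jackson argument already gives the coefficient estimate for every $m\in\N$ with no restriction $m>d$.
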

	
	\begin{proof}
		We set 
		\[
		V_{n}:=\spann\left\{ \varsigma_{k},k\in\Znd\right\} \subset\Cf(\Td).
		\]
		Then 
		\begin{align*}
			\ex_{n}f(x) & =\frac{1}{n^{d}}\sum_{y\in\Tnd}\sum_{k\in\Znd}f(y)\bar{\varsigma}_{k}(y)\varsigma_{k}(x)\\
			&=\sum_{y\in\Tnd}\left(\frac{1}{n^{d}}\sum_{k\in\Znd}\varsigma_{k}(x-y)\right)f(y)=\sum_{y\in\Tnd}\phi_{n}(x,y)f(y),
		\end{align*}
		where
		\[
		\phi_{n}(x,y)=\frac{1}{n^{d}}\sum_{k\in\Znd}\varsigma_{k}(x-y).
		\]
		
		Using the fact that $\ex_{n}g=g$ for $g\in V_{n}$, we can estimate
		\begin{align*}
			\norm{f-\ex_{n}f}_{\Cf(\Td)} & \le\norm{f-g}_{\Cf(\Td)}+\norm{g-\ex_{n}f}_{\Cf(\Td)}\le\left(1+\norm{\ex_{n}}_{op}\right)\norm{f-g}_{\Cf(\Td)}.
		\end{align*}
		The operator norm $\norm{\ex_n}_{op}$ in $\Cf(\Td)$ is the Lebesgue
		constant associated with trigonometric interpolation and satisfies
		\[
		\norm{\ex_n}_{op}\le C(1+\ln^d n).
		\]
		For $d=1$, the corresponding logarithmic estimate for equidistant
		trigonometric interpolation is classical; see
		\citep[pp.~453--456]{Jackson1913}. See also
		\citep[pp.~30--31]{Fejer1910} for the closely related logarithmic
		estimate of the Fourier Lebesgue constants. The multidimensional
		estimate follows by tensorization.

		Taking the infimum over $g\in V_{n}$, we obtain 
		\begin{align*}
			\norm{f-\ex_{n}f}_{\Cf(\Td)} & \le C(1+\ln^{d}n)\inf_{g\in V_{n}}\norm{f-g}_{\Cf(\Td)}.
		\end{align*}
		Combining this estimate with Jackson's theorem yields
		\[
		\bnorm{f-\ex_{n}f}_{\Cf(\Td)}\le\frac{C_{m}\left(1+\ln^{d}n\right)}{n^{m}}\bnorm f_{\Cf^{m}(\Td)}.
		\]
		
		We now prove the second inequality in the statement of the lemma.
		Let $g\in V_{n}$ and $k\in\Znd$. Then
		\[
		\left|\langle f,\varsigma_{k}\rangle_{n}-\langle f,\varsigma_{k}\rangle\right|\le\left|\langle f-g,\varsigma_{k}\rangle_{n}\right|+\left|\langle g-f,\varsigma_{k}\rangle\right|\le2\norm{f-g}_{C(\Td)}.
		\]
		Taking the infimum over $g\in V_{n}$ and applying Jackson's theorem
		gives
		\[
		\left|\langle f,\varsigma_{k}\rangle_{n}-\langle f,\varsigma_{k}\rangle\right|\le\frac{C_{m}}{n^{m}}\norm f_{C^{m}(\Td)}.
		\]
		This completes the proof of the lemma.
	\end{proof}
	
	\begin{cor}
		\label{cor:ex_n_approximation_in_H_I_norm}Let $I\ge0$ and $m\in\N$
		satisfy $m\ge I+\frac{d}{2}.$ Then there exists a constant $C_{I,m}>0$
		such that, for every $f\in\Cf^{m}(\Td)$,
		\[
		\bnorm{f-\ex_{n}f}_{\H[I]}\le\frac{C_{I,m}}{n^{m-I-\frac{d}{2}}}\bnorm f_{\Cf^{m}(\Td)}.
		\]
	\end{cor}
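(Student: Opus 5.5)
The plan is to pass from the uniform bound of Lemma~\ref{lem:approximation_for_ex} to the Sobolev norm through Fourier coefficients. I will split $f-\ex_n f$ into its low-frequency part (frequencies $k\in\Znd$) and its high-frequency part ($k\notin\Znd$), and then use the definition
\[
\norm{f-\ex_n f}^2_{\H[I]}=\sum_{k\in\Zd}(1+|k|^2)^I\left|\inn{f-\ex_nf,\varsigma_k}\right|^2 .
\]

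For $k\in\Znd$, definition \eqref{eq:ex_definition} gives $\inn{\ex_n f,\varsigma_k}=\inn{f,\varsigma_k}_n$. The second estimate of Lemma~\ref{lem:approximation_for_ex} then bounds each coefficient by $C_m n^{-m}\norm f_{\Cf^m(\Td)}$. Since $|k|\le C n$ on $\Znd$ and $\#\Znd=n^d$, the low-frequency contribution is at most
\[
C n^d\, n^{2I}\, n^{-2m}\norm f^2_{\Cf^m(\Td)} ,
\]
which is exactly $(n^{-(m-I-d/2)})^2\norm f^2_{\Cf^m(\Td)}$.

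For $k\notin\Znd$ we have $\inn{\ex_nf,\varsigma_k}=0$, so only the coefficients of $f$ remain, and $|k|_\infty>N\ge n/3$ there. I would not use the crude decay $|\inn{f,\varsigma_k}|\le C|k|^{-m}\norm f_{\Cf^m}$, because summing it against $(1+|k|^2)^I$ requires $2m-2I>d$. In the borderline case $m=I+d/2$ that sum diverges logarithmically. Instead I use Parseval on derivatives:
\[
\sum_{|k|\ge cn}(1+|k|^2)^I|\hat f(k)|^2\le (cn)^{2I-2m}\sum_k (1+|k|^2)^m|\hat f(k)|^2\le C n^{2I-2m}\norm f^2_{\H[m]} ,
\]
together with $\norm f_{\H[m]}\le C\norm f_{\Cf^m(\Td)}$, which holds because $L^2$ norms of derivatives are dominated by sup norms on the unit-volume torus. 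This gives a bound $n^{-(m-I)}\le n^{-(m-I-d/2)}$, which is even better than needed. Adding the two contributions and taking square roots finishes the proof.

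The main obstacle is keeping the high-frequency tail clean at the borderline exponent, which the Parseval-based argument above handles. The low-frequency part is where the loss of $d/2$ arises, from summing $n^d$ coefficients. For even $n$, Remark~\ref{rem:choice_of_n} gives the same bounds after splitting the extreme frequencies.
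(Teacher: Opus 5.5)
Your proposal is correct and follows the paper's proof essentially step for step. The paper uses the same split into frequencies $k\in\Znd$ and the tail $k\notin\Znd$. The low frequencies are controlled by the second estimate of Lemma~\ref{lem:approximation_for_ex} together with $\sum_{k\in\Znd}(1+|k|^2)^I\le Cn^{d+2I}$. The tail is controlled by $|k|\ge cn$ and $\|f\|_{\H[m]}\le C\|f\|_{\Cf^{m}(\Td)}$, which is also why the paper does not run into trouble at the borderline case $m=I+\frac d2$.
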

	
	\begin{proof}
		Using the Fourier representation of $\ex_{n}f$ and Lemma \ref{lem:approximation_for_ex},
		we obtain
		\begin{align*}
			\bnorm{f-\ex_{n}f}^{2}_{\H[I]} & =\sum_{k\in\Znd}(1+|k|^{2})^{I}\left|\binn{f,\varsigma_{k}}-\binn{f,\varsigma_{k}}_{n}\right|^{2}+\sum_{k\notin\Znd}(1+|k|^{2})^{I}\left|\binn{f,\varsigma_{k}}\right|^{2}\\
			& \le\frac{C_{I,m}}{n^{2m}}\bnorm f^{2}_{\Cf^{m}(\Td)}\sum_{k\in\Znd}(1+|k|^{2})^{I}+\sum_{k\notin\Znd}(1+|k|^{2})^{I}\left|\binn{f,\varsigma_{k}}\right|^{2}.
		\end{align*}
		Since
		\[
		\sum_{k\in\Znd}(1+|k|^{2})^{I}\le C_{I}n^{d+2I},
		\]
		the first term is bounded by $\frac{C_{I,m}}{n^{2m-d-2I}}\bnorm f^{2}_{\Cf^{m}(\Td)}.$
		Moreover, for $k\notin\Znd$ one has $|k|\ge cn$. Hence
		\[
		\sum_{k\notin\Znd}(1+|k|^{2})^{I}\left|\binn{f,\varsigma_{k}}\right|^{2}\le\frac{C_{I,m}}{n^{2(m-I)}}\bnorm f^{2}_{\H[m]}\le\frac{C_{I,m}}{n^{2(m-I)}}\bnorm f^{2}_{\Cf^{m}(\Td)}.
		\]
		
		Combining the two estimates yields
		\[
		\bnorm{f-\ex_{n}f}^{2}_{\H[I]}\le\frac{C_{I,m}}{n^{2\left(m-I-\frac{d}{2}\right)}}\bnorm f^{2}_{\Cf^{m}(\Td)}.
		\]
		This proves the claim.
	\end{proof}
	
	\begin{cor}
		\label{cor:bound_of_ex}Let $I\ge0$ and let $m\in\N$ satisfy $m\ge\frac{d}{2}+I$.
		Then there exists a constant $C_{I,m}>0$ such that, for every $f\in\Cf^{m}(\Td)$,
		\[
		\bnorm{\ex_{n}f}_{\H[I]}\le C_{I,m}\bnorm f_{\Cf^{m}(\Td)}.
		\]
	\end{cor}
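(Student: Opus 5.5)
The bound follows from the triangle inequality in $\H[I]$, combining Corollary~\ref{cor:ex_n_approximation_in_H_I_norm} with a direct estimate of $\norm f_{\H[I]}$ in terms of $\norm f_{\Cf^{m}(\Td)}$. We write
\[
\bnorm{\ex_n f}_{\H[I]}\le \bnorm{f}_{\H[I]}+\bnorm{f-\ex_n f}_{\H[I]} .
\]
Since $m\ge I+\frac d2$, the exponent $m-I-\frac d2$ in Corollary~\ref{cor:ex_n_approximation_in_H_I_norm} is nonnegative. The second term is therefore bounded by $C_{I,m}n^{-(m-I-d/2)}\norm f_{\Cf^m(\Td)}\le C_{I,m}\norm f_{\Cf^m(\Td)}$ uniformly in $n\ge3$.

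For the first term, we use $m\ge I$, which holds because $m\ge I+\frac d2$. This gives $\norm f_{\H[I]}\le\norm f_{\H[m]}$. For integer $m$, the $\H[m]$ norm is equivalent to $\sum_{|\alpha|_1\le m}\norm{\partial^\alpha f}_{\L}$, seen via Parseval and $(1+|k|^2)^m\le C_m\sum_{|\alpha|_1\le m}|(2\pi k)^{\alpha}|^2$. Each $L^2$ norm on the unit-volume torus is bounded by the corresponding sup norm, so $\norm f_{\H[I]}\le C_{I,m}\norm f_{\Cf^m(\Td)}$.

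Adding the two bounds gives the claim. An alternative route is to bound Fourier coefficients directly: $|\inn{f,\varsigma_k}_n|\le |\inn{f,\varsigma_k}|+C_m n^{-m}\norm f_{\Cf^m}$ by Lemma~\ref{lem:approximation_for_ex}, after which one sums over $\Znd$ exactly as in the proof of Corollary~\ref{cor:ex_n_approximation_in_H_I_norm}. The triangle-inequality route is shorter, and no step presents a real obstacle. The only point to check is that the hypothesis $m\ge I+\frac d2$ makes the power of $n$ from Corollary~\ref{cor:ex_n_approximation_in_H_I_norm} harmless, and it does.
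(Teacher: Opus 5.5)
Your proof is correct and follows the paper's argument exactly. The paper also uses the triangle inequality $\|\ex_n f\|_{H^I}\le\|f\|_{H^I}+\|f-\ex_n f\|_{H^I}$ together with Corollary~\ref{cor:ex_n_approximation_in_H_I_norm}, where $m\ge I+\frac d2$ makes the power of $n$ harmless. Your Parseval argument for $\|f\|_{H^I}\le C_{I,m}\|f\|_{C^m(\mathbb{T}^d)}$ spells out a step the paper leaves implicit.
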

	
	\begin{proof}
		By the triangle inequality and Corollary (\ref{cor:ex_n_approximation_in_H_I_norm}),
		\[
		\bnorm{\ex_{n}f}_{\H[I]}\le\bnorm f_{\H[I]}+\bnorm{f-\ex_{n}f}_{\H[I]}\le C\bnorm f_{\Cf^{m}(\Td)}.
		\]
		This proves the claim.
	\end{proof}
	
	The following lemma relates the empirical distribution associated
	with a function on the discrete torus to its trigonometric interpolation.
	In particular, it allows us to transfer estimates for $\ex_{n}f$
	to $\bar{f}$ in negative Sobolev spaces.
	\begin{lem}
		Let $I>\frac{d}{2}$. Then there exists a constant $C_{I}>0$ such
		that
		\[
		\bnorm{\bar{f}}_{\H[-I]}\le C\bnorm{\ex_{n}f}_{\H[-I]}
		\]
		for all $f\in\Ln.$
	\end{lem}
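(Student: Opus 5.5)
We compare the Fourier coefficients of $\bar f$ and $\ex_n f$ directly. For every $k\in\Zd$ we have $\inn{\bar f,\varsigma_k}=\inn{f,\varsigma_k}_n$. Since $\varsigma_k$ restricted to $\Tnd$ depends only on $k$ modulo $n\Zd$, this coefficient is periodic in $k$ with period lattice $n\Zd$. On the other hand, $\inn{\ex_n f,\varsigma_k}=\inn{f,\varsigma_k}_n$ for $k\in\Znd$, and it vanishes for $k\notin\Znd$.

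Hence, writing every $k\in\Zd$ uniquely as $k=l+nm$ with $l\in\Znd$ and $m\in\Zd$ (for odd $n=2N+1$; the even case follows via Remark~\ref{rem:choice_of_n}), we obtain
\[
\bnorm{\bar f}^2_{\H[-I]}=\sum_{l\in\Znd}\left|\inn{f,\varsigma_l}_n\right|^2\sum_{m\in\Zd}\left(1+|l+nm|^2\right)^{-I}.
\]
Similarly,
\[
\bnorm{\ex_n f}^2_{\H[-I]}=\sum_{l\in\Znd}\left(1+|l|^2\right)^{-I}\left|\inn{f,\varsigma_l}_n\right|^2.
\]
It therefore suffices to prove the uniform bound
\[
\sup_{n}\ \sup_{l\in\Znd}\ \left(1+|l|^2\right)^{I}\sum_{m\in\Zd}\left(1+|l+nm|^2\right)^{-I}\le C_I.
\]

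The term $m=0$ contributes exactly $1$. For $m\neq0$ and $l\in\Znd$ we have $|l_j|\le N<n/2$ in each coordinate, so
\[
|l+nm|\ge n|m|-|l|\ge n|m|-\tfrac{\sqrt d}{2}n .
\]
This lower bound is not useful for small $|m|$, so we argue coordinatewise. If $m_j\neq0$, then $|l_j+nm_j|\ge n|m_j|/2$. Consequently
\[
|l+nm|\ge \tfrac n2|m|_\infty\ge c\,n|m|\qquad\text{for } m\neq0 .
\]
Also $1+|l|^2\le 1+dn^2/4\le Cn^2$. Therefore the $m\neq0$ part is bounded by
\[
Cn^{2I}\sum_{m\neq0}\left(c\,n|m|\right)^{-2I}=C'\sum_{m\neq0}|m|^{-2I},
\]
and this series is finite precisely because $2I>d$.

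The only point requiring care, and the main (mild) obstacle, is this aliasing estimate. One needs a lower bound on $|l+nm|$ for nonzero $m$ that holds uniformly in $l\in\Znd$, which the coordinatewise argument above supplies. The even-$n$ convention only splits the boundary frequencies and changes constants, not the argument. Combining the three displays gives $\bnorm{\bar f}_{\H[-I]}\le C_I\bnorm{\ex_n f}_{\H[-I]}$.
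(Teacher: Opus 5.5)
Your proof is correct and takes essentially the same approach as the paper. Both decompose $k=l+nm$ with $l\in\Znd$, $m\in\Zd$, use the $n\Zd$-periodicity of $\inn{f,\varsigma_k}_n$, and bound the aliased terms via $|l+nm|\ge c\,n|m|$ for $m\neq0$, together with $1+|l|^2\le Cn^2$ and $2I>d$; your coordinatewise argument just spells out the lattice inequality $|q+n\ell|\ge Cn|\ell|$ that the paper asserts without proof.
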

	
	\begin{proof}
		Using the definition of Sobolev norm, we get
		\[
		\bnorm{\bar{f}}^{2}_{\H[-I]}=\sum_{k\in\Zd}\left(1+|k|^{2}\right)^{-I}\left|\binn{\bar{f},\varsigma_{k}}\right|^{2}=\sum_{k\in\Zd}\left(1+|k|^{2}\right)^{-I}\left|\binn{f,\varsigma_{k}}_{n}\right|^{2}.
		\]
		Since $\Z^{d}_{n}=\{-N,\ldots,N\}^{d}$ is a complete set of representatives
		of $\Zd/n\Zd$, every $k\in\Zd$ has a unique representation
		\[
		k=q+n\ell,\quad q\in\Z^{d}_{n},\ \ell\in\Zd.
		\]
		Moreover
		\[
		\inn{f,\varsigma_{q+n\ell}}_{n}=\inn{f,\varsigma_{q}}_{n},
		\]
		since $\varsigma_{q+n\ell}=\varsigma_{q}$ on $\T^{d}_{n}$. Hence
		\[
		\bnorm{\bar{f}}^{2}_{\H[-I]}=\sum_{q\in\Z^{d}_{n}}\left|\langle f,\varsigma_{q}\rangle_{n}\right|^{2}\sum_{\ell\in\Zd}\left(1+|q+n\ell|^{2}\right)^{-I}.
		\]
		Note that for some constant $C>0$ and all $q\in\Znd$, $\ell\in\Zd\backslash\{0\}$
		one has
		\[
		|q+n\ell|\ge Cn|\ell|\ge C(1+|q|)(1+|\ell|).
		\]
		Since $I>\frac{d}{2}$, we obtain
		\[
		\sum_{\ell\in\Zd}\left(1+|q+n\ell|^{2}\right)^{-I}\le C_{I}\left(1+|q|^{2}\right)^{-I},
		\]
		uniformly in $q\in\Z^{d}_{n}$ and $n$. Therefore,
		\[
		\bnorm{\bar{f}}^{2}_{\H[-I]}\le C_{I}\sum_{q\in\Z^{d}_{n}}\left(1+|q|^{2}\right)^{-I}\left|\binn{f,\varsigma_{q}}_{n}\right|^{2}=C_{I}\bnorm{\ex_{n}f}^{2}_{\H[-I]}.
		\]
		Taking square roots completes the proof.
	\end{proof}

	\subsection{Discrete and differential operators}
	
	We next introduce the differential and discrete operators used in
	the paper. In particular, we define finite-order differential approximations
	of the discrete operators and collect several estimates for their
	approximation errors.
	
	We start with the definition of the usual differential operators and
	their discrete counterparts. The partial derivative in the $j$-th
	coordinate direction is denoted by $\partial_{j}$, $j\in[d]$. For
	a multi-index $\alpha=(\alpha_{j})_{j\in[d]}\in\N^{d}_{0}$, we set
	\[
	|\alpha|\coloneqq\sum^{d}_{j=1}\alpha_{j},\quad\partial^{\alpha}\coloneqq\partial^{\alpha_{1}}_{1}\cdots\partial^{\alpha_{d}}_{d}.
	\]
	We also set 
	\[
	\nabla f\coloneqq(\partial_{j}f)_{j\in[d]},\quad\Delta f\coloneqq\sum^{d}_{j=1}\partial^{2}_{j}f.
	\]
	
	For $j\in[d]$, let $e^{n}_{j}\coloneqq e_{j}/n,$ where $(e_{j})_{j\in[d]}$
	denotes the canonical basis of $\R^{d}$. For $f\in\Ln$, we define
	the discrete shift and derivative operators by 
	\[
	\tau_{n,j}f\coloneqq f(\cdot+e^{n}_{j}),\quad\partial_{n,j}f\coloneqq n\bigl(f(\cdot+e^{n}_{j})-f\bigr),\quad j\in[d].
	\]
	Set
	\[
	\nabla_{n}f\coloneqq(\partial_{n,j}f)_{j\in[d]}
	\]
	and 
	\begin{equation}
		\Delta_{n}f\coloneqq-\sum^{d}_{j=1}\partial^{*}_{n,j}\partial_{n,j}f=n^{2}\sum^{d}_{j=1}\left(f(\cdot+e^{n}_{j})+f(\cdot-e^{n}_{j})-2f\right),\label{eq:definition_of_Delta_n}
	\end{equation}
	where $\partial^{*}_{n,j}$ denotes the adjoint operator of $\partial_{n,j}$
	on $\Ln$.
	
	If $f:(\Tnd)^{2}\to\R$, we define the mixed discrete derivative by
	\[
	\begin{aligned}\partial^{\otimes2}_{n,j}f(x,y)\coloneqq n^{2}\big[ & f(x+e^{n}_{j},y+e^{n}_{j})-f(x,y+e^{n}_{j})\\
		& -f(x+e^{n}_{j},y)+f(x,y)\big],\quad x,y\in\Tnd,
	\end{aligned}
	\]
	and its restriction to the diagonal by 
	\[
	\partial^{\odot2}_{n,j}f(x)\coloneqq\partial^{\otimes2}_{n,j}f(x,x),\quad x\in\Tnd.
	\]
	Higher-order tensor derivatives $\partial^{\otimes k}_{n,j}$ are
	defined analogously, and $\partial^{\odot k}_{n,j}$ denotes the restriction
	of $\partial^{\otimes k}_{n,j}$ to the diagonal.
	We use the same notation for the corresponding operators on functions
	on $\Td$, defined by the same formulas. Likewise,
	$\partial^{\otimes k}_{n,j}$ and $\partial^{\odot k}_{n,j}$ are
	defined on functions on $(\Td)^k$ by the same expressions.
	
	The discrete operators admit the formal representation
	\[
	\tau_{n,j}=e^{\partial_{j}/n},\quad\partial_{n,j}=n(e^{\partial_{j}/n}-1)\quad\text{and}\quad\Delta_{n}=n^{2}\sum^{d}_{j=1}\left(e^{\partial_{j}/n}+e^{-\partial_{j}/n}-2\right),
	\]
	for $j\in[d]$. These representations suggest approximating the discrete
	operators by replacing the exponential function with its Taylor polynomial.
	For $m\in\N_{0}$, let
	\[
	P_{m}(z):=\sum^{m}_{k=0}\frac{z^{k}}{k!}.
	\]
	We then define the corresponding modified differential operators by
	\[
	\tau^{(m)}_{n,j}:=P_{m}\left(\frac{\partial_{j}}{n}\right),\quad\partial^{(m)}_{n,j}:=n\left[P_{m}\left(\frac{\partial_{j}}{n}\right)-1\right]
	\]
	and
	\begin{equation}
		\Delta^{(2m)}_{n}:=n^{2}\sum^{d}_{j=1}\left[P_{2m}\left(\frac{\partial_{j}}{n}\right)+P_{2m}\left(-\frac{\partial_{j}}{n}\right)-2\right].\label{eq:modified_laplacian}
	\end{equation}
	
	\begin{lem}
		\label{lem:approximation_of_discrete_derivatives} Let $m\in\N_{0}$,
		$\ell\in\N_{0}$ and $\varphi\in\Cf^{m+\ell+1}(\Td)$. Then there
		exists a constant $C_{m,\ell}>0$ such that, for every $n\in\N$ and
		$j\in[d]$
		\begin{align*}
			\bnorm{\tau_{n,j}\varphi-\tau^{(m)}_{n,j}\varphi}_{\Cf^{\ell}(\Td)} & \le\frac{C_{m,\ell}}{n^{m+1}}\norm{\partial^{m+1}_{j}\varphi}_{\Cf^{\ell}(\Td)},
		\end{align*}
		and
		\[
		\bnorm{\partial_{n,j}\varphi-\partial^{(m)}_{n,j}\varphi}_{\Cf^{\ell}(\Td)}\le\frac{C_{m,\ell}}{n^{m}}\bnorm{\partial^{m+1}_{j}\varphi}_{\Cf^{\ell}(\Td)}.
		\]
		Moreover, if $\varphi\in\Cf^{2m+\ell+2}(\Td)$, then
		\[
		\bnorm{\Delta_{n}\varphi-\Delta^{(2m)}_{n}\varphi}_{\Cf^{\ell}(\Td)}\le\frac{C_{m,\ell}}{n^{2m}}\sum^{d}_{j=1}\bnorm{\partial^{2m+2}_{j}\varphi}_{\Cf^{\ell}(\Td)}.
		\]
	\end{lem}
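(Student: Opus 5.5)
The plan is to use Taylor's formula with integral remainder along the line segment in the $e_j$ direction, applied pointwise to $\varphi$ and to its derivatives up to order $\ell$. The discrete operators and their Taylor truncations both commute with $\partial^\beta$ for every multi-index $\beta$. It therefore suffices to prove each estimate in the sup norm with $\varphi$ replaced by $\partial^\beta\varphi$, $|\beta|\le\ell$. Then take the maximum over $\beta$ and use $\partial^\beta\partial_j^{m+1}\varphi=\partial_j^{m+1}\partial^\beta\varphi$.

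For the shift, fix $x\in\Td$ and set $g(s):=\varphi(x+se_j)$, a $\Cf^{m+1}$ function of $s\in\R$. We have $\tau_{n,j}\varphi(x)=g(1/n)$, while $\tau^{(m)}_{n,j}\varphi(x)=\sum_{k=0}^m g^{(k)}(0)n^{-k}/k!$. Taylor's formula \eqref{eq:taylors_formula} (in its one-dimensional, integral-remainder form) gives
\[
g(1/n)-\sum_{k=0}^m\frac{g^{(k)}(0)}{k!n^k}=\int_0^{1/n}\frac{(1/n-s)^m}{m!}\,g^{(m+1)}(s)\,ds .
\]
This is bounded by $n^{-(m+1)}\norm{\partial_j^{m+1}\varphi}_{\Cf(\Td)}/(m+1)!$. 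The derivative estimate follows immediately, because
\[
\partial_{n,j}-\partial^{(m)}_{n,j}=n\bigl(\tau_{n,j}-\tau^{(m)}_{n,j}\bigr),
\]
which costs exactly one factor of $n$. Here the regularity $\Cf^{m+\ell+1}$ is precisely what is needed for the remainder.

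For the Laplacian, I will use the same expansion in both directions $\pm e_j$ to order $2m+1$, which requires $\varphi\in\Cf^{2m+\ell+2}$. Write $g(s)=\varphi(x+se_j)$ as before. The quantity $g(1/n)+g(-1/n)-2g(0)$ minus its truncation equals $P_{2m}(\partial_j/n)+P_{2m}(-\partial_j/n)-2$ applied to $\varphi$, plus a remainder. The odd-order Taylor terms, in particular the order-$(2m+1)$ term, cancel between $+1/n$ and $-1/n$. I therefore expand each side to order $2m+1$ and use the integral remainder of order $2m+2$. The resulting error is
\[
\int_0^{1/n}\frac{(1/n-s)^{2m+1}}{(2m+1)!}\bigl(g^{(2m+2)}(s)+g^{(2m+2)}(-s)\bigr)\,ds,
\]
of size at most $C n^{-(2m+2)}\norm{\partial_j^{2m+2}\varphi}_{\Cf}$. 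Multiplying by $n^2$ and summing over $j$ gives the $n^{-2m}$ bound.

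There is no real obstacle; the only point requiring care is this parity cancellation. Without it one would obtain only $n^{-2m+1}$ with $2m+1$ derivatives instead of $n^{-2m}$ with $2m+2$.
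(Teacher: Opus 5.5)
Your proposal is correct and follows the same route as the paper, whose proof is the single remark that the estimates follow directly from Taylor's expansion. You supply the details that remark leaves implicit: commuting $\partial^\beta$ through the operators, the integral remainder, the identity $\partial_{n,j}-\partial^{(m)}_{n,j}=n\bigl(\tau_{n,j}-\tau^{(m)}_{n,j}\bigr)$, and the cancellation of the order-$(2m+1)$ terms between $\pm e_j/n$ for $\Delta_n$.
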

	
	\begin{proof}
		The statement follows directly from Taylor's expansion.
	\end{proof}
	
	We recall that the family $\left\{ \varsigma_{k},k\in\Znd\right\} $
	forms an orthonormal basis of $\Ln$. Since
	\[
	\partial_{j}\varsigma_{k}=2\pi\i k_{j}\varsigma_{k},
	\]
	we obtain that $\varsigma_{k}$, $k\in\Znd$, are eigenfunctions of
	$\Delta_{n}$ with corresponding eigenvalues
	\begin{align*}
		\lambda_{k,n}: & =n^{2}\sum^{d}_{j=1}\left(e^{2\pi\i k_{j}/n}+e^{-2\pi\i k_{j}/n}-2\right)=2n^{2}\sum^{d}_{j=1}\left[\cos\frac{2\pi}{n}k_{j}-1\right],\quad k\in\Znd.
	\end{align*}
	Similarly, $\varsigma_{k}$, $k\in\Zd$, are eigenfunctions of $\Delta^{(2m)}_{n}$
	associated with eigenvalues 
	\begin{align*}
		\lambda^{(2m)}_{k,n} & :=n^{2}\sum^{d}_{j=1}\left[P_{2m}\left(\frac{2\pi\i k_{j}}{n}\right)+P_{2m}\left(-\frac{2\pi\i k_{j}}{n}\right)-2\right]\\
		& =2n^{2}\sum^{d}_{j=1}\left[p_{m}\left(\frac{2\pi k_{j}}{n}\right)-1\right],\quad k\in\Zd,
	\end{align*}
	where 
	\[
	p_{m}(z)\coloneqq\sum^{m}_{l=0}\frac{(-1)^{l}z^{2l}}{(2l)!}
	\]
	is the Taylor polynomial of $\cos z$ of degree $2m$. 
	
	The following lemma quantifies the approximation of the eigenvalues
	of $\Delta_{n}$ by those of $\Delta^{(2m)}_{n}$.
	\begin{lem}
		\label{lem:estimate_of_eigenvalues} For every $m\in\N$ there exists
		a constant $C_{m}>0$ such that for all $n\in\N$ and $k\in\Z^{d}_{n}$,
		\[
		\left|\lambda_{k,n}-\lambda^{(2m)}_{k,n}\right|\le\frac{C_{m}}{n^{2m}}|k|^{2m+2}.
		\]
		Moreover, if $m$ is odd, then 
		\[
		\lambda^{(2m)}_{k,n}\le0,\quad k\in\Zd,
		\]
		and for every $n\in\N$
		\[
		\lim_{|k|\to\infty}\frac{-\lambda^{(2m)}_{k,n}}{\sum^{d}_{j=1}|k_{j}|^{2m}}=\frac{2(2\pi)^{2m}}{(2m)!n^{2m-2}}.
		\]
	\end{lem}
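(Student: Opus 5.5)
The plan is to work directly with the explicit formulas for the eigenvalues. For the first estimate, write $\lambda_{k,n}-\lambda^{(2m)}_{k,n}=2n^{2}\sum_{j=1}^{d}\bigl[\cos(2\pi k_j/n)-p_m(2\pi k_j/n)\bigr]$. The Lagrange form of Taylor's remainder for $\cos$ gives $|\cos z-p_m(z)|\le |z|^{2m+2}/(2m+2)!$ for all real $z$. Substituting $z=2\pi k_j/n$ then yields
\[
\bigl|\lambda_{k,n}-\lambda^{(2m)}_{k,n}\bigr|\le 2n^{2}\sum_{j=1}^{d}\frac{(2\pi)^{2m+2}|k_j|^{2m+2}}{(2m+2)!\,n^{2m+2}}\le \frac{C_m}{n^{2m}}|k|^{2m+2}.
\]
This bound in fact holds for every $k\in\Zd$; the restriction $k\in\Znd$ is not needed here.

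For the sign statement, it suffices to show that $p_m(z)\le 1$ for all real $z$ when $m$ is odd. Set $q(z):=1-p_m(z)=\sum_{l=1}^{m}(-1)^{l+1}z^{2l}/(2l)!$. The approach is to use the alternating-series bound for the cosine on the interval where its terms decrease, together with a leading-term argument for large $|z|$. More cleanly, I would use the classical fact that the Taylor polynomials of $\cos$ alternate around it: $p_m(z)\le\cos z$ for all real $z$ when $m$ is odd, and $p_m(z)\ge \cos z$ when $m$ is even. This follows by induction on $m$ using the integral relations $\cos z-p_m(z)=\int_0^z\!\int_0^s(\cos u-p_{m-1}(u))\,du\,ds$ with sign flips. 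Concretely, define $r_m:=\cos-p_m$. Then $r_m''=-r_{m-1}$ and $r_m(0)=r_m'(0)=0$. If $r_{m-1}\ge0$ everywhere (the case $m-1$ even), then $r_m''\le0$, and by evenness and the vanishing data at $0$ one gets $r_m\le 0$; the other parity is symmetric. The base case is $r_0=\cos-1\le0$. Hence $r_m\ge 0$ for odd $m$, which gives $p_m(z)\le\cos z\le 1$ and therefore each summand $p_m(2\pi k_j/n)-1\le 0$, so $\lambda^{(2m)}_{k,n}\le 0$.

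For the limit, fix $n$. Since $m$ is odd, the leading coefficient of $1-p_m(z)$ is $(-1)^{m+1}/(2m)!=1/(2m)!>0$, so
\[
-\lambda^{(2m)}_{k,n}=2n^{2}\sum_{j}\Bigl[\frac{(2\pi k_j)^{2m}}{(2m)!\,n^{2m}}+O\bigl(1+|k_j|^{2m-2}\bigr)\Bigr].
\]
Dividing by $\sum_j|k_j|^{2m}$, which is at least $d^{1-m}|k|^{2m}$ and so tends to infinity as $|k|\to\infty$, the lower-order terms vanish in the limit, and the remaining ratio is exactly $2(2\pi)^{2m}/((2m)!\,n^{2m-2})$. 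The only step needing care is the parity induction for the sign; the rest is routine Taylor bookkeeping.
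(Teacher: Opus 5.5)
Your proposal is correct and follows the paper's route. The Lagrange remainder for $\cos$ gives the first bound, the inequality $p_m\le\cos$ for odd $m$ gives the sign, and the leading coefficient $1/(2m)!$ of $1-p_m$ gives the limit; your induction via $r_m''=-r_{m-1}$ proves the classical inequality that the paper only asserts. One small slip: with the base case $r_0\le 0$, the case $r_{m-1}\ge 0$ corresponds to $m-1$ \emph{odd}, not even, though this does not affect the induction as you carry it out.
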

	
	\begin{proof}
		The first estimate follows from Taylor's theorem applied to $\cos z-1$.
		
		For odd $m$, the Taylor polynomial $p_{m}$ of $\cos z$ of degree
		$2m$ lies below $\cos z$ on $\mathbb{R}$. Hence
		\[
		p_{m}(z)-1\le\cos z-1\le0.
		\]
		This implies that
		\[
		\lambda^{(2m)}_{k,n}=2n^{2}\sum^{d}_{j=1}\left[p_{m}\left(\frac{2\pi k_{j}}{n}\right)-1\right]\le0.
		\]
		
		The last part of the lemma follows from the expression of $\lambda^{(2m)}_{k,n}$.
		The proof of the lemma is complete.
	\end{proof}

	\subsection{Kernels of multilinear functionals on Sobolev spaces\label{subsec:kernels_of_multilinear_functionals}}
	
	We conclude Section \ref{sec:notation} with two auxiliary results
	concerning Dirac distributions and kernels associated with multilinear
	functionals on negative Sobolev spaces. They will allow us to represent
	Fréchet derivatives of functionals on $\H[-I]$ by sufficiently regular
	functions on finite products of $\Td$.
	
	For $I>\frac{d}{2}$, we identify the Dirac measure $\delta_{x}$,
	$x\in\Td$, with an element of $\H[-I]$ and denote by
	
	\[
	\delta_{\cdot}:\Td\to\H[-I],\quad x\mapsto\delta_{x},
	\]
	the corresponding map. We first establish the regularity of this map
	as a function taking values in a negative Sobolev space.
	\begin{lem}
		\label{lem:differentiability_of_delta} Let $m\in\N_{0}$ and $I>\frac{d}{2}+m$.
		Then $\delta_{\cdot}\in\Cf^{m}(\Td;\H[-I]).$ Moreover, for every
		multi-index $\alpha\in\N^{d}_{0}$ with $|\alpha|_{1}\leq m$ and
		every $x\in\Td$, 
		\[
		\binn{D^{\alpha}\delta_{x},\varphi}=\partial^{\alpha}\varphi(x),\quad\varphi\in\H[I],
		\]
		and $\bnorm{D^{\alpha}\delta_{x}}_{\H[-I]}$ is finite and independent
		of $x$. 
	\end{lem}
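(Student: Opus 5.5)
I will work on the Fourier side, where everything is explicit. For $x\in\Td$, the distribution $\delta_x$ has Fourier coefficients $\inn{\delta_x,\varsigma_k}=\overline{\varsigma_k(x)}=e^{-2\i\pi k\cdot x}$. Since $|\varsigma_k(x)|=1$,
\[
\norm{\delta_x}^2_{\H[-I]}=\sum_{k\in\Zd}(1+|k|^2)^{-I},
\]
which is finite because $I>d/2$ and does not depend on $x$.

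The candidate for $D^\alpha\delta_x$ is the distribution $\psi_{\alpha,x}$ with coefficients $(-2\i\pi k)^\alpha e^{-2\i\pi k\cdot x}$, where $(2\i\pi k)^\alpha$ denotes the product $\prod_j(2\i\pi k_j)^{\alpha_j}$. Its squared $\H[-I]$-norm is
\[
\sum_k (1+|k|^2)^{-I}(2\pi)^{2|\alpha|_1}|k^\alpha|^2\le C\sum_k(1+|k|^2)^{|\alpha|_1-I}.
\]
This is finite because $I-|\alpha|_1>d/2$ for $|\alpha|_1\le m$, and it is again independent of $x$. Pairing with $\varphi=\sum_k\hat\varphi_k\varsigma_k\in\H[I]$ gives $\sum_k\hat\varphi_k(2\i\pi k)^\alpha\varsigma_k(x)$. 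This series converges absolutely by Cauchy--Schwarz, since $\sum_k(1+|k|^2)^{|\alpha|_1-I}<\infty$. Because $I>d/2+m$, the Fourier series of $\varphi$ converges in $\Cf^m(\Td)$, so the sum equals $\partial^\alpha\varphi(x)$. This is the claimed identity, once $\psi_{\alpha,x}$ is shown to be the actual derivative.

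Differentiability goes by induction on $|\alpha|_1$. It suffices to show that for $|\alpha|_1<m$ and each $j$,
\[
\bnorm{\psi_{\alpha,x+he_j}-\psi_{\alpha,x}-h\,\psi_{\alpha+e_j,x}}_{\H[-I]}=o(h).
\]
The $k$-th coefficient of this difference is $(-2\i\pi k)^\alpha e^{-2\i\pi k\cdot x}\bigl(e^{-2\i\pi k_jh}-1+2\i\pi k_jh\bigr)$. The bracket is bounded in modulus by $C\min\{|k|h,|k|^2h^2\}$. Hence $h^{-2}$ times the squared norm is at most
\[
C\sum_k(1+|k|^2)^{|\alpha|_1+1-I}\min\{1,|k|^2h^2\}.
\]
This tends to $0$ by dominated convergence, since $I-|\alpha|_1-1>d/2$. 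So the map is differentiable with partial derivatives $\psi_{\alpha+e_j,x}$.

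It remains to check that the top-order derivatives $x\mapsto\psi_{\alpha,x}$, $|\alpha|_1=m$, are continuous into $\H[-I]$. The same dominated convergence argument applies, with the bound $|e^{-2\i\pi k\cdot(x+y)}-e^{-2\i\pi k\cdot x}|\le\min\{2,2\pi|k||y|\}$ and $I-m>d/2$. Continuous partial derivatives then give $\delta_\cdot\in\Cf^m(\Td;\H[-I])$, with $D^\alpha\delta_x=\psi_{\alpha,x}$.

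No step is a serious obstacle. The only care needed is to use the strict inequality $I>d/2+m$ at each order, both to get the $o(h)$ bound at orders below $m$ and to get continuity at order $m$.
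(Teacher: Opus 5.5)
Your proof is correct and follows essentially the same route as the paper. Both work with an explicit Fourier-side candidate for $D^\alpha\delta_x$ whose $H^{-I}$-norm is independent of $x$, proceed by induction on $|\alpha|_1$ with dominated convergence applied to the first-order remainder of the exponential, and use dominated convergence again for continuity. The differences are minor: the paper defines $\delta^{(\alpha)}_x$ directly by $\varphi\mapsto\partial^\alpha\varphi(x)$ and checks continuity at every order, whereas you derive the pairing identity from absolute convergence of the Fourier series in $C^m$ and check continuity only at top order. The latter suffices, since the lower-order partials then have continuous partial derivatives.
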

	
	\begin{proof}
		For a multi-index $\alpha\in\N^{d}_{0}$ with $|\alpha|_{1}\le m$,
		define $\delta^{(\alpha)}_{x}\in\H[-I]$ by 
		\[
		\binn{\delta^{(\alpha)}_{x},\varphi}:=\partial^{\alpha}\varphi(x),\quad\varphi\in\H[I].
		\]
		The map $\delta^{(\alpha)}_{x}$ is an element of $\H[-I]$ due to
		\[
		\begin{aligned}\bnorm{\delta^{(\alpha)}_{x}}^{2}_{\H[-I]} & =\sum_{k\in\Zd}\frac{|\partial^{\alpha}\varsigma_{k}(x)|^{2}}{(1+|k|^{2})^{I}} =(2\pi)^{2|\alpha|_{1}}\sum_{k\in\Zd}\frac{|k^{\alpha}|^{2}}{(1+|k|^{2})^{I}}<\infty,
		\end{aligned}
		\]
		since $I>\frac{d}{2}+m$. In particular, this norm is independent
		of $x$.
		
		Moreover, for $x,y\in\Td$, 
		\[
		\bnorm{\delta^{(\alpha)}_{x}-\delta^{(\alpha)}_{y}}^{2}_{\H[-I]}=(2\pi)^{2|\alpha|_{1}}\sum_{k\in\Zd}\frac{|k^{\alpha}|^{2}|\varsigma_{k}(x)-\varsigma_{k}(y)|^{2}}{(1+|k|^{2})^{I}}.
		\]
		Hence, by dominated convergence, 
		\[
		\bnorm{\delta^{(\alpha)}_{x}-\delta^{(\alpha)}_{y}}_{\H[-I]}\longrightarrow0\quad\text{as }y\to x.
		\]
		Thus $x\mapsto\delta^{(\alpha)}_{x}$ is continuous.
		
		We now show by induction over $|\alpha|_{1}$ that 
		\[
		D^{\alpha}\delta_{x}=\delta^{(\alpha)}_{x}.
		\]
		The assertion is immediate for $\alpha=0$. Suppose that it holds
		for some $\alpha$ with $|\alpha|_{1}<m$, and let $j\in[d]$. Then
		\[
		\begin{aligned} & \frac{1}{\eps^{2}}\bnorm{\delta^{(\alpha)}_{x+\eps e_{j}}-\delta^{(\alpha)}_{x}-\eps\delta^{(\alpha+e_{j})}_{x}}^{2}_{\H[-I]}=\sum_{k\in\Zd}\frac{|\partial^{\alpha}\varsigma_{k}(x)|^{2}}{(1+|k|^{2})^{I}}\left|\frac{e^{2\pi\i k_{j}\eps}-1}{\eps}-2\pi\i k_{j}\right|^{2}.\end{aligned}
		\]
		The right-hand side converges to zero by the dominated convergence
		theorem. Consequently, 
		\[
		D_{j}\delta^{(\alpha)}_{x}=\delta^{(\alpha+e_{j})}_{x}.
		\]
		The induction therefore yields 
		\[
		D^{\alpha}\delta_{x}=\delta^{(\alpha)}_{x}
		\]
		for every $|\alpha|_{1}\le m$. Hence $\delta_{\cdot}\in\Cf^{m}(\Td;\H[-I])$
		and 
		\[
		\binn{D^{\alpha}\delta_{x},\varphi}=\partial^{\alpha}\varphi(x).
		\]
		This completes the proof of the statement.
	\end{proof}
	
	\begin{rem}
		If $\partial^{\alpha}\delta_{x}$ denotes the distributional derivative
		of $\delta_{x}$, then
		\[
		D^{\alpha}\delta_{x}=(-1)^{|\alpha|_{1}}\partial^{\alpha}\delta_{x}.
		\]
		Indeed,
		\[
		\binn{D^{\alpha}\delta_{x},\varphi}=\partial^{\alpha}\varphi(x)=(-1)^{|\alpha|_{1}}\binn{\partial^{\alpha}\delta_{x},\varphi}.
		\]
	\end{rem}
	
	The regularity of $\delta_{\cdot}$ allows us to associate with every
	continuous multilinear functional on $\H[-I]$ a regular kernel on
	a finite product of $\Td$.
	\begin{lem}
		\label{lem:existence_of_smooth_kernel} Let $k\in\N,$ $m\in\N_{0}$,
		$I>\frac{d}{2}+m$ and $A\in\cL_{k}(\H[-I])$. Then the function
		\[
		K_{A}(x_{1},\ldots,x_{k}):=A[\delta_{x_{1}},\ldots,\delta_{x_{k}}],\quad x_{i}\in\Td,\ i\in[k],
		\]
		belongs to $\Cf^{m,\ldots,m}\left((\T^{d})^{k}\right)$ and is a
		kernel associated with $A$, i.e. for finite signed measures $\mu_{1},\ldots,\mu_{k}$
		it holds 
		\begin{equation}
			A[\mu_{1},\ldots,\mu_{k}]=\int_{(\T^{d})^{k}}K_{A}(x_{1},\ldots,x_{k})\mu_{1}(dx_{1})\ldots\mu_{k}(dx_{k}).\label{eq:expression_for_A_via_kernel}
		\end{equation}
		Moreover, for every multi-index $\alpha_{i}\in\N_{0}^d$ satisfying
		$|\alpha_{i}|_{1}\le m$,
		\[
		\partial^{\alpha_{1}}_{x_{1}}\ldots\partial^{\alpha_{k}}_{x_{k}}K_{A}(x_{1},\ldots,x_{k})=A\left[D^{\alpha_{1}}\delta_{x_{1}},\ldots,D^{\alpha_{k}}\delta_{x_{k}}\right],\quad x_{i}\in\Td,\ i\in[k],
		\]
		and there exists a constant $C_{m,I,k}>0$ independent of $A$ such
		that
		\[
		\bnorm{K_{A}}_{\Cf^{m,\ldots,m}\left((\T^{d})^{k}\right)}\le C_{m,I,k}\bnorm A_{\cL_{k}(\H[-I])}.
		\]
	\end{lem}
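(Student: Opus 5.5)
The plan is to combine the regularity of $\delta_{\cdot}$ from Lemma~\ref{lem:differentiability_of_delta} with the multilinearity and boundedness of $A$. First, regularity of $K_A$. The map
\[
(x_1,\ldots,x_k)\mapsto(\delta_{x_1},\ldots,\delta_{x_k})\in(\H[-I])^k
\]
is $\Cf^{m}$ in each variable separately, and $A$ is a bounded $k$-linear map. Differentiating one variable at a time (a chain rule for bounded multilinear maps composed with $\Cf^m$ maps in separate arguments) shows that each mixed derivative $\partial^{\alpha_1}_{x_1}\cdots\partial^{\alpha_k}_{x_k}K_A$ exists and equals $A[D^{\alpha_1}\delta_{x_1},\ldots,D^{\alpha_k}\delta_{x_k}]$. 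Concretely, for one variable the difference quotient is $A[\ldots,\eps^{-1}(D^{\alpha}\delta_{x+\eps e_j}-D^{\alpha}\delta_x),\ldots]$, and this converges because $D^\alpha\delta$ is differentiable into $\H[-I]$ and $A$ is continuous. Joint continuity of the resulting expression follows from continuity of each $x\mapsto D^{\alpha_i}\delta_{x}$ together with the estimate
\[
|A[a_1,\ldots,a_k]-A[b_1,\ldots,b_k]|\le\|A\|\sum_i\|a_i-b_i\|\prod_{j\ne i}\max(\|a_j\|,\|b_j\|).
\]
The norm bound is then immediate from the last assertion of Lemma~\ref{lem:differentiability_of_delta}: $\|D^{\alpha}\delta_x\|_{\H[-I]}$ is finite and independent of $x$, so the required constant can be taken as $C=\max_{|\alpha|\le m}\|D^\alpha\delta_0\|^k_{\H[-I]}$.

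Next, the kernel identity~\eqref{eq:expression_for_A_via_kernel}. I will prove it one argument at a time, using the Bochner integral in $\H[-I]$. For a finite signed measure $\mu$, the map $x\mapsto\delta_x$ is continuous and bounded into $\H[-I]$, so the Bochner integral $\int\delta_x\,\mu(dx)$ exists. Testing it against any $\varphi\in\H[I]$ gives $\int\varphi\,d\mu=\langle\mu,\varphi\rangle$, and hence $\int\delta_x\,\mu(dx)=\mu$ in $\H[-I]$. Since bounded linear functionals commute with Bochner integrals, freezing all other arguments gives
\[
A[\mu_1,\ldots]=\int A[\delta_{x_1},\mu_2,\ldots]\,\mu_1(dx_1).
\]
Iterating over $k$ arguments and using Fubini yields the formula. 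Fubini applies because the integrand is continuous and bounded on a compact product and the $|\mu_i|$ are finite.

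The main (mild) obstacle is justifying the mixed derivatives: one needs both existence and continuity of the full mixed partials, not merely of the iterated ones. Rather than rely on a general chain rule, I will handle this through the one-variable-at-a-time argument above together with the joint continuity estimate. Once the continuous mixed partials are in hand, they are genuine elements of $\Cf^{m,\ldots,m}$.
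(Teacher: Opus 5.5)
Your proposal is correct and follows essentially the same route as the paper. Regularity and the derivative formula come from Lemma~\ref{lem:differentiability_of_delta} combined with the continuity and multilinearity of $A$; the paper cites Cartan's chain rule where you check the mixed partials by hand. The kernel identity comes from $\int_{\Td}\delta_x\,\mu(dx)=\mu$ as a Bochner integral in $\H[-I]$, pulled through $A$ one argument at a time; the paper first reduces to nonnegative measures via the Hahn decomposition, which you do not need. The norm bound comes from the $x$-independence of $\|D^{\alpha}\delta_x\|_{\H[-I]}$.
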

	
	\begin{proof}
		The continuous differentiability of $K_{A}$ follows from Lemma \ref{lem:differentiability_of_delta}
		and \citep[Theorems 2.2.1, 2.4.3]{Cartan:1971}. By the Hahn decomposition
		theorem, it is enough to prove (\ref{eq:expression_for_A_via_kernel})
		only for finite measures $\mu_{i}$, $i\in[k]$, on $\Td$. 
		
		Since the function $\delta_{\cdot}$ is continuous it is measurable
		and, hence, strongly measurable due to the separability of $\H[-I]$.
		Moreover, $\norm{\delta_{x}}_{\H[-I]}$, $x\in\Td$, is constant.
		Consequently, $\delta_{\cdot}$ is Bochner integrable for each finite
		measure $\mu$ on $\Td$ and 
		\[
		\int_{\Td}\delta_{x}\mu(dx)=\mu.
		\]
		Indeed, the Bochner integrability follows from \citep[Proposition 1.2.2]{Veraar:2016}.
		Moreover, for every $\varphi\in\H[I]$, one has
		\[
		\binn{\int_{\Td}\delta_{x}\mu(dx),\varphi}=\int_{\Td}\inn{\delta_{x},\varphi}\mu(dx)=\int_{\Td}\varphi(x)\mu(dx)=\inn{\varphi,\mu}.
		\]
		
		Since $A$ is a bounded multilinear functional, we get
		\begin{align*}
			A[\mu_{1},\ldots,\mu_{k}] & =A\left[\int_{\Td}\delta_{x_{1}}\mu_{1}(dx_{1}),\ldots,\int_{\Td}\delta_{x_{k}}\mu_{k}(dx_{k})\right]\\
			& =\int_{\Td}\ldots\int_{\Td}A\left[\delta_{x_{1}},\ldots,\delta_{x_{k}}\right]\mu_{1}(dx_{1})\ldots\mu_{k}(dx_{k})\\
			& =\int_{\Td}\ldots\int_{\Td}K_{A}(x_{1},\ldots,x_{k})\mu_{1}(dx_{1})\ldots\mu_{k}(dx_{k}),
		\end{align*}
		by \citep[Theorem 1.2.4]{Veraar:2016}.
		
		For a multi-index $\alpha_{i}\in\N^{d}_{0}$ denote by $\partial^{\alpha_{i}}_{x_{i}}K_{A}(x_{1},\ldots,x_{k})$
		the derivative with respect to variable $x_{i}\in\Td$ for $i\in[k]$.
		Then for each $\alpha_{i}\in\N^{d}_{0}$ satisfying $|\alpha_{i}|_{1}\le m$
		we get $D^{\alpha_{i}}\delta_{x_{i}}\in\H[-I]$ and
		\[
		C_{\alpha_{i},I,d}:=\bnorm{D^{\alpha_{i}}\delta_{x_{i}}}_{\H[-I]}
		\]
		is independent of $x_{i}$. Thus, by the continuity of the multilinear
		operator $A$ and the differentiability of $\delta_{\cdot}$, we get
		\begin{align*}
			\left|\partial^{\alpha_{1}}_{x_{1}}\ldots\partial^{\alpha_{k}}_{x_{k}}K_{A}(x_{1},\ldots,x_{k})\right| & =\left|A\left[D^{\alpha_{1}}\delta_{x_{1}},\ldots,D^{\alpha_{k}}\delta_{x_{k}}\right]\right|\\
			& \le\bnorm A_{\cL_{k}(\H[-I])}\bnorm{D^{\alpha_{1}}\delta_{x_{1}}}_{\H[-I]}\ldots\bnorm{D^{\alpha_{k}}\delta_{x_{k}}}_{\H[-I]}\\
			& \le\prod^{k}_{i=1}C_{\alpha_{i},I,d}\bnorm A_{\cL_{k}(\H[-I])}.
		\end{align*}
		This implies that there exists a constant $C_{m,k,I}$ such that
		\[
		\bnorm{K_{A}}_{\Cf^{m,\ldots,m}\left((\T^{d})^{k}\right)}\le C_{m,k,I}\bnorm A_{\cL_{k}(\H[-I])}.
		\]
		This completes the proof of the lemma.
	\end{proof}
	
	\begin{cor}
		Let $k\in\N$, $m\in\N_{0}$, and $I>\frac{d}{2}+m$. Then the map
		\[
		\cK_{k}:\cL_{k}(\H[-I])\to\Cf^{m,\ldots,m}\left((\Td)^{k}\right),\quad A\mapsto K_{A},
		\]
		is a bounded linear operator.
	\end{cor}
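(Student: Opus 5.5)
The statement to prove is the final corollary: the map $\cK_k:A\mapsto K_A$ is a bounded linear operator from $\cL_k(\H[-I])$ into $\Cf^{m,\ldots,m}((\Td)^k)$. The plan is to read this off from Lemma~\ref{lem:existence_of_smooth_kernel}, checking three things: that $\cK_k$ is well defined, that it is linear, and that it obeys a norm bound.

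\emph{Well-definedness.} Lemma~\ref{lem:existence_of_smooth_kernel} states directly that for every $A\in\cL_k(\H[-I])$ the function
\[
K_A(x_1,\ldots,x_k)=A[\delta_{x_1},\ldots,\delta_{x_k}]
\]
lies in $\Cf^{m,\ldots,m}((\Td)^k)$. So $\cK_k$ does map into the stated target space.

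\emph{Linearity.} Linearity holds pointwise, straight from the definition of the kernel. For $A,B\in\cL_k(\H[-I])$, scalars $a,b\in\R$, and any $x_1,\ldots,x_k\in\Td$,
\[
K_{aA+bB}(x_1,\ldots,x_k)=(aA+bB)[\delta_{x_1},\ldots,\delta_{x_k}]=aK_A(x_1,\ldots,x_k)+bK_B(x_1,\ldots,x_k).
\]
Since this holds at every point, $\cK_k(aA+bB)=a\,\cK_k A+b\,\cK_k B$.

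\emph{Boundedness.} This is exactly the final estimate of Lemma~\ref{lem:existence_of_smooth_kernel}:
\[
\|K_A\|_{\Cf^{m,\ldots,m}}\le C_{m,I,k}\|A\|_{\cL_k(\H[-I])},
\]
where the constant does not depend on $A$. Hence $\cK_k$ is bounded, with operator norm at most $C_{m,I,k}$.

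No step here is a real obstacle. All the analytic content, namely the regularity of $x\mapsto\delta_x$ and the uniform bounds on $\|D^\alpha\delta_x\|_{\H[-I]}$, was already established in Lemmas~\ref{lem:differentiability_of_delta} and~\ref{lem:existence_of_smooth_kernel}.
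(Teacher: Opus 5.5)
Your proposal is correct and follows the paper's own argument: linearity is read off pointwise from the definition of $K_A$, and boundedness is the final estimate of Lemma~\ref{lem:existence_of_smooth_kernel}. Your separate well-definedness check is also immediate from that lemma.
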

	
	\begin{proof}
		Linearity follows directly from the definition of $K_{A}$, while
		boundedness follows from Lemma \ref{lem:existence_of_smooth_kernel}.
	\end{proof}

	\section{Microscopic and modified Gaussian dynamics\label{sec:microscopic_and_modified_gaussian_dynamics}}
	
	In this section, we introduce the microscopic fluctuation dynamics
	and construct the modified Gaussian process that will be used to approximate
	them. We first derive a decomposition of the generator of the SSEP
	fluctuation field into its drift and covariance contributions, together
	with a higher-order remainder. We then approximate the discrete operators
	and the density-dependent coefficients appearing in this decomposition
	by their modified continuum counterparts. This leads naturally to
	a modified Ornstein-Uhlenbeck process whose generator reproduces the
	leading terms of the microscopic generator up to explicitly controlled
	errors.
	
	The results of this section provide the link between the microscopic
	particle system and the Gaussian dynamics used in the sequel. In particular,
	the generator comparison obtained below will serve as the main input
	for the quantitative comparison of their distributions.
	
	\subsection{SSEP and the fluctuation field}
	
	In this subsection, we introduce the symmetric simple exclusion process
	and the associated fluctuation field together with its generator.
	
	Recall that the generator of the SSEP $(\eta^{n}_{t})_{t\ge0}$ is
	given by 
	\[
	\cL^{n,0}F(\eta)=\frac{n^{2}}{2}\sum^{d}_{j=1}\sum_{x\in\Tnd}\left[F\left(\eta^{x\lrarr x+e^{n}_{j}}\right)-F(\eta)\right],
	\]
	where $\eta^{x\lrarr y}$ denotes the configuration obtained from
	$\eta$ by interchanging the occupation variables at the sites $x$
	and $y$. In particular, for every function $F:\{0,1\}^{\Tnd}\to\R$,
	the process 
	\[
	F(\eta^{n}_{t})-F(\eta^{n}_{0})-\int^{t}_{0}\cL^{n,0}F(\eta^{n}_{s})\,ds,\quad t\ge0,
	\]
	is a martingale with respect to the filtration $(\cF^{n}_{t})_{t\ge0}$
	generated by $(\eta^{n}_{t})_{t\ge0}$.
	
	We denote the associated deterministic density profile by 
	\[
	\rho^{n}_{t}(x):=\E\eta^{n}_{t}(x),\quad x\in\Tnd,\quad t\ge0.
	\]
	It solves the discrete heat equation 
	\[
	\partial_{t}\rho^{n}_{t}=\frac{1}{2}\Delta_{n}\rho^{n}_{t},\quad t\ge0.
	\]
	
	The fluctuation field associated with $(\eta^{n}_{t})_{t\ge0}$ is
	defined by 
	\begin{equation}
		\zeta^{n}_{t}:=n^{d/2}\left(\eta^{n}_{t}-\rho^{n}_{t}\right),\quad t\ge0.\label{eq:fluctuation_field_of_eta}
	\end{equation}
	For $I>\frac{d}{2}$, we consider its empirical representation 
	\[
	\bar{\zeta}^{n}_{t}=\frac{1}{n^{d/2}}\sum_{x\in\Tnd}\left(\eta^{n}_{t}(x)-\rho^{n}_{t}(x)\right)\delta_{x},\quad t\ge0,
	\]
	as an $\H[-I]$-valued process.
	
	We next introduce the generator of the fluctuation field. For $\rho\in\Ln$,
	$\eta\in\{0,1\}^{\Tnd}$, and $\zeta=n^{d/2}(\eta-\rho)$, set 
	\[
	\bar{\zeta}^{x\lrarr y}:=n^{d/2}\left(\overline{\eta^{x\lrarr y}}-\bar{\rho}\right).
	\]
	For $F\in\Cf^{1}_{\text{der}}(\H[-I])$, define 
	\begin{align}
		\cL^{n}_{\rho}F(\bar{\zeta}):= & \frac{n^{2}}{2}\sum^{d}_{j=1}\sum_{x\in\Tnd}\left[F\left(\bar{\zeta}^{x\lrarr x+e^{n}_{j}}\right)-F(\bar{\zeta})\right]-\frac{n^{d/2}}{2}\binn{\Delta_{n}DF(\bar{\zeta};\cdot),\rho}_{n}.\label{eq:definition_of_cL_rho}
	\end{align}
	Then, for every $F\in\Cf^{1}_{\text{der}}(\H[-I])$, the process 
	\[
	F(\bar{\zeta}^{n}_{t})-F(\bar{\zeta}^{n}_{0})-\int^{t}_{0}\cL^{n}_{\rho^{n}_{s}}F(\bar{\zeta}^{n}_{s})\,ds,\quad t\ge0,
	\]
	is a martingale with respect to $(\cF^{n}_{t})_{t\ge0}$. In Section
	\ref{subsec:martingale_problem_for_non-anticipative_functionals},
	this martingale problem will be extended to the class of non-anticipative
	functionals.
	
	\subsection{Expansion of the microscopic generator}
	
	The following proposition provides the basic decomposition of the
	microscopic fluctuation generator into its drift and covariance contributions
	and a third-order remainder.
	\begin{prop}[Generator decomposition]
		\label{prop:expansion_of_generator}Let $I>\frac{d}{2}+3$ and $F\in\Cf^{3}_{\text{der}}\left(\H[-I]\right)$.
		Then for every $n\ge3$, $\eta\in\{0,1\}^{\Tnd}$, $\rho\in[0,1]^{\Tnd}$
		and $\zeta=n^{d/2}(\eta-\rho)$
		\[
		\cL^{n}_{\rho}F(\bar{\zeta})=\frac{1}{2}\binn{\Delta_{n}DF(\bar{\zeta};\cdot),\bar{\zeta}}+\frac{1}{4}\sum^{d}_{j=1}\binn{\partial^{\odot2}_{n,j}D^{2}F(\bar{\zeta};\cdot),(\tau_{n,j}\eta-\eta)^{2}}_{n}+R_{n}(F,\eta,\rho)
		\]
		with
		\[
		\left|R_{n}(F,\eta,\rho)\right|\le\frac{C_{I}}{n^{\frac{d}{2}+1}}\sup_{f\in\H[-I]}\bnorm{D^{3}F(f)}_{\cL_{3}(\H[-I])}
		\]
		
	\end{prop}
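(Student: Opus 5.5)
The plan is a third-order Taylor expansion of each exchange term in \eqref{eq:definition_of_cL_rho}, followed by a summation by parts that turns the first-order terms into the discrete Laplacian. Fix $j\in[d]$ and $x\in\Tnd$, and write $y=x+e^{n}_{j}$. Exchanging the occupations at $x$ and $y$ changes the field by
\[
h_{x,j}:=\bar{\zeta}^{x\lrarr y}-\bar{\zeta}=n^{-d/2}\bigl(\eta(y)-\eta(x)\bigr)\bigl(\delta_{x}-\delta_{y}\bigr),
\]
because $\rho$ is unchanged by the exchange. Taylor's formula \eqref{eq:taylors_formula} with $m=3$ gives
\[
F(\bar{\zeta}+h_{x,j})-F(\bar{\zeta})=DF(\bar{\zeta})[h_{x,j}]+\tfrac12D^{2}F(\bar{\zeta})[h_{x,j}^{\otimes2}]+\tfrac16D^{3}F(\tilde{z})[h_{x,j}^{\otimes3}].
\]
The three orders are then handled separately.

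\emph{First-order terms.} Using the kernel representation of Lemma~\ref{lem:existence_of_smooth_kernel}, we have $DF(\bar{\zeta})[h_{x,j}]=n^{-d/2}(\eta(y)-\eta(x))\bigl(DF(\bar{\zeta};x)-DF(\bar{\zeta};y)\bigr)$. Write $\phi:=DF(\bar\zeta;\cdot)$ restricted to $\Tnd$. Summing over $x$ with prefactor $n^{2}/2$ and summing by parts on the periodic lattice turns $\sum_x(\eta(x+e^n_j)-\eta(x))(\phi(x)-\phi(x+e^n_j))$ into $\sum_x\eta(x)\bigl(\phi(x+e^n_j)+\phi(x-e^n_j)-2\phi(x)\bigr)$. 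By \eqref{eq:definition_of_Delta_n}, the first-order contribution is therefore
\[
\frac{n^{d/2}}{2}\inn{\Delta_{n}\phi,\eta}_{n}.
\]
Subtracting the correction term $\frac{n^{d/2}}{2}\inn{\Delta_n\phi,\rho}_n$ in \eqref{eq:definition_of_cL_rho} leaves $\frac{n^{d/2}}{2}\inn{\Delta_n\phi,\eta-\rho}_n=\frac12\inn{\Delta_n\phi,\zeta}_n=\frac12\inn{\Delta_nDF(\bar\zeta;\cdot),\bar\zeta}$, since $\inn{g,\zeta}_n=n^{-d}\sum g\zeta=\inn{g,\bar\zeta}$ up to the stated normalisation. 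Only finitely many point evaluations appear, so the kernel identities apply exactly and this step is bookkeeping.

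\emph{Second-order terms.} By bilinearity,
\[
D^{2}F(\bar\zeta)[h_{x,j}^{\otimes 2}]=n^{-d}(\eta(y)-\eta(x))^{2}\bigl[K(y,y)-K(x,y)-K(y,x)+K(x,x)\bigr],\qquad K=D^2F(\bar\zeta;\cdot,\cdot).
\]
The bracket is exactly $n^{-2}\partial^{\odot2}_{n,j}K(x)$. Multiplying by $\frac{n^2}{2}\cdot\frac12$ and summing over $x$ gives $\frac14\sum_j\inn{\partial^{\odot2}_{n,j}D^2F(\bar\zeta;\cdot),(\tau_{n,j}\eta-\eta)^2}_n$, which is the stated term.

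\emph{Remainder.} The remainder is $R_n=\frac{n^2}{12}\sum_{j,x}D^3F(\tilde z)[h_{x,j}^{\otimes3}]$, and this is the only estimate with content. A crude bound uses $\|\delta_x-\delta_y\|_{\H[-I]}\le C$, giving $n^{2}\cdot n^{d}\cdot n^{-3d/2}=n^{2-d/2}$, which is too weak. Instead we exploit the smallness of $\delta_x-\delta_y$ in $\H[-I]$. Write $\delta_{y}-\delta_{x}=\int_0^{1/n}D^{e_j}\delta_{x+se_j}\,ds$, which is valid by Lemma~\ref{lem:differentiability_of_delta} since $I>d/2+1$. Then $\|\delta_x-\delta_y\|_{\H[-I]}\le C_I/n$, uniformly in $x$. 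Consequently $|D^3F(\tilde z)[h_{x,j}^{\otimes3}]|\le C_I n^{-3d/2}n^{-3}\sup_f\|D^3F(f)\|$. Summing over the $dn^{d}$ pairs with prefactor $n^{2}$ gives
\[
|R_n|\le C_I\,n^{2+d-3d/2-3}\sup_f\|D^3F(f)\|=C_I\,n^{-d/2-1}\sup_f\|D^3F(f)\|,
\]
as claimed. The hypothesis $I>d/2+3$ is more than what this step needs; it is enough that $I>d/2+1$. The main point is to use the $1/n$ gain per factor in the negative Sobolev norm rather than the naive bound.
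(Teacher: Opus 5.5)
Your proposal is correct and follows essentially the same route as the paper: a third-order Taylor expansion of each exchange term, then three steps. First, a summation by parts turns the first-order terms, together with the $\rho$-correction, into $\frac12\langle\Delta_n DF(\bar\zeta;\cdot),\bar\zeta\rangle$. Second, the second-order term is the diagonal mixed difference $n^{-2}\partial^{\odot2}_{n,j}$ of the kernel. Third, the remainder gains a factor $1/n$ for each of its three arguments.

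The only difference is cosmetic. You get that gain from $\|\delta_x-\delta_{x+e^n_j}\|_{H^{-I}}\le C_I/n$ via Lemma~\ref{lem:differentiability_of_delta}. The paper instead bounds the third-order discrete mixed difference of the kernel of $D^3F$ via Lemma~\ref{lem:existence_of_smooth_kernel}, which is itself proved from the same lemma. Your remark that $I>\frac d2+1$ suffices is also accurate.
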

	
	\begin{proof}
		Using Taylor's expansion (\ref{eq:taylors_formula}), we get for $\eta\in\left\{ 0,1\right\} ^{\Tnd}$,
		$\rho\in[0,1]^{\Tnd}$ and $\zeta=n^{d/2}(\eta-\rho)$
		\begin{align*}
			\cL^{n}_{\rho}F(\bar{\zeta}) & =\frac{n^{2}}{2}\sum^{d}_{j=1}\sum_{x\in\Tnd}\left[F\left(\bar{\zeta}^{x\lr x+e_{j}^n}\right)-F\left(\bar{\zeta}\right)\right]-\frac{n^{d/2}}{2}\binn{\Delta_{n}DF\left(\bar{\zeta};\cdot\right),\rho}_{n}\\
			& =\frac{n^{2}}{2}\sum^{d}_{j=1}\sum_{x\in\Tnd}DF(\bar{\zeta})\left[\bar{\zeta}^{x\lr x+e_{j}^n}-\bar{\zeta}\right]-\frac{n^{d/2}}{2}\binn{\Delta_{n}DF\left(\bar{\zeta};\cdot\right),\rho}_{n}\\
			& +\frac{n^{2}}{4}\sum^{d}_{j=1}\sum_{x\in\Tnd}D^{2}F(\bar{\zeta})\left[\left(\bar{\zeta}^{x\leftrightarrow x+e_{j}^n}-\bar{\zeta}\right)^{\otimes2}\right]\\
			& +\frac{n^{2}}{12}\sum^{d}_{j=1}\sum_{x\in\Tnd}D^{3}F(\tilde{\zeta}_{x,j})\left[\left(\bar{\zeta}^{x\leftrightarrow x+e_{j}^n}-\bar{\zeta}\right)^{\otimes3}\right],
		\end{align*}
		where $\tilde{\zeta}_{x,j}$ are elements of $\H[-I]$. Denote the
		terms in the right hand side of the equality above by $S_{i}$, $i\in[4]$.
		The terms $S_{1}+S_{2}$ correspond to the drift contribution, $S_{3}$
		corresponds to the covariance structure of the fluctuation field and
		$S_{4}$ is the microscopic third-order remainder. 
		
		According to the equality
		\[
		n^{2}\binn{\varphi,\bar{\zeta}^{x\leftrightarrow x+e_{j}^n}-\bar{\zeta}}=-\frac{n^{d/2}}{n^{d}}\partial_{n,j}\varphi(x)\partial_{n,j}\eta(x),
		\]
		we get
		\begin{align*}
			S_{1}+S_{2} & =-\frac{n^{d/2}}{2n^{d}}\sum^{d}_{j=1}\sum_{x\in\Tnd}\partial_{n,j}DF(\bar{\zeta};x)\partial_{n,j}\eta(x)-\frac{n^{d/2}}{2}\binn{\Delta_{n}DF(\bar{\zeta};\cdot),\rho}_{n}\\
			& =\frac{n^{d/2}}{2}\binn{\Delta_{n}DF(\bar{\zeta};\cdot),\eta}_{n}-\frac{n^{d/2}}{2}\binn{\Delta_{n}DF(\bar{\zeta};\cdot),\rho}_{n}\\
			& =\frac{1}{2}\binn{\Delta_{n}DF(\bar{\zeta}),\bar{\zeta}}.
		\end{align*}
		Using the kernel representation of $D^{2}F(\bar{\zeta})$ and the
		definition of the diagonal mixed derivative, we obtain
		\begin{align*}
			S_{3} & =\frac{n^{d}}{4n^{2d+2}}\sum^{d}_{j=1}\sum_{x\in\Tnd}\partial^{\otimes2}_{n,j}D^{2}F(\bar{\zeta};x,x)\left(\partial_{n,j}\eta(x)\right)^{2}\\
			& =\frac{1}{4}\sum^{d}_{j=1}\binn{\partial^{\odot2}_{n,j}D^{2}F(\bar{\zeta};\cdot),(\tau_{n,j}\eta-\eta)^{2}}_{n}.
		\end{align*}
		The last term $S_{4}$ can be estimated by 
		\begin{align*}
			|S_{4}|
			&\le
			\frac{1}{12n^{d/2+1}}
			\sum_{j=1}^{d}
			\sup_{x\in\Tnd}
			\bnorm{
				\partial^{\odot3}_{n,j}
				D^{3}F(\tilde{\zeta}_{x,j};\cdot)
			}_{\Cf(\Td)}
			\\
			&\le
			\frac{d}{12n^{d/2+1}}
			\sup_{f\in\H[-I]}
			\bnorm{D^{3}F(f;\cdot)}_{\Cf^{3}((\Td)^{3})}
			\\
			&\le
			\frac{C_{I}}{n^{d/2+1}}
			\sup_{f\in\H[-I]}
			\bnorm{D^{3}F(f)}_{\cL_{3}(\H[-I])}.
		\end{align*}
		due to 
		\[
		\left|\partial_{n,j}\eta(x)\right|\le n
		\]
		and Lemma \ref{lem:existence_of_smooth_kernel}. This completes the
		proof of the proposition.
	\end{proof}

	\subsection{Modified density profile}
	
	We next introduce a modified density profile that will provide a higher-order
	continuum approximation of the discrete density profile. Recall that
	\[
	\rho^{n}_{t}(x):=\E\eta^{n}_{t}(x),\quad x\in\Tnd,\quad t\ge0,
	\]
	solves the discrete heat equation 
	\begin{equation}
		\partial_{t}\rho^{n}_{t}=\frac{1}{2}\Delta_{n}\rho^{n}_{t},\quad t\ge0.\label{eq:discrete_heat_equation}
	\end{equation}
	Motivated by the approximation of $\Delta_{n}$ by the modified differential
	operator $\Delta^{(2m)}_{n}$ introduced in (\ref{eq:modified_laplacian}),
	we consider the corresponding modified heat equation 
	\begin{equation}
		\partial_{t}\varrho^{n}_{t}=\frac{1}{2}\Delta^{(2m)}_{n}\varrho^{n}_{t},\quad t\ge0.\label{eq:modified_heat_equation}
	\end{equation}
	
	For odd $m\in\N$, $I\in\R$ and $f\in\H[I]$, define 
	\[
	\cS^{n,m}_{t}f:=\sum_{k\in\Zd}e^{\frac{1}{2}\lambda^{(2m)}_{k,n}t}\binn{f,\varsigma_{k}}\varsigma_{k},\quad t\ge0.
	\]
	Recall from Lemma \ref{lem:estimate_of_eigenvalues} that $\lambda^{(2m)}_{k,n}\le0$
	for odd $m$. The following lemma collects the basic properties of
	$(\cS^{n,m}_{t})_{t\ge0}$.
	\begin{lem}
		\label{lem:modified_heat_semigroup} Let $I\in\R$ and let $m\in\N$
		be odd. Then $(\cS^{n,m}_{t})_{t\ge0}$ is a strongly continuous contraction
		semigroup on $\H[I]$. Its infinitesimal generator is
		\[
		A_{n,m}=\frac{1}{2}\Delta^{(2m)}_{n}
		\]
		with domain $D(A_{n,m})=\H[I+2m].$ Consequently, for every $f\in\H[I+2m]$,
		\[
		\cS^{n,m}_{\cdot}f\in\Cf^{1}([0,\infty);\H[I])\cap\Cf([0,\infty);\H[I+2m])
		\]
		and
		\[
		\frac{d}{dt}\cS^{n,m}_{t}f=\frac{1}{2}\Delta^{(2m)}_{n}\cS^{n,m}_{t}f=\frac{1}{2}\cS^{n,m}_{t}\Delta^{(2m)}_{n}f,\quad t\ge0.
		\]
		Moreover, the inequality
		\[
		\bnorm{\cS^{n,m}_{t}f}_{\H[I]}\le\bnorm f_{\H[I]},
		\]
		holds for all $f\in\H[I]$ and $t\ge0$.
	\end{lem}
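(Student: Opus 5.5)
The plan is to reduce everything to scalar Fourier-mode statements, since $\cS^{n,m}_t$ acts diagonally on the basis $\{\varsigma_k\}_{k\in\Zd}$ with multipliers $e^{\frac12\lambda^{(2m)}_{k,n}t}$. By Lemma~\ref{lem:estimate_of_eigenvalues}, for odd $m$ we have $\lambda^{(2m)}_{k,n}\le0$. Hence every multiplier lies in $(0,1]$, and the contraction bound $\|\cS^{n,m}_tf\|_{\H[I]}\le\|f\|_{\H[I]}$ is immediate from the definition of the Sobolev norm.

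\textbf{Semigroup property and strong continuity.} The semigroup law $\cS^{n,m}_{t+s}=\cS^{n,m}_t\cS^{n,m}_s$ and $\cS^{n,m}_0=\id$ are also immediate. For strong continuity, consider
\[
\|\cS^{n,m}_tf-f\|^2_{\H[I]}=\sum_k(1+|k|^2)^I\bigl|e^{\frac12\lambda^{(2m)}_{k,n}t}-1\bigr|^2|\inn{f,\varsigma_k}|^2 .
\]
Each term tends to $0$ as $t\downarrow0$ and is dominated by $4(1+|k|^2)^I|\inn{f,\varsigma_k}|^2$, which is summable. Dominated convergence then gives $\cS^{n,m}_tf\to f$.

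\textbf{Identification of the generator.} Let $A$ denote the generator. I will argue in four steps.

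1. \emph{Growth of the multipliers.} For fixed $n$, the last statement of Lemma~\ref{lem:estimate_of_eigenvalues}, together with $\sum_j|k_j|^{2m}\simeq|k|^{2m}$, gives two-sided bounds
\[
c_n(1+|k|^2)^m-C_n\le-\lambda^{(2m)}_{k,n}\le C_n(1+|k|^2)^m .
\]
The upper bound holds because $\lambda^{(2m)}_{k,n}$ is a polynomial in $k$ of degree $2m$. Therefore the multiplication operator $\frac12\lambda^{(2m)}_{k,n}$, which is exactly $\frac12\Delta^{(2m)}_n$ on Fourier modes, maps $\H[I+2m]$ boundedly into $\H[I]$.

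2. \emph{$\H[I+2m]\subseteq D(A)$.} For $f\in\H[I+2m]$, use $|(e^{at}-1)/t-a|\le 2|a|$ for $a\le0$ and $t>0$. Dominated convergence then gives $t^{-1}(\cS^{n,m}_tf-f)\to\frac12\Delta^{(2m)}_nf$ in $\H[I]$.

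3. \emph{$D(A)\subseteq\H[I+2m]$.} Let $f\in D(A)$. Testing the difference quotient against $\varsigma_k$ shows that $\inn{Af,\varsigma_k}=\frac12\lambda^{(2m)}_{k,n}\inn{f,\varsigma_k}$. Since $Af\in\H[I]$, the lower bound in step 1 yields $\sum_k(1+|k|^2)^{I+2m}|\inn{f,\varsigma_k}|^2<\infty$.

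4. \emph{Conclusion.} Combining steps 2 and 3, $D(A)=\H[I+2m]$ and $A=\frac12\Delta^{(2m)}_n$.

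\textbf{Regularity and derivative formula.} These follow from standard semigroup theory: for $f\in D(A)$ the orbit is $\Cf^1$ into $\H[I]$ and satisfies $\frac{d}{dt}\cS_tf=A\cS_tf=\cS_tAf$. Continuity into $\H[I+2m]$ comes from applying the strong continuity already proved on the space $\H[I+2m]$ itself.

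The only step needing care is the lower bound on $-\lambda^{(2m)}_{k,n}$ in step 1, which is what gives $D(A)\subseteq\H[I+2m]$. I would obtain it directly from the asymptotic limit in Lemma~\ref{lem:estimate_of_eigenvalues}: for $|k|\ge K_n$ the ratio exceeds half its limit, and the finitely many remaining $k$ are absorbed into the constant $C_n$.
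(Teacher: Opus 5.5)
Your proposal is correct and follows essentially the same route as the paper: contraction from $\lambda^{(2m)}_{k,n}\le0$ on Fourier modes, dominated convergence for strong continuity and for $\H[I+2m]\subseteq D(A)$, the growth of $\lambda^{(2m)}_{k,n}$ from Lemma~\ref{lem:estimate_of_eigenvalues} for the reverse inclusion, and standard semigroup theory for the regularity statements. The differences are cosmetic: you spell out the two-sided bound on $-\lambda^{(2m)}_{k,n}$, which the paper compresses into an equivalence of weighted series, and you prove strong continuity at $t=0$ rather than at every $t$.
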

	
	\begin{proof}
		Since $\lambda^{(2m)}_{k,n}\le0$ for every $k\in\Zd$, we have 
		\begin{align*}
			\bnorm{\cS^{n,m}_{t}f}^{2}_{\H[I]} & =\sum_{k\in\Zd}e^{\lambda^{(2m)}_{k,n}t}(1+|k|^{2})^{I}\left|\binn{f,\varsigma_{k}}\right|^{2}\\
			& \le\sum_{k\in\Zd}(1+|k|^{2})^{I}\left|\binn{f,\varsigma_{k}}\right|^{2}=\bnorm f^{2}_{\H[I]}.
		\end{align*}
		Thus $\cS^{n,m}_{t}$ is a contraction on $\H[I]$. The semigroup
		property follows directly from its Fourier representation.
		
		For $s,t\ge0$, 
		\[
		\bnorm{\cS^{n,m}_{t}f-\cS^{n,m}_{s}f}^{2}_{\H[I]}=\sum_{k\in\Zd}\left|e^{\frac{1}{2}\lambda^{(2m)}_{k,n}t}-e^{\frac{1}{2}\lambda^{(2m)}_{k,n}s}\right|^{2}(1+|k|^{2})^{I}\left|\binn{f,\varsigma_{k}}\right|^{2}.
		\]
		Since $\lambda^{(2m)}_{k,n}\le0$, the factor in absolute value is
		bounded by $2$ and converges to zero for every $k\in\Zd$ as $s\to t$.
		Hence the dominated convergence theorem yields 
		\[
		\bnorm{\cS^{n,m}_{t}f-\cS^{n,m}_{s}f}_{\H[I]}\to0\quad\text{as }s\to t.
		\]
		Therefore $(\cS^{n,m}_{t})_{t\ge0}$ is a strongly continuous contraction
		semigroup on $\H[I]$.
		
		We note that 
		\begin{equation}
			\sum_{k\in\Zd}(1+|k|^{2})^{I}\left|\lambda^{(2m)}_{k,n}\right|^{2}\left|\binn{f,\varsigma_{k}}\right|^{2}<\infty\quad\iff\quad\sum_{k\in\Zd}(1+|k|^{2})^{I+2m}\left|\binn{f,\varsigma_{k}}\right|^{2}<\infty,\label{eq:equivalence_for_series_convergence}
		\end{equation}
		according to Lemma \ref{lem:estimate_of_eigenvalues}. Thus, for each
		$f\in\H[I+2m]$,
		\begin{align*}
			&\lim_{t\to0+}\bnorm{\frac{\cS^{n,m}_{t}f-f}{t}-\frac{1}{2}\Delta^{(2m)}_{n}f}^{2}_{\H[I]}\\
			&\qquad=\lim_{t\to0+}\sum_{k\in\Zd}\left|\frac{e^{\frac{1}{2}\lambda^{(2m)}_{k,n}t}-1}{t}-\frac{1}{2}\lambda^{(2m)}_{k,n}\right|^{2}(1+|k|^{2})^{I}\left|\binn{f,\varsigma_{k}}\right|^{2}=0,
		\end{align*}
		by the dominated convergence theorem and (\ref{eq:equivalence_for_series_convergence}).
		Hence
		\[
		\H[I+2m]\subseteq D(A_{n,m}).
		\]
		
		Next, let $f\in D(A_{n,m})$ and set
		\[
		g:=\lim_{t\to0+}\frac{\cS^{n,m}_{t}f-f}{t}
		\]
		in $\H[I]$. Since the Fourier coefficient maps are continuous on
		$\H[I]$, for every $k\in\Zd$,
		\[
		\binn{g,\varsigma_{k}}=\frac{1}{2}\lambda^{(2m)}_{k,n}\binn{f,\varsigma_{k}}.
		\]
		Therefore, we have
		\[
		\bnorm g^{2}_{\H[I]}=\frac{1}{4}\sum_{k\in\Zd}(1+|k|^{2})^{I}\left|\lambda^{(2m)}_{k,n}\right|^{2}\left|\binn{f,\varsigma_{k}}\right|^{2}<\infty.
		\]
		By (\ref{eq:equivalence_for_series_convergence}), $f\in\H[I+2m]$.
		Hence
		\[
		D(A_{n,m})=\H[I+2m]
		\]
		and
		\[
		A_{n,m}=\frac{1}{2}\Delta^{(2m)}_{n}.
		\]
		The remaining assertions follow from \citep[Theorem 1.2.4(c)]{Pazy:1983}
		or \citep[Proposition A.2]{Prato:2014}.
	\end{proof}
	
	As a direct consequence of the preceding estimate and the Sobolev
	embedding theorem, we obtain the following uniform bound in spaces
	of continuously differentiable functions.
	\begin{cor}
		\label{cor:regularity_of_modified_semigroup} Let $m\in\N$ be odd,
		$\ell\in\N_{0}$ and $\tilde{\ell}\in\N$ such that $\tilde{\ell}>\ell+\frac{d}{2}.$Then
		there exists a constant $C_{\tilde{\ell}}>0$ such that 
		\[
		\bnorm{\cS^{n,m}_{t}f}_{\Cf^{\ell}(\Td)}\le C_{\tilde{\ell}}\bnorm f_{\Cf^{\tilde{\ell}}(\Td)}
		\]
		for all $f\in\Cf^{\tilde{\ell}}(\Td)$, $t\ge0$ and $n\ge3$.
	\end{cor}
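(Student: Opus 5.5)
The plan is to pass through Sobolev spaces with the help of Lemma~\ref{lem:modified_heat_semigroup}. Fix an integer $J$ with $J>\ell+\frac d2$ and $J\le\tilde\ell$; since $\tilde\ell\in\N$ and $\tilde\ell>\ell+\frac d2$, the choice $J=\tilde\ell$ works. The estimate will then come from the chain
\[
\bnorm{\cS^{n,m}_{t}f}_{\Cf^{\ell}(\Td)}\le C\bnorm{\cS^{n,m}_{t}f}_{\H[\tilde\ell]}\le C\bnorm{f}_{\H[\tilde\ell]}\le C\bnorm f_{\Cf^{\tilde\ell}(\Td)} .
\]

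\textbf{First inequality.} This is the Sobolev embedding $\H[\tilde\ell]\hookrightarrow\Cf^{\ell}(\Td)$, valid because $\tilde\ell>\ell+\frac d2$. To keep the constant explicit I would argue through Fourier series. For $|\alpha|\le\ell$ and $g\in\H[\tilde\ell]$,
\[
\sup_x|\partial^\alpha g(x)|\le(2\pi)^{|\alpha|}\sum_{k}|k|^{|\alpha|}|\inn{g,\varsigma_k}|.
\]
By Cauchy--Schwarz, the right-hand side is at most
\[
C\Bigl(\sum_k(1+|k|^2)^{\ell-\tilde\ell}\Bigr)^{1/2}\norm g_{\H[\tilde\ell]} .
\]
The series converges precisely because $2(\tilde\ell-\ell)>d$. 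The same absolute convergence gives uniform convergence of the differentiated Fourier series, so $g\in\Cf^{\ell}$.

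\textbf{Second inequality.} This is the contraction property $\norm{\cS^{n,m}_t f}_{\H[\tilde\ell]}\le\norm f_{\H[\tilde\ell]}$ from Lemma~\ref{lem:modified_heat_semigroup}. It uses that $m$ is odd, since then $\lambda^{(2m)}_{k,n}\le0$, and it is uniform in $t\ge0$ and $n$.

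\textbf{Third inequality.} For integer $\tilde\ell$ we have $(1+|k|^2)^{\tilde\ell}\le C\sum_{|\beta|\le\tilde\ell}|(2\pi k)^\beta|^2$. Parseval therefore gives
\[
\norm f^2_{\H[\tilde\ell]}\le C\sum_{|\beta|\le\tilde\ell}\norm{\partial^\beta f}^2_{\L}\le C\norm f^2_{\Cf^{\tilde\ell}}.
\]
The last step uses that $\Td$ has unit measure.

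None of the constants depends on $t$ or $n$, which proves the corollary. There is no real obstacle here. The only point needing care is that the constant stays uniform in $n$, and this is guaranteed because the semigroup is a contraction for every $n$; this is exactly where oddness of $m$ enters.
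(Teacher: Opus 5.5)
Your proof is correct and is the same argument as the paper's: the chain $\|\mathcal{S}^{n,m}_t f\|_{C^{\ell}}\le C\|\mathcal{S}^{n,m}_t f\|_{H^{\tilde\ell}}\le C\|f\|_{H^{\tilde\ell}}\le C\|f\|_{C^{\tilde\ell}}$. It uses the Sobolev embedding, the contraction property from Lemma~\ref{lem:modified_heat_semigroup} (uniform in $t$ and $n$ because $m$ is odd), and the elementary bound of the $H^{\tilde\ell}$ norm by the $C^{\tilde\ell}$ norm; you simply write out the Fourier-series details that the paper leaves implicit.
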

	
	\begin{proof}
		By the Sobolev embedding theorem and the preceding lemma, 
		\[
		\bnorm{\cS^{n,m}_{t}f}_{\Cf^{\ell}(\Td)}\le C_{\tilde{\ell}}\bnorm{\cS^{n,m}_{t}f}_{\H[\tilde{\ell}]}\le C_{\tilde{\ell}}\bnorm f_{\H[\tilde{\ell}]}\le C_{\tilde{\ell}}\bnorm f_{\Cf^{\tilde{\ell}}(\Td)}.
		\]
	\end{proof}
	
	\begin{rem}
		If $\varrho_{0}=\ex_{n}\rho_{0}$ for some $\rho_{0}\in\Ln$, then
		the Fourier representation of $\ex_{n}$ yields 
		\begin{equation}
			\cS^{n,m}_{t}\varrho_{0}=\sum_{k\in\Znd}e^{\frac{1}{2}\lambda^{(2m)}_{k,n}t}\binn{\rho_{0},\varsigma_{k}}_{n}\varsigma_{k},\quad t\ge0.\label{eq:solution_to_modified_HE_started_from_ex_rho}
		\end{equation}
		In particular, $\cS^{n,m}_{t}\varrho_{0}$ belongs to the space $V_{n}$
		of trigonometric polynomials with frequencies in $\Znd$ for every
		$t\ge0$. 
	\end{rem}
	
	In addition to strong continuity on $\H[I]$, we will need continuity
	at zero in the operator norm if an arbitrarily small loss of Sobolev
	regularity is allowed. Let $\id$ denote the identity operator.
	\begin{lem}
		\label{lem:operator_norm_continuity_modified_semigroup} Let $m\in\N$
		be odd and $I\in\R$. Then, for every $\delta>0$, 
		\[
		\sup_{n\ge3}\bnorm{\id-\cS^{n,m}_{t}}_{\cL_{1}(\H[I];\H[I-\delta])}\to0\quad\text{as }t\to0+.
		\]
	\end{lem}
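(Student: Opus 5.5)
The operator $\id-\cS^{n,m}_{t}$ acts diagonally in the Fourier basis, so its norm from $\H[I]$ to $\H[I-\delta]$ equals the supremum over $k\in\Zd$ of its Fourier multiplier weighted by the Sobolev factor. The plan is to compute this supremum exactly and bound it uniformly in $n\ge3$.

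For $f\in\H[I]$,
\[
\bnorm{(\id-\cS^{n,m}_{t})f}^{2}_{\H[I-\delta]}=\sum_{k\in\Zd}\Bigl|1-e^{\frac12\lambda^{(2m)}_{k,n}t}\Bigr|^{2}(1+|k|^{2})^{-\delta}(1+|k|^{2})^{I}\left|\binn{f,\varsigma_{k}}\right|^{2}.
\]
Hence the operator norm is bounded by
\[
\sup_{k\in\Zd}\Bigl(1-e^{\frac12\lambda^{(2m)}_{k,n}t}\Bigr)(1+|k|^{2})^{-\delta/2}.
\]
The factor $1-e^{\frac12\lambda^{(2m)}_{k,n}t}$ lies in $[0,1]$ because $\lambda^{(2m)}_{k,n}\le0$ by Lemma~\ref{lem:estimate_of_eigenvalues}, since $m$ is odd.

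Fix $K\ge1$ and split the frequencies according to the size of $|k|$.

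\begin{itemize}
\item For $|k|>K$, bound the first factor by $1$. The contribution is then at most $(1+K^{2})^{-\delta/2}$, independently of $n$ and $t$.
\item For $|k|\le K$, use $1-e^{-x}\le x$. This gives the bound $\frac{t}{2}|\lambda^{(2m)}_{k,n}|$, so what is needed is a bound on $|\lambda^{(2m)}_{k,n}|$ uniform in $n$ for bounded $k$.
\end{itemize}

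For the second case, write $\lambda^{(2m)}_{k,n}=2n^{2}\sum_{j}[p_{m}(2\pi k_{j}/n)-1]$. The polynomial $p_{m}(z)-1$ has no constant term and only even powers $z^{2l}$ with $l\ge1$. Therefore
\[
n^{2}\bigl|p_{m}(2\pi k_{j}/n)-1\bigr|\le\sum_{l=1}^{m}\frac{(2\pi|k_{j}|)^{2l}}{(2l)!}\,n^{2-2l}\le C_{m}(1+|k_{j}|)^{2m}
\]
for all $n\ge1$, because $n^{2-2l}\le1$ for every $l\ge1$. (Alternatively one could use the first estimate of Lemma~\ref{lem:estimate_of_eigenvalues} together with $|\lambda_{k,n}|\le 4\pi^{2}|k|^{2}$, but that estimate is stated only for $k\in\Znd$. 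The direct polynomial bound avoids this restriction.) Consequently, $|\lambda^{(2m)}_{k,n}|\le C_{m}(1+K)^{2m}$ for $|k|\le K$, uniformly in $n$.

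Combining the two cases,
\[
\sup_{n\ge3}\bnorm{\id-\cS^{n,m}_{t}}_{\cL_{1}(\H[I];\H[I-\delta])}\le(1+K^{2})^{-\delta/2}+C_{m}\,t\,(1+K)^{2m}.
\]
Given $\eps>0$, first choose $K$ so that the first term is below $\eps/2$. Then take $t$ small enough that the second term is below $\eps/2$. This proves the claim.

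The only point requiring care is the uniformity in $n$ of the low-frequency eigenvalue bound. It holds because every power of $n$ appearing in $\lambda^{(2m)}_{k,n}$ is nonpositive.
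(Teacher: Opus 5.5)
Your proof is correct and follows the paper's argument. Both use the same Fourier-multiplier bound for the operator norm, bound the factor $1-e^{\frac12\lambda^{(2m)}_{k,n}t}$ by $1$ above a frequency cutoff, and then let $t\to0+$ before sending the cutoff to infinity. The only difference is at low frequencies: you make the uniformity in $n$ explicit through the bound $n^{2}|p_{m}(2\pi k_{j}/n)-1|\le C_{m}(1+|k_{j}|)^{2m}$, whereas the paper simply appeals to Lemma~\ref{lem:estimate_of_eigenvalues}, so your version is slightly more self-contained.
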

	
	\begin{proof}
		For $f\in\H[I]$, we have 
		\begin{align*}
			\bnorm{f-\cS^{n,m}_{t}f}^{2}_{\H[I-\delta]} & =\sum_{k\in\Zd}\left|1-e^{\frac{1}{2}\lambda^{(2m)}_{k,n}t}\right|^{2}\left(1+|k|^{2}\right)^{I-\delta}\left|\binn{f,\varsigma_{k}}\right|^{2}\\
			& \le\sup_{k\in\Zd}\frac{\left|1-e^{\frac{1}{2}\lambda^{(2m)}_{k,n}t}\right|^{2}}{\left(1+|k|^{2}\right)^{\delta}}\bnorm f^{2}_{\H[I]}.
		\end{align*}
		Consequently, 
		\[
		\bnorm{\id-\cS^{n,m}_{t}}^{2}_{\cL_{1}(\H[I];\H[I-\delta])}\le\sup_{k\in\Zd}\frac{\left|1-e^{\frac{1}{2}\lambda^{(2m)}_{k,n}t}\right|^{2}}{\left(1+|k|^{2}\right)^{\delta}}.
		\]
		Since $\lambda^{(2m)}_{k,n}\le0$, the numerator is bounded by $1$.
		Given $R>0$, we therefore have 
		\[
		\sup_{|k|>R}\frac{\left|1-e^{\frac{1}{2}\lambda^{(2m)}_{k,n}t}\right|^{2}}{\left(1+|k|^{2}\right)^{\delta}}\le\frac{1}{(1+R^{2})^{\delta}}
		\]
		for all $n\ge3.$ On the other hand, for fixed $R$, the set $\{k\in\Zd:|k|\le R\}$
		is finite, and hence 
		\[
		\sup_{n\ge3}\sup_{|k|\le R}\left|1-e^{\frac{1}{2}\lambda^{(2m)}_{k,n}t}\right|\to0\quad\text{as }t\to0+,
		\]
		by Lemma \ref{lem:estimate_of_eigenvalues}. Letting first $t\to0+$
		and then $R\to\infty$ proves the claim.
	\end{proof}
	
	We now compare the solution of the discrete heat equation with that
	of its modified counterpart. The following estimate quantifies the
	error introduced by replacing the discrete Laplacian $\Delta_{n}$
	with its finite-order approximation $\Delta^{(2m)}_{n}$.
	\begin{prop}
		\label{prop:comparison_of_rho_and_varrho} Let $m\in\N$ be odd, let
		$(\rho^{n}_{t})_{t\ge0}$ be a solution to (\ref{eq:discrete_heat_equation}),
		and let $(\varrho^{n}_{t})_{t\ge0}$ be the solution to (\ref{eq:modified_heat_equation})
		with initial condition 
		\[
		\varrho^{n}_{0}=\ex_{n}\rho^{n}_{0}.
		\]
		Then, for every $I\in\R$, there exists a constant $C_{m}>0$ such
		that 
		\[
		\bnorm{\ex_{n}\rho^{n}_{t}-\varrho^{n}_{t}}_{\H[I]}\le\frac{C_{m}t}{n^{2m}}\bnorm{\ex_{n}\rho^{n}_{0}}_{\H[I+2m+2]},\quad t\ge0.
		\]
		In particular, for every $k\in\N_{0}$ there exists a constant $C_{m,k}>0$
		such that 
		\[
		\bnorm{\ex_{n}\rho^{n}_{t}-\varrho^{n}_{t}}_{\Cf^{k}(\Td)}\le\frac{C_{m,k}t}{n^{2m}}\bnorm{\ex_{n}\rho^{n}_{0}}_{\H[k+\frac{d}{2}+2m+3]},\quad t\ge0.
		\]
	\end{prop}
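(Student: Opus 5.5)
The plan is to work entirely on the Fourier side. The key preliminary observation is that $\ex_n\rho^n_t$ solves a heat-type equation on the space $V_n$ of trigonometric polynomials with frequencies in $\Znd$. Since $\rho^n_t$ solves (\ref{eq:discrete_heat_equation}) and the $\varsigma_k$, $k\in\Znd$, are eigenfunctions of $\Delta_n$ on $\Ln$ with eigenvalues $\lambda_{k,n}$, we have
\[
\binn{\rho^n_t,\varsigma_k}_n=e^{\frac12\lambda_{k,n}t}\binn{\rho^n_0,\varsigma_k}_n,\qquad k\in\Znd .
\]
Hence
\[
\ex_n\rho^n_t=\sum_{k\in\Znd}e^{\frac12\lambda_{k,n}t}\binn{\rho^n_0,\varsigma_k}_n\varsigma_k .
\]
On the other hand, by (\ref{eq:solution_to_modified_HE_started_from_ex_rho}),
\[
\varrho^n_t=\sum_{k\in\Znd}e^{\frac12\lambda^{(2m)}_{k,n}t}\binn{\rho^n_0,\varsigma_k}_n\varsigma_k .
\]
Both functions therefore live in $V_n$, and the difference is diagonal in the Fourier basis.

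It follows that
\[
\bnorm{\ex_n\rho^n_t-\varrho^n_t}^2_{\H[I]}=\sum_{k\in\Znd}(1+|k|^2)^I\left|e^{\frac12\lambda_{k,n}t}-e^{\frac12\lambda^{(2m)}_{k,n}t}\right|^2\left|\binn{\rho^n_0,\varsigma_k}_n\right|^2 .
\]
Since $m$ is odd, Lemma \ref{lem:estimate_of_eigenvalues} gives $\lambda^{(2m)}_{k,n}\le0$, and clearly $\lambda_{k,n}\le0$. The elementary inequality $|e^{a}-e^{b}|\le|a-b|$ for $a,b\le0$ (the mean value theorem, since $e^{c}\le1$ for $c\le0$) then bounds the exponential difference by
\[
\tfrac t2\left|\lambda_{k,n}-\lambda^{(2m)}_{k,n}\right|\le\frac{C_m t}{n^{2m}}|k|^{2m+2}\le\frac{C_m t}{n^{2m}}(1+|k|^2)^{m+1},
\]
again by Lemma \ref{lem:estimate_of_eigenvalues}, which applies because $k\in\Znd$. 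Substituting back, the weight becomes $(1+|k|^2)^{I+2m+2}$, and the resulting sum is exactly $\frac{C_m^2t^2}{n^{4m}}\bnorm{\ex_n\rho^n_0}^2_{\H[I+2m+2]}$. This proves the first estimate.

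For the second estimate, we would apply the Sobolev embedding $\H[k+\frac d2+1]\hookrightarrow\Cf^k(\Td)$; any exponent strictly larger than $k+\frac d2$ works, and integer exponents are convenient. Taking $I=k+\frac d2+1$ in the first estimate then gives the weight $\H[k+\frac d2+2m+3]$, which is the norm appearing in the statement.

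No step here is a genuine obstacle. The only points requiring care are that the frequencies in both series are restricted to $\Znd$, which is what makes the eigenvalue comparison of Lemma \ref{lem:estimate_of_eigenvalues} applicable, and that the sign condition coming from the oddness of $m$ is what allows the linear-in-$t$ bound on the exponential difference. If $n$ is even, the boundary-frequency convention of Remark \ref{rem:choice_of_n} changes only constants.
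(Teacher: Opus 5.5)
Your proposal is correct and follows the paper's proof essentially step by step. Both arguments use the diagonal Fourier representation of $\ex_n\rho^n_t$ and $\varrho^n_t$ on $\Znd$, the mean value bound on the exponentials (which relies on the nonpositivity of both eigenvalue families for odd $m$), the eigenvalue comparison of Lemma~\ref{lem:estimate_of_eigenvalues}, and the Sobolev embedding with $I=k+\frac d2+1$ for the $\Cf^k$ estimate.
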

	
	\begin{proof}
		The Fourier representation of the solution to the discrete heat equation
		gives 
		\begin{equation}
			\ex_{n}\rho^{n}_{t}=\sum_{k\in\Znd}e^{\frac{1}{2}\lambda_{k,n}t}\binn{\rho^{n}_{0},\varsigma_{k}}_{n}\varsigma_{k},\quad t\ge0.\label{eq:formula_for_ex_n_rho}
		\end{equation}
		Combining this with (\ref{eq:solution_to_modified_HE_started_from_ex_rho})
		and the identity 
		\[
		\binn{\rho^{n}_{0},\varsigma_{k}}_{n}=\binn{\ex_{n}\rho^{n}_{0},\varsigma_{k}},\quad k\in\Znd,
		\]
		we obtain 
		\begin{align*}
			\bnorm{\ex_{n}\rho^{n}_{t}-\varrho^{n}_{t}}^{2}_{\H[I]} & =\sum_{k\in\Znd}(1+|k|^{2})^{I}\left|e^{\frac{1}{2}\lambda_{k,n}t}-e^{\frac{1}{2}\lambda^{(2m)}_{k,n}t}\right|^{2}\left|\binn{\ex_{n}\rho^{n}_{0},\varsigma_{k}}\right|^{2}.
		\end{align*}
		Since $\lambda_{k,n}\le0$ and, by Lemma \ref{lem:estimate_of_eigenvalues},
		$\lambda^{(2m)}_{k,n}\le0$ for odd $m$, the mean value theorem yields
		\[
		\left|e^{\frac{1}{2}\lambda_{k,n}t}-e^{\frac{1}{2}\lambda^{(2m)}_{k,n}t}\right|\le\frac{t}{2}\left|\lambda_{k,n}-\lambda^{(2m)}_{k,n}\right|.
		\]
		Hence, again by Lemma \ref{lem:estimate_of_eigenvalues}, 
		\[
		\left|e^{\frac{1}{2}\lambda_{k,n}t}-e^{\frac{1}{2}\lambda^{(2m)}_{k,n}t}\right|\le\frac{C_{m}t}{n^{2m}}|k|^{2m+2}.
		\]
		Therefore, 
		\begin{align*}
			\bnorm{\ex_{n}\rho^{n}_{t}-\varrho^{n}_{t}}^{2}_{\H[I]} & \le\frac{C^{2}_{m}t^{2}}{n^{4m}}\sum_{k\in\Znd}(1+|k|^{2})^{I+2m+2}\left|\binn{\ex_{n}\rho^{n}_{0},\varsigma_{k}}\right|^{2}\\
			& =\frac{C^{2}_{m}t^{2}}{n^{4m}}\bnorm{\ex_{n}\rho^{n}_{0}}^{2}_{\H[I+2m+2]}.
		\end{align*}
		This proves the first estimate.
		
		For the second estimate, take 
		\[
		I=k+\frac{d}{2}+1.
		\]
		Since $I>k+\frac{d}{2}$, the Sobolev embedding theorem and the first
		estimate yield 
		\begin{align*}
			\bnorm{\ex_{n}\rho^{n}_{t}-\varrho^{n}_{t}}_{\Cf^{k}(\Td)} & \le C_{k}\bnorm{\ex_{n}\rho^{n}_{t}-\varrho^{n}_{t}}_{\H[k+\frac{d}{2}+1]}\\
			& \le\frac{C_{m,k}t}{n^{2m}}\bnorm{\ex_{n}\rho^{n}_{0}}_{\H[k+\frac{d}{2}+2m+3]}.
		\end{align*}
		This completes the proof.
	\end{proof}
	
	We will later assume that the initial density $\rho^{n}_{0}$ is obtained
	by restricting a sufficiently regular function $\rhoini:\Td\to\R$
	to $\Tnd$. In order to apply Proposition \ref{prop:comparison_of_rho_and_varrho},
	we therefore need a bound on $\ex_{n}\rhoini$ in Sobolev spaces that
	is uniform in $n$. Combining Proposition \ref{prop:comparison_of_rho_and_varrho}
	with Corollary~\ref{cor:bound_of_ex} yields the following corollary.
	\begin{cor}
		\label{cor:comparison_of_rho_and_varrho} Let $m\in\N$ be odd, $\ell\in\N_{0}$,
		and set $\tilde{\ell}\coloneqq\ell+d+2m+3.$ Let $\rhoini\in\Cf^{\tilde{\ell}}(\Td)$,
		let $(\rho^{n}_{t})_{t\ge0}$ be the solution to (\ref{eq:discrete_heat_equation})
		with initial condition 
		\[
		\rho^{n}_{0}=\rhoini\big|_{\Tnd},
		\]
		and let $(\varrho^{n}_{t})_{t\ge0}$ be the solution to (\ref{eq:modified_heat_equation})
		with initial condition 
		\[
		\varrho^{n}_{0}=\ex_{n}\rho^{n}_{0}=\ex_{n}\rhoini.
		\]
		Then there exists a constant $C_{m,\ell}>0$ such that 
		\[
		\bnorm{\ex_{n}\rho^{n}_{t}-\varrho^{n}_{t}}_{\Cf^{\ell}(\Td)}\le\frac{C_{m,\ell}t}{n^{2m}}\bnorm{\rhoini}_{\Cf^{\tilde{\ell}}(\Td)},\quad t\ge0.
		\]
	\end{cor}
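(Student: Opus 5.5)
The corollary is obtained by chaining Proposition~\ref{prop:comparison_of_rho_and_varrho} with Corollary~\ref{cor:bound_of_ex}. Since $\rho^{n}_{0}=\rhoini|_{\Tnd}$, the initial condition $\varrho^{n}_{0}=\ex_{n}\rho^{n}_{0}=\ex_{n}\rhoini$ is exactly the one required in Proposition~\ref{prop:comparison_of_rho_and_varrho}.

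\textbf{Step 1.} Apply the second estimate of Proposition~\ref{prop:comparison_of_rho_and_varrho} with $k=\ell$. This gives
\[
\bnorm{\ex_{n}\rho^{n}_{t}-\varrho^{n}_{t}}_{\Cf^{\ell}(\Td)}\le\frac{C_{m,\ell}t}{n^{2m}}\bnorm{\ex_{n}\rhoini}_{\H[\ell+\frac{d}{2}+2m+3]},\qquad t\ge0 .
\]

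\textbf{Step 2.} Bound the Sobolev norm on the right uniformly in $n$ using Corollary~\ref{cor:bound_of_ex}, with Sobolev index $I:=\ell+\frac d2+2m+3\ge0$ and smoothness $\tilde\ell=\ell+d+2m+3$. The hypothesis of that corollary is $\tilde\ell\ge\frac d2+I$, and
\[
\frac d2+I=\ell+d+2m+3=\tilde\ell ,
\]
so it holds with equality. Hence $\bnorm{\ex_{n}\rhoini}_{\H[I]}\le C\bnorm{\rhoini}_{\Cf^{\tilde\ell}(\Td)}$, with a constant independent of $n$.

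Combining Steps 1 and 2 and absorbing constants yields the claim. There is no real obstacle. The only point to check is that $\tilde\ell$ is chosen exactly so that the hypothesis of Corollary~\ref{cor:bound_of_ex} holds, including for odd $d$, where $I$ is a half-integer; this is fine because that corollary allows real $I\ge0$ and only needs $\tilde\ell\in\N$. Uniformity in $n\ge3$ of both constants is inherited from the two cited results.
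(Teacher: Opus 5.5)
Your proposal is correct and is the paper's proof: it applies the second estimate of Proposition~\ref{prop:comparison_of_rho_and_varrho} with $k=\ell$. It then bounds $\bnorm{\ex_{n}\rhoini}_{\H[\ell+\frac d2+2m+3]}$ uniformly in $n$ via Corollary~\ref{cor:bound_of_ex}, using that $\tilde\ell\ge\frac d2+\bigl(\ell+\frac d2+2m+3\bigr)$, which holds with equality.
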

	
	\begin{proof}
		By Proposition \ref{prop:comparison_of_rho_and_varrho}, 
		\[
		\bnorm{\ex_{n}\rho^{n}_{t}-\varrho^{n}_{t}}_{\Cf^{\ell}(\Td)}\le\frac{C_{m,\ell}t}{n^{2m}}\bnorm{\ex_{n}\rhoini}_{\H[\ell+\frac{d}{2}+2m+3]}.
		\]
		Since 
		\[
		\tilde{\ell}=\ell+d+2m+3\ge\frac{d}{2}+\left(\ell+\frac{d}{2}+2m+3\right),
		\]
		Corollary~\ref{cor:bound_of_ex} yields 
		\[
		\bnorm{\ex_{n}\rhoini}_{\H[\ell+\frac{d}{2}+2m+3]}\le C_{m,\ell}\bnorm{\rhoini}_{\Cf^{\tilde{\ell}}(\Td)}
		\]
		uniformly in $n$. Combining the two estimates proves the claim.
	\end{proof}
	
	We will assume that the initial density satisfies 
	\[
	0\le\rhoini(x)\le1,\quad x\in\Td.
	\]
	The discrete heat equation preserves the interval $[0,1]$, and hence
	$0\le\rho^{n}_{t}\le1$ for every $t\ge0$. The modified heat equation
	does not, in general, satisfy a maximum principle. Nevertheless, the
	preceding comparison estimate shows that its solution remains close
	to the range of the initial density.
	\begin{lem}
		\label{lem:state_bound_of_varrho}Let $m\in\N$ be odd and $\rhoini\in\Cf^{2m+d+3}(\Td)$.
		Let $(\varrho^{n}_{t})_{t\ge0}$ be the solution to (\ref{eq:modified_heat_equation})
		with initial condition $\varrho^{n}_{0}=\ex_{n}\rhoini$. Then there
		exists a constant $C_{m}>0$ such that, for all $x\in\Td$ and $t\ge0$,
		\[
		\min_{y\in\Td}\rhoini(y)-\frac{C_{m}(1+t)}{n^{2m}}\bnorm{\rhoini}_{\Cf^{2m+d+3}(\Td)}\le\varrho^{n}_{t}(x)
		\]
		and 
		\[
		\varrho^{n}_{t}(x)\le\max_{y\in\Td}\rhoini(y)+\frac{C_{m}(1+t)}{n^{2m}}\bnorm{\rhoini}_{\Cf^{2m+d+3}(\Td)}.
		\]
	\end{lem}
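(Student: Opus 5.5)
The plan is to compare $\varrho^n_t$ with $\ex_n\rho^n_t$, whose range is controlled by the maximum principle for the discrete heat equation, and then to control $\ex_n\rho^n_t$ by $\rho^n_t$ itself. The triangle inequality gives, for all $x\in\Td$ and $t\ge0$,
\[
\bigl|\varrho^n_t(x)-\rho^n_t(\pi x)\bigr|\le\bnorm{\varrho^n_t-\ex_n\rho^n_t}_{\Cf(\Td)}+\bigl|\ex_n\rho^n_t(x)-\rho^n_t(\pi x)\bigr|,
\]
where $\pi x$ is any lattice point. This does not quite work as written, because $\ex_n\rho^n_t$ need not take values in the range of $\rho^n_t$ off the lattice. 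I will therefore not compare with $\rho^n_t(\pi x)$ pointwise. Instead I will compare with a continuum heat flow, as follows.

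\textbf{Step 1.} By Corollary~\ref{cor:comparison_of_rho_and_varrho} with $\ell=0$, so that $\tilde\ell=d+2m+3$, we have
\[
\bnorm{\ex_n\rho^n_t-\varrho^n_t}_{\Cf(\Td)}\le C_m t\, n^{-2m}\bnorm{\rhoini}_{\Cf^{2m+d+3}(\Td)}.
\]

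\textbf{Step 2.} We need $\ex_n\rho^n_t$ to lie within $O(n^{-2m})$ of $[\min\rhoini,\max\rhoini]$, and we cannot use interpolation stability directly. The idea is to find a smooth function $g_t$ that agrees with $\rho^n_t$ on $\Tnd$ up to a small error and already has the right range. The natural candidate is the continuum evolution $g_t:=\sum_{k\in\Zd}e^{\frac12\lambda_{k,n}t}$-type flow, meaning the discrete heat semigroup realized on $\Td$ as the translation-invariant operator $e^{\frac t2\Delta_n}$ acting on functions on $\Td$. By the formal representation this is $g_t=e^{\frac t2\Delta_n}\rhoini$, with $\Delta_n$ given by the same difference formula on $\Td$. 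It is a Markov (random-walk) semigroup on $\Td$, so $g_t$ takes values in $[\min\rhoini,\max\rhoini]$. Its restriction to $\Tnd$ is exactly $\rho^n_t$, because the difference operator commutes with restriction to the lattice. Moreover $\norm{g_t}_{\Cf^{M}(\Td)}\le\norm{\rhoini}_{\Cf^M(\Td)}$, since translations commute with derivatives and the semigroup is a convex combination of translates.

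\textbf{Step 3.} Since $\ex_n\rho^n_t=\ex_n g_t$, Lemma~\ref{lem:approximation_for_ex} gives $\norm{g_t-\ex_n g_t}_{\Cf}\le C(1+\ln^d n)n^{-M}\norm{\rhoini}_{\Cf^M}$. Taking $M=2m+d+3$ yields the bound $C n^{-2m}\norm{\rhoini}_{\Cf^{2m+d+3}}$, with the logarithm absorbed by the spare power $n^{-d-3}$.

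Combining Steps 1–3 with $\min\rhoini\le g_t\le\max\rhoini$ gives both inequalities with the factor $(1+t)$. The main subtlety is Step 2: checking that the continuum-realized discrete semigroup preserves bounds and $\Cf^M$ norms uniformly in $n$. This follows because it is the law of a compound Poisson translation.
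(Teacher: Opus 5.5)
Your argument is correct and is essentially the paper's. Both proofs combine Corollary~\ref{cor:comparison_of_rho_and_varrho} with $\ell=0$, the maximum principle for the discrete heat semigroup $e^{\frac t2\Delta_n}$ realized on $\Td$, and Lemma~\ref{lem:approximation_for_ex}. The only difference is the order of interpolation and evolution. The paper notes that $\ex_n\rho^n_t=e^{\frac t2\Delta_n}\ex_n\rhoini$ and applies the maximum principle to that function, so it needs the interpolation error only at $t=0$. You apply the maximum principle to $g_t=e^{\frac t2\Delta_n}\rhoini$ and pay the interpolation error at time $t$, which you control through the easy uniform $\Cf^M$ bound on $g_t$.
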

	
	\begin{proof}
		By the expression (\ref{eq:formula_for_ex_n_rho}),
		\[
		\ex_{n}\rho^{n}_{t}=\sum_{k\in\Znd}e^{\frac{1}{2}\lambda_{k,n}t}\binn{\rho^{n}_{0},\varsigma_{k}}_{n}\varsigma_{k}
		\]
		on $\Td$. Therefore,
		\[
		\partial_{t}\ex_{n}\rho^{n}_{t}=\frac{1}{2}\Delta_{n}\ex_{n}\rho^{n}_{t}
		\]
		on $\Td$ and $\ex_{n}\rho^{n}_{0}=\ex_{n}\rhoini.$ The semigroup
		generated by $\frac{1}{2}\Delta_{n}$ preserves positivity and constants.
		Hence 
		\begin{equation}
			\min_{y\in\Td}\ex_{n}\rhoini(y)\le\ex_{n}\rho^{n}_{t}(x)\le\max_{y\in\Td}\ex_{n}\rhoini(y),\quad x\in\Td,\ t\ge0.\label{eq:maximum_principle_ex}
		\end{equation}
		
		By Lemma \ref{lem:approximation_for_ex}, 
		\begin{equation}
			\bnorm{\rhoini-\ex_{n}\rhoini}_{\Cf(\Td)}\le\frac{C_{m}}{n^{2m}}\bnorm{\rhoini}_{\Cf^{2m+1}(\Td)}.\label{eq:bound_for_initial_interpolation}
		\end{equation}
		Moreover, Corollary \ref{cor:comparison_of_rho_and_varrho}, applied
		with $\ell=0$, gives 
		\[
		\bnorm{\varrho^{n}_{t}-\ex_{n}\rho^{n}_{t}}_{\Cf(\Td)}\le\frac{C_{m}t}{n^{2m}}\bnorm{\rhoini}_{\Cf^{2m+d+3}(\Td)}.
		\]
		Consequently, 
		\begin{align*}
			\varrho^{n}_{t}(x) & \le\ex_{n}\rho^{n}_{t}(x)+\bnorm{\varrho^{n}_{t}-\ex_{n}\rho^{n}_{t}}_{\Cf(\Td)}\\
			& \le\max_{y\in\Td}\rhoini(y)+\bnorm{\rhoini-\ex_{n}\rhoini}_{\Cf(\Td)}+\bnorm{\varrho^{n}_{t}-\ex_{n}\rho^{n}_{t}}_{\Cf(\Td)}\\
			& \le\max_{y\in\Td}\rhoini(y)+\frac{C_{m}(1+t)}{n^{2m}}\bnorm{\rhoini}_{\Cf^{2m+d+3}(\Td)}.
		\end{align*}
		The lower bound follows analogously.
	\end{proof}
	
	The preceding estimates concern the modified heat equation started
	from the trigonometric interpolation $\ex_{n}\rhoini$. For the application
	one can prefer to start the modified equation directly from $\rhoini$.
	The following lemma shows that this change of initial condition produces
	an error of the same order.

	\begin{lem}
		\label{lem:bound_for_varrho_and_rho}Let $m\in\N$ be odd, $\ell\in\N_{0},$
		and set $\tilde{\ell}\coloneqq\ell+d+2m+3$, and let $\rhoini\in\Cf^{\tilde{\ell}}(\Td)$.
		Let $(\rho^{n}_{t})_{t\ge0}$ be the solution to the discrete heat
		equation (\ref{eq:discrete_heat_equation}) with initial condition
		\[
		\rho^{n}_{0}=\rhoini\big|_{\Tnd},
		\]
		and let 
		\[
		\varrho^{n}_{t}=\cS^{n,m}_{t}\rhoini,\quad t\ge0,
		\]
		be the solution to the modified heat equation (\ref{eq:modified_heat_equation})
		with initial condition $\rhoini$. Then there exists a constant $C_{m,\ell}>0$
		such that 
		\begin{equation}
			\bnorm{\ex_{n}\rho^{n}_{t}-\varrho^{n}_{t}}_{\Cf^{\ell}(\Td)}\le\frac{C_{m,\ell}(1+t)}{n^{2m}}\bnorm{\rhoini}_{\Cf^{\tilde{\ell}}(\Td)},\quad t\ge0.\label{eq:comparion_of_rho_t_and_varrho_t_full}
		\end{equation}
		Moreover, for every $x\in\Td$ and $t\ge0$, 
		\[
		\min_{y\in\Td}\rhoini(y)-\frac{C_{m}(1+t)}{n^{2m}}\bnorm{\rhoini}_{\Cf^{2m+d+3}(\Td)}\le\varrho^{n}_{t}(x)
		\]
		and 
		\[
		\varrho^{n}_{t}(x)\le\max_{y\in\Td}\rhoini(y)+\frac{C_{m}(1+t)}{n^{2m}}\bnorm{\rhoini}_{\Cf^{2m+d+3}(\Td)}.
		\]
	\end{lem}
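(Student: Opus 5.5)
The plan is to reduce the statement to Corollary~\ref{cor:comparison_of_rho_and_varrho} and Lemma~\ref{lem:state_bound_of_varrho}. Let $\tilde\varrho^n_t:=\cS^{n,m}_t\ex_n\rhoini$ denote the solution started from the interpolation. Then
\[
\ex_n\rho^n_t-\varrho^n_t=\bigl(\ex_n\rho^n_t-\tilde\varrho^n_t\bigr)+\cS^{n,m}_t\bigl(\ex_n\rhoini-\rhoini\bigr).
\]
Corollary~\ref{cor:comparison_of_rho_and_varrho} bounds the first term in $\Cf^\ell(\Td)$ by $C_{m,\ell}\,t\,n^{-2m}\|\rhoini\|_{\Cf^{\tilde\ell}(\Td)}$, since $\rhoini\in\Cf^{\tilde\ell}(\Td)$ with the same $\tilde\ell$.

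It therefore remains to show
\[
\bigl\|\cS^{n,m}_t(\ex_n\rhoini-\rhoini)\bigr\|_{\Cf^\ell(\Td)}\le C_{m,\ell}\,n^{-2m}\|\rhoini\|_{\Cf^{\tilde\ell}(\Td)}
\]
uniformly in $t\ge0$.

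1. Choose $I=\ell+\lceil d/2\rceil+1$, so that $I>\ell+d/2$. The Sobolev embedding gives $\|g\|_{\Cf^\ell}\le C\|g\|_{\H[I]}$.
2. Lemma~\ref{lem:modified_heat_semigroup} shows $\cS^{n,m}_t$ is a contraction on $\H[I]$. Hence the term is at most $C\|\ex_n\rhoini-\rhoini\|_{\H[I]}$.
3. Apply Corollary~\ref{cor:ex_n_approximation_in_H_I_norm} with smoothness index $\tilde\ell$. This gives the rate $n^{-(\tilde\ell-I-d/2)}$, and
\[
\tilde\ell-I-\tfrac d2=\ell+d+2m+3-\ell-\lceil d/2\rceil-1-\tfrac d2\ge 2m+1\ge 2m,
\]
using $\lceil d/2\rceil\le d/2+1/2$.

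Adding the two bounds gives \eqref{eq:comparion_of_rho_t_and_varrho_t_full} with prefactor $(1+t)$. The only delicate point is this exponent bookkeeping, and the margin above shows it works.

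For the pointwise bounds, use the maximum principle \eqref{eq:maximum_principle_ex} for $\ex_n\rho^n_t$, exactly as in Lemma~\ref{lem:state_bound_of_varrho}:
\[
\varrho^n_t(x)\le\max\rhoini+\|\rhoini-\ex_n\rhoini\|_{\Cf(\Td)}+\|\ex_n\rho^n_t-\varrho^n_t\|_{\Cf(\Td)}.
\]
Lemma~\ref{lem:approximation_for_ex} bounds the second term by $C n^{-2m}\|\rhoini\|_{\Cf^{2m+1}}$. The logarithmic factor $(1+\ln^d n)$ is absorbed by using one extra derivative, $\Cf^{2m+2}$, which is available. The third term is \eqref{eq:comparion_of_rho_t_and_varrho_t_full} with $\ell=0$, where $\tilde\ell=2m+d+3$. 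The lower bound follows symmetrically.
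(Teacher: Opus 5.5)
Your proof is correct and follows essentially the same route as the paper. You use the same splitting through $\cS^{n,m}_t\ex_n\rhoini$, Corollary~\ref{cor:comparison_of_rho_and_varrho} for the first piece, and Sobolev embedding, contraction and Corollary~\ref{cor:ex_n_approximation_in_H_I_norm} for the second (the paper's choice $I=\ell+\lfloor d/2\rfloor+1$ changes nothing); for the pointwise bounds you rerun the maximum-principle argument of Lemma~\ref{lem:state_bound_of_varrho} instead of citing it for $\cS^{n,m}_t\ex_n\rhoini$, an immaterial rearrangement in which $\Cf^{2m+1}$ already suffices to absorb the logarithm since $(1+\ln^d n)/n$ is bounded.
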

	
	\begin{proof}
		Set 
		\[
		\widetilde{\varrho}^{n}_{t}\coloneqq\cS^{n,m}_{t}\ex_{n}\rhoini,\quad t\ge0.
		\]
		By the triangle inequality, 
		\begin{align*}
			\bnorm{\ex_{n}\rho^{n}_{t}-\varrho^{n}_{t}}_{\Cf^{\ell}(\Td)}\le & \bnorm{\ex_{n}\rho^{n}_{t}-\widetilde{\varrho}^{n}_{t}}_{\Cf^{\ell}(\Td)}+\bnorm{\widetilde{\varrho}^{n}_{t}-\varrho^{n}_{t}}_{\Cf^{\ell}(\Td)}.
		\end{align*}
		Using Corollary \ref{cor:comparison_of_rho_and_varrho}, we obtain
		\[
		\bnorm{\ex_{n}\rho^{n}_{t}-\widetilde{\varrho}^{n}_{t}}_{\Cf^{\ell}(\Td)}\le\frac{C_{m,\ell}t}{n^{2m}}\bnorm{\rhoini}_{\Cf^{\tilde{\ell}}(\Td)}.
		\]
		
		Set 
		\[
		I\coloneqq\ell+\left\lfloor \frac{d}{2}\right\rfloor +1\quad\text{and}\quad M\coloneqq I+2m+\left\lceil \frac{d}{2}\right\rceil =\ell+d+2m+1.
		\]
		Then $I>\ell+\frac{d}{2}$. Hence, by the Sobolev embedding theorem,
		the contraction property of $\cS^{n,m}_{t}$ and Corollary \ref{cor:ex_n_approximation_in_H_I_norm},
		one has
		\begin{align*}
			\bnorm{\widetilde{\varrho}^{n}_{t}-\varrho^{n}_{t}}_{\Cf^{\ell}(\Td)} & =\bnorm{\cS^{n,m}_{t}(\ex_{n}\rhoini-\rhoini)}_{\Cf^{\ell}(\Td)}\\
			& \le C_{\ell}\bnorm{\cS^{n,m}_{t}(\ex_{n}\rhoini-\rhoini)}_{\H[I]}\\
			& \le C_{\ell}\bnorm{\ex_{n}\rhoini-\rhoini}_{\H[I]}\\
			& \le\frac{C_{m,\ell}}{n^{2m}}\bnorm{\rhoini}_{\Cf^{M}(\Td)}\\
			& \le\frac{C_{m,\ell}}{n^{2m}}\bnorm{\rhoini}_{\Cf^{\tilde{\ell}}(\Td)}.
		\end{align*}
		Combining the two estimates proves (\ref{eq:comparion_of_rho_t_and_varrho_t_full}).
		
		It remains to prove the pointwise bounds. By Lemma \ref{lem:state_bound_of_varrho},
		\[
		\widetilde{\varrho}^{n}_{t}(x)\le\max_{y\in\Td}\rhoini(y)+\frac{C_{m}(1+t)}{n^{2m}}\bnorm{\rhoini}_{\Cf^{2m+d+3}(\Td)}.
		\]
		On the other hand, the estimate above with $\ell=0$ yields 
		\[
		\bnorm{\widetilde{\varrho}^{n}_{t}-\varrho^{n}_{t}}_{\Cf(\Td)}\le\frac{C_{m}}{n^{2m}}\bnorm{\rhoini}_{\Cf^{2m+d+3}(\Td)}.
		\]
		Consequently, 
		\[
		\varrho^{n}_{t}(x)\le\max_{y\in\Td}\rhoini(y)+\frac{C_{m}(1+t)}{n^{2m}}\bnorm{\rhoini}_{\Cf^{2m+d+3}(\Td)}.
		\]
		The lower bound follows analogously.
	\end{proof}

	\subsection{Modified covariance coefficients}
	
	The second-order term in the generator decomposition of Proposition
	\ref{prop:expansion_of_generator} still depends explicitly on the
	microscopic configuration $\eta$ and therefore does not yield a closed
	generator for an approximation of the fluctuation field. In this subsection,
	we introduce the corresponding deterministic covariance coefficient
	and collect several estimates needed to approximate it in terms of
	the modified density profile and the differential approximation of
	the discrete shift.
	
	Let $j\in[d]$ be fixed. Since $\eta(x)\in\{0,1\}$, we have 
	\[
	(\tau_{n,j}\eta-\eta)^{2}=\eta+\tau_{n,j}\eta-2\eta\tau_{n,j}\eta.
	\]
	This motivates, for $\rho\in\Ln$, the definition 
	\begin{equation}
		\Gamma_{n,j}(\rho):=\rho+\tau_{n,j}\rho-2\rho\tau_{n,j}\rho.\label{eq:definition_of_Gamma}
	\end{equation}
	We extend $\Gamma_{n,j}$ to functions on $\Td$ by the same formula.
	
	For sufficiently regular $\varrho$, we further introduce its modified
	counterpart 
	\begin{equation}
		\Gamma^{(m)}_{n,j}(\varrho):=\varrho+\tau^{(m)}_{n,j}\varrho-2\varrho\tau^{(m)}_{n,j}\varrho.\label{eq:definition_of_Gamma_m_tau}
	\end{equation}
	Since the latter need not be nonnegative, whereas $(\tau_{n,j}\eta-\eta)^{2}\ge0$,
	the covariance coefficient used below will be $\bigl(\Gamma^{(m)}_{n,j}(\varrho)\bigr)^{+}$,
	where $x^{+}:=\max\{x,0\}$.
	
	The following lemma collects the basic estimates for these covariance
	coefficients that will be used in the comparison of generators in
	Section \ref{subsec:generator_comparison}.
	\begin{lem}
		\label{lem:bounds_for_Gamma}Let $\ell,m\in\N_{0}$ and $j\in[d]$.
		Then the following statements hold.
		\begin{enumerate}
			\item[(i)] There exists $C_{\ell}>0$ such that, for every $f,g\in\Cf^{\ell}(\Td)$,
			\[
			\bnorm{\Gamma_{n,j}(f)-\Gamma_{n,j}(g)}_{\Cf^{\ell}(\Td)}\le C_{\ell}\left(1+\norm f_{\Cf^{\ell}(\Td)}+\norm g_{\Cf^{\ell}(\Td)}\right)\norm{f-g}_{\Cf^{\ell}(\Td)}.
			\]
			\item[(ii)] There exists $C_{m,\ell}>0$ such that, for every $\varrho\in\Cf^{m+\ell+1}(\Td)$,
			\[
			\bnorm{\Gamma_{n,j}(\varrho)-\Gamma^{(m)}_{n,j}(\varrho)}_{\Cf^{\ell}(\Td)}\le\frac{C_{m,\ell}}{n^{m+1}}\norm{\varrho}_{\Cf^{m+\ell+1}(\Td)}\left(1+\norm{\varrho}_{\Cf^{\ell}(\Td)}\right).
			\]
			\item[(iii)] There exists $C_{m}>0$ such that, for every $\varphi,\varrho\in\Cf^{m}(\Td)$,
			\[
			\left|\binn{\varphi,\Gamma_{n,j}(\varrho)}_{n}-\binn{\varphi,\Gamma_{n,j}(\varrho)}\right|\le\frac{C_{m}}{n^{m}}\norm{\varphi}_{\Cf^{m}(\Td)}\norm{\varrho}_{\Cf^{m}(\Td)}\left(1+\norm{\varrho}_{\Cf^{m}(\Td)}\right).
			\]
			\item[(iv)] There exists $C>0$ such that, for every $\varrho\in\Cf(\Td)$, 
			\[
			\bnorm{\Gamma_{n,j}(\varrho)^{-}}_{\Cf(\Td)}\le C\left(\bnorm{\varrho^{-}}_{\Cf(\Td)}+\bnorm{(1-\varrho)^{-}}_{\Cf(\Td)}\right)\left(1+\norm{\varrho}_{\Cf(\Td)}\right),
			\]
			where $x^{-}:=\max\{-x,0\}$.
		\end{enumerate}
	\end{lem}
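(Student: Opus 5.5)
We treat the four items separately; each is elementary and reduces to the algebraic structure of $\Gamma_{n,j}$, the Leibniz rule in $\Cf^{\ell}(\Td)$, and the approximation results of Section~\ref{sec:notation}.

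For (i), we write
\[
\Gamma_{n,j}(f)-\Gamma_{n,j}(g)=(f-g)+\tau_{n,j}(f-g)-2(f-g)\tau_{n,j}f-2g\,\tau_{n,j}(f-g).
\]
The shift $\tau_{n,j}$ is an isometry of $\Cf^{\ell}(\Td)$, since it commutes with derivatives. The product estimate $\norm{uv}_{\Cf^{\ell}}\le C_\ell\norm u_{\Cf^{\ell}}\norm v_{\Cf^{\ell}}$ then gives the bound directly.

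For (ii), we write
\[
\Gamma_{n,j}(\varrho)-\Gamma^{(m)}_{n,j}(\varrho)=(1-2\varrho)\bigl(\tau_{n,j}\varrho-\tau^{(m)}_{n,j}\varrho\bigr).
\]
By the product estimate, this is at most $C_\ell(1+\norm{\varrho}_{\Cf^{\ell}})\norm{\tau_{n,j}\varrho-\tau^{(m)}_{n,j}\varrho}_{\Cf^{\ell}}$. Lemma~\ref{lem:approximation_of_discrete_derivatives} bounds the last factor by $C_{m,\ell}n^{-(m+1)}\norm{\partial_j^{m+1}\varrho}_{\Cf^{\ell}}\le C_{m,\ell}n^{-(m+1)}\norm{\varrho}_{\Cf^{m+\ell+1}}$.

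For (iii), set $h:=\varphi\,\Gamma_{n,j}(\varrho)\in\Cf^{m}(\Td)$. The difference $\inn{h,1}_n-\inn{h,1}$ is the $k=0$ case of the second estimate in Lemma~\ref{lem:approximation_for_ex}, so it is bounded by $C_m n^{-m}\norm h_{\Cf^m}$. Using the product rule and the shift-invariance of $\Cf^m$ norms,
\[
\norm h_{\Cf^m}\le C_m\norm\varphi_{\Cf^m}\bigl(2\norm\varrho_{\Cf^m}+2C_m\norm\varrho_{\Cf^m}^2\bigr),
\]
which has the required form. For $m=0$ the claim is trivial, because both averages are bounded by $\norm h_{\Cf}$. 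The only point to check is that Lemma~\ref{lem:approximation_for_ex} is stated for $m\in\N$, so $m=0$ must be handled separately as just done.

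For (iv), the main step is a pointwise bound on the negative part. Fix $x$ and write $a=\varrho(x)$, $b=\tau_{n,j}\varrho(x)$, so that $\Gamma=a+b-2ab=a(1-b)+b(1-a)$. Decompose $a=\bar a+a_e$, where $\bar a\in[0,1]$ is the projection of $a$ onto $[0,1]$ and $|a_e|\le\eps:=\norm{\varrho^-}_{\Cf}+\norm{(1-\varrho)^-}_{\Cf}$; decompose $b$ in the same way. The main term $\bar a(1-\bar b)+\bar b(1-\bar a)$ is nonnegative. Every remaining term contains a factor $a_e$ or $b_e$ multiplied by a quantity bounded by $C(1+\norm\varrho_{\Cf})$. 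Hence $\Gamma\ge -C\eps(1+\norm\varrho_{\Cf})$, which gives $\Gamma^-\le C\eps(1+\norm\varrho_{\Cf})$ pointwise. Taking the supremum over $x$ proves (iv).
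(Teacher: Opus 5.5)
Your proposal is correct. Items (i)--(iii) follow the paper's proof essentially verbatim. You use the same identities for $\Gamma_{n,j}(f)-\Gamma_{n,j}(g)$ and for $\Gamma_{n,j}(\varrho)-\Gamma^{(m)}_{n,j}(\varrho)=(1-2\varrho)(\tau_{n,j}\varrho-\tau^{(m)}_{n,j}\varrho)$. You use the same ingredients: the Leibniz product estimate with shift invariance of $\Cf^{\ell}$ norms, Lemma~\ref{lem:approximation_of_discrete_derivatives}, and the $k=0$ case of Lemma~\ref{lem:approximation_for_ex} applied to $h=\varphi\,\Gamma_{n,j}(\varrho)$, with $m=0$ treated separately.

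Item (iv) is argued differently. The paper writes $\Gamma_{n,j}(\varrho)=\varrho(1-\tau_{n,j}\varrho)+(1-\varrho)\tau_{n,j}\varrho$. It then applies $(a+b)^{-}\le a^{-}+b^{-}$ and $(ab)^{-}\le a^{-}|b|+b^{-}|a|$, together with $(\tau_{n,j}\varrho)^{-}=\tau_{n,j}(\varrho^{-})$ and $(1-\tau_{n,j}\varrho)^{-}=\tau_{n,j}((1-\varrho)^{-})$. You instead clamp both values onto $[0,1]$, note that $a+b-2ab\ge0$ there, and bound the perturbation. Both arguments are pointwise and equally short. The paper's version exploits the specific product structure. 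Yours only uses that $(a,b)\mapsto a+b-2ab$ is nonnegative on $[0,1]^{2}$ with gradient of size $O(1+|a|+|b|)$, so it would transfer unchanged to other covariance coefficients with that property.
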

	
	\begin{proof}
		We first note that the shift operator preserves the $\Cf^{\ell}$-norm,
		that is, 
		\[
		\bnorm{\tau_{n,j}f}_{\Cf^{\ell}(\Td)}=\bnorm f_{\Cf^{\ell}(\Td)}.
		\]
		
		For (i), using 
		\[
		\Gamma_{n,j}(f)=f+\tau_{n,j}f-2f\tau_{n,j}f,
		\]
		we obtain 
		\begin{align*}
			\Gamma_{n,j}(f)-\Gamma_{n,j}(g)= & (f-g)+\tau_{n,j}(f-g)\\
			& -2(f-g)\tau_{n,j}f-2g\tau_{n,j}(f-g).
		\end{align*}
		Hence, by the standard product estimate in $\Cf^{\ell}(\Td)$, 
		\[
		\bnorm{\Gamma_{n,j}(f)-\Gamma_{n,j}(g)}_{\Cf^{\ell}(\Td)}\le C_{\ell}\left(1+\bnorm f_{\Cf^{\ell}(\Td)}+\bnorm g_{\Cf^{\ell}(\Td)}\right)\bnorm{f-g}_{\Cf^{\ell}(\Td)}.
		\]
		
		For (ii), we have 
		\[
		\Gamma_{n,j}(\varrho)-\Gamma^{(m)}_{n,j}(\varrho)=(1-2\varrho)\left(\tau_{n,j}\varrho-\tau^{(m)}_{n,j}\varrho\right).
		\]
		Therefore, 
		\begin{align*}
			\bnorm{\Gamma_{n,j}(\varrho)-\Gamma^{(m)}_{n,j}(\varrho)}_{\Cf^{\ell}(\Td)} & \le C_{\ell}\left(1+\bnorm{\varrho}_{\Cf^{\ell}(\Td)}\right)\bnorm{\tau_{n,j}\varrho-\tau^{(m)}_{n,j}\varrho}_{\Cf^{\ell}(\Td)}\\
			& \le\frac{C_{m,\ell}}{n^{m+1}}\bnorm{\varrho}_{\Cf^{m+\ell+1}(\Td)}\left(1+\bnorm{\varrho}_{\Cf^{\ell}(\Td)}\right),
		\end{align*}
		where the last inequality follows from Lemma \ref{lem:approximation_of_discrete_derivatives}.
		
		We next prove (iii). The case $m=0$ is immediate. Let now $m\ge1$. Set 
		\[
		h:=\varphi\Gamma_{n,j}(\varrho).
		\]
		Then 
		\[
		\binn{\varphi,\Gamma_{n,j}(\varrho)}_{n}-\binn{\varphi,\Gamma_{n,j}(\varrho)}=\binn{h,\varsigma_{0}}_{n}-\binn{h,\varsigma_{0}}.
		\]
		Consequently, Lemma \ref{lem:approximation_for_ex} yields 
		\[
		\left|\binn{\varphi,\Gamma_{n,j}(\varrho)}_{n}-\binn{\varphi,\Gamma_{n,j}(\varrho)}\right|\le\frac{C_{m}}{n^{m}}\bnorm h_{\Cf^{m}(\Td)}.
		\]
		Since 
		\[
		\bnorm{\Gamma_{n,j}(\varrho)}_{\Cf^{m}(\Td)}\le C_{m}\bnorm{\varrho}_{\Cf^{m}(\Td)}\left(1+\bnorm{\varrho}_{\Cf^{m}(\Td)}\right),
		\]
		the product estimate gives 
		\[
		\bnorm h_{\Cf^{m}(\Td)}\le C_{m}\bnorm{\varphi}_{\Cf^{m}(\Td)}\bnorm{\varrho}_{\Cf^{m}(\Td)}\left(1+\bnorm{\varrho}_{\Cf^{m}(\Td)}\right),
		\]
		which proves (iii).
		
		Finally, for (iv), observe that 
		\[
		\Gamma_{n,j}(\varrho)=\varrho(1-\tau_{n,j}\varrho)+(1-\varrho)\tau_{n,j}\varrho.
		\]
		Using 
		\[
		(a+b)^{-}\le a^{-}+b^{-}
		\]
		and 
		\[
		(ab)^{-}\le a^{-}|b|+b^{-}|a|,
		\]
		we obtain 
		\begin{align*}
			\Gamma_{n,j}(\varrho)^{-}\le & \varrho^{-}\left|1-\tau_{n,j}\varrho\right|+(1-\tau_{n,j}\varrho)^{-}|\varrho|\\
			& +(1-\varrho)^{-}|\tau_{n,j}\varrho|+(\tau_{n,j}\varrho)^{-}|1-\varrho|.
		\end{align*}
		Since 
		\[
		(\tau_{n,j}\varrho)^{-}=\tau_{n,j}(\varrho^{-})
		\]
		and 
		\[
		(1-\tau_{n,j}\varrho)^{-}=\tau_{n,j}\bigl((1-\varrho)^{-}\bigr),
		\]
		taking the supremum over $\Td$ gives 
		\[
		\bnorm{\Gamma_{n,j}(\varrho)^{-}}_{\Cf(\Td)}\le C\left(\bnorm{\varrho^{-}}_{\Cf(\Td)}+\bnorm{(1-\varrho)^{-}}_{\Cf(\Td)}\right)\left(1+\bnorm{\varrho}_{\Cf(\Td)}\right).
		\]
		This completes the proof.
	\end{proof}

	\subsection{Modified Ornstein--Uhlenbeck process\label{subsec:Modified-Ornstein=002013Uhlenbeck-proc}}
	
	The goal of this subsection is to introduce the modified Ornstein--Uhlenbeck
	dynamics that will be used to approximate the fluctuation field. Its
	definition is motivated by the generator expansion obtained in Proposition
	\ref{prop:expansion_of_generator} and by the higher-order approximations
	of the discrete operators established in Lemma \ref{lem:approximation_of_discrete_derivatives}.
	
	Fix $n\ge3$, approximation orders $m_{\Delta},m_{\partial}\in\N$,
	where $m_{\Delta}$ is odd, and a measurable locally bounded function
	\[
	\Phi=(\Phi_{j})_{j\in[d]}:[0,\infty)\to L^{2}(\Td;\R^{d}).
	\]
	Later we will take 
	\[
	\Phi_{j}(t)=\sqrt{\left(\Gamma^{(m)}_{n,j}(\varrho_{t})\right)^{+}},\quad j\in[d],
	\]
	where $(\varrho_{t})_{t\ge0}$ is a solution to the modified heat
	equation (\ref{eq:modified_heat_equation}).
	
	We also write 
	\[
	A\coloneqq\frac{1}{2}\Delta^{(2m_{\Delta})}_{n},\quad\cS_{t}\coloneqq\cS^{n,m_{\Delta}}_{t}.
	\]
	
	The covariance structure of the noise is encoded by the operator 
	\[
	B(t):L^{2}(\Td;\R^{d})\to\H[-I]
	\]
	defined by 
	\begin{equation}
		B(t)h\coloneqq\frac{1}{\sqrt{2}}\sum^{d}_{j=1}\left(\partial^{(m_{\partial})}_{n,j}\right)^{*}\left(\Phi_{j}(t)h_{j}\right)=\frac{1}{\sqrt{2}}\divv^{(m_{\partial})}_{n}\left(\Phi(t)h\right)\label{eq:definition_of_B}
	\end{equation}
	for $h=(h_{1},\ldots,h_{d})$, where 
	\[
	\left(\partial^{(m_{\partial})}_{n,j}\right)^{*}=n\left[P_{m_{\partial}}\left(-\frac{\partial_{j}}{n}\right)-1\right]
	\]
	denotes the adjoint operator to $\partial^{(m_{\partial})}_{n,j}$
	in $\L$.
	The differential operators in this definition are understood in the
	distributional sense. Indeed, for $h\in L^{2}(\Td;\R^{d})$ one has
	$\Phi_{j}(t)h_{j}\in L^{1}(\Td)$, and the assumption
	$I>\frac{d}{2}+m_{\partial}$ ensures that the resulting distributions
	belong to $\H[-I]$.

	We denote by $\cL^{2}(H_{1};H_{2})$ the space of Hilbert--Schmidt
	operators from a Hilbert space $H_{1}$ to a Hilbert space $H_{2}$.
	The following estimate shows that $B(t)$ is Hilbert--Schmidt whenever
	$I>\frac{d}{2}+m_{\partial}$.
	\begin{lem}
		\label{lem:norm_of_diffusion_term_for_SPDE} Let $I>\frac{d}{2}+m_{\partial}$
		and let $B(t)$ be defined by (\ref{eq:definition_of_B}). Then 
		\[
		B(t)\in\cL^{2}\bigl(L^{2}(\Td;\R^{d});\H[-I]\bigr)
		\]
		for every $t\ge0$, and there exists a constant $C_{I,m_{\partial}}>0$
		such that 
		\[
		\bnorm{B(t)}_{\cL^{2}(L^{2}(\Td;\R^{d});\H[-I])}\le C_{I,m_{\partial}}\norm{\Phi(t)}_{L^{2}(\Td;\R^{d})},\quad t\ge0.
		\]
	\end{lem}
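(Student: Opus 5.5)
The plan is to compute the Hilbert--Schmidt norm directly, using an orthonormal basis of $L^{2}(\Td;\R^{d})$ built from the Fourier basis. The dual representation of $B(t)$ reduces the sum over that basis to a sum of $L^{2}$-norms of Dirac-type distributions. Let $\{\varsigma_k e_j\}_{k\in\Zd,j\in[d]}$ be the (complexified) orthonormal basis; for a real formulation one uses the real and imaginary parts, with identical bounds. The norm is
\[
\bnorm{B(t)}^{2}_{\cL^{2}}=\sum_{j,k}\bnorm{B(t)(\varsigma_k e_j)}^{2}_{\H[-I]}.
\]
Rather than expanding each term in the basis, I use the adjoint. A Hilbert--Schmidt operator and its adjoint have the same norm, and $\H[-I]$ is unitarily identified with $\H[I]$, so it suffices to bound the Hilbert--Schmidt norm of $B(t)^{*}:\H[I]\to L^{2}(\Td;\R^{d})$, or equivalently to sum over an orthonormal basis of $\H[-I]$.

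The first step is to compute the adjoint action. For $\varphi\in\H[I]$ and $h\in L^{2}(\Td;\R^{d})$ we have
\[
\binn{B(t)h,\varphi}=\frac{1}{\sqrt2}\sum_{j}\int_{\Td}\Phi_j(t,x)h_j(x)\,\partial^{(m_\partial)}_{n,j}\varphi(x)\,dx .
\]
Since $I>\frac d2+m_\partial$, the function $\partial^{(m_\partial)}_{n,j}\varphi$ is continuous, by Sobolev embedding. So $B(t)^{*}\varphi=\frac{1}{\sqrt2}\bigl(\Phi_j(t)\,\partial^{(m_\partial)}_{n,j}\varphi\bigr)_j$ in $L^{2}$, up to the Riesz isomorphism. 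Take the orthonormal basis $e_k:=(1+|k|^{2})^{-I/2}\varsigma_k$ of $\H[I]$. Then
\[
\bnorm{B(t)^{*}}^{2}_{\cL^{2}}=\frac12\sum_{j}\int_{\Td}\Phi_j(t,x)^{2}\sum_{k\in\Zd}(1+|k|^{2})^{-I}\bigl|\partial^{(m_\partial)}_{n,j}\varsigma_k(x)\bigr|^{2}dx ,
\]
where Tonelli's theorem justifies exchanging sum and integral.

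The second step is to bound the symbol. We have $\partial^{(m_\partial)}_{n,j}\varsigma_k=n\bigl[P_{m_\partial}(2\pi\i k_j/n)-1\bigr]\varsigma_k$ and $|\varsigma_k|=1$. The key point is that this symbol is bounded by $C_{m_\partial}(|k_j|+|k_j|^{m_\partial})$ uniformly in $n\ge1$: each term of the sum is $n^{1-l}|2\pi k_j|^{l}/l!\le|2\pi k_j|^{l}$. Hence the inner sum is at most
\[
C\sum_k(1+|k|^2)^{m_\partial-I}<\infty,
\]
since $2(I-m_\partial)>d$. This gives the bound $C_{I,m_\partial}\sum_j\norm{\Phi_j(t)}^{2}_{L^{2}}$, and taking square roots proves the estimate.

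The only delicate point is the identification of $B(t)$ with the adjoint formula when $\Phi_j h_j$ is merely $L^{1}$. The distributional definition gives exactly the pairing above, and it is well defined because $\partial^{(m_\partial)}_{n,j}\varphi$ is bounded. So I expect this step to be routine; the $n$-uniformity of the symbol bound is the step to double-check.
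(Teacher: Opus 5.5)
Your proof is correct and is essentially the paper's argument. The paper sums over the basis $\varsigma_k e_j$ of $L^{2}(\Td;\R^{d})$, expands the $\H[-I]$ norm in Fourier modes, moves $\partial^{(m_\partial)}_{n,j}$ onto $\varsigma_l$, and then uses Parseval in $k$ to get $\|\Phi_j(t)\varsigma_l\|^2_{\L}=\|\Phi_j(t)\|^2_{\L}$; this is exactly your passage to the adjoint with the order of summation interchanged, and both routes arrive at $\frac12\sum_j\|\Phi_j(t)\|^2_{\L}\sum_l(1+|l|^2)^{-I}|q^{(m_\partial)}_{n,j}(l)|^2$ with the same symbol bound. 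The only difference is that you justify the $n$-uniformity of that bound explicitly via $n^{1-l}\le 1$, whereas the paper simply asserts it.
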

	
	\begin{proof}
		We first note that the family $h_{k,j}=\varsigma_{k}e_{j}$, $k\in\Zd$,
		$j\in[d]$, is an orthonormal basis in $L^{2}(\Td;\R^{d})$. Thus,
		we can estimate
		\begin{align*}
			\bnorm{B(t)}^{2}_{\cL^{2}(L^{2}(\Td;\R^{d});\H[-I])} & =\frac{1}{2}\sum^{d}_{j=1}\sum_{k\in\Zd}\bnorm{\left(\partial^{(m_{\partial})}_{n,j}\right)^{*}\left(\Phi_{j}(t)\varsigma_{k}\right)}^{2}_{\H[-I]}\\
			& \le\frac{1}{2}\sum^{d}_{j=1}\sum_{k\in\Zd}\sum_{l\in\Zd}\frac{1}{(1+|l|^{2})^{I}}\left|\binn{\left(\partial^{(m_{\partial})}_{n,j}\right)^{*}\left(\Phi_{j}(t)\varsigma_{k}\right),\varsigma_{l}}\right|^{2}\\
			& =\frac{1}{2}\sum^{d}_{j=1}\sum_{k\in\Zd}\sum_{l\in\Zd}\frac{1}{(1+|l|^{2})^{I}}\left|\binn{\Phi_{j}(t)\varsigma_{k},\partial^{(m_{\partial})}_{n,j}\varsigma_{l}}\right|^{2},
		\end{align*}
		Note that
		\[
		\partial^{(m_{\partial})}_{n,j}\varsigma_{l}=q^{(m_{\partial})}_{n,j}(l)\varsigma_{l},
		\]
		where
		\[
		q^{(m_{\partial})}_{n,j}(l):=n\left[P_{m_{\partial}}\left(\frac{2\pi\i l_{j}}{n}\right)-1\right].
		\]
		Thus, by the Parseval equality,
		\begin{align*}
			\sum_{k\in\Zd}\left|\binn{\Phi_{j}(t)\varsigma_{k},\partial^{(m_{\partial})}_{n,j}\varsigma_{l}}\right|^{2} & =\sum_{k\in\Zd}\left|q^{(m_{\partial})}_{n,j}(l)\right|^{2}\left|\binn{\Phi_{j}(t)\varsigma_{k},\varsigma_{l}}\right|^{2}\\
			& =\left|q^{(m_{\partial})}_{n,j}(l)\right|^{2}\bnorm{\Phi_{j}(t)\varsigma_{l}}^{2}_{\L}=\left|q^{(m_{\partial})}_{n,j}(l)\right|^{2}\norm{\Phi_{j}(t)}^{2}_{\L}.
		\end{align*}
		
		Since 
		\[
		\left|q^{(m_{\partial})}_{n,j}(l)\right|^{2}\le C_{m_{\partial}}(1+|l|^{2})^{m_{\partial}},
		\]
		we obtain 
		\begin{align*}
			\bnorm{B(t)}^{2}_{\cL^{2}(L^{2}(\Td;\R^{d});\H[-I])} & \le C_{m_{\partial}}\sum^{d}_{j=1}\norm{\Phi_{j}(t)}^{2}_{\L}\sum_{l\in\Zd}(1+|l|^{2})^{-I+m_{\partial}}\\
			& \le C_{I,m_{\partial}}\norm{\Phi(t)}^{2}_{L^{2}(\Td;\R^{d})}
		\end{align*}
		due to the fact that $I-m_{\partial}>\frac{d}{2}$. This completes
		the proof of the lemma.
	\end{proof}
	
	The modified Ornstein--Uhlenbeck process will be defined as the mild
	solution to 
	\[
	d\xi_{t}=A\xi_{t}dt+B(t)\,dW_{t},\quad\xi_{0}=g,
	\]
	where $W$ is a cylindrical Wiener process on $L^{2}(\Td;\R^{d})$.
	Formally, this equation can be written as 
	\begin{align}
		\partial_{t}\xi_{t} & =\frac{1}{2}\Delta^{(2m_{\Delta})}_{n}\xi_{t}+\frac{1}{\sqrt{2}}\divv^{(m_{\partial})}_{n}\left(\Phi(t)\dot{W}_{t}\right),\quad t\ge0,\label{eq:SPDE_for_OU_process}\\
		\xi_{0} & =g,\nonumber 
	\end{align}
	where $\dot{W}$ denotes a $d$-dimensional space--time white noise.
	
	We first consider the solution with zero initial condition. Set $\cS_{t}:=\cS^{n,m_{\Delta}}_{t},$
	$t\ge0$.
	\begin{lem}
		\label{lem:stochastic_convolution}Let $I>\frac{d}{2}+m_{\partial}$
		and $\Phi$ be a locally bounded measurable function. Then the stochastic
		convolution 
		\begin{equation}
			Z_{t}\coloneqq\int^{t}_{0}\cS_{t-s}B(s)dW_{s},\quad t\ge0,\label{eq:stochastic_convolution}
		\end{equation}
		is well defined and admits an $\H[-I]$-valued continuous Gaussian
		version. Moreover, $(Z_{t})_{t\ge0}$ is the unique mild solution
		to (\ref{eq:SPDE_for_OU_process}) with initial condition $g=0$.
		For every $T>0$ there exists a constant $C_{I,m_{\partial},T}>0$ such that 
		\begin{equation}
			\E\left[\sup_{s\in[0,t]}\bnorm{Z_{s}}^{2}_{\H[-I]}\right]\le C_{I,m_{\partial},T}\int^{t}_{0}\bnorm{\Phi(s)}^{2}_{L^{2}(\Td;\R^{d})}ds,\quad t\in[0,T].\label{eq:uniform_control_of_OU_process}
		\end{equation}
	\end{lem}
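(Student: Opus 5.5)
The plan is to invoke the standard theory of stochastic convolutions in Hilbert spaces (Da Prato--Zabczyk), using Lemma~\ref{lem:norm_of_diffusion_term_for_SPDE} and Lemma~\ref{lem:modified_heat_semigroup} as inputs. First, well-definedness. Since $\cS_{t-s}$ is a contraction on $\H[-I]$, Lemma~\ref{lem:norm_of_diffusion_term_for_SPDE} gives
\[
\int_0^t\bnorm{\cS_{t-s}B(s)}^2_{\cL^2(L^2(\Td;\R^d);\H[-I])}\,ds\le C_{I,m_\partial}\int_0^t\norm{\Phi(s)}^2_{L^2(\Td;\R^d)}\,ds<\infty,
\]
because $\Phi$ is locally bounded. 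Measurability of $s\mapsto\cS_{t-s}B(s)h$ follows from the measurability of $\Phi$ and the strong continuity of $\cS$. Hence $Z_t$ is a well-defined $\H[-I]$-valued Wiener integral of a deterministic integrand, and so it is centered Gaussian; the same holds jointly for finitely many times.

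Second, continuity and the maximal inequality. Here I would exploit the fact that $\cS$ is a contraction semigroup on the Hilbert space $\H[-I]$; by Lemma~\ref{lem:estimate_of_eigenvalues} it is in fact self-adjoint and nonpositive in the Fourier basis. For contraction semigroups, the maximal inequality for stochastic convolutions (Kotelenez, or Hausenblas--Seidler via Sz.-Nagy unitary dilation) gives a continuous version together with
\[
\E\sup_{s\le t}\norm{Z_s}^2_{\H[-I]}\le C\,\E\int_0^t\norm{B(s)}^2_{\cL^2}\,ds.
\]
Combined with the Hilbert--Schmidt bound above, this yields \eqref{eq:uniform_control_of_OU_process}. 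A more hands-on alternative is to work coordinatewise. Each Fourier coefficient $\inn{Z_t,\varsigma_k}$ is a scalar Ornstein--Uhlenbeck integral $\int_0^t e^{\frac12\lambda^{(2m_\Delta)}_{k,n}(t-s)}\,dM^k_s$, where $M^k$ is a continuous martingale. Writing this as $M^k_t+\frac12\lambda_k\int_0^t e^{\frac12\lambda_k(t-r)}M^k_r\,dr$ (integration by parts) gives $\sup_{s\le t}|\cdot|\le 2\sup_{s\le t}|M^k_s|$, since $\lambda_k\le0$. Doob's inequality together with the bound $\E|M^k_t|^2\le|q(k)|^2\int_0^t\norm{\Phi_j}^2$ and summation against $(1+|k|^2)^{-I}$ then reproduce the Hilbert--Schmidt estimate. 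Continuity follows because each coordinate is continuous and the tails converge uniformly in $L^2(\Omega;C)$.

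Third, mild solution and uniqueness. By definition, a mild solution with $g=0$ is exactly a process satisfying $Z_t=\int_0^t\cS_{t-s}B(s)\,dW_s$ almost surely for each $t$, so $Z$ is a mild solution. Any two mild solutions coincide almost surely for each $t$, and continuous versions are indistinguishable, which gives uniqueness.

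The main obstacle is the maximal inequality uniformly in $n$. The factorization method would need analyticity and a loss of regularity, so the contraction (dilation) argument, or equivalently the Fourier-mode argument with $\lambda^{(2m_\Delta)}_{k,n}\le0$ from Lemma~\ref{lem:estimate_of_eigenvalues}, is the route I would use. The constant then depends only on $I$, $m_\partial$ and $T$.
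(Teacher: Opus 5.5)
Your proposal is correct, and its main line coincides with the paper's. The paper checks that $\frac12\Delta^{(2m_\Delta)}_n$ generates a contraction semigroup on $H^{-I}$ (Lemma~\ref{lem:modified_heat_semigroup}) and that $B(t)$ is Hilbert--Schmidt into $H^{-I}$ with norm controlled by $\|\Phi(t)\|_{L^2}$ (Lemma~\ref{lem:norm_of_diffusion_term_for_SPDE}). It then cites Da Prato--Zabczyk for the continuous version, the unique mild solution and the maximal inequality, and obtains Gaussianity from the deterministic integrand, exactly as you do.

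Your Fourier-mode alternative is a genuinely different, self-contained route. It uses that the semigroup is diagonal in $\{\varsigma_k\}$ with real eigenvalues $\frac12\lambda^{(2m_\Delta)}_{k,n}\le 0$, which is where oddness of $m_\Delta$ enters. Each coordinate is then a scalar Ornstein--Uhlenbeck integral, and your integration-by-parts identity correctly gives $\sup_{s\le t}|\langle Z_s,\varsigma_k\rangle|\le 2\sup_{s\le t}|M^k_s|$. Doob's inequality together with $|q^{(m_\partial)}_{n,j}(k)|^2\le C(1+|k|^2)^{m_\partial}$, which holds uniformly in $n$, reproduces \eqref{eq:uniform_control_of_OU_process}, even with a constant independent of $T$ and $n$. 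Convergence of the partial sums in $L^2(\Omega;C([0,t];H^{-I}))$ gives the continuous version directly.

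The Fourier route buys transparency and explicit uniformity in $n$ with no black box. The paper's citation buys brevity and uses only the contraction property and the Hilbert--Schmidt bound, not the self-adjoint diagonal structure.

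One small correction to your last paragraph: factorization would not need analyticity or a loss of regularity here. Since $B(r)$ is already Hilbert--Schmidt into $H^{-I}$, $\int_0^T s^{-2\alpha}\|\cS_s B(r)\|_{HS}^2\,ds<\infty$ holds for every $\alpha<\frac12$, so continuity would follow from factorization. Its actual drawback is that it works through $p>2$ moments and does not directly give the second-moment bound in the form $\int_0^t\|\Phi(s)\|^2_{L^2}\,ds$. That is why the contraction (Kotelenez-type) or Fourier argument is the right tool for \eqref{eq:uniform_control_of_OU_process}.
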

	
	\begin{proof}
		By Lemma \ref{lem:modified_heat_semigroup}, $A=\frac{1}{2}\Delta^{(2m_{\Delta})}_{n}$
		generates the strongly continuous contraction semigroup $(\cS_{t})_{t\ge0}$
		on $\H[-I]$. Moreover, by Lemma \ref{lem:norm_of_diffusion_term_for_SPDE},
		the map $t\mapsto B(t)$ is measurable from $[0,\infty)$ to $\cL^{2}(L^{2}(\Td;\R^{d});\H[-I])$
		and locally bounded in the Hilbert--Schmidt norm. Hence the assumptions
		of \citep[Hypothesis 7.1]{Prato:2014} are satisfied.
		
		Therefore, by \citep[Theorem 7.2]{Prato:2014}, the stochastic convolution
		\[
		Z_{t}=\int^{t}_{0}\cS_{t-s}B(s)dW_{s}
		\]
		admits an $\H[-I]$-valued continuous version and is the unique mild
		solution to (\ref{eq:SPDE_for_OU_process}) with initial condition
		$g=0$. Since $B$ is deterministic, $(Z_{t})_{t\ge0}$ is a Gaussian
		process.
		
		Finally, (\ref{eq:uniform_control_of_OU_process}) follows from \citep[Proposition 7.3]{Prato:2014}
		and Lemma \ref{lem:norm_of_diffusion_term_for_SPDE}.
	\end{proof}
	
	\begin{rem}
		The condition $I>\frac{d}{2}+m_{\partial}$ is used only to ensure
		that $B(t)$ is Hilbert--Schmidt from $L^{2}(\Td;\R^{d})$ to $\H[-I]$.
		No smoothing property of $(\cS_{t})_{t\ge0}$ is used in the preceding
		construction. For fixed $n$, the operator $\Delta^{(2m_{\Delta})}_{n}$
		is of order $2m_{\Delta}$, and its semigroup has a regularizing effect.
		Exploiting this smoothing allows the stochastic convolution to be
		constructed in Sobolev spaces of higher regularity and, for $m_{\Delta}$
		sufficiently large, as a function-valued process.
	\end{rem}
	
	We will later consider solutions to (\ref{eq:SPDE_for_OU_process})
	started from $g\in\H[-I]$ at an arbitrary time $s\ge0$. For convenience,
	we set 
	\[
	\cS_{s,t}\coloneqq\cS^{n,m_{\Delta}}_{t-s},\quad0\le s\le t.
	\]
	
	\begin{prop}
		\label{prop:OU_started_at_s} Under the assumptions of Lemma \ref{lem:stochastic_convolution},
		for every $g\in\H[-I]$ and $s\ge0$, the process 
		\begin{equation}
			\xi_{s,t}(g)\coloneqq\cS_{s,t}g+\int^{t}_{s}\cS_{r,t}B(r)dW_{r},\quad t\ge s,\label{eq:OU_process}
		\end{equation}
		is the unique mild solution to (\ref{eq:SPDE_for_OU_process}) started
		from $g$ at time $s$.
		
		Moreover, for every $T>0$, $s\in[0,T]$ and $g\in\H[-I]$
		\[
		\E\left[\sup_{t\in[s,T]}\bnorm{\xi_{s,t}(g)}^{2}\right]\le2\norm g^{2}+C_{I,m_{\partial},T}\int^{T}_{s}\bnorm{\Phi(r)}^{2}_{L^{2}(\Td;\R^{d})}dr
		\]
		and for every $T,c,\delta>0$, 
		\[
		\sup_{\bnorm g_{\H[-I+\delta]}\le c}\sup_{s\in[0,T]}\E\left[\sup_{t\in[s,s+\eps]}\bnorm{\xi_{s,t}(g)-g}^{2}_{\H[-I]}\right]\to0
		\]
		as $\eps\to0+$.
	\end{prop}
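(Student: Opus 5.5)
We treat the three assertions separately; each reduces to results already established for the semigroup $\cS$ and the stochastic convolution.

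\textbf{Mild solution.} Shifting time by $s$ turns the problem into one started at time $0$, with coefficient $\tilde\Phi(r):=\Phi(s+r)$ and Wiener process $\tilde W_r:=W_{s+r}-W_s$. The latter is again a cylindrical Wiener process, and $\tilde\Phi$ is still locally bounded and measurable. By Lemma~\ref{lem:stochastic_convolution} the convolution $\int_s^t\cS_{r,t}B(r)\,dW_r$ therefore exists, has a continuous Gaussian version, and is the unique mild solution with zero initial datum. The term $\cS_{s,t}g$ is continuous in $t$ in $\H[-I]$ by Lemma~\ref{lem:modified_heat_semigroup}, and it solves the homogeneous equation in the mild sense. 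By linearity the sum \eqref{eq:OU_process} is a mild solution started from $g$ at time $s$. For uniqueness, the difference of two mild solutions satisfies $v_t=\cS_{s,t}v_s$ with $v_s=0$, so it vanishes.

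\textbf{Second-moment bound.} Use $\|a+b\|^2\le 2\|a\|^2+2\|b\|^2$. The contraction property of Lemma~\ref{lem:modified_heat_semigroup} gives $\sup_t\|\cS_{s,t}g\|\le\|g\|$. The bound \eqref{eq:uniform_control_of_OU_process}, applied to the shifted process, controls the stochastic term by $C\int_s^T\|\Phi(r)\|^2_{L^2}\,dr$. The factor $2$ in front of this term is absorbed into the constant. The constant stays uniform in $s$ because the time horizon $T-s\le T$.

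\textbf{Uniform continuity at $s$.} Split
\[
\xi_{s,t}(g)-g=(\cS_{t-s}-\id)g+\int_s^t\cS_{r,t}B(r)\,dW_r .
\]
\begin{itemize}
\item For the deterministic part, Lemma~\ref{lem:operator_norm_continuity_modified_semigroup} with loss $\delta$ gives
\[
\sup_{t\in[s,s+\eps]}\|(\id-\cS_{t-s})g\|_{\H[-I]}\le \sup_{u\le\eps}\|\id-\cS_u\|_{\cL_1(\H[-I+\delta];\H[-I])}\,c\to0 .
\]
This holds uniformly over $\|g\|_{\H[-I+\delta]}\le c$ and over $s$.
\item For the stochastic part, apply the shifted version of \eqref{eq:uniform_control_of_OU_process} on $[s,s+\eps]$ to get the bound $C\int_s^{s+\eps}\|\Phi(r)\|^2\,dr\le C\eps\sup_{r\le T+1}\|\Phi(r)\|^2$. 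This tends to $0$ uniformly in $s$ by local boundedness of $\Phi$.
\end{itemize}

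The only delicate point is that the constant in \eqref{eq:uniform_control_of_OU_process} must not depend on $s$. This holds because that bound comes from \citep[Proposition 7.3]{Prato:2014}, whose constant depends only on the horizon and on the contraction bound $\sup_t\|\cS_t\|\le1$.
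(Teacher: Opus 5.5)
Your proposal is correct and follows the paper's proof essentially step for step. Both arguments use the same time-shift reduction, with $W^s_r=W_{s+r}-W_s$ and $r\mapsto\Phi(s+r)$, to apply Lemma~\ref{lem:stochastic_convolution}. Both use the same $2\|a\|^2+2\|b\|^2$ splitting with the contraction property, and the same deterministic/stochastic decomposition for the last claim via Lemma~\ref{lem:operator_norm_continuity_modified_semigroup} and the bound $C\eps\sup_{r\le T+1}\|\Phi(r)\|^2$.
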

	
	\begin{proof}
		The first claim of the statement follows from Lemmas \ref{lem:modified_heat_semigroup}
		and \ref{lem:stochastic_convolution}.
		
		For the second claim, using the contraction property of the semigroup,
		we have
		\begin{align*}
			\E\left[\sup_{t\in[s,T]}\bnorm{\xi_{s,t}(g)}^{2}_{\H[-I]}\right] & \le2\sup_{t\in[s,T]}\bnorm{\cS_{s,t}g}^{2}_{\H[-I]}+2\E\left[\sup_{t\in[s,T]}\bnorm{\int^{t}_{s}\cS_{r,t}B(r)\,dW_{r}}^{2}_{\H[-I]}\right]\\
			& \le2\bnorm g^{2}_{\H[-I]}+C_{T}\int^{T}_{s}\bnorm{\Phi(r)}^{2}_{L^{2}(\Td;\R^{d})}\,dr.
		\end{align*}
		Indeed, the estimate for the stochastic convolution follows from Lemma
		\ref{lem:stochastic_convolution} applied to the shifted cylindrical
		Wiener process
		\[
		W^{s}_{r}:=W_{s+r}-W_{s}
		\]
		and the function $r\mapsto\Phi(s+r)$.
		
		We next prove the last assertion. For $g\in\H[-I]$ and $s\in[0,T]$,
		we have 
		\begin{align*}
			\E\left[\sup_{t\in[s,s+\eps]}\bnorm{\xi_{s,t}(g)-g}^{2}_{\H[-I]}\right] & \le2R^{1}_{\eps}(g;s)+2R^{2}_{\eps}(s),
		\end{align*}
		where 
		\[
		R^{1}_{\eps}(g;s)\coloneqq\sup_{t\in[s,s+\eps]}\bnorm{\cS_{s,t}g-g}^{2}_{\H[-I]}
		\]
		and 
		\[
		R^{2}_{\eps}(s)\coloneqq\E\left[\sup_{t\in[s,s+\eps]}\bnorm{\int^{t}_{s}\cS_{r,t}B(r)dW_{r}}^{2}_{\H[-I]}\right].
		\]
		
		For the first term, 
		\begin{align*}
			R^{1}_{\eps}(g;s) & =\sup_{u\in[0,\eps]}\bnorm{(\cS_{u}-\id)g}^{2}_{\H[-I]}\\
			& \le\sup_{u\in[0,\eps]}\bnorm{\cS_{u}-\id}^{2}_{\cL_{1}(\H[-I+\delta];\H[-I])}\bnorm g^{2}_{\H[-I+\delta]}.
		\end{align*}
		Hence, by Lemma \ref{lem:operator_norm_continuity_modified_semigroup},
		\[
		\sup_{\bnorm g_{\H[-I+\delta]}\le c}\sup_{s\in[0,T]}R^{1}_{\eps}(g;s)\to0
		\]
		as $\eps\to0+$.
		
		For the stochastic term, using the shifted cylindrical Brownian motion
		\[
		W^{s}_{r}\coloneqq W_{s+r}-W_{s},
		\]
		we obtain 
		\[
		R^{2}_{\eps}(s)=\E\left[\sup_{u\in[0,\eps]}\bnorm{\int^{u}_{0}\cS_{u-r}B(s+r)dW^{s}_{r}}^{2}_{\H[-I]}\right].
		\]
		Therefore, by Lemma \ref{lem:stochastic_convolution}, 
		\[
		R^{2}_{\eps}(s)\le C_{I,m_{\partial},T+1}\int^{\eps}_{0}\bnorm{\Phi(s+r)}^{2}_{L^{2}(\Td;\R^{d})}\,dr
		\]
		for $\eps\le1$. Since $\Phi$ is locally bounded, 
		\[
		\sup_{s\in[0,T]}R^{2}_{\eps}(s)\le \eps C_{I,m_{\partial},T+1}\sup_{r\in[0,T+1]}\bnorm{\Phi(r)}^{2}_{L^{2}(\Td;\R^{d})}\to0
		\]
		as $\eps\to0+$.
		
		Combining the two estimates completes the proof.
	\end{proof}
	
	\begin{rem}
		\label{rem:convergence_to_zero_of_sup_xi}Similarly to the proof of
		Proposition \ref{prop:OU_started_at_s}, we can prove that for each
		$g\in\H[-I]$
		\[
		\sup_{s\in[0,T]}\E\left[\sup_{t\in[s,s+\eps]}\bnorm{\xi_{s,t}(g)-g}^{2}_{\H[-I]}\right]\to0
		\]
		as $\eps\to0+$.
	\end{rem}
	
	We finally introduce the time-dependent generator associated with
	the modified Ornstein--Uhlenbeck dynamics. Let $I>\frac{d}{2}+m_{\partial}$
	and $F\in\Cf^{2}_{\text{der}}(\H[-I-2m_{\Delta}])$. Since $\H[-I]\subset\H[-I-2m_{\Delta}],$
	the function $F$ and its derivatives can be evaluated at every $f\in\H[-I]$.
	Moreover, $DF(f)\in\H[I+2m_{\Delta}],$ and hence 
	\[
	\Delta^{(2m_{\Delta})}_{n}DF(f)\in\H[I].
	\]
	Therefore, the term $\binn{\Delta^{(2m_{\Delta})}_{n}DF(f),f}$ is
	well defined.
	
	Furthermore, by Lemma \ref{lem:existence_of_smooth_kernel}, for every
	$f\in\H[-I]$, the bilinear functional $D^{2}F(f)$ admits a kernel
	\[
	D^{2}F(f;\cdot)\in\Cf^{m_{\partial},m_{\partial}}\left((\Td)^{2}\right).
	\]
	Consequently, for $t\ge0$ we define 
	\begin{align}
		\cG^{\Phi}_{t}F(f)\coloneqq & \frac{1}{2}\binn{\Delta^{(2m_{\Delta})}_{n}DF(f),f}+\frac{1}{4}\sum^{d}_{j=1}\binn{\left(\partial^{(m_{\partial})}_{n,j}\right)^{\odot2}D^{2}F(f;\cdot),\Phi^{2}_{j}(t)},\quad f\in\H[-I].\label{eq:generator_G_Phi}
	\end{align}
	Here 
	\[
	\left(\partial^{(m_{\partial})}_{n,j}\right)^{\odot2}D^{2}F(f;x)\coloneqq\left.\partial^{(m_{\partial})}_{n,j,x_{1}}\partial^{(m_{\partial})}_{n,j,x_{2}}D^{2}F(f;x_{1},x_{2})\right|_{x_{1}=x_{2}=x}.
	\]
	
	\begin{lem}
		\label{lem:martingale_problem_modified_OU} Let $I>\frac{d}{2}+m_{\partial}$,
		$\Phi:[0,\infty)\to L^{2}(\Td;\R^{d})$ be measurable and locally
		bounded, and $F\in\Cf^{2}_{\text{der}}(\H[-I-2m_{\Delta}]).$ Then,
		for every $g\in\H[-I]$ and $s\ge0$, the process 
		\[
		F(\xi_{s,t}(g))-F(g)-\int^{t}_{s}\cG^{\Phi}_{r}F(\xi_{s,r}(g))dr,\quad t\ge s,
		\]
		is a continuous square-integrable martingale with respect to the filtration
		generated by $\left(W_{t}-W_{s}\right)_{t\ge s}$.
	\end{lem}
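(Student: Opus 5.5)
The plan is to apply Itô's formula to $F(\xi_{s,t}(g))$, viewing the mild solution in the weaker space $\H[-I-2m_\Delta]$ where it behaves like a strong solution. Since $\xi_{s,t}(g)\in\H[-I]$ and the operator $A=\frac12\Delta^{(2m_\Delta)}_n$ maps $\H[-I]$ boundedly into $\H[-I-2m_\Delta]$ (Lemma \ref{lem:modified_heat_semigroup} with base index $-I-2m_\Delta$), the standard equivalence of mild and weak/strong solutions (e.g.\ \citep[Proposition 6.3 / Theorem 5.4]{Prato:2014}) gives, in $\H[-I-2m_\Delta]$,
\[
\xi_{s,t}(g)=g+\int_s^t A\xi_{s,r}(g)\,dr+\int_s^t B(r)\,dW_r,
\]
with the Bochner integral in $\H[-I-2m_\Delta]$ well defined since $\sup_{r\le T}\|\xi_{s,r}(g)\|_{\H[-I]}$ is square integrable by Proposition \ref{prop:OU_started_at_s}. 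The stochastic integral is well defined there because $B(r)$ is Hilbert--Schmidt into $\H[-I]\hookrightarrow\H[-I-2m_\Delta]$ by Lemma \ref{lem:norm_of_diffusion_term_for_SPDE}. I would verify this identity directly by testing against $\varsigma_k$: each Fourier coefficient satisfies a scalar linear SDE, so the identity holds coefficientwise and hence in $\H[-I-2m_\Delta]$.

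Next I apply Itô's formula for $F\in\Cf^2_{\text{der}}(\H[-I-2m_\Delta])$ (\citep[Theorem 4.32]{Prato:2014}; boundedness of $D^2F$ and linear growth of $DF$ suffice). This gives
\[
F(\xi_{s,t})=F(g)+\int_s^t\Bigl(DF(\xi_{s,r})[A\xi_{s,r}]+\tfrac12\Tr\bigl(D^2F(\xi_{s,r})B(r)B(r)^*\bigr)\Bigr)dr+\int_s^t DF(\xi_{s,r})[B(r)\,dW_r].
\]
The drift term equals $\frac12\inn{\Delta^{(2m_\Delta)}_nDF(\xi_{s,r}),\xi_{s,r}}$ by self-adjointness of the Fourier multiplier, as noted before \eqref{eq:generator_G_Phi}. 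For the trace I expand in the basis $\varsigma_k e_j$ and use the kernel representation (Lemma \ref{lem:existence_of_smooth_kernel}, with $I>\frac d2+m_\partial$). This yields
\[
\tfrac12\sum_{j,k}\tfrac12 D^2F\bigl[(\partial^{(m_\partial)}_{n,j})^*(\Phi_j\varsigma_k),(\partial^{(m_\partial)}_{n,j})^*(\Phi_j\bar\varsigma_k)\bigr]=\tfrac14\sum_j\int(\partial^{(m_\partial)}_{n,j})^{\odot2}D^2F(f;x,y)\Phi_j(x)\Phi_j(y)\sum_k\varsigma_k(x)\bar\varsigma_k(y),
\]
and Parseval collapses the last sum to the diagonal, giving $\frac14\sum_j\inn{(\partial^{(m_\partial)}_{n,j})^{\odot2}D^2F(f;\cdot),\Phi_j^2}$. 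To justify this rigorously I would compute it as $\sum_k\sum_{x\text{-}}$ via Parseval in one variable for the smooth kernel, exactly as in the proof of Lemma \ref{lem:norm_of_diffusion_term_for_SPDE}. Hence the drift equals $\cG^\Phi_rF(\xi_{s,r})$.

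Finally, the stochastic integral is a continuous square-integrable martingale with respect to the filtration of $(W_t-W_s)_{t\ge s}$, since
\[
\E\int_s^t\|B(r)^*DF(\xi_{s,r})\|^2dr\le C\int_s^t\|\Phi(r)\|_{L^2}^2\,\E\bigl(1+\|\xi_{s,r}\|^2_{\H[-I]}\bigr)dr<\infty,
\]
using the linear growth of $DF$ together with Proposition \ref{prop:OU_started_at_s}. Adaptedness holds because $\xi_{s,\cdot}(g)$ depends only on increments after $s$. I expect the main obstacle to be the rigorous passage from the mild to the strong formulation, since $A$ is unbounded, together with the trace identification, which is where the distributional divergence and the merely $L^2$ coefficient $\Phi$ meet. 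The Fourier-coefficient argument should handle both.
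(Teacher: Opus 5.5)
Your proposal is correct and follows essentially the same route as the paper: pass from the mild to the strong formulation in $H^{-I-2m_\Delta}$, where $A$ is bounded from $H^{-I}$, and apply It\^o's formula there. The drift is identified by self-adjointness of $\Delta^{(2m_\Delta)}_n$, the trace term via the kernel representation of $D^2F$ and Parseval, and square-integrability of the stochastic integral follows from Lemma~\ref{lem:norm_of_diffusion_term_for_SPDE}. The differences are minor: you verify the strong formulation coefficientwise instead of citing \citep[Theorem 3.1]{Gawarecki:2011}, and you use the complex rather than a real Fourier basis, while the collapse of the trace onto the diagonal is sketched at the same level of detail as in the paper.
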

	
	\begin{proof}
		We apply Itô's formula in the Hilbert space $\H[-I-2m_{\Delta}]$.
		Since $\H[-I]\subset\H[-I-2m_{\Delta}]$ and 
		\[
		A=\frac{1}{2}\Delta^{(2m_{\Delta})}_{n}:\H[-I]\to\H[-I-2m_{\Delta}]
		\]
		is bounded, the process $\xi_{s,\cdot}(g)$ satisfies
		\[
		\xi_{s,t}(g)=g+\int^{t}_{s}A\xi_{s,r}(g)dr+\int^{t}_{s}B(r)dW_{r}
		\]
		as an $\H[-I-2m_{\Delta}]$-valued process, according to \citep[Theorem 3.1]{Gawarecki:2011}.
		By Itô's formula (see, e.g., \citep[Theorem 4.32]{Prato:2014}), we
		have 
		\begin{align*}
			F(\xi_{s,t}(g))= & F(g)+\int^{t}_{s}\binn{DF(\xi_{s,r}(g)),B(r)dW_{r}}\\
			& +\int^{t}_{s}\left[\binn{DF(\xi_{s,r}(g)),A\xi_{s,r}(g)}+\frac{1}{2}\Tr\left[B(r)^{*}D^{2}F(\xi_{s,r}(g))B(r)\right]\right]dr.
		\end{align*}
		Since $A=\frac{1}{2}\Delta^{(2m_{\Delta})}_{n}$ is self-adjoint and
		$DF(f)\in\H[I+2m_{\Delta}],$ we obtain 
		\[
		\binn{DF(f),Af}=\frac{1}{2}\binn{\Delta^{(2m_{\Delta})}_{n}DF(f),f},\quad f\in\H[-I].
		\]
		
		It remains to identify the trace term. Let $\{\tilde{\varsigma}_{k}\}_{k\in\Zd}$ be a real trigonometric basis obtained from the complex Fourier basis by taking sine and cosine components and chosen to be
		orthonormal in $L^{2}(\Td)$. We take $h_{k,j}=\tilde\varsigma_{k} e_{j}$,
		$k\in\Zd$, $j\in[d]$, and note that they form an orthonormal basis of $L^{2}(\Td;\R^{d})$.
		Hence 
		\begin{align*}
			\Tr\left[B(r)^{*}D^{2}F(f)B(r)\right] & =\sum^{d}_{j=1}\sum_{k\in\Zd}D^{2}F(f)\left[B(r)h_{k,j},B(r)h_{k,j}\right]\\
			& =\frac{1}{2}\sum^{d}_{j=1}\sum_{k\in\Zd}D^{2}F(f)\left[\left(\partial^{(m_{\partial})}_{n,j}\right)^{*}\bigl(\Phi_{j}(r)\tilde\varsigma_{k}\bigr),\left(\partial^{(m_{\partial})}_{n,j}\right)^{*}\bigl(\Phi_{j}(r)\tilde\varsigma_{k}\bigr)\right].
		\end{align*}
		Using the kernel representation of $D^{2}F(f)$ and moving the derivatives
		onto its kernel, we get 
		\begin{align*}
			&\Tr\left[B(r)^{*}D^{2}F(f)B(r)\right]\\
			& \qquad = \frac{1}{2}\sum^{d}_{j=1}\sum_{k\in\Zd}\int_{(\Td)^{2}}\partial^{(m_{\partial})}_{n,j,x}\partial^{(m_{\partial})}_{n,j,y}D^{2}F(f;x,y)\Phi_{j}(r,x)\Phi_{j}(r,y)\tilde\varsigma_{k}(x)\tilde\varsigma_{k}(y)dxdy.
		\end{align*}
		To justify the passage to the diagonal, let $P_{N}$ be the finite-rank
		orthogonal projections onto an increasing family of trigonometric
		modes. Since $B(r)$ is Hilbert--Schmidt and $D^{2}F(f)$ is bounded,
		$B(r)^{*}D^{2}F(f)B(r)$ is trace class. Hence its trace is the limit
		of the corresponding finite-dimensional traces. Using the kernel
		representation and Parseval's identity in these finite-dimensional
		traces and then passing to the limit yields
		\begin{align*}
			\Tr\left[B(r)^{*}D^{2}F(f)B(r)\right] & =\frac{1}{2}\sum^{d}_{j=1}\int_{\Td}\left(\partial^{(m_{\partial})}_{n,j}\right)^{\odot2}D^{2}F(f;x)\Phi_{j}^{2}(r,x)dx\\
			& =\frac{1}{2}\sum^{d}_{j=1}\binn{\left(\partial^{(m_{\partial})}_{n,j}\right)^{\odot2}D^{2}F(f;\cdot),\Phi_{j}^{2}(r)}.
		\end{align*}
		Consequently, 
		\[
		\binn{DF(f),Af}+\frac{1}{2}\Tr\left[B(r)^{*}D^{2}F(f)B(r)\right]=\cG^{\Phi}_{r}F(f)
		\]
		and, therefore, 
		\[
		F(\xi_{s,t}(g))-F(g)-\int^{t}_{s}\cG^{\Phi}_{r}F(\xi_{s,r}(g))dr=\int^{t}_{s}\binn{DF(\xi_{s,r}(g)),B(r)dW_{r}}.
		\]
		
		Finally, since $DF$ is bounded and $\Phi$ is locally bounded, Lemma~\ref{lem:norm_of_diffusion_term_for_SPDE}
		implies, for every $T>s$, 
		\[
		\E\int^{T}_{s}\bnorm{B(r)^{*}DF(\xi_{s,r}(g))}^{2}_{L^{2}(\Td;\R^{d})}dr<\infty.
		\]
		Hence the stochastic integral on the right-hand side is a square-integrable
		continuous martingale, which completes the proof.
	\end{proof}

	\section{Non-anticipative functionals and backward Kolmogorov equations\label{sec:non-anticipative_functionals}}
	
	In this section, we develop the framework of non-anticipative functionals
	needed to compare the fluctuation field with the modified Ornstein-Uhlenbeck
	process introduced in the previous section. After introducing suitable
	notions of horizontal and vertical differentiation and establishing
	the corresponding martingale problem, we study continuation functionals
	associated with the modified Ornstein-Uhlenbeck dynamics. The main
	goal is to derive a backward Kolmogorov equation for these functionals,
	which will be used in the subsequent analysis.
	
	\subsection{Path spaces and non-anticipative functionals}
	
	We begin by introducing the space of stopped paths and non-anticipative
	functionals, following \citep{Cont_Fournie2010,Cont_Fournie:2013}.
	Let $(E,d_{E})$ be a metric space and let $T>0$ be fixed. For an
	interval $K\subseteq[0,T]$, we denote by $D(K,E)$ the space of \cdl
	functions from $K$ to $E$, equipped with the uniform topology. If
	$K_{1}\subseteq K_{2}\subseteq[0,T]$ and $\gamma\in D(K_{2},E)$,
	we denote the restriction of $\gamma$ to $K_{1}$ by $\gamma^{K_{1}}$.
	
	For $0\le a<b\le T$, we define
	\[
	\Upsilon([a,b],E):=\bigcup_{t\in[a,b]}\left(\{t\}\times D([a,t],E)\right).
	\]
	We regard the spaces $D([a,t],E)$, $t\in[a,b]$, as disjoint, so
	that the terminal time $t$ is uniquely determined by an element $\gamma\in D([a,t],E)$.
	Accordingly, we will often identify $(t,\gamma)\in\Upsilon([a,b],E)$
	with $\gamma$ and simply write $\gamma\in\Upsilon([a,b],E)$.
	
	For $(t,\gamma),(r,\vartheta)\in\Upsilon([a,b],E)$, define 
	\begin{equation}
		d_{\Upsilon([a,b],E)}(\gamma,\vartheta):=\sup_{s\in[a,b]}d_{E}\bigl(\gamma_{s\wedge t},\vartheta_{s\wedge r}\bigr)+|t-r|.\label{eq:distance_d_infty}
	\end{equation}
	Then $\left(\Upsilon([a,b],E),d_{\Upsilon([a,b],E)}\right)$ is a
	metric space. Moreover, if $(E,d_{E})$ is complete, then $\Upsilon([a,b],E)$
	is complete as well.
	
	For measurability, we equip $D(K,E)$ with the cylindrical $\sigma$-algebra. We equip $\Upsilon([a,b],E)$ with the $\sigma$-algebra generated by the maps
	\[
	(t,\gamma)\mapsto t,
	\qquad
	(t,\gamma)\mapsto\gamma_{s\wedge t},
	\quad s\in[a,b].
	\]
	All continuity and differentiability requirements refer to the uniform metrics introduced above.
	\begin{defn}
		A measurable map 
		\[
		F:\Upsilon([a,b],E)\to\widetilde{E},\quad(t,\gamma)\mapsto F_{t}(\gamma),
		\]
		where $(\widetilde{E},d_{\widetilde{E}})$ is a metric space, is called
		a non-anticipative functional. If $\gamma\in D([a,t],E)$, we will
		also write $F(\gamma)$ instead of $F_{t}(\gamma)$.
	\end{defn}
	
	We next introduce the continuity properties of non-anticipative functionals
	that will be used below.
	\begin{defn}
		Let $F:\Upsilon([a,b],E)\to\widetilde{E}$ be a non-anticipative functional.
		\begin{enumerate}
			\item[(i)]  We say that $F$ is \emph{continuous at fixed time} $t\in[a,b]$
			if 
			\[
			D([a,t],E)\ni\gamma\longmapsto F_{t}(\gamma)\in\widetilde{E}
			\]
			is continuous with respect to the uniform metric.
			\item[(ii)] We say that $F$ is \emph{left-continuous} at $\gamma\in D([a,t],E)$,
			$t\in(a,b]$, if for every $\eps>0$ there exists $\delta>0$ such
			that 
			\[
			d_{\widetilde{E}}\bigl(F_{t}(\gamma),F_{r}(\vartheta)\bigr)<\eps
			\]
			whenever $a\le r\le t$, $\vartheta\in D([a,r],E)$, and 
			\[
			d_{\Upsilon([a,b],E)}(\gamma,\vartheta)<\delta.
			\]
			\item[(iii)] We say that $F$ is \emph{continuous} if it is continuous as a map
			from $\Upsilon([a,b],E)$ to $\widetilde{E}$.
		\end{enumerate}
	\end{defn}
	
	For $a\le t\le r\le b$, define the constant-extension operator 
	\[
	\Theta^{a}_{t,r}:D([a,t],E)\to D([a,r],E)
	\]
	by 
	\[
	(\Theta^{a}_{t,r}\gamma)_{s}:=\gamma_{s\wedge t},\quad s\in[a,r].
	\]
	Thus, $\Theta^{a}_{t,r}\gamma$ coincides with $\gamma$ on $[a,t]$
	and remains constant at $\gamma_{t}$ on $[t,r]$. We also set $\Theta_{t,r}:=\Theta^{0}_{t,r}$.
	
	For every $\gamma\in D([a,b],E)$, the map from $\{(t,r):a\le t\le r\le b\}$
	to $\Upsilon([a,b],E)$ defined by
	\[
	(t,r)\mapsto\Theta^{a}_{t,r}\gamma^{[a,t]}
	\]
	is measurable. Consequently, for every non-anticipative functional
	$F$, the map 
	\[
	(t,r)\mapsto F_{r}\bigl(\Theta^{a}_{t,r}\gamma^{[a,t]}\bigr)
	\]
	is also measurable.
	
	\subsection{Horizontal absolute continuity}
	
	To derive the path-dependent backward Kolmogorov equation used below,
	we need a notion of horizontal regularity which is sufficiently weak
	to accommodate the non-anticipative functionals arising in our comparison
	argument. In particular, we will consider functionals whose dependence
	on the path involves its values at finitely many fixed times. For
	such functionals, classical horizontal differentiability is too restrictive,
	since the horizontal derivative may fail to exist, or to possess the
	required regularity, at these prescribed times.
	
	We therefore introduce a weaker notion of horizontal regularity based
	on absolute continuity. More precisely, we require that, along every
	constant extension of a stopped path, the increments of the functional
	admit an integral representation in terms of a measurable non-anticipative
	functional. This allows the horizontal derivative to be defined only
	almost everywhere in time and provides precisely the regularity needed
	for the backward Kolmogorov equations considered below.
	\begin{defn}
		A real-valued non-anticipative functional $F$ is said to be \textit{horizontally
			absolutely continuous} if there exists a non-anticipative functional
		$G$ such that, for every $(t,\gamma)\in\Upsilon([0,T],E)$, the map
		\[
		s\mapsto G_{s}(\Theta_{t,s}\gamma)
		\]
		is Lebesgue integrable on $[t,T]$ and 
		\[
		F_{r}(\Theta_{t,r}\gamma)-F_{t}(\gamma)=\int^{r}_{t}G_{s}(\Theta_{t,s}\gamma)ds,\quad r\in[t,T].
		\]
		Any such functional $G$ is called a \textit{weak horizontal derivative}
		of $F$ and is denoted by 
		\[
		\cH_{t}F:=G_{t},\quad t\in[0,T].
		\]
	\end{defn}
	
	In general, a weak horizontal derivative is determined only almost
	everywhere in time along each constant extension. The next lemma shows
	that, under continuity at fixed times, any two weak horizontal derivatives
	nevertheless coincide almost everywhere along every \cdl path.
	\begin{lem}
		\label{lem:uniqueness_of_weak_derivatives} Let $F$ be a horizontally
		absolutely continuous non-anticipative functional, and let $G$ and
		$\widetilde{G}$ be two weak horizontal derivatives of $F$ which
		are continuous at each fixed time. Then, for every $\gamma\in D([0,T],E)$,
		\[
		G_{t}(\gamma^{[0,t]})=\widetilde{G}_{t}(\gamma^{[0,t]})
		\]
		for a.e. $t\in[0,T]$.
	\end{lem}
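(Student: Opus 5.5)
Fix $\gamma\in D([0,T],E)$ and set $H_t:=G_t(\gamma^{[0,t]})-\widetilde G_t(\gamma^{[0,t]})$. The plan is to approximate $\gamma$ by step paths, on which constant extensions are available, use the defining integral identity to show that $G-\widetilde G$ integrates to zero along constant extensions, and then pass to the limit using continuity at fixed times.

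First I would use the defining identity along constant extensions. For every $(t,\vartheta)\in\Upsilon([0,T],E)$ and $r\in[t,T]$, subtracting the two representations of $F_r(\Theta_{t,r}\vartheta)-F_t(\vartheta)$ gives
\[
\int_t^r\bigl(G_s(\Theta_{t,s}\vartheta)-\widetilde G_s(\Theta_{t,s}\vartheta)\bigr)ds=0\quad\text{for all }r\in[t,T].
\]
By Lebesgue differentiation, $G_s(\Theta_{t,s}\vartheta)=\widetilde G_s(\Theta_{t,s}\vartheta)$ for a.e. $s\in[t,T]$.

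Next I would discretise $\gamma$. For a partition $0=t_0<t_1<\dots<t_N=T$, let $\gamma^N$ be the piecewise constant path equal to $\gamma_{t_i}$ on $[t_i,t_{i+1})$, with the value $\gamma_T$ at $T$. Partitions of a càdlàg path can be chosen (standard, e.g.\ via the jump times of size larger than $1/N$ and the càdlàg modulus) with $\sup_{s}d_E(\gamma^N_s,\gamma_s)\to0$. More precisely, one takes the step path $\gamma^N_s=\gamma_{t_i}$ for $s\in[t_i,t_{i+1})$, where the partition satisfies $\mathrm{osc}_{[t_i,t_{i+1})}\gamma\le1/N$. On each $[t_i,t_{i+1})$, the stopped path $(\gamma^N)^{[0,s]}$ equals $\Theta_{t_i,s}(\gamma^N)^{[0,t_i]}$. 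The first step, applied with $\vartheta=(\gamma^N)^{[0,t_i]}$, then gives $G_s((\gamma^N)^{[0,s]})=\widetilde G_s((\gamma^N)^{[0,s]})$ for a.e. $s\in[t_i,t_{i+1}]$. Taking the union over finitely many $i$ and countably many $N$, there is a null set $\mathcal N$ such that equality holds for all $N$ and all $s\notin\mathcal N$.

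Finally I would pass to the limit at fixed times. For each fixed $s\notin\mathcal N$, we have $(\gamma^N)^{[0,s]}\to\gamma^{[0,s]}$ uniformly on $[0,s]$. By continuity at fixed time $s$ of $G$ and $\widetilde G$, both sides converge, which yields $G_s(\gamma^{[0,s]})=\widetilde G_s(\gamma^{[0,s]})$. The only delicate point is the uniform step approximation of a càdlàg path, and it is exactly why the argument uses continuity in the uniform metric at fixed times rather than joint continuity. I would construct it via the usual fact that for every $\varepsilon$ a càdlàg path admits a partition with oscillation below $\varepsilon$ on each half-open cell. Measurability of $s\mapsto G_s(\gamma^{[0,s]})$ is not needed, since the conclusion is a pointwise statement off $\mathcal N$.
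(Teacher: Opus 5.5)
Your proof is correct, but it takes a different route from the paper's.

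\textbf{The paper's argument.} It fixes $\gamma$ and obtains, for each $t$, that $G_r(\Theta_{t,r}\gamma^{[0,t]})=\widetilde G_r(\Theta_{t,r}\gamma^{[0,t]})$ for a.e.\ $r\in[t,T]$. It then applies Fubini's theorem to the set of pairs $(t,r)$ to swap the roles of the two variables: for a.e.\ $r$, the exceptional set of $t\in[0,r]$ is null. At such an $r$ that is also a continuity point of $\gamma$, it picks $t_k\uparrow r$ outside this set. It then approximates $\gamma^{[0,r]}$ by the constant extensions $\Theta_{t_k,r}\gamma^{[0,t_k]}$, with the uniform convergence coming from continuity of $\gamma$ at $r$.

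\textbf{Your argument.} You replace the Fubini step with the oscillation lemma for c\`adl\`ag paths, which gives partitions with oscillation below $\varepsilon$ on each half-open cell. This yields step paths $\gamma^N\to\gamma$ uniformly, and on each cell their stopped paths are exactly constant extensions. The exceptional set is therefore a countable union of null sets, each coming only from the one-parameter identity.

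\textbf{Comparison.}
\begin{itemize}
\item Your route never needs joint measurability of $(t,r)\mapsto G_r(\Theta_{t,r}\gamma^{[0,t]})$. The paper needs this for Fubini and takes it from its earlier remark on the measurability of constant extensions.
\item You also do not have to discard the jump times of $\gamma$, since your uniform approximation holds at every $s$.
\item The price is the appeal to the c\`adl\`ag partition lemma. The paper's argument needs only the fact that a c\`adl\`ag path has countably many discontinuities.
\item Both proofs use continuity at each fixed time, in the uniform metric, in the same way to pass to the limit.
\end{itemize}
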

	
	\begin{proof}
		Fix $\gamma\in D([0,T],E)$. Since $G$ and $\widetilde{G}$ are both
		weak horizontal derivatives of $F$, for every $t\in[0,T]$ and $r\in[t,T]$,
		\[
		\int^{r}_{t}\left[G_{s}(\Theta_{t,s}\gamma^{[0,t]})-\widetilde{G}_{s}(\Theta_{t,s}\gamma^{[0,t]})\right]ds=0.
		\]
		Hence 
		\[
		G_{r}(\Theta_{t,r}\gamma^{[0,t]})=\widetilde{G}_{r}(\Theta_{t,r}\gamma^{[0,t]})
		\]
		for a.e. $r\in[t,T]$. In particular, for every $t\in[0,T]$, the
		set 
		\[
		B_{t}:=\left\{ r\in[t,T]:G_{r}(\Theta_{t,r}\gamma^{[0,t]})\neq\widetilde{G}_{r}(\Theta_{t,r}\gamma^{[0,t]})\right\} 
		\]
		has Lebesgue measure zero. By Fubini's theorem, for a.e. $r\in[0,T]$
		the set 
		\[
		B^{r}:=\left\{ t\in[0,r]:G_{r}(\Theta_{t,r}\gamma^{[0,t]})\neq\widetilde{G}_{r}(\Theta_{t,r}\gamma^{[0,t]})\right\} 
		\]
		has Lebesgue measure zero too.
		
		Fix such an $r$ at which $\gamma$ is continuous. Since $B^{r}$
		has Lebesgue measure zero, we may choose a sequence $t_{k}\uparrow r$
		with $t_{k}\notin B^{r}$. Moreover, 
		\[
		d_{\Upsilon([0,T],E)}\left(\Theta_{t_{k},r}\gamma^{[0,t_{k}]},\gamma^{[0,r]}\right)=\sup_{u\in[t_{k},r]}d_{E}(\gamma_{t_{k}},\gamma_{u})\longrightarrow0
		\]
		as $k\to\infty$, by the continuity of $\gamma$ at $r$. Since $G$
		and $\widetilde{G}$ are continuous at the fixed time $r$, we obtain
		\[
		G_{r}(\gamma^{[0,r]})=\lim_{k\to\infty}G_{r}(\Theta_{t_{k},r}\gamma^{[0,t_{k}]})=\lim_{k\to\infty}\widetilde{G}_{r}(\Theta_{t_{k},r}\gamma^{[0,t_{k}]})=\widetilde{G}_{r}(\gamma^{[0,r]}).
		\]
		Since a \cdl path has at most countably many discontinuities, the
		preceding identity holds for a.e. $r\in[0,T]$, which proves the claim.
	\end{proof}

	\subsection{Strong horizontal derivative\label{subsec:horizontal_derivative}}
	
	We next consider a connection of horizontal absolute continuity to
	the classical horizontal derivative of non-anticipative functionals.
	Let $0\le a<b\le T$ and let $F$ be a real-valued non-anticipative
	functional on $\Upsilon([a,b],E)$. Following \citep[Definition 3.1]{Cont_Fournie2010},
	for $\gamma\in D([a,t],E)$ and $t\in[a,b)$ we define the \textit{(strong)
		horizontal derivative} of $F$ by 
	\[
	\cD_{t}F(\gamma):=\lim_{r\downarrow t}\frac{F_{r}(\Theta^{a}_{t,r}\gamma)-F_{t}(\gamma)}{r-t},
	\]
	whenever the limit exists. We say that $F$ is horizontally differentiable
	if $\cD_{t}F(\gamma)$ exists for every $(t,\gamma)\in\Upsilon([a,b],E)$
	with $t<b$. Setting $\cD_{b}F(\gamma):=0$, we regard $\cD F$ as
	a non-anticipative functional on $\Upsilon([a,b],E)$.
	
	We also recall the notion of a boundedness preserving functional.
	\begin{defn}[Boundedness preserving functionals]
		Let $\widetilde{E}$ be a metric space. We denote by $\bB(\Upsilon([a,b],E);\widetilde{E})$
		the class of non-anticipative functionals
		\[
		F:\Upsilon([a,b],E)\to\widetilde{E}
		\]
		such that, for every compact set $U\subset E$, the set
		\[
		\left\{ F_{t}(\gamma):t\in[a,b],\;\gamma\in D([a,t],U)\right\} 
		\]
		is bounded in $\widetilde{E}$.
	\end{defn}
	
	The following result shows that, under mild additional assumptions,
	the strong horizontal derivative is also a weak horizontal derivative.
	\begin{lem}
		\label{lem:differentiability_and_absolute_continuity} Let $F$ be
		a real-valued, left-continuous, horizontally differentiable non-anticipative
		functional. Assume that, for every $(t,\gamma)\in\Upsilon([0,T],E)$,
		the map 
		\[
		s\mapsto\cD_{s}F(\Theta_{t,s}\gamma)
		\]
		is Lebesgue integrable on $[t,T]$. Then $F$ is horizontally absolutely
		continuous and $\cD F$ is a weak horizontal derivative of $F$. More
		precisely, for every $0\le t\le r\le T$ and $\gamma\in D([0,t],E)$,
		\[
		F_{r}(\Theta_{t,r}\gamma)-F_{t}(\gamma)=\int^{r}_{t}\cD_{s}F(\Theta_{t,s}\gamma)ds.
		\]
	\end{lem}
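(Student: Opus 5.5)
Fix $(t,\gamma)\in\Upsilon([0,T],E)$ and reduce everything to a one-variable real function. Set
\[
\phi(s):=F_{s}(\Theta_{t,s}\gamma),\qquad s\in[t,T].
\]
The claim is then that $\phi$ satisfies $\phi(r)-\phi(t)=\int_t^r\phi'_+(s)\,ds$, where $\phi'_+$ is its right derivative. To see that $\phi'_+$ is the horizontal derivative, note that for $s<T$ we have $\Theta_{s,u}\Theta_{t,s}\gamma=\Theta_{t,u}\gamma$ for $u\ge s$. Hence the right derivative of $\phi$ at $s$ equals $\cD_sF(\Theta_{t,s}\gamma)$, which exists by horizontal differentiability. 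Measurability of $s\mapsto\cD_sF(\Theta_{t,s}\gamma)$ follows from the measurability remark after the definition of $\Theta$. It is also implicit in the integrability assumption.

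Next, show that $\phi$ is continuous on $[t,T]$. Left-continuity at $s\in(t,T]$ comes from left-continuity of $F$ at $\Theta_{t,s}\gamma$. For $u\uparrow s$,
\[
d_{\Upsilon}(\Theta_{t,s}\gamma,\Theta_{t,u}\gamma)=|s-u|\to 0,
\]
because the two stopped paths coincide once both are extended constantly. Right-continuity at $s<T$ follows from the existence of the finite right derivative $\phi'_+(s)$.

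The key step is a real-analysis lemma. If $\phi$ is continuous on $[t,r]$, has a finite right derivative at every point of $[t,r)$, and $\phi'_+\in L^1$, then $\phi(r)-\phi(t)=\int_t^r\phi'_+$. This is a known result (a Lebesgue–Denjoy-type theorem for Dini derivatives), but I would prove it directly, since it is the main obstacle. Fix $\eps>0$. By Vitali–Carathéodory, choose a lower semicontinuous $g\ge\phi'_+$ with $g>-\infty$ and $\int_t^r g\le\int_t^r\phi'_+ +\eps$. Define
\[
\psi(s):=\phi(s)-\phi(t)-\int_t^s g-\eps(s-t).
\]
Show that $\psi\le0$ on $[t,r]$ by a sup-argument. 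Let $s^*$ be the supremum of the set where $\psi\le0$ holds on $[t,s]$; it is attained by continuity, because $\psi$ is continuous ($\int g$ is continuous since $g\in L^1$). If $s^*<r$, then at $s^*$ lower semicontinuity of $g$ gives
\[
\liminf_{h\downarrow0}\frac1h\int_{s^*}^{s^*+h}g\ge g(s^*)\ge\phi'_+(s^*).
\]
Therefore
\[
\limsup_{h\downarrow0}\frac{\psi(s^*+h)-\psi(s^*)}{h}\le-\eps<0,
\]
which contradicts the maximality of $s^*$. Letting $\eps\to0$ gives $\phi(r)-\phi(t)\le\int_t^r\phi'_+$. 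Applying the same argument to $-\phi$ gives the reverse inequality.

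Finally, combine these steps. For every $(t,\gamma)$ and $r\in[t,T]$ this yields
\[
F_r(\Theta_{t,r}\gamma)-F_t(\gamma)=\int_t^r\cD_sF(\Theta_{t,s}\gamma)\,ds.
\]
Note that the convention $\cD_T F:=0$ changes only a null set. Hence $F$ is horizontally absolutely continuous with weak horizontal derivative $\cD F$. One subtlety needs care: the Vitali–Carathéodory step requires $\phi'_+$ to be measurable. This follows because $\phi'_+$ is a pointwise limit of the continuous difference quotients $(\phi(s+1/k)-\phi(s))k$.
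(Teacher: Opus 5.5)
Your argument is correct and follows the same route as the paper. You reduce to $\varphi(s)=F_s(\Theta_{t,s}\gamma)$, get left-continuity from that of $F$ and right-continuity from the existence of the right derivative $\varphi'_+(s)=\cD_sF(\Theta_{t,s}\gamma)$, and then use the fundamental theorem of calculus for continuous functions with an everywhere finite, integrable right derivative; the only difference is that the paper cites this real-variable fact (Hagood--Thomson, Theorem~11), whereas you prove it directly via Vitali--Carath\'eodory and a supremum argument, which is valid as written.
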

	
	\begin{proof}
		Fix $t\in[0,T)$ and $\gamma\in D([0,t],E)$, and define $\varphi:[t,T]\to\R$
		by 
		\[
		\varphi(s):=F_{s}(\Theta_{t,s}\gamma).
		\]
		We first show that $\varphi$ is left-continuous. Let $s_{k}\uparrow s$.
		Then 
		\[
		d_{\Upsilon([0,T],E)}\left(\Theta_{t,s_{k}}\gamma,\Theta_{t,s}\gamma\right)=s-s_{k}\longrightarrow0.
		\]
		Since $F$ is left-continuous, it follows that $\varphi(s_{k})\to\varphi(s)$.
		
		For $s\in[t,T)$, using 
		\[
		\Theta_{t,r}\gamma=\Theta_{s,r}(\Theta_{t,s}\gamma),\quad r\ge s,
		\]
		we obtain 
		\begin{align*}
			\varphi'_{+}(s) & =\lim_{r\downarrow s}\frac{F_{r}(\Theta_{t,r}\gamma)-F_{s}(\Theta_{t,s}\gamma)}{r-s}\\
			& =\cD_{s}F(\Theta_{t,s}\gamma).
		\end{align*}
		Since $\varphi$ is right differentiable, it is also right continuous
		and, thus, a continuous function. By the assumption of the lemma
		$\varphi_{+}'$ is Lebesgue integrable on $[t,T].$ Using \citep[Theorem 11]{Hagood_Thomson2006},
		the function $\varphi$ is absolutely continuous and
		\[
		\varphi(r)-\varphi(t)=\int^{r}_{t}\varphi'_{+}(s)\,ds=\int^{r}_{t}\cD_{s}F(\Theta_{t,s}\gamma)\,ds.
		\]
		This completes the proof of the lemma.
	\end{proof}
	
	\begin{cor}
		Let $F$ be a real-valued left-continuous, horizontally differentiable
		non-anticipative functional. If $\cD F\in\bB(\Upsilon([0,T],E))$, then $F$
		is horizontally absolutely continuous, and $\cD F$ is a weak horizontal
		derivative of $F$.
	\end{cor}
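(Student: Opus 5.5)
The plan is to reduce the corollary directly to Lemma~\ref{lem:differentiability_and_absolute_continuity}. The only hypothesis of that lemma not already assumed is integrability: for every $(t,\gamma)\in\Upsilon([0,T],E)$, the map $s\mapsto\cD_{s}F(\Theta_{t,s}\gamma)$ must be Lebesgue integrable on $[t,T]$. Once this is shown, the lemma gives that $F$ is horizontally absolutely continuous with weak horizontal derivative $\cD F$, together with the stated integral identity.

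The integrability comes in two steps.

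First, measurability. The map $s\mapsto\Theta_{t,s}\gamma$ into $\Upsilon([0,T],E)$ is measurable, as observed after the definition of $\Theta^{a}_{t,r}$. Since $\cD F$ is regarded as a non-anticipative functional, hence measurable, the composition is measurable. If one wants to avoid this point, one can instead note the following. For $s<T$, the quantity $\cD_{s}F(\Theta_{t,s}\gamma)$ is the right derivative of $\varphi(s)=F_{s}(\Theta_{t,s}\gamma)$. In the proof of the lemma, $\varphi$ was shown to be continuous. It is therefore the pointwise limit of the continuous difference quotients $k\bigl(\varphi(s+1/k)-\varphi(s)\bigr)$, so it is Borel measurable on $[t,T)$.

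Second, boundedness. The image $U:=\{\gamma_{u}:u\in[0,t]\}$ of a càdlàg path on a compact interval is relatively compact in $E$. I will take its closure, which is compact; I expect this to be the only mildly delicate point. Every extension $\Theta_{t,s}\gamma$ takes values in $U\cup\{\gamma_{t}\}=U$, so it lies in $D([0,s],\bar U)$. The assumption $\cD F\in\bB(\Upsilon([0,T],E))$ then yields a constant $C_{U}$ with $|\cD_{s}F(\Theta_{t,s}\gamma)|\le C_{U}$ for all $s\in[t,T]$.

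A bounded measurable function on a finite interval is integrable, so the lemma applies and the corollary follows.
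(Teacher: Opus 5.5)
Your proposal is correct and follows the paper's proof essentially verbatim. The closure $K$ of the range of the c\`adl\`ag path $\gamma$ on $[0,t]$ is compact, every constant extension $\Theta_{t,s}\gamma$ lies in $D([0,s],K)$, and so the boundedness-preserving property of $\cD F$ makes $s\mapsto\cD_{s}F(\Theta_{t,s}\gamma)$ bounded on $[t,T]$, after which Lemma~\ref{lem:differentiability_and_absolute_continuity} applies; your extra remark on measurability (via measurability of $s\mapsto\Theta_{t,s}\gamma$, or as a pointwise limit of difference quotients of the continuous function $\varphi$) fills in a point the paper leaves implicit.
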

	
	\begin{proof}
		Let $K$ be the closure of $\{\gamma_{r}:r\in[0,t]\}$ in $E$. Since
		$\gamma$ is a \cdl function, the set $K$ is compact. Moreover,
		for every $s\in[t,T]$,
		\[
		\Theta_{t,s}\gamma\in D([0,s],K).
		\]
		Hence, by the boundedness preserving property of $\cD F$,
		\[
		\sup_{s\in[t,T]}\left|\cD_{s}F(\Theta_{t,s}\gamma)\right|<\infty.
		\]
		In particular, the map $s\mapsto\cD_{s}F(\Theta_{t,s}\gamma)$ is
		Lebesgue integrable on $[t,T]$. The claim therefore follows from
		\ref{lem:differentiability_and_absolute_continuity}.
	\end{proof}
	
	\begin{cor}
		Let $F$ be a real-valued, left-continuous, horizontally differentiable
		non-anticipative functional such that $\cD F\in\bB(\Upsilon([0,T],E))$.
		Assume that $\cD F$ is continuous at every fixed time. If $\cH F$
		is another weak horizontal derivative of $F$ which is continuous
		at every fixed time, then, for every $\vartheta\in D([0,T],E)$, 
		\[
		\cH_{t}F(\vartheta^{[0,t]})=\cD_{t}F(\vartheta^{[0,t]})
		\]
		for a.e. $t\in[0,T]$.
	\end{cor}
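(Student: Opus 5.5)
The corollary follows by combining the two preceding results. First, the previous corollary applies: since $F$ is real-valued, left-continuous and horizontally differentiable with $\cD F\in\bB(\Upsilon([0,T],E))$, it shows that $F$ is horizontally absolutely continuous and that $\cD F$ is a weak horizontal derivative of $F$. That is, for every $(t,\gamma)\in\Upsilon([0,T],E)$ and $r\in[t,T]$,
\[
F_{r}(\Theta_{t,r}\gamma)-F_{t}(\gamma)=\int^{r}_{t}\cD_{s}F(\Theta_{t,s}\gamma)\,ds .
\]
Next, $\cH F$ is by assumption another weak horizontal derivative of the same functional $F$, so we have two weak horizontal derivatives, $G:=\cD F$ and $\widetilde G:=\cH F$.

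Both are continuous at every fixed time: $\cD F$ by hypothesis, and $\cH F$ by hypothesis as well. Hence Lemma~\ref{lem:uniqueness_of_weak_derivatives} applies directly. For every $\vartheta\in D([0,T],E)$ it yields $G_{t}(\vartheta^{[0,t]})=\widetilde G_{t}(\vartheta^{[0,t]})$ for a.e. $t\in[0,T]$, which is exactly $\cH_{t}F(\vartheta^{[0,t]})=\cD_{t}F(\vartheta^{[0,t]})$ for a.e. $t$.

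The only point needing care is measurability. Lemma~\ref{lem:uniqueness_of_weak_derivatives} uses Fubini's theorem on $(t,r)\mapsto G_{r}(\Theta_{t,r}\gamma^{[0,t]})$, so $\cD F$ must be a measurable non-anticipative functional. I would justify this by writing $\cD F$ as the pointwise limit of the measurable difference quotients $k\bigl(F_{t+1/k}(\Theta_{t,t+1/k}\gamma)-F_{t}(\gamma)\bigr)$, using the measurability of $(t,r)\mapsto\Theta_{t,r}\gamma$ noted earlier. The convention $\cD_{T}F:=0$ affects only a null set. No other obstacle is expected.
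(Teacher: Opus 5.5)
Your proposal is correct and matches the paper's proof, which obtains that $\cD F$ is a weak horizontal derivative from Lemma~\ref{lem:differentiability_and_absolute_continuity} (with integrability supplied by $\cD F\in\bB$, as in the preceding corollary) and then applies Lemma~\ref{lem:uniqueness_of_weak_derivatives}. Your measurability remark is an addition the paper leaves implicit when it regards $\cD F$ as a non-anticipative functional. If you keep it, replace $t+1/k$ by $\min\{t+1/k,T\}$ in the difference quotients, since $t+1/k$ can exceed $T$.
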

	
	\begin{proof}
		The corollary follows from Lemmas \ref{lem:differentiability_and_absolute_continuity}
		and \ref{lem:uniqueness_of_weak_derivatives}.
	\end{proof}

	\subsection{Vertical derivatives\label{subsec:vertical_derivatives}}
	
	We next introduce two notions of vertical differentiation. The first
	one is the usual vertical derivative of a non-anticipative functional
	\citep{Dupire:2019,Cont_Fournie2010,Cont_Fournie:2013,Bally_Cont:2016},
	which describes its sensitivity with respect to the terminal value
	of a stopped path. The second one will be used for functionals defined
	on the full path space and describes their sensitivity with respect
	to perturbations of the path after a given time. The latter notion
	will be needed below to study the vertical regularity of continuation
	functionals associated with the modified Ornstein-Uhlenbeck process.
	
	Let $(E,\norm{\cdot}_{E})$ be a Banach space. If $K_{1}\subseteq K_{2}$,
	we identify an $E$-valued function on $K_{1}$ with its extension
	to $K_{2}$ which is equal to zero on $K_{2}\backslash K_{1}$. In
	particular, we can write
	\[
	\gamma=\gamma^{[0,t)}+\gamma^{[t,T]}
	\]
	for all $\gamma\in D([0,T],E)$. For $(t,\gamma)\in\Upsilon([a,b],E)$,
	it will be convenient to write 
	\[
	F_{t}(\gamma^{[a,t)},x):=F_{t}\bigl(\gamma^{[a,t)}+x\I_{\{t\}}\bigr),\quad x\in E.
	\]
	In particular, 
	\[
	F_{t}(\gamma)=F_{t}(\gamma^{[a,t)},\gamma_{t}).
	\]
	For $t=a$, we use the convention 
	\[
	F_{a}(\gamma^{[a,a)},\gamma_{a}):=F_{a}(\gamma_{a}).
	\]
	
	\begin{defn}
		Let $0\le a<b\le T$ and $m\in\N$. For $(t,\gamma)\in\Upsilon([a,b],E)$,
		the $m$-th \textit{vertical derivative} $\nabla^{m}F_{t}(\gamma)$
		of a real-valued non-anticipative functional $F$ is defined as the
		$m$-th Fréchet derivative at $x=\gamma_{t}$ of the map 
		\[
		x\mapsto F_{t}(\gamma^{[a,t)},x).
		\]
	\end{defn}
	
	For $m\in\N$, we denote by $\Cf^{m}_{l}(\Upsilon([a,b],E))$ the
	space of continuous real-valued non-anticipative functionals which
	are $m$ times vertically differentiable and whose vertical derivatives
	$\nabla^{k}F$, $k\in[m]$, are bounded and left-continuous non-anticipative
	functionals. We also introduce the seminorm
	\[
	\bnorm F_{\Cf^{m}_{l}(\Upsilon([a,b],E))}:=\sum^{m}_{k=1}\sup_{t\in[a,b]}\sup_{\gamma\in D([a,t],E)}\bnorm{\nabla^{k}F_{t}(\gamma)}_{\cL_{k}(E)}.
	\]
	
	We will also need to differentiate functionals on the full path space
	with respect to perturbations of the path occurring after a prescribed
	time. This is motivated by the continuation functional introduced
	below: changing its terminal value at time $t$ changes the entire
	future Ornstein-Uhlenbeck trajectory, and the corresponding perturbation
	is propagated by the semigroup on $[t,T]$. It is therefore natural
	to differentiate the terminal path functional with respect to the
	whole future segment rather than only at a single time.
	\begin{defn}
		Let $F:D([a,b],E)\to\R$, $t\in[a,b]$, and $\gamma\in D([a,b],E)$.
		We say that $F$ is \textit{vertically path-differentiable} after
		$t$ at $\gamma$ if the map
		\[
		h\mapsto F(\gamma+h),\quad h\in D([t,b],E),
		\]
		where $h$ is extended by zero to $[a,b]$, is Fréchet differentiable
		at $0$. We denote its derivative by
		\[
		\nabla_{[t,b]}F(\gamma)\in\cL_{1}(D([t,b],E)).
		\]
		Equivalently,
		\[
		F(\gamma+h)-F(\gamma)=\nabla_{[t,b]}F(\gamma)[h]+o\left(\|h\|_{D([t,b],E)}\right)
		\]
		as $\|h\|_{D([t,b],E)}\to0$. We can similarly introduce $\nabla^{m}_{[t,b]}F(\gamma)$
		as a bounded multilinear operator on $D([t,b],E)$. 
	\end{defn}
	
	If $F$ is $m$ times Fréchet differentiable on $D([a,b],E)$, then
	for each $\ell\in[m]$ and $t\in[a,b]$, its $\ell$-th vertical path
	derivative after $t$ at $\gamma$ is defined by
	\[
	\nabla^{\ell}_{[t,b]}F(\gamma)[h_{1},\ldots,h_{\ell}]:=D^{\ell}F(\gamma)\left[h_{1},\ldots,h_{\ell}\right],\quad h_{i}\in D([t,b],E),\ i\in[\ell].
	\]
	Moreover,
	\[
	\sup_{t\in[a,b]}\bnorm{\nabla^{\ell}_{[t,b]}F(\gamma)}_{\cL_{\ell}(D([t,b],E))}=\bnorm{D^{\ell}F(\gamma)}_{\cL_{\ell}(D([a,b],E))}.
	\]
	
	For $0\le r\le t\le b\le T$, we define the constant extension operator
	\[
	\iota^{b}_{r,t}:D([t,b],E)\to D([r,b],E)
	\]
	by
	\[
	(\iota^{b}_{r,t}h)_{u}:=h_{u\vee t},\quad u\in[r,b].
	\]
	Clearly, $\iota^{b}_{r,t}$ is a bounded linear operator with norm
	one. We also set $\iota_{r,t}:=\iota^{T}_{r,t}$.
	\begin{defn}
		Let $0\le a<b\le T$ and $F:D([a,b],E)\to\R$.
		\begin{enumerate}
			\item[(i)]  Let $m\in\N$. We say that the family $\nabla^{m}_{[t,b]}F$, $t\in[a,b]$,
			is \textit{left-continuous} if, for every $t_{n}\uparrow t$ in $[a,b]$
			and every $\gamma^{n},\gamma\in D([a,b],E)$ such that
			\[
			\bnorm{\gamma^{n}-\gamma}_{D([a,b],E)}\to0,
			\]
			one has
			\[
			\nabla^{m}_{[t_{n},b]}F(\gamma^{n})\circ\left(\iota^{b}_{t_{n},t}\right)^{\otimes m}\to\nabla^{m}_{[t,b]}F(\gamma)
			\]
			in $\cL_{m}(D([t,b],E))$.
			\item[(ii)]  The family of vertical path derivatives $\nabla^{m}_{[t,b]}F,$
			$t\in[a,b]$, is said to be bounded if 
			\[
			\sup_{\gamma\in D([a,b],E)}\sup_{t\in[a,b]}\bnorm{\nabla^{m}_{[t,b]}F(\gamma)}_{\cL_{m}(D([t,b],E))}<\infty.
			\]
		\end{enumerate}
		The set of measurable continuous functions from $D([a,b],E)$ to $\R$ with bounded
		left-continuous vertical path derivatives up to the order $m\in\N$
		will be denoted by $\cP^{m}(D([a,b],E))$. We define the seminorm
		on $\cP^{m}(D([a,b],E))$ by
		\[
		\bnorm F_{\cP^{m}(D([a,b],E))}
		:=
		\sum^{m}_{k=1}
		\sup_{\gamma\in D([a,b],E)}
		\sup_{t\in[a,b]}
		\bnorm{
			\nabla^{k}_{[t,b]}F(\gamma)
		}_{\cL_{k}(D([t,b],E))}.
		\]
	\end{defn}
	
	We also denote by $\cP^{m}(\Upsilon([a,b],E))$ the space of non-anticipative
	functionals $F$ on $\Upsilon([a,b],E)$ such that
	\[
	F_{t}\in\cP^{m}(D([a,t],E))\quad\text{for every }t\in[a,b],
	\]
	and
	\[
	\norm F_{\cP^{m}(\Upsilon([a,b],E))}:=\sup_{t\in[a,b]}\norm{F_{t}}_{\cP^{m}(D([a,t],E))}<\infty.
	\]
	
	For an interval $K\subseteq[0,T]$ we define
	\[
	\jmath_{K}:E\to D(K,E)
	\]
	to be the constant-path embedding,
	\[
	(\jmath_{K}h)_{u}:=h,\quad u\in K.
	\]
	
	\begin{lem}
		\label{lem:regularity_of_map_G} Let $m\in\N$, $0\le t<b\le T$,
		$\gamma\in D([0,t],E)$, and $F\in\cP^{m}(D([0,b],E))$. For $(s,\vartheta)\in\Upsilon([t,b],E)$,
		define
		\[
		G_{s}(\vartheta):=F\left(\gamma^{[0,t)}+\Theta^{t}_{s,b}\vartheta\right).
		\]
		Then
		\[
		G\in\Cf^{m}_{l}(\Upsilon([t,b],E))
		\]
		and $G$ is horizontally differentiable with
		\[
		\cD_{s}G(\vartheta)=0.
		\]
		Moreover, for every $\ell\in[m]$,
		\begin{equation}
			\nabla^{\ell}G_{s}(\vartheta)[h_{1},\ldots,h_{\ell}]=\nabla^{\ell}_{[s,b]}F\left(\gamma^{[0,t)}+\Theta^{t}_{s,b}\vartheta\right)[\jmath_{[s,b]}h_{1},\ldots,\jmath_{[s,b]}h_{\ell}].\label{eq:derivative_of_G_via_F}
		\end{equation}
		Furthermore,
		\begin{equation}
			\bnorm G_{\Cf^{m}_{l}(\Upsilon([t,b],E))}\le\bnorm F_{\cP^{m}(D([0,b],E))}.\label{eq:control_of_G_via_F}
		\end{equation}
	\end{lem}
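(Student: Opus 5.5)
The whole argument reduces to one observation about the path that $G_s$ feeds into $F$: the path $\Psi_s(\vartheta):=\gamma^{[0,t)}+\Theta^{t}_{s,b}\vartheta$ is affine in $\vartheta$. Changing only the terminal value $\vartheta_s$ adds a constant path on $[s,b]$. Everything then follows from the definition of vertical path derivatives after $s$.

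\emph{Horizontal derivative.} For $r\ge s$ we have $\Theta^t_{r,b}\Theta^t_{s,r}\vartheta=\Theta^t_{s,b}\vartheta$. Hence $G_r(\Theta^t_{s,r}\vartheta)=G_s(\vartheta)$, the difference quotient vanishes identically, and $\cD_sG(\vartheta)=0$.

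\emph{Vertical derivatives.} For $x\in E$,
\[
\Psi_s(\vartheta^{[t,s)}+x\I_{\{s\}})=\Psi_s(\vartheta)+\jmath_{[s,b]}(x-\vartheta_s),
\]
with $\jmath_{[s,b]}(\cdot)$ extended by zero before $s$. So $x\mapsto G_s(\vartheta^{[t,s)},x)$ is the composition of $F(\Psi_s(\vartheta)+\cdot)$ restricted to $D([s,b],E)$ with the bounded affine map $x\mapsto\jmath_{[s,b]}(x-\vartheta_s)$, which has norm one. Since $F\in\cP^m$, the first map is $m$ times Fréchet differentiable around every point; the derivative at a shifted point $\Psi_s(\vartheta)+h$ is the derivative at that path. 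The chain rule then gives \eqref{eq:derivative_of_G_via_F}. Because $\|\jmath_{[s,b]}h\|_{D}=\|h\|_E$, the operator norms satisfy $\|\nabla^\ell G_s(\vartheta)\|\le\|\nabla^\ell_{[s,b]}F(\Psi_s(\vartheta))\|$. Summing over $\ell$ yields \eqref{eq:control_of_G_via_F} and boundedness.

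\emph{Continuity of $G$.} Take $(s_k,\vartheta^k)\to(s,\vartheta)$ in $d_{\Upsilon}$. By definition of this metric, $\sup_u\|\vartheta^k_{u\wedge s_k}-\vartheta_{u\wedge s}\|\to0$. This is exactly $\|\Theta^t_{s_k,b}\vartheta^k-\Theta^t_{s,b}\vartheta\|_{D([t,b],E)}\to0$, so $\Psi_{s_k}(\vartheta^k)\to\Psi_s(\vartheta)$ uniformly. Continuity of $F$ gives continuity of $G$. Measurability follows because $\Psi$ is measurable with respect to the evaluation $\sigma$-algebras.

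\emph{Left-continuity of $\nabla^\ell G$.} This is the only nontrivial step. Take $s_k\le s$ with $d_\Upsilon(\vartheta^k,\vartheta)\to0$, hence $s_k\uparrow s$ along a subsequence or $s_k=s$. As above, $\Psi_{s_k}(\vartheta^k)\to\Psi_s(\vartheta)$ in $D([0,b],E)$. Note that $\jmath_{[s_k,b]}h=\iota^b_{s_k,s}\jmath_{[s,b]}h$ on $[s_k,b]$, because the constant path is unchanged by constant extension. Therefore
\[
\nabla^\ell G_{s_k}(\vartheta^k)[h_1,\dots,h_\ell]=\nabla^\ell_{[s_k,b]}F(\Psi_{s_k}(\vartheta^k))\circ(\iota^b_{s_k,s})^{\otimes\ell}[\jmath h_1,\dots,\jmath h_\ell].
\]
The left-continuity assumption on the family $\nabla^\ell_{[\cdot,b]}F$ gives convergence in $\cL_\ell(D([s,b],E))$. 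For $s_k=s$ we instead need continuity in $\gamma$ at a fixed time; this is the case $t_n=t$ of the left-continuity definition, which I would treat as included, e.g.\ via a constant sequence $t_n\uparrow t$. Composing with the norm-one map $\jmath_{[s,b]}$ transfers the convergence to $\cL_\ell(E)$, uniformly over unit $h_i$.

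The main obstacle is this index bookkeeping, namely verifying the identity $\jmath_{[s_k,b]}=\iota^b_{s_k,s}\circ\jmath_{[s,b]}$ and that the approximating sequence genuinely converges in the uniform metric. Everything else is the chain rule.
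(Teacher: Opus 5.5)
Your proposal is correct and follows the paper's proof essentially step by step: you use the identity $\Theta^t_{r,b}\Theta^t_{s,r}\vartheta=\Theta^t_{s,b}\vartheta$ to get $\cD_sG=0$, and the affine identity $\Theta^t_{s,b}(\vartheta+h\I_{\{s\}})=\Theta^t_{s,b}\vartheta+\jmath_{[s,b]}h$ with the chain rule for the vertical derivatives and the bound. Continuity comes from the uniform convergence $\Theta^t_{s_k,b}\vartheta^k\to\Theta^t_{s,b}\vartheta$ given by the $\Upsilon$-metric, and left-continuity from the factorization $\jmath_{[s_k,b]}=\iota^b_{s_k,s}\circ\jmath_{[s,b]}$ combined with the left-continuity of $\nabla^\ell_{[\cdot,b]}F$; your subsequence reduction to constant or strictly increasing $s_k$ (both admissible, since $\uparrow$ only requires non-decreasing) is a small extra bit of care that the paper leaves implicit.
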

	
	\begin{proof}
		We first verify the continuity of $G$. Let $s_{n}\to s$ in $[t,b]$
		and let $\vartheta^{n}\in D([t,s_{n}],E)$ converge to $\vartheta\in D([t,s],E)$
		in $\Upsilon([t,b],E)$. Then
		\begin{align*}
			\bnorm{\Theta^{t}_{s_{n},b}\vartheta^{n}-\Theta^{t}_{s,b}\vartheta}_{D([t,b],E)} & =\sup_{u\in[t,b]}\bnorm{\vartheta^{n}_{u\wedge s_{n}}-\vartheta_{u\wedge s}}_{E}\\
			& \le d_{\Upsilon([t,b],E)}(\vartheta^{n},\vartheta)\to0.
		\end{align*}
		Consequently,
		\[
		\gamma^{[0,t)}+\Theta^{t}_{s_{n},b}\vartheta^{n}\to\gamma^{[0,t)}+\Theta^{t}_{s,b}\vartheta
		\]
		in $D([0,b],E)$. Since $F$ is continuous, it follows that
		\[
		G_{s_{n}}(\vartheta^{n})=F\left(\gamma^{[0,t)}+\Theta^{t}_{s_{n},b}\vartheta^{n}\right)\to F\left(\gamma^{[0,t)}+\Theta^{t}_{s,b}\vartheta\right)=G_{s}(\vartheta).
		\]
		Thus $G$ is continuous.
		
		By construction, 
		\[
		\Theta^{t}_{u,b}\Theta^{t}_{s,u}\vartheta=\Theta^{t}_{s,b}\vartheta,\quad t\le s\le u\le b,
		\]
		and hence 
		\[
		G_{u}(\Theta^{t}_{s,u}\vartheta)=G_{s}(\vartheta).
		\]
		Therefore 
		\[
		\cD_{s}G(\vartheta)=\lim_{u\to s+}\frac{G_{u}(\Theta^{t}_{s,u}\vartheta)-G_{s}(\vartheta)}{u-s}=0.
		\]
		
		We next consider the vertical derivatives of $G$. Fix $(s,\vartheta)\in\Upsilon([t,b],E)$
		and set
		\[
		\theta:=\gamma^{[0,t)}+\Theta^{t}_{s,b}(\vartheta).
		\]
		For $h\in E$, we have
		\[
		\Theta^{t}_{s,b}(\vartheta+h\I_{\{s\}})=\Theta^{t}_{s,b}\vartheta+\jmath_{[s,b]}h.
		\]
		Hence, by the vertical path differentiability of $F$ after $s$,
		$G_{s}$ is vertically differentiable and
		\[
		\nabla G_{s}(\vartheta)[h]=\nabla_{[s,b]}F(\theta)\left[\jmath_{[s,b]}h\right].
		\]
		More generally, for $\ell\in[m]$,
		\[
		\nabla^{\ell}G_{s}(\vartheta)[h_{1},\ldots,h_{\ell}]=\nabla^{\ell}_{[s,b]}F\left(\gamma^{[0,t)}+\Theta^{t}_{s,b}\vartheta\right)\left[\jmath_{[s,b]}h_{1},\ldots,\jmath_{[s,b]}h_{\ell}\right].
		\]
		
		We next verify the left-continuity of the vertical derivatives of
		$G$. Let $s_{n}\uparrow s$ in $[t,b]$ and let $\vartheta^{n}\in D([t,s_{n}],E)$
		converge to $\vartheta\in D([t,s],E)$ in $\Upsilon([t,b],E)$. Set
		\[
		\theta_{n}:=\gamma^{[0,t)}+\Theta^{t}_{s_{n},b}\vartheta^{n},\quad\theta:=\gamma^{[0,t)}+\Theta^{t}_{s,b}\vartheta.
		\]
		As above,
		\[
		\theta_{n}\to\theta\quad\text{in }D([0,b],E).
		\]
		Recall that
		\[
		\nabla G_{s_{n}}(\vartheta^{n})=\nabla_{[s_{n},b]}F(\theta_{n})\circ\jmath_{[s_{n},b]}.
		\]
		Since
		\[
		\jmath_{[s_{n},b]}=\iota^{b}_{s_{n},s}\circ\jmath_{[s,b]},
		\]
		we obtain
		\[
		\bnorm{\nabla G_{s_{n}}(\vartheta^{n})-\nabla G_{s}(\vartheta)}_{\cL_{1}(E)}\le\bnorm{\nabla_{[s_{n},b]}F(\theta_{n})\circ\iota_{s_{n},s}^b-\nabla_{[s,b]}F(\theta)}_{\cL_{1}(D([s,b],E))}.
		\]
		The right-hand side converges to zero by the left-continuity of the
		vertical path derivative of $F$. Hence $\nabla G$ is left-continuous.
		Similarly, one can show that $\nabla^{\ell}G$ is left-continuous
		for each $\ell\in[m]$. 
		
		Finally, since $\bnorm{\jmath_{[s,b]}}_{\cL_{1}(E,D([s,b],E))}=1$
		for every $s\in[0,b]$, (\ref{eq:derivative_of_G_via_F}) yields (\ref{eq:control_of_G_via_F}).
	\end{proof}
	
	The next statement is a chain rule for the weak horizontal derivative
	under a deterministic absolutely continuous translation of the path.
	\begin{lem}
		\label{lem:absolute_continuity_for_translation_of_path}Let $F$ be
		a horizontally absolutely continuous non-anticipative functional on
		$\Upsilon([0,T],E)$ such that $F\in\cP^{1}(\Upsilon([0,T],E)).$
		Assume that $\cH F$ is left-continuous and boundedness preserving.
		Let $c\in\R$, and let $\varrho:[0,T]\to E$ be absolutely continuous,
		with 
		\[
		\varrho_{t}=\varrho_{0}+\int^{t}_{0}\dot{\varrho}_{s}\,ds,\quad t\in[0,T],
		\]
		for some bounded Bochner integrable map $s\mapsto\dot{\varrho}_{s}$.
		Define the non-anticipative functional 
		\[
		F^{c,\varrho}_{t}(\gamma):=F_{t}\left(c\gamma+\varrho^{[0,t]}\right),\quad(t,\gamma)\in\Upsilon([0,T],E).
		\]
		Then $F^{c,\varrho}$ is horizontally absolutely continuous, and 
		\[
		\cH_{t}F^{c,\varrho}(\gamma)=\cH_{t}F\left(c\gamma+\varrho^{[0,t]}\right)+\nabla F_{t}\left(c\gamma+\varrho^{[0,t]}\right)\left[\dot{\varrho}_{t}\right]
		\]
		is a weak horizontal derivative of $F^{c,\varrho}$.
	\end{lem}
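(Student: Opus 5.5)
Fix $(t,\gamma)\in\Upsilon([0,T],E)$ and put $\psi^{\gamma}:=c\gamma+\varrho^{[0,t]}\in D([0,t],E)$. The aim is the identity
\[
F_r\bigl(c\,\Theta_{t,r}\gamma+\varrho^{[0,r]}\bigr)-F_t(\psi^\gamma)=\int_t^r\Bigl(\cH_sF\bigl(c\,\Theta_{t,s}\gamma+\varrho^{[0,s]}\bigr)+\nabla F_s\bigl(c\,\Theta_{t,s}\gamma+\varrho^{[0,s]}\bigr)[\dot\varrho_s]\Bigr)ds
\]
for $r\in[t,T]$, together with integrability of the integrand. The path $\vartheta^s:=c\,\Theta_{t,s}\gamma+\varrho^{[0,s]}$ is not a constant extension: on $[t,s]$ it moves through the absolutely continuous shift $\varrho_u-\varrho_t$. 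So I would approximate it by piecewise constant extensions and separate horizontal increments, which are handled by the definition of $\cH F$, from vertical jumps, which are handled by the vertical derivative.

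Concretely, take a partition $t=s_0<s_1<\dots<s_N=r$ with mesh going to zero. Define $\vartheta^{N,s_k}$ to equal $\vartheta^{s_k}$ on $[0,s_k]$ at the nodes, and extend it between nodes by constant extension. Telescope $F_r(\vartheta^r)-F_t(\vartheta^t)$ over the nodes. Each step $s_k\to s_{k+1}$ splits into two parts.
\begin{itemize}
\item A horizontal part $F_{s_{k+1}}(\Theta_{s_k,s_{k+1}}\vartheta^{N,s_k})-F_{s_k}(\vartheta^{N,s_k})$. By horizontal absolute continuity this equals $\int_{s_k}^{s_{k+1}}\cH_uF(\Theta_{s_k,u}\vartheta^{N,s_k})\,du$.
\item A vertical part, which changes the terminal value by $\varrho_{s_{k+1}}-\varrho_{s_k}=\int_{s_k}^{s_{k+1}}\dot\varrho_u\,du$. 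The mean value theorem in the terminal variable gives $\nabla F_{s_{k+1}}(\cdot,\tilde x_k)[\varrho_{s_{k+1}}-\varrho_{s_k}]$ for an intermediate point $\tilde x_k$.
\end{itemize}
The approximating paths differ from the true ones in sup norm by at most $\sup_{|u-v|\le\text{mesh}}\|\varrho_u-\varrho_v\|_E$, which tends to zero because $\varrho$ is uniformly continuous. We therefore also have to compare $F$ evaluated along the piecewise path with $F$ along the true path. I would organise this as a Riemann-sum argument: the vertical sum becomes $\int_t^r\nabla F_{\lceil u\rceil_N}(\ldots)[\dot\varrho_u]\,du$, and the horizontal sum becomes $\int_t^r\cH_uF(\text{approx path at }u)\,du$.

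The limit $N\to\infty$ is the main obstacle. For every fixed $u$, the approximating stopped paths, viewed as elements of $\Upsilon$ at times $s\le u$ with $s\uparrow u$ or $s=u$, converge to $\vartheta^u$ in $d_\Upsilon$. Left-continuity of $\cH F$ then gives pointwise convergence of the horizontal integrand. For the vertical integrand, the evaluation time is the right node $\lceil u\rceil_N\downarrow u$, which left-continuity does not reach. I would first try to avoid this by arranging the telescoping the other way: perform the vertical jump at $s_k$ before the horizontal extension, with increment $\varrho_{s_{k+1}}-\varrho_{s_k}$ added at time $s_k$. This costs an error $O(\|\varrho_{s_{k+1}}-\varrho_{s_k}\|)$ per step in the path, which a Lipschitz bound (from the bounded $\nabla F$ in $\cP^1$) converts into a small total error. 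The derivative is then evaluated at times $\lfloor u\rfloor_N\uparrow u$, and left-continuity of $\nabla F$ from $\cP^1$ together with continuity of the intermediate points gives pointwise convergence. If this ordering creates trouble in the horizontal term instead, I would fall back on the continuity of $F$ itself, which is part of $\cP^1$, to control the difference at the single point $u$.

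Domination comes from two sources. First, $\nabla F$ is bounded and $\dot\varrho$ is bounded. Second, all paths involved take values in the compact set $K:=\overline{\{c\gamma_u+\varrho_v:u\le t,\ v\le T\}}+\{\varrho_v-\varrho_w\}$, which is compact since it is a sum of compacts from a càdlàg path and a continuous path. Hence $\cH F$ is uniformly bounded by the boundedness-preserving assumption, and dominated convergence gives the claimed identity. Integrability of $s\mapsto\cH_sF^{c,\varrho}(\Theta_{t,s}\gamma)$ follows from the same bounds. Finally, measurability of the candidate derivative as a non-anticipative functional is immediate, since it is a composition of measurable maps.
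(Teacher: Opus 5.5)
Your step-path decomposition (horizontal pieces handled by $\cH F$, vertical jumps at the nodes, domination on a compact set) is sound as bookkeeping, and it even shows that $f(r):=F^{c,\varrho}_r(\Theta_{t,r}\gamma)$ is Lipschitz. The gap is the limit of the vertical sum. There you need $\nabla F_{s_k}$, evaluated on the approximating path, to converge to $\nabla F_v(\vartheta^v)$ as the node $s_k\uparrow v$. You attribute this to ``left-continuity of $\nabla F$ from $\cP^1$'', and your fallback likewise uses continuity of $F$ across terminal times.

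Neither property is available. $F\in\cP^1(\Upsilon([0,T],E))$ only says that each $F_s$ lies in $\cP^1(D([0,s],E))$, with a bound uniform in $s$. So each $F_s$ is continuous on $D([0,s],E)$, and for a \emph{fixed} terminal time $r$ the family $s\mapsto\nabla_{[s,r]}F_r$ is bounded and left-continuous. Nothing relates $F_s$ or $\nabla F_s$ to $F_v$ or $\nabla F_v$ for $s\neq v$. Joint left-continuity of $\nabla F$ is the $\Cf^1_l$ property, which is not among the hypotheses (compare the remark after Theorem~\ref{thm:backward_kolmogorov_equation}).

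Horizontal absolute continuity does not supply it either. Differentiating $F_r(\Theta_{s,r}\vartheta)=F_s(\vartheta)+\int_s^r\cH_vF(\Theta_{s,v}\vartheta)\,dv$ vertically produces a derivative of the horizontal integral that you cannot control, since $\cH F$ is not assumed vertically differentiable. Here is a concrete counterexample. Take $E=\R$, $F_s(\gamma)=g(s,\gamma_s)$ and $g(s,x)=\sum_k\phi_k(s)\eps_k\sin(x/\eps_k)$. The $\phi_k$ are height-one bumps on disjoint intervals accumulating at $s_0$ from the left, and $\eps_k=o(|\supp\phi_k|)$. All hypotheses of the lemma hold, with $\cH_sF(\gamma)=\partial_sg(s,\gamma_s)$ continuous and bounded. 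Yet $\nabla F_s(\gamma)=\partial_xg(s,\gamma_s)$ is not left-continuous at $s_0$. Reordering the jumps does not help, because every vertical increment is still measured by a different functional $F_{s_k}$.

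The missing idea is to measure the vertical contribution with a single functional at a fixed terminal time, which is what the paper does. For $t\le u<r$ and $\vartheta^u=c\Theta_{t,u}\gamma+\varrho^{[0,u]}$, split $f(r)-f(u)$ into two pieces:
\begin{itemize}
\item the horizontal increment $F_r(\Theta_{u,r}\vartheta^u)-F_u(\vartheta^u)=\int_u^r\cH_sF(\Theta_{u,s}\vartheta^u)\,ds$;
\item the difference $F_r(\vartheta^r)-F_r(\Theta_{u,r}\vartheta^u)$, which involves only $F_r$.
\end{itemize}
Your step-path argument with right-endpoint values does apply to the second piece, through $G^{r,u}_s(\vartheta):=F_r(c\Theta_{t,r}\gamma+\varrho^{[0,u)}+\Theta^u_{s,r}\vartheta)$. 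This functional has $\cD G^{r,u}=0$, and by Lemma~\ref{lem:regularity_of_map_G} its vertical derivatives $\nabla_{[s,r]}F_r(\cdot)\circ\jmath_{[s,r]}$ are bounded and jointly left-continuous. It gives $\int_u^rA^r_s[\dot\varrho_s]\,ds$, with $A^r_s$ built from $\nabla_{[s,r]}F_r$.

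This gives the Lipschitz bound. The derivative is then identified at a.e.\ $r$ (points where $f$ is differentiable and which are Lebesgue points of $\dot\varrho$) by letting $u\to r-$. Left-continuity of $\cH F$ handles the horizontal term, and left-continuity of $s\mapsto\nabla_{[s,r]}F_r$ at fixed $r$ gives $A^r_s\to\nabla F_r(\vartheta^r)$. As written, your argument proves the lemma only under the extra hypothesis that $F$ and $\nabla F$ are left-continuous non-anticipative functionals.
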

	
	\begin{proof}
		Fix $0\le t<T$ and $\gamma\in D([0,t],E)$, and define 
		\[
		f(r):=F^{c,\varrho}_{r}(\Theta_{t,r}\gamma)=F_{r}\left(c\Theta_{t,r}\gamma+\varrho^{[0,r]}\right),\quad r\in[t,T].
		\]
		We first show that $f$ is absolutely continuous.
		
		Let $t\le u<r\le T$ and set 
		\[
		\vartheta^{u}:=c\Theta_{t,u}\gamma+\varrho^{[0,u]}.
		\]
		Then 
		\begin{align}
			f(r)-f(u) & =F_{r}\left(c\Theta_{t,r}\gamma+\varrho^{[0,r]}\right)-F_{r}\left(\Theta_{u,r}\vartheta^{u}\right)\nonumber \\
			& +F_{r}\left(\Theta_{u,r}\vartheta^{u}\right)-F_{u}(\vartheta^{u}).\label{eq:decomposition_horizontal_translation}
		\end{align}
		By the horizontal absolute continuity of $F$, 
		\begin{equation}
			F_{r}\left(\Theta_{u,r}\vartheta^{u}\right)-F_{u}(\vartheta^{u})=\int^{r}_{u}\cH_{s}F\left(\Theta_{u,s}\vartheta^{u}\right)ds.\label{eq:horizontal_part_translation}
		\end{equation}
		
		We next consider the first difference on the right-hand side of (\ref{eq:decomposition_horizontal_translation}).
		Define 
		\[
		G^{r,u}_{s}(\vartheta):=F_{r}\left(c\Theta_{t,r}\gamma+\varrho^{[0,u)}+\Theta^{u}_{s,r}\vartheta\right),\quad(s,\vartheta)\in\Upsilon([u,r],E).
		\]
		Then 
		\[
		G^{r,u}_{u}(\varrho_{u})=F_{r}\left(\Theta_{u,r}\vartheta^{u}\right)
		\]
		and 
		\[
		G^{r,u}_{r}(\varrho^{[u,r]})=F_{r}\left(c\Theta_{t,r}\gamma+\varrho^{[0,r]}\right).
		\]
		By Lemma~\ref{lem:regularity_of_map_G}, $G^{r,u}$ is horizontally
		differentiable with 
		\[
		\cD_{s}G^{r,u}(\vartheta)=0,
		\]
		and its vertical derivative is given by 
		\[
		\nabla G^{r,u}_{s}(\vartheta)[h]=\nabla_{[s,r]}F_{r}\left(c\Theta_{t,r}\gamma+\varrho^{[0,u)}+\Theta^{u}_{s,r}\vartheta\right)\left[\jmath_{[s,r]}h\right],\quad h\in E.
		\]
		Take partitions $u=t_0<\cdots<t_N=r$ with mesh tending to zero, and approximate $\varrho$ by the step paths taking the value $\varrho_{t_{i+1}}$ on $[t_i,t_{i+1})$
		and $\varrho_r$ at $r$. Since $G^{r,u}$ is unchanged by constant extension,
		a telescoping sum reduces its increment along each such path to vertical increments at the times $t_i$. Applying the fundamental theorem of calculus to these increments and using
		$\varrho_{t_{i+1}}-\varrho_{t_i} =\int_{t_i}^{t_{i+1}}\dot{\varrho}_s\,ds$, we may pass to the limit by dominated convergence, using the boundedness and left-continuity of $\nabla G^{r,u}$. Consequently,
		\begin{equation}
			F_{r}\left(c\Theta_{t,r}\gamma+\varrho^{[0,r]}\right)-F_{r}\left(\Theta_{u,r}\vartheta^{u}\right)=\int^{r}_{u}\nabla G^{r,u}_{s}(\varrho^{[u,s]})[\dot{\varrho}_{s}]\,ds=\int^{r}_{u}A^{r}_{s}[\dot{\varrho}_{s}]\,ds,\label{eq:vertical_part_translation}
		\end{equation}
		where 
		\[
		A^{r}_{s}[h]:=\nabla_{[s,r]}F_{r}\left(c\Theta_{t,r}\gamma+\varrho^{[0,s)}+\jmath_{[s,r]}\varrho_{s}\right)\left[\jmath_{[s,r]}h\right],\quad h\in E.
		\]
		In particular, $A^{r}_{s}$ is independent of $u$.
		
		Combining (\ref{eq:decomposition_horizontal_translation}), (\ref{eq:horizontal_part_translation}),
		and (\ref{eq:vertical_part_translation}), we obtain 
		\begin{equation}
			f(r)-f(u)=\int^{r}_{u}\cH_{s}F\left(\Theta_{u,s}\vartheta^{u}\right)\,ds+\int^{r}_{u}A^{r}_{s}[\dot{\varrho}_{s}]\,ds.\label{eq:representation_translation_f}
		\end{equation}

		We next show that $f$ is Lipschitz continuous. Since $\gamma$ is
		c\`adl\`ag, its range has compact closure in $E$, while the image
		$\varrho([0,T])$ is compact by continuity. Hence
		\[
		K:=c\,\overline{\{\gamma_{s}:s\in[0,t]\}}+\varrho([0,T])
		\]
		is a compact subset of $E$. Every path
		$\Theta_{u,s}\vartheta^{u}$, $t\le u\le s\le T$, takes values in $K$.
		Since $\cH F$ is boundedness preserving, there exists a constant
		$C_{c,\gamma,\varrho,F}$ such that
		\[
		\sup_{t\le u\le s\le T}
		\left|\cH_{s}F\left(\Theta_{u,s}\vartheta^{u}\right)\right|
		\le C_{c,\gamma,\varrho,F}.
		\]
		Moreover, the boundedness of the vertical path derivative of $F$
		and of $\dot{\varrho}$ yields 
		\[
		\left|A^{r}_{s}[\dot{\varrho}_{s}]\right|\le\|F\|_{\cP^{1}(\Upsilon([0,T],E))}\sup_{v\in[0,T]}\|\dot{\varrho}_{v}\|_{E}.
		\]
		Consequently, by (\ref{eq:representation_translation_f}), 
		\[
		|f(r)-f(u)|\le C_{c,\gamma,\varrho,F,\dot{\varrho}}(r-u),\quad t\le u<r\le T.
		\]
		Thus $f$ is Lipschitz continuous and hence absolutely continuous.
		Consequently, $f'(s)$ exists for a.e. $s\in[t,T]$ and 
		\begin{equation}
			f(r)-f(u)=\int^{r}_{u}f'(s)\,ds,\quad t\le u\le r\le T.\label{eq:AC_translation_f}
		\end{equation}
		
		We finally identify $f'$. Fix $r\in(t,T)$ such that $f$ is differentiable
		at $r$ and $r$ is a Lebesgue point of $\dot{\varrho}$. Such $r$
		form a set of full Lebesgue measure by the absolute continuity of
		$f$ and the Lebesgue differentiation theorem for Bochner integrals
		\citep[Theorem 2.3.4]{Veraar:2016}.
		By (\ref{eq:representation_translation_f}),
		\begin{equation}
			f'(r)=\lim_{u\to r-}\frac{1}{r-u}\int^{r}_{u}\cH_{s}F\left(\Theta_{u,s}\vartheta^{u}\right)\,ds+\lim_{u\to r-}\frac{1}{r-u}\int^{r}_{u}A^{r}_{s}[\dot{\varrho}_{s}]\,ds.\label{eq:derivative_translation_f}
		\end{equation}
		Since $\varrho$ is continuous, 
		\[
		\sup_{s\in[u,r]}d_{\Upsilon([0,T],E)}\left((s,\Theta_{u,s}\vartheta^{u}),(r,\vartheta^{r})\right)\to0\quad\text{as }u\to r-.
		\]
		Hence, by the left-continuity of $\cH F$, 
		\[
		\sup_{s\in[u,r]}\left|\cH_{s}F\left(\Theta_{u,s}\vartheta^{u}\right)-\cH_{r}F(\vartheta^{r})\right|\to0\quad\text{as }u\to r-.
		\]
		Therefore, 
		\begin{equation}
			\lim_{u\to r-}\frac{1}{r-u}\int^{r}_{u}\cH_{s}F\left(\Theta_{u,s}\vartheta^{u}\right)\,ds=\cH_{r}F(\vartheta^{r}).\label{eq:limit_horizontal_translation}
		\end{equation}
		
		For the second term, set 
		\[
		A_{r}[h]:=\nabla F_{r}(\vartheta^{r})[h],\quad h\in E.
		\]
		By the left-continuity of the vertical path derivatives of $F$, 
		\[
		\sup_{s\in[u,r]}\|A^{r}_{s}-A_{r}\|_{\cL(E,\R)}\to0\quad\text{as }u\to r-.
		\]
		Consequently, 
		\begin{align*}
			& \left|\frac{1}{r-u}\int^{r}_{u}A^{r}_{s}[\dot{\varrho}_{s}]\,ds-A_{r}\left[\frac{\varrho_{r}-\varrho_{u}}{r-u}\right]\right|\\
			& \qquad\le\sup_{s\in[u,r]}\|A^{r}_{s}-A_{r}\|_{\cL(E,\R)}\frac{1}{r-u}\int^{r}_{u}\|\dot{\varrho}_{s}\|_{E}\,ds\longrightarrow0.
		\end{align*}
		Moreover, since $r$ is a Lebesgue point of $\dot{\varrho}$ and
		\[
		\varrho_{r}-\varrho_{u}=\int^{r}_{u}\dot{\varrho}_{s}\,ds,
		\]
		it follows that
		\[
		\frac{\varrho_{r}-\varrho_{u}}{r-u}=\frac{1}{r-u}\int^{r}_{u}\dot{\varrho}_{s}\,ds\to\dot{\varrho}_{r}\quad\text{as }u\to r-.
		\]
		Hence 
		\begin{equation}
			\lim_{u\to r-}\frac{1}{r-u}\int^{r}_{u}A^{r}_{s}[\dot{\varrho}_{s}]\,ds=\nabla F_{r}(\vartheta^{r})[\dot{\varrho}_{r}].\label{eq:limit_vertical_translation}
		\end{equation}
		Combining (\ref{eq:derivative_translation_f}), (\ref{eq:limit_horizontal_translation}),
		and (\ref{eq:limit_vertical_translation}), we obtain 
		\[
		f'(r)=\cH_{r}F(\vartheta^{r})+\nabla F_{r}(\vartheta^{r})[\dot{\varrho}_{r}]
		\]
		for a.e. $r\in[t,T]$.
		
		Since $f$ is absolutely continuous, (\ref{eq:AC_translation_f})
		therefore yields 
		\begin{align*}
			F^{c,\varrho}_{r}(\Theta_{t,r}\gamma)-F^{c,\varrho}_{t}(\gamma) & =\int^{r}_{t}\cH_{s}F\left(c\Theta_{t,s}\gamma+\varrho^{[0,s]}\right)\,ds\\
			&+\int^{r}_{t}\nabla F_{s}\left(c\Theta_{t,s}\gamma+\varrho^{[0,s]}\right)[\dot{\varrho}_{s}]\,ds.
		\end{align*}
		Thus $F^{c,\varrho}$ is horizontally absolutely continuous, with
		weak horizontal derivative 
		\[
		\cH_{t}F^{c,\varrho}(\gamma)=\cH_{t}F\left(c\gamma+\varrho^{[0,t]}\right)+\nabla F_{t}\left(c\gamma+\varrho^{[0,t]}\right)[\dot{\varrho}_{t}].
		\]
		The proof of the statement is complete.
	\end{proof}

	\subsection{Martingale problem for non-anticipative functionals \label{subsec:martingale_problem_for_non-anticipative_functionals}}
	
	We next derive a martingale problem for horizontally absolutely continuous
	non-anticipative functionals evaluated along a Markov jump process.
	This will provide the path-dependent analogue of the usual martingale
	problem and will later allow us to apply the backward Kolmogorov equation
	to the microscopic fluctuation field. The main point is that only
	horizontal absolute continuity, rather than strong horizontal differentiability,
	will be required.
	
	Let $(E,d_{E})$ be a Polish space, let $\lambda:E\to[0,\infty)$
	be a bounded Borel measurable function, and let $\mu$ be a transition
	probability kernel on $E$. We consider a càdlàg Markov jump process
	$(Y_{t})_{t\ge0}$ with generator
	\begin{equation}
		\cL f(x)=\lambda(x)\int_{E}\left(f(y)-f(x)\right)\mu(x,dy).\label{eq:generator_of_jump_process}
	\end{equation}
	In particular, for every bounded Borel measurable function $f:E\to\R$,
	the process
	\[
	f(Y_{t})-f(Y_{0})-\int^{t}_{0}\cL f(Y_{s})ds,\quad t\ge0,
	\]
	is an $(\F^{Y}_{t})$-martingale.
	\begin{rem}
		\label{rem:assumption_to_probability_kernel} Without loss of generality,
		we assume that $\mu(x,\{x\})=0$ for all $x\in E$. Indeed, setting
		$p(x):=\mu(x,\{x\})$, one may replace $\lambda(x)$ by $\widetilde{\lambda}(x):=\lambda(x)(1-p(x))$
		and, whenever $p(x)<1$, $\mu(x,\cdot)$ by
		\[
		\widetilde{\mu}(x,A):=\frac{\mu(x,A\setminus\{x\})}{1-p(x)}.
		\]
		If $p(x)=1$, then $\widetilde{\lambda}(x)=0$ and $\widetilde{\mu}(x,\cdot)$
		may be chosen arbitrarily. This leaves the generator $\cL$ unchanged
		and preserves boundedness of the jump rate.
	\end{rem}
	
	The following proposition gives the corresponding path-dependent martingale
	problem.
	\begin{prop}
		\label{prop:matingale_proplem_for_nonanticipative_functional} Let
		$(Y_{t})_{t\ge0}$ be a Markov jump process with generator (\ref{eq:generator_of_jump_process})
		and $T>0$. Let $F$ also be a left-continuous, horizontally absolutely
		continuous non-anticipative functional such that
		\begin{equation}
			\sup_{(t,\gamma)\in\Upsilon([0,T],E)}\int_{E}\left|F_{t}(\gamma^{[0,t)},y)-F_{t}(\gamma)\right|\mu(\gamma_{t},dy)<\infty.\label{eq:extra_asusmption_in_martingale_problem}
		\end{equation}
		Then
		
		\[
		F_{t}(Y^{[0,t]})-F_{0}(Y_{0})-\int^{t}_{0}\left[\cH_{s}F(Y^{[0,s]})+\cL F_{s}(Y^{[0,s)},\cdot)(Y_{s})\right]ds,\quad t\in[0,T],
		\]
		is a local $(\F^{Y}_{t})$-martingale.
	\end{prop}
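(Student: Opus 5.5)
We will proceed pathwise between jumps, and then handle the jumps through the compensator of the jump measure. Since $\lambda$ is bounded, on $[0,T]$ the process $Y$ has almost surely finitely many jump times $0<\sigma_1<\sigma_2<\cdots<\sigma_N\le T$; by Remark~\ref{rem:assumption_to_probability_kernel} these are genuine jumps, $Y_{\sigma_k}\neq Y_{\sigma_k-}$. Set $\sigma_0:=0$. On each interval $[\sigma_k,\sigma_{k+1})$ the path is constant, so $Y^{[0,s]}=\Theta_{\sigma_k,s}Y^{[0,\sigma_k]}$ for $s\in[\sigma_k,\sigma_{k+1})$. Horizontal absolute continuity then gives
\[
F_{s}(Y^{[0,s]})-F_{\sigma_k}(Y^{[0,\sigma_k]})=\int_{\sigma_k}^{s}\cH_rF(Y^{[0,r]})\,dr,\qquad s\in[\sigma_k,\sigma_{k+1}).
\]
Letting $s\uparrow\sigma_{k+1}$, we note that $\Theta_{\sigma_k,s}Y^{[0,\sigma_k]}\to(\sigma_{k+1},Y^{[0,\sigma_{k+1})}\text{ frozen})$ in $d_\Upsilon$, that is, to the stopped path $\Theta_{\sigma_k,\sigma_{k+1}}Y^{[0,\sigma_k]}=(Y^{[0,\sigma_{k+1})},Y_{\sigma_{k+1}-})$. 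Left-continuity of $F$ (or, more directly, the identity with $r=\sigma_{k+1}$ in the definition) yields the same identity up to $\sigma_{k+1}$, evaluated at the pre-jump path $(Y^{[0,\sigma_{k+1})},Y_{\sigma_{k+1}-})$. Telescoping, we obtain
\[
F_t(Y^{[0,t]})-F_0(Y_0)=\int_0^t\cH_sF(Y^{[0,s]})\,ds+\sum_{\sigma_k\le t}\Bigl[F_{\sigma_k}(Y^{[0,\sigma_k)},Y_{\sigma_k})-F_{\sigma_k}(Y^{[0,\sigma_k)},Y_{\sigma_k-})\Bigr].
\]

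It remains to show that the jump sum minus $\int_0^t\cL F_s(Y^{[0,s)},\cdot)(Y_s)\,ds$ is a local martingale. Let $\nu(ds,dy)$ be the jump measure of $Y$ (marks $y=Y_{\sigma_k}$). For a jump process with generator \eqref{eq:generator_of_jump_process}, its $(\F^Y_t)$-compensator is $\lambda(Y_{s-})\mu(Y_{s-},dy)\,ds$. Define the predictable integrand
\[
H(s,y):=F_s(Y^{[0,s)},y)-F_s(Y^{[0,s)},Y_{s-}).
\]
This is predictable because it depends only on $Y^{[0,s)}$ and on $s$; measurability follows from the measurability of $F$ on $\Upsilon$ together with that of the map $(s,\omega)\mapsto(s,Y^{[0,s)}+y\I_{\{s\}})$. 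The jump sum equals $\int_0^t\!\int_E H\,d\nu$. By \eqref{eq:extra_asusmption_in_martingale_problem} and the boundedness of $\lambda$, we have $\int_0^t\!\int|H|\lambda\mu\,ds\le Ct$. Standard theory of integer-valued random measures therefore gives that $\int H\,d(\nu-\hat\nu)$ is a martingale, in particular a local one. Since $Y_{s-}=Y_s$ for a.e.\ $s$, the compensator term equals $\int_0^t\lambda(Y_s)\int(F_s(Y^{[0,s)},y)-F_s(Y^{[0,s)},Y_s))\mu(Y_s,dy)\,ds=\int_0^t\cL F_s(Y^{[0,s)},\cdot)(Y_s)\,ds$, which gives the claim.

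The main obstacle is twofold. First, we must justify the compensator formula for the marked point process of $Y$ with respect to $\F^Y$. If we want to stay self-contained, we will instead apply the ordinary Markov martingale problem to the time-space process $(s,Y^{[0,s]})$, or derive the compensator from the construction of $Y$ via exponential clocks and the kernel $\mu$. Second, we must check that the pathwise telescoping at jump times is legitimate, using the left-continuity of $F$ and the fact that the constant extension up to $\sigma_{k+1}$ recovers the pre-jump stopped path. Localization, via the stopping times $\sigma_k$ or the number of jumps, handles any integrability issues of $\int|\cH F|$; this is why only a local martingale is asserted.
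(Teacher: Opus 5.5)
Your proposal is correct and follows essentially the same route as the paper. The paper also telescopes pathwise, applying horizontal absolute continuity on the constancy intervals between jump times and using the defining identity directly at the next jump time. It then compensates the jump sum with the predictable compensator $\lambda(Y_{s-})\mu(Y_{s-},dy)\,ds$, which it takes from Jacod's formula for marked point processes. Your localization remark about $\int|\cH F|$ is unnecessary: the decomposition is pathwise, and under the integrability assumption the compensated jump integral is even a true martingale.
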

	
	\begin{proof}
		For non-anticipative functionals with stronger horizontal regularity
		and vertical differentiability, a related statement follows from \citep[Proposition 6]{Cont_Fournie2010}.
		Here we give a direct proof adapted to horizontally absolutely continuous
		functionals and Markov jump processes with bounded jump rates.
		
		Since $\lambda$ is bounded, the process $Y$ has only finitely many
		jumps on $[0,T]$ almost surely. We first obtain a pathwise decomposition
		of $F_{t}(Y^{[0,t]})$ into its horizontal evolution between jumps
		and its increments at the jump times. Set $\tau_{0}:=0$ and, recursively,
		\[
		\tau_{i+1}:=\inf\{t>\tau_{i}:Y_{t}\neq Y_{\tau_{i}}\},
		\]
		with the usual convention $\inf\varnothing=\infty$. For fixed $t\in[0,T]$,
		let $k\in\N_{0}$ be such that
		\[
		\tau_{k}\le t<\tau_{k+1}.
		\]
		Between two consecutive jump times the process $Y$ is constant. Hence,
		for $r\in[\tau_{i-1},\tau_{i})$ 
		\[
		Y^{[0,r]}=\Theta_{\tau_{i-1},r}Y^{[0,\tau_{i-1}]}.
		\]
		By horizontal absolute continuity,
		\[
		F_{\tau_{i}}\left(\Theta_{\tau_{i-1},\tau_{i}}Y^{[0,\tau_{i-1}]}\right)-F_{\tau_{i-1}}(Y^{[0,\tau_{i-1}]})=\int^{\tau_{i}}_{\tau_{i-1}}\cH_{s}F(Y^{[0,s]})\,ds.
		\]
		Since $Y$ is constant on $[\tau_{i-1},\tau_{i})$, we get
		\[
		F_{\tau_{i}}\left(\Theta_{\tau_{i-1},\tau_{i}}Y^{[0,\tau_{i-1}]}\right)=F_{\tau_{i}}(Y^{[0,\tau_{i})},Y_{\tau_{i}-}).
		\]
		Consequently,
		\[
		F_{\tau_{i}}\bigl(Y^{[0,\tau_{i})},Y_{\tau_{i}-}\bigr)-F_{\tau_{i-1}}(Y^{[0,\tau_{i-1}]})=\int^{\tau_{i}}_{\tau_{i-1}}\cH_{s}F(Y^{[0,s]})ds.
		\]
		Similarly, 
		\[
		F_{t}(Y^{[0,t]})-F_{\tau_{k}}(Y^{[0,\tau_{k}]})=\int^{t}_{\tau_{k}}\cH_{s}F\bigl(Y^{[0,s]}\bigr)ds.
		\]
		Summing the horizontal increments and the jump increments $F_{\tau_{i}}(Y^{[0,\tau_{i}]})-F_{\tau_{i}}\bigl(Y^{[0,\tau_{i})},Y_{\tau_{i}-}\bigr)$,
		we obtain
		\begin{align*}
			F_{t}(Y^{[0,t]})-F_{0}(Y_{0})= & \sum^{k}_{i=1}\left[F_{\tau_{i}}\bigl(Y^{[0,\tau_{i})},Y_{\tau_{i}-}\bigr)-F_{\tau_{i-1}}(Y^{[0,\tau_{i-1}]})\right]\\
			& +\sum^{k}_{i=1}\left[F_{\tau_{i}}(Y^{[0,\tau_{i}]})-F_{\tau_{i}}\bigl(Y^{[0,\tau_{i})},Y_{\tau_{i}-}\bigr)\right]\\
			& +F_{t}(Y^{[0,t]})-F_{\tau_{k}}(Y^{[0,\tau_{k}]}).
		\end{align*}
		Thus,
		\begin{align*}
			F_{t}(Y^{[0,t]})-F_{0}(Y_{0})= & \int^{t}_{0}\cH_{s}F(Y^{[0,s]})\,ds+\sum_{0<s\le t}\left[F_{s}(Y^{[0,s]})-F_{s}(Y^{[0,s)},Y_{s-})\right].
		\end{align*}
		
		We next compensate the jump contribution. Define the integer-valued
		random measure 
		\[
		N(dt,dy):=\sum_{r:\,\Delta Y_{r}\neq0}\delta_{(r,Y_{r})}(dt,dy).
		\]
		Then, for every $t\in[0,T]$, 
		\[
		\sum_{0<s\le t}\left[F_{s}(Y^{[0,s]})-F_{s}(Y^{[0,s)},Y_{s-})\right]=\int_{(0,t]\times E}\Psi_{s}(y)N(ds,dy),
		\]
		where 
		\begin{equation}
			\Psi_{s}(y):=F_{s}(Y^{[0,s)},y)-F_{s}(Y^{[0,s)},Y_{s-}).\label{eq:predictable_process_Psi}
		\end{equation}
		By the canonical measurability of $F$, the map
		$(s,\omega,y)\mapsto\Psi_s(\omega,y)$ is
		$\mathcal P\otimes\mathcal B(E)$-measurable.
		
		We next identify the predictable compensator of $N$. By the standard
		construction of the jump process (see, e.g., \citep[Section 4.2]{Ethier:1986}),
		conditionally on $\F^{Y}_{\tau_{i}}$, the joint distribution of the
		holding time $\tau_{i+1}-\tau_{i}$ and the next state $Y_{\tau_{i+1}}$
		is
		\[
		G_{i}(du,dy)=\lambda(Y_{\tau_{i}})e^{-\lambda(Y_{\tau_{i}})u}\mu(Y_{\tau_{i}},dy)\,du,\quad u>0.
		\]
		Here we use the convention introduced in Remark \ref{rem:assumption_to_probability_kernel}
		that $\mu(x,\{x\})=0$. The corresponding survival function of the
		holding time is
		
		\[
		H_{i}([u,\infty))=e^{-\lambda(Y_{\tau_{i}})u},\quad u\ge0.
		\]
		Therefore, by \citep[Proposition 3.1]{Jacod1975}, the predictable
		compensator of $N$ is 
		\begin{align*}
			\nu(dt,dy) & =\sum^{\infty}_{i=0}\frac{G_{i}(d(t-\tau_{i}),dy)}{H_{i}([t-\tau_{i},\infty))}\I_{\{\tau_{i}<t\le\tau_{i+1}\}}\\
			& =\sum^{\infty}_{i=0}\lambda(Y_{\tau_{i}})\I_{\{\tau_{i}<t\le\tau_{i+1}\}}\mu(Y_{\tau_{i}},dy)\,dt\\
			& =\lambda(Y_{t-})\mu(Y_{t-},dy)\,dt.
		\end{align*}
		
		Consequently, 
		\[
		M_{t}:=\int_{(0,t]\times E}\Psi_{s}(y)\,(N-\nu)(ds,dy),\quad t\in[0,T],
		\]
		is a local $(\F^{Y}_{t})$-martingale. The integrability condition
		in the statement, together with the boundedness of $\lambda$, guarantees
		that the compensator integral is finite on $[0,T]$.
		
		Moreover, 
		\begin{align*}
			\int_{(0,t]\times E}\Psi_{s}(y)\,\nu(ds,dy) & =\int^{t}_{0}\lambda(Y_{s-})\int_{E}\left[F_{s}(Y^{[0,s)},y)-F_{s}(Y^{[0,s)},Y_{s-})\right]\mu(Y_{s-},dy)\,ds\\
			& =\int^{t}_{0}\cL F_{s}(Y^{[0,s)},\cdot)(Y_{s-})\,ds=\int^{t}_{0}\cL F_{s}(Y^{[0,s)},\cdot)(Y_{s})\,ds,
		\end{align*}
		where in the last equality we used that $Y_{s-}=Y_{s}$ for a.e. $s\in[0,T]$.
		Combining this identity with the pathwise decomposition above, we
		obtain
		\[
		F_{t}(Y^{[0,t]})-F_{0}(Y_{0})-\int^{t}_{0}\left[\cH_{s}F(Y^{[0,s]})+\cL F_{s}(Y^{[0,s)},\cdot)(Y_{s})\right]ds=M_{t},
		\]
		and the claim follows.
	\end{proof}

	\subsection{Regularity of the continuation functional}
	
	We now introduce the continuation functional associated with the modified
	Ornstein--Uhlenbeck dynamics. It will later be shown to solve the
	corresponding path-dependent backward Kolmogorov equation. The aim
	of this subsection is to establish the regularity properties needed
	for this identification.
	
	Throughout the remainder of Section \ref{sec:non-anticipative_functionals},
	we fix $n\ge3$ and usually omit it from the notation. However, we
	keep track of the dependence on $n$ in the estimates. As in Section
	\ref{subsec:Modified-Ornstein=002013Uhlenbeck-proc}, we assume that
	$m_{\Delta},m_{\partial}\in\N$, with $m_{\Delta}$ odd, and that
	\[
	\Phi=(\Phi_{j})_{j\in[d]}:[0,\infty)\to L^{2}(\Td;\R^{d})
	\]
	is measurable and locally bounded. Recall that, for $s\ge0$ and $g\in\H[-I]$,
	we denote by $\bigl(\xi_{s,t}(g)\bigr)_{t\ge s}$ the solution to
	the SPDE (\ref{eq:SPDE_for_OU_process}) starting from $g$ at time
	$s$, as defined in (\ref{eq:OU_process}).
	
	Let $T>0$ be fixed and let $F:D([0,T],\H[-I])\to\R$ be measurable.
	We define the \textit{continuation functional} associated with $F$
	by 
	\begin{equation}
		U_{t}(\gamma):=\E F\left(\gamma^{[0,t)}+\xi^{[t,T]}_{t,\cdot}(\gamma_{t})\right),\quad(t,\gamma)\in\Upsilon([0,T],\H[-I]),\label{eq:definition_of_U}
	\end{equation}
	whenever the expectation exists. Here, as before, the two functions
	appearing in the sum are extended by zero outside their respective
	domains. In particular, 
	\[
	U_{T}(\gamma)=F(\gamma),\quad\gamma\in D([0,T],\H[-I]).
	\]
	
	We first establish the continuity of $U$ and then investigate its
	vertical regularity.
	\begin{lem}
		\label{lem:continuity_of_U}Let $I>\frac{d}{2}+m_{\partial}$ and
		$F\in\cP^{1}(D([0,T],\H[-I]))$. Then the functional $U$ defined
		by (\ref{eq:definition_of_U}) is well-defined and continuous non-anticipative
		functional on $\Upsilon([0,T],\H[-I])$. 
	\end{lem}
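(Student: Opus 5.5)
To prove Lemma~\ref{lem:continuity_of_U}, we first check well-definedness and measurability, and then prove continuity in the metric $d_{\Upsilon([0,T],\H[-I])}$ by splitting a perturbation into a change of path at a fixed time and a change of the starting time.

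\emph{Well-definedness.} Fix $(t,\gamma)$. By Proposition~\ref{prop:OU_started_at_s}, the process $\xi_{t,\cdot}(\gamma_t)$ has a continuous $\H[-I]$-valued version. Hence the concatenation $\gamma^{[0,t)}+\xi^{[t,T]}_{t,\cdot}(\gamma_t)$ is a random element of $D([0,T],\H[-I])$, measurable for the cylindrical $\sigma$-algebra. Since $F\in\cP^{1}$, the path derivative $\nabla_{[0,T]}F=DF$ is bounded, so $F$ is globally Lipschitz with constant $L:=\|F\|_{\cP^1}$ for the uniform norm. This gives
\[
|F(\theta)|\le|F(0)|+L\|\theta\|_{D}.
\]
Proposition~\ref{prop:OU_started_at_s} bounds $\E\sup_{s\in[t,T]}\|\xi_{t,s}(\gamma_t)\|^2$, so the expectation is finite. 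Measurability of $U$ in $(t,\gamma)$ follows by writing $\xi_{t,s}(g)=\cS_{t,s}g+\int_t^s\cS_{r,s}B(r)\,dW_r$ and using that $(t,s,g)\mapsto\cS_{t,s}g$ is jointly measurable. The joint measurability of the stochastic integral in $(t,s)$ comes from the version $\int_0^s-\int_0^t$ of $\int_t^s\cS_{r,s}B(r)\,dW_r$ (after factoring out $\cS_{\cdot,s}$ appropriately), followed by Fubini.

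\emph{Continuity at fixed time.} For $\gamma,\vartheta\in D([0,t],\H[-I])$, use the same Wiener process for both. Linearity gives $\xi_{t,s}(\gamma_t)-\xi_{t,s}(\vartheta_t)=\cS_{t,s}(\gamma_t-\vartheta_t)$, and contractivity then yields
\[
|U_t(\gamma)-U_t(\vartheta)|\le L\sup_{s}\|\gamma_s-\vartheta_s\|.
\]

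\emph{Change of starting time.} Take $(t_k,\gamma^k)\to(t,\gamma)$ in $\Upsilon$. By the fixed-time step, it suffices to compare $U_{t_k}(\gamma^k)$ with $U_t(\gamma)$ through the driving noise. Couple everything on one $W$ and estimate
\[
\E\bigl\|\bigl(\gamma^{k,[0,t_k)}+\xi_{t_k,\cdot}(\gamma^k_{t_k})\bigr)-\bigl(\gamma^{[0,t)}+\xi_{t,\cdot}(\gamma_t)\bigr)\bigr\|_{D([0,T])}.
\]
Consider the case $t_k\le t$; the other case is symmetric. On $[0,t_k)$ the difference is at most $d_\Upsilon$. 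On $[t_k,t)$ it is $\xi_{t_k,s}(\gamma^k_{t_k})-\gamma_s$, which is bounded by
\[
\sup_{s\in[t_k,t]}\|\xi_{t_k,s}(\gamma^k_{t_k})-\gamma^k_{t_k}\|+\|\gamma^k_{t_k}-\gamma_{s\wedge t}\|.
\]
Here $\|\gamma^k_{t_k}-\gamma_{s\wedge t}\|$ is controlled by $d_\Upsilon$, since $\gamma^k$ is frozen after $t_k$. On $[t,T]$, the flow property $\xi_{t_k,s}(g)=\xi_{t,s}(\xi_{t_k,t}(g))$ together with linearity gives a difference $\cS_{t,s}\bigl(\xi_{t_k,t}(\gamma^k_{t_k})-\gamma_t\bigr)$, bounded by $\|\xi_{t_k,t}(\gamma^k_{t_k})-\gamma^k_{t_k}\|+d_\Upsilon$.

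\emph{Main obstacle.} All remaining terms reduce to $\E\sup_{s\in[t_k,t_k+\eps]}\|\xi_{t_k,s}(g_k)-g_k\|$ with varying initial data $g_k=\gamma^k_{t_k}\to\gamma_t$. Proposition~\ref{prop:OU_started_at_s} gives this uniformly only for $g$ bounded in $\H[-I+\delta]$, which is not available here. Instead write
\[
\xi_{t_k,s}(g_k)-g_k=(\cS_{s-t_k}-\id)(g_k-\gamma_t)+\bigl(\xi_{t_k,s}(\gamma_t)-\gamma_t\bigr).
\]
The first term is at most $2\|g_k-\gamma_t\|$ by contractivity. The second tends to zero by Remark~\ref{rem:convergence_to_zero_of_sup_xi} applied to the fixed point $\gamma_t$, since that remark is uniform in the starting time. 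Combining these with the Lipschitz bound on $F$ gives $U_{t_k}(\gamma^k)\to U_t(\gamma)$.
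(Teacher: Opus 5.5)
Your proof is correct and follows essentially the same route as the paper. Both arguments couple the two continuations on the same Wiener process, split $[0,T]$ at the two starting times, use the flow property and contractivity of $\cS$ on the final segment, and control the short-time increment on the middle segment. Your treatment of the varying initial datum, via $(\cS_{s-t_k}-\id)(g_k-\gamma_t)$ together with Remark~\ref{rem:convergence_to_zero_of_sup_xi}, repackages the paper's appeal to the joint continuity of $(t,g)\mapsto\cS_tg$ and the stochastic-convolution bound of Lemma~\ref{lem:stochastic_convolution}.
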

	
	\begin{proof}
		We first show that $U_{t}(\gamma)$ is well-defined for every $(t,\gamma)\in\Upsilon([0,T],\H[-I])$.
		Since $F\in\cP^{1}(D([0,T],\H[-I]))$, its Fréchet derivative $\nabla_{[0,T]}F$
		is bounded. Hence, by the mean-value theorem, 
		\begin{align*}
			\E\left|F\left(\gamma^{[0,t)}+\xi^{[t,T]}_{t,\cdot}(\gamma_{t})\right)\right| & \le\E\left|F\left(\gamma^{[0,t)}+\xi^{[t,T]}_{t,\cdot}(\gamma_{t})\right)-F\left(\gamma^{[0,t)}\right)\right|+\left|F\left(\gamma^{[0,t)}\right)\right|\\
			& \le\bnorm F_{\cP^{1}(D([0,T],\H[-I]))}\E\left[\bnorm{\xi^{[t,T]}_{t,\cdot}(\gamma_{t})}_{D([0,T],\H[-I])}\right]+\left|F\left(\gamma^{[0,t)}\right)\right|\\
			& =\bnorm F_{\cP^{1}(D([0,T],\H[-I]))}\E\left[\sup_{s\in[t,T]}\bnorm{\xi_{t,s}(\gamma_{t})}_{\H[-I]}\right]+\left|F\left(\gamma^{[0,t)}\right)\right|<\infty,
		\end{align*}
		where the last inequality follows from Lemma \ref{lem:stochastic_convolution}.
		Thus, $U$ is well-defined. 
		
		The measurability of $U$ follows from Fubini's theorem. Thus, $U$
		is a non-anticipative functional.
		
		We next prove the continuity of $U$. Let $(s,\gamma),(r,\vartheta)\in\Upsilon([0,T],\H[-I])$
		and assume, without loss of generality, that $s\le r$. Set 
		\[
		\widetilde{\gamma}:=\gamma^{[0,s)}+\xi^{[s,T]}_{s,\cdot}(\gamma_{s}),\quad\widetilde{\vartheta}:=\vartheta^{[0,r)}+\xi^{[r,T]}_{r,\cdot}(\vartheta_{r}).
		\]
		Since the Fréchet derivative of $F$ is bounded, 
		\[
		|U_{s}(\gamma)-U_{r}(\vartheta)|\le\bnorm F_{\cP^{1}(D([0,T],\H[-I]))}\E\bnorm{\widetilde{\gamma}-\widetilde{\vartheta}}_{D([0,T],\H[-I])}.
		\]
		We estimate 
		\begin{align*}
			\bnorm{\widetilde{\gamma}-\widetilde{\vartheta}}_{D([0,T],\H[-I])}\le & \sup_{t\in[0,s)}\bnorm{\gamma_{t}-\vartheta_{t}}_{\H[-I]}\\
			& +\sup_{t\in[s,r)}\bnorm{\xi_{s,t}(\gamma_{s})-\vartheta_{t}}_{\H[-I]}\\
			& +\sup_{t\in[r,T]}\bnorm{\xi_{s,t}(\gamma_{s})-\xi_{r,t}(\vartheta_{r})}_{\H[-I]}.
		\end{align*}
		Using the representation of the modified Ornstein--Uhlenbeck process,
		for $t\in[s,r]$, 
		\begin{align*}
			\bnorm{\xi_{s,t}(\gamma_{s})-\vartheta_{t}}_{\H[-I]}\le & \bnorm{\cS_{s,t}(\gamma_{s})-\gamma_{s}}_{\H[-I]}+\bnorm{\gamma_{s}-\vartheta_{t}}_{\H[-I]}\\
			& +\bnorm{\int^{t}_{s}\cS_{u,t}B(u)dW_{u}}_{\H[-I]},
		\end{align*}
		where $B$ is defined by (\ref{eq:definition_of_B}), $\cS_{s,t}=\cS^{n,m_{\Delta}}_{s,t},$
		and $W$ is the cylindrical Wiener process on $L^{2}(\Td;\R^{d})$.
		Moreover, by the flow property of $\xi$ and Lemma (\ref{lem:modified_heat_semigroup}),
		for $t\in[r,T]$, 
		\begin{align*}
			\bnorm{\xi_{s,t}(\gamma_{s})-\xi_{r,t}(\vartheta_{r})}_{\H[-I]} & =\bnorm{\cS_{r,t}\left(\xi_{s,r}(\gamma_{s})-\vartheta_{r}\right)}_{\H[-I]}\\
			& \le\bnorm{\xi_{s,r}(\gamma_{s})-\vartheta_{r}}_{\H[-I]}.
		\end{align*}
		Consequently, 
		\begin{align}
			\E\bnorm{\widetilde{\gamma}-\widetilde{\vartheta}}_{D([0,T],\H[-I])}\le2 & d_{\Upsilon([0,T],\H[-I])}(\gamma,\vartheta)+\sup_{t\in[s,r]}\bnorm{\cS_{s,t}(\gamma_{s})-\gamma_{s}}_{\H[-I]}\label{eq:estimate_of_difference_in_U_continuity}\\
			& +\E\sup_{t\in[s,r]}\bnorm{\int^{t}_{s}\cS_{u,t}B(u)dW_{u}}_{\H[-I]}.\nonumber 
		\end{align}
		
		We note that the map
		\[
		[0,\infty)\times\H[-I]\ni(t,g)\mapsto\cS_{t}g\in\H[-I]
		\]
		is continuous. Indeed, this follows from the strong continuity and
		the contraction property of $(\cS_{t})_{t\ge0}$ (see Lemma \ref{lem:modified_heat_semigroup}).
		Thus, the second term on the right-hand side of
		(\ref{eq:estimate_of_difference_in_U_continuity})
		converges to zero as $r-s\to0$ by the strong continuity of the semigroup.
		By Lemma \ref{lem:stochastic_convolution}, the last term converges
		to zero as $r-s\to0$. Hence
		\[
		|U_{s}(\gamma)-U_{r}(\vartheta)|\to0
		\]
		whenever 
		\[
		d_{\Upsilon([0,T],\H[-I])}(\gamma,\vartheta)\to0.
		\]
		Thus, $U$ is continuous on $\Upsilon([0,T],\H[-I])$.
	\end{proof}
	
	\begin{lem}
		\label{lem:differentiability_of_U}Let $k\in\N$, $I>\frac{d}{2}+m_{\partial}$,
		and $\delta>0$. Assume that 
		\[
		F\in\cP^{k}(D([0,T],\H[-I-\delta])).
		\]
		Then 
		\[
		U\in\Cf^{k}_{l}(\Upsilon([0,T],\H[-I])).
		\]
		Moreover, for every $\ell\in[k]$, $(t,\gamma)\in\Upsilon([0,T],\H[-I])$,
		and $h_{1},\ldots,h_{\ell}\in\H[-I]$, 
		\[
		\nabla^{\ell}U_{t}(\gamma)[h_{1},\ldots,h_{\ell}]=\E\left[\nabla^{\ell}_{[t,T]}F\left(\gamma^{[0,t)}+\xi^{[t,T]}_{t,\cdot}(\gamma_{t})\right)\left[(\cS_{t,\cdot}h_{1})^{[t,T]},\ldots,(\cS_{t,\cdot}h_{\ell})^{[t,T]}\right]\right].
		\]
		Furthermore, 
		\begin{equation}
			\bnorm U_{\Cf^{k}_{l}(\Upsilon([0,T],\H[-I]))}\le\bnorm F_{\cP^{k}(D([0,T],\H[-I]))}.\label{eq:control_of_norm_of_U_via_F}
		\end{equation}
	\end{lem}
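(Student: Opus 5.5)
The key structural fact is linearity of the Gaussian flow in its initial datum. By (\ref{eq:OU_process}), for fixed $t$ and $g,h\in\H[-I]$,
\[
\xi_{t,s}(g+h)=\xi_{t,s}(g)+\cS_{t,s}h,\qquad s\in[t,T],
\]
with the same noise realisation. Hence
\[
U_t(\gamma^{[0,t)},\gamma_t+h)=\E F\bigl(\gamma^{[0,t)}+\xi^{[t,T]}_{t,\cdot}(\gamma_t)+(\cS_{t,\cdot}h)^{[t,T]}\bigr),
\]
so the vertical perturbation of $U$ at time $t$ is the vertical path perturbation of $F$ after $t$ in the direction $(\cS_{t,\cdot}h)^{[t,T]}\in D([t,T],\H[-I])$. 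The plan is to differentiate under the expectation and then verify boundedness and left-continuity.

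\textbf{Step 1 (derivative formula).} Set $\theta:=\gamma^{[0,t)}+\xi^{[t,T]}_{t,\cdot}(\gamma_t)$ and $L_t:h\mapsto(\cS_{t,\cdot}h)^{[t,T]}$. This is a linear map $\H[-I]\to D([t,T],\H[-I])$ with norm at most one. It is continuous in time by Lemma~\ref{lem:modified_heat_semigroup}, and a fortiori $\H[-I]\to D([t,T],\H[-I-\delta])$ has norm at most one, since $\norm{\cdot}_{\H[-I-\delta]}\le\norm{\cdot}_{\H[-I]}$. Pathwise Taylor expansion of $F$ on $D([0,T],\H[-I-\delta])$ to order $\ell$ along $L_th$ gives a remainder bounded by $\sup_\gamma\norm{\nabla^\ell_{[t,T]}F}\,\norm h^\ell$ (deterministic in $\omega$). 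Iterating via the mean value theorem and dominated convergence, I obtain that $h\mapsto U_t(\gamma^{[0,t)},\gamma_t+h)$ is $\ell$ times Fréchet differentiable with the stated formula. The derivative of order $\ell-1$ is given by the same formula. Its increment in $h$ equals $\E[\nabla^{\ell-1}_{[t,T]}F(\theta+L_th)-\nabla^{\ell-1}_{[t,T]}F(\theta)]\circ L_t^{\otimes(\ell-1)}$, and the $\ell$-th derivative of $F$, bounded and continuous, lets dominated convergence identify the limit for each fixed $\omega$. For $\ell=k$, continuity of $h\mapsto\nabla^kU_t$ requires $\nabla^k_{[t,T]}F$ to be continuous in the path. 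I would get this from left-continuity with constant $t_n=t$, since the definition allows $t_n\uparrow t$ and hence constant sequences. The bound (\ref{eq:control_of_norm_of_U_via_F}) follows from $\norm{L_t}\le1$ and $\norm{\cdot}_{\H[-I-\delta]}\le\norm{\cdot}_{\H[-I]}$. Continuity of $U$ itself is Lemma~\ref{lem:continuity_of_U}, applied on $\H[-I]$ (where $F$ is also $\cP^1$ by the same embedding).

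\textbf{Step 2 (left-continuity, the main obstacle).} Take $t_n\uparrow t$ and $\gamma^n\in D([0,t_n],\H[-I])$ with $d_\Upsilon(\gamma^n,\gamma)\to0$, and set $\theta_n:=\gamma^{n,[0,t_n)}+\xi^{[t_n,T]}_{t_n,\cdot}(\gamma^n_{t_n})$. I must show
\[
\E\,\nabla^\ell_{[t_n,T]}F(\theta_n)\circ L_{t_n}^{\otimes\ell}\to\E\,\nabla^\ell_{[t,T]}F(\theta)\circ L_t^{\otimes\ell}
\]
in $\cL_\ell(\H[-I])$. I would split $L_{t_n}=\iota_{t_n,t}L_t+(L_{t_n}-\iota_{t_n,t}L_t)$. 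The correction satisfies
\[
\sup_{s\in[t_n,T]}\norm{\cS_{t_n,s}h-\cS_{t,s\vee t}h}_{\H[-I-\delta]}\le\sup_{u\le t-t_n}\norm{(\cS_u-\id)}_{\cL_1(\H[-I];\H[-I-\delta])}\norm h_{\H[-I]},
\]
which tends to $0$ uniformly in $n$ by Lemma~\ref{lem:operator_norm_continuity_modified_semigroup}. This is exactly where the loss $\delta$ is needed: strong continuity alone would not give operator-norm convergence. The correction therefore contributes $o(1)$ through the multilinear bound on $\nabla^\ell F$. For the main term, $\nabla^\ell_{[t_n,T]}F(\theta_n)\circ\iota_{t_n,t}^{\otimes\ell}\to\nabla^\ell_{[t,T]}F(\theta)$ by left-continuity of $F$, provided $\theta_n\to\theta$ in $D([0,T],\H[-I-\delta])$. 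This holds in $L^1(\p)$ by the estimate (\ref{eq:estimate_of_difference_in_U_continuity}) from the proof of Lemma~\ref{lem:continuity_of_U}, together with Proposition~\ref{prop:OU_started_at_s}. Passing to an a.s. convergent subsequence, using boundedness with dominated convergence, and applying the subsequence-of-subsequence argument gives convergence in operator norm of the expectation. One subtlety to check is that dominated convergence in the operator norm is legitimate: it is, since the norm of the difference is a bounded scalar random variable converging to zero.
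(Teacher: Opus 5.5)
Your proposal is correct and follows essentially the same route as the paper. Both arguments use the affine dependence $\xi_{t,\cdot}(g+h)=\xi_{t,\cdot}(g)+\cS_{t,\cdot}h$ to differentiate under the expectation. Both then split the left-continuity difference into a semigroup correction, controlled in $\mathcal{L}_1(\H[-I];\H[-I-\delta])$ by Lemma~\ref{lem:operator_norm_continuity_modified_semigroup}, and a main term, handled by left-continuity of $\nabla^\ell_{[t,T]}F$, convergence in probability of the continued paths, and dominated convergence. The only difference is that the paper obtains the interchange of derivative and expectation from Proposition~\ref{prop:interchange_integral_and_derivative}, after checking strong measurability of the operator-valued integrands via separability of $C([t,T],\H[-I])$. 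You instead redo that interchange by hand, so you should note that the operator norms of the remainders are measurable, e.g.\ by taking suprema over a countable dense set of directions.
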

	
	\begin{rem}
		The additional loss of $\delta>0$ in the regularity assumption on
		$F$ is needed only to establish the left-continuity of the vertical
		derivatives of $U$ in operator norm. For vertical differentiability
		and the representation of the derivatives, it suffices to assume that
		$F\in\cP^{k}(D([0,T],\H[-I]))$.
	\end{rem}
	
	\begin{proof}
		By Lemma \ref{lem:continuity_of_U}, $U$ is a continuous non-anticipative
		functional. We first prove its vertical differentiability. Fix $(t,\gamma)\in\Upsilon([0,T],\H[-I])$
		and recall that 
		\[
		U_{t}(\gamma^{[0,t)},g)=\E F\left(\gamma^{[0,t)}+\xi^{[t,T]}_{t,\cdot}(g)\right),\quad g\in\H[-I].
		\]
		For every $\omega\in\Omega$, the map 
		\[
		\H[-I]\ni g\mapsto\xi^{[t,T]}_{t,\cdot}(\omega,g)\in D([t,T],\H[-I])
		\]
		is affine and, hence, Fréchet differentiable, with 
		\[
		D\xi^{[t,T]}_{t,\cdot}(\omega,g)[h]=(\cS_{t,\cdot}h)^{[t,T]},\quad h\in\H[-I].
		\]
		Moreover, by Lemma \ref{lem:modified_heat_semigroup}, 
		\[
		\bnorm{(\cS_{t,\cdot}h)^{[t,T]}}_{D([t,T],\H[-I])}\le\bnorm h_{\H[-I]}.
		\]
		
		Since $F\in\cP^{k}(D([0,T],\H[-I-\delta]))$, its restriction to $D([0,T],\H[-I])$
		belongs to $\cP^{k}(D([0,T],\H[-I]))$. Hence, by the chain rule,
		for every $\ell\in[k]$, 
		\begin{align*}
			& D^{\ell}\left[F\left(\gamma^{[0,t)}+\xi^{[t,T]}_{t,\cdot}(\omega,\cdot)\right)\right](g)[h_{1},\ldots,h_{\ell}]\\
			& \qquad=\nabla^{\ell}_{[t,T]}F\left(\gamma^{[0,t)}+\xi^{[t,T]}_{t,\cdot}(\omega,g)\right)\left[(\cS_{t,\cdot}h_{1})^{[t,T]},\ldots,(\cS_{t,\cdot}h_{\ell})^{[t,T]}\right].
		\end{align*}
		In particular, 
		\begin{align*}
			& \left|D^{\ell}\left[F\left(\gamma^{[0,t)}+\xi^{[t,T]}_{t,\cdot}(\omega,\cdot)\right)\right](g)[h_{1},\ldots,h_{\ell}]\right|\\
			& \qquad\le\sup_{\vartheta\in D([0,T],\H[-I])}\sup_{r\in[0,T]}\bnorm{\nabla^{\ell}_{[r,T]}F(\vartheta)}_{\cL_{\ell}(D([r,T],\H[-I]))}\prod^{\ell}_{j=1}\bnorm{h_{j}}_{\H[-I]},
		\end{align*}
		uniformly in $\omega$ and $g$.
		
		The random variables with values in the spaces of bounded multilinear
		operators are strongly measurable. Indeed, for fixed $t$ and $\gamma$, the random continuation
		paths take values in an affine translate of the separable space
		$C([t,T],\H[-I])$, while the corresponding vertical path derivatives
		of $F$ are continuous in the operator norm. Hence
		Proposition~\ref{prop:interchange_integral_and_derivative} can be
		applied iteratively. We conclude that
		$g\mapsto U_{t}(\gamma^{[0,t)},g)$ is $k$ times Fr\'echet
		differentiable and, for every $\ell\in[k]$,
		\begin{align}
			&\nabla^{\ell}U_{t}(\gamma)[h_{1},\ldots,h_{\ell}]\nonumber\\
			&\qquad=\E\bigg[ \nabla^{\ell}_{[t,T]}F\left(\gamma^{[0,t)}+\xi^{[t,T]}_{t,\cdot}(\gamma_{t})\right)\left[(\cS_{t,\cdot}h_{1})^{[t,T]},\ldots,(\cS_{t,\cdot}h_{\ell})^{[t,T]}\right]\bigg].\label{eq:expression_for_nabla_U}
		\end{align}
		
		We next prove the left-continuity of the vertical derivatives. We
		first consider $\nabla U$. Let $t_{n}\uparrow t$ and let $\gamma^{n}\in D([0,t_{n}],\H[-I])$
		converge to $\gamma\in D([0,t],\H[-I])$ in $\Upsilon([0,T],\H[-I])$.
		Set 
		\[
		\widetilde{\gamma}^{n}:=(\gamma^{n})^{[0,t_{n})}+\xi^{[t_{n},T]}_{t_{n},\cdot}(\gamma^{n}_{t_{n}}),\quad\widetilde{\gamma}:=\gamma^{[0,t)}+\xi^{[t,T]}_{t,\cdot}(\gamma_{t}).
		\]
		As in the proof of Lemma \ref{lem:continuity_of_U}, we have
		\[
		\E\bnorm{\widetilde{\gamma}^{n}-\widetilde{\gamma}}_{D([0,T],\H[-I])}\to0.
		\]
		Hence
		\[
		\widetilde{\gamma}^{n}\to\widetilde{\gamma}
		\]
		in probability in $D([0,T],\H[-I])$ and, by the continuous embedding
		$\H[-I]\hookrightarrow\H[-I-\delta]$, also in probability in $D([0,T],\H[-I-\delta])$.
		
		For $h\in\H[-I]$, the representation of the vertical derivative gives
		\begin{align}
			\nabla U_{t_{n}}(\gamma^{n})[h]-\nabla U_{t}(\gamma)[h] & =\E\left[\nabla_{[t_{n},T]}F(\widetilde{\gamma}^{n})\left[(\cS_{t_{n},\cdot}h)^{[t_{n},T]}-\iota_{t_{n},t}(\cS_{t,\cdot}h)^{[t,T]}\right]\right]\nonumber \\
			& +\E\left[\left(\nabla_{[t_{n},T]}F(\widetilde{\gamma}^{n})\circ\iota_{t_{n},t}-\nabla_{[t,T]}F(\widetilde{\gamma})\right)\left[(\cS_{t,\cdot}h)^{[t,T]}\right]\right].\label{eq:continuity_of_nabla_U}
		\end{align}
		Therefore, 
		\begin{align*}
			& \bnorm{\nabla U_{t_{n}}(\gamma^{n})-\nabla U_{t}(\gamma)}_{\cL_{1}(\H[-I])}\\
			& \qquad\le\bnorm F_{\cP^{1}(D([0,T],\H[-I-\delta]))}\bnorm{\cS^{[t_{n},T]}_{t_{n},\cdot}-\iota_{t_{n},t}\cS^{[t,T]}_{t,\cdot}}_{\cL_{1}(\H[-I],D([t_{n},T],\H[-I-\delta]))}\\
			& \qquad+\E\bnorm{\nabla_{[t_{n},T]}F(\widetilde{\gamma}^{n})\circ\iota_{t_{n},t}-\nabla_{[t,T]}F(\widetilde{\gamma})}_{\cL_{1}(D([t,T],\H[-I-\delta]))}.
		\end{align*}
		
		For the first term in (\ref{eq:continuity_of_nabla_U}), the semigroup
		property and its contraction property on $\H[-I-\delta]$ yield
		\[
		\bnorm{\cS^{[t_{n},T]}_{t_{n},\cdot}-\iota_{t_{n},t}\cS^{[t,T]}_{t,\cdot}}_{\cL_{1}(\H[-I],D([t_{n},T],\H[-I-\delta]))}\le\sup_{r\in[t_{n},t]}\bnorm{\id-\cS_{t_{n},r}}_{\cL_{1}(\H[-I],\H[-I-\delta])},
		\]
		which converges to zero as $n\to\infty$, by Lemma \ref{lem:operator_norm_continuity_modified_semigroup}.
		
		For the second term in (\ref{eq:continuity_of_nabla_U}), the left-continuity
		of the vertical path derivative of $F$ and the convergence $\widetilde{\gamma}^{n}\to\widetilde{\gamma}$
		in probability in $D([0,T],\H[-I-\delta])$ imply that the integrand
		converges to zero in probability. Since the vertical path derivative
		of $F$ is bounded, the dominated convergence theorem yields 
		\[
		\E\bnorm{\nabla_{[t_{n},T]}F(\widetilde{\gamma}^{n})\circ\iota_{t_{n},t}-\nabla_{[t,T]}F(\widetilde{\gamma})}_{\cL_{1}(D([t,T],\H[-I-\delta]))}\to0.
		\]
		Consequently, 
		\[
		\nabla U_{t_{n}}(\gamma^{n})\to\nabla U_{t}(\gamma)\quad\text{in }\cL_{1}(\H[-I]).
		\]
		
		The same argument, using the representation of $\nabla^{\ell}U$ and
		the left-continuity of $\nabla^{\ell}_{[t,T]}F$, applies to every
		$\ell\in[k]$. Thus all vertical derivatives $\nabla^{\ell}U$ are
		left-continuous. Hence 
		\[
		U\in\Cf^{k}_{l}(\Upsilon([0,T],\H[-I])).
		\]
		
		Finally the bound (\ref{eq:control_of_norm_of_U_via_F}) follows from
		(\ref{eq:expression_for_nabla_U}) and Lemma \ref{lem:modified_heat_semigroup}.
		This completes the proof of the statement.
	\end{proof}
	
	\begin{cor}
		\label{cor:existence_of_regular_kernel} Let $k,m\in\N$ and $I>\frac{d}{2}+(m_{\partial}\vee m).$
		Then, under the assumptions of Lemma~\ref{lem:differentiability_of_U},
		for every $\ell\in[k]$, $t\in[0,T]$, and $\gamma\in D([0,t],\H[-I])$,
		the multilinear functional $\nabla^{\ell}U_{t}(\gamma)$ admits a
		kernel 
		\[
		\nabla^{\ell}U_{t}(\gamma;\cdot)\in\Cf^{m,\ldots,m}((\Td)^{\ell}).
		\]
		Moreover, there exists a constant $C_{k,I,m}>0$ such that 
		\begin{align*}
			\sup_{t\in[0,T]}\sup_{\gamma\in D([0,t],\H[-I])}\bnorm{\nabla^{\ell}U_{t}(\gamma;\cdot)}_{\Cf^{m,\ldots,m}((\Td)^{\ell})}&\le C_{k,I,m}\bnorm U_{\Cf^{k}_{l}(\Upsilon([0,T],\H[-I]))}\\
			&\le C_{k,I,m}\bnorm F_{\cP^{k}(D([0,T],\H[-I]))}.
		\end{align*}
	\end{cor}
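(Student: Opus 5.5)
The statement is a direct combination of Lemma~\ref{lem:differentiability_of_U} with Lemma~\ref{lem:existence_of_smooth_kernel}. I would apply them pointwise in $(t,\gamma)$ and then take suprema.

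First, fix $\ell\in[k]$, $t\in[0,T]$ and $\gamma\in D([0,t],\H[-I])$. By Lemma~\ref{lem:differentiability_of_U}, $U\in\Cf^{k}_{l}(\Upsilon([0,T],\H[-I]))$. Hence $A:=\nabla^{\ell}U_{t}(\gamma)$ is a continuous symmetric $\ell$-linear functional on $\H[-I]$, that is, $A\in\cL_{\ell}(\H[-I])$. By definition of the seminorm,
\[
\bnorm{A}_{\cL_{\ell}(\H[-I])}\le\bnorm U_{\Cf^{k}_{l}(\Upsilon([0,T],\H[-I]))}.
\]

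Second, since $I>\frac d2+m$, Lemma~\ref{lem:existence_of_smooth_kernel}, applied with this $m$ and $k$ replaced by $\ell$, produces the kernel
\[
K_A(x_1,\ldots,x_\ell)=A[\delta_{x_1},\ldots,\delta_{x_\ell}].
\]
This kernel lies in $\Cf^{m,\ldots,m}((\Td)^{\ell})$, represents $A$ on finite signed measures, and satisfies
\[
\norm{K_A}_{\Cf^{m,\ldots,m}}\le C_{m,I,\ell}\norm{A}_{\cL_\ell(\H[-I])}.
\]
I set $\nabla^{\ell}U_t(\gamma;\cdot):=K_A$. The constant depends only on $m,I,\ell$ and not on $(t,\gamma)$, so taking the supremum over $t$ and $\gamma$ and the maximum over $\ell\le k$ gives the first inequality with $C_{k,I,m}:=\max_{\ell\le k}C_{m,I,\ell}$.

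Third, the second inequality is exactly the norm bound \eqref{eq:control_of_norm_of_U_via_F} from Lemma~\ref{lem:differentiability_of_U}. It is worth checking that the lemma's hypothesis $F\in\cP^k(D([0,T],\H[-I-\delta]))$ controls the norm appearing on the right-hand side. This holds because $\norm{\cdot}_{\H[-I-\delta]}\le\norm{\cdot}_{\H[-I]}$, so multilinear operators on the $\H[-I-\delta]$-valued path space restrict to $\H[-I]$-valued paths with no larger norm.

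No step is a real obstacle. The only points to watch are the bookkeeping just described, namely that the kernel constant is uniform in $(t,\gamma)$ and that the norm comparison between the two Sobolev levels goes in the right direction.
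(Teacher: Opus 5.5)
Your proposal is correct and is the paper's argument: the paper's proof is the single remark that the corollary follows directly from Lemma~\ref{lem:differentiability_of_U}, which gives $\nabla^{\ell}U_t(\gamma)\in\cL_{\ell}(H^{-I})$ with norm at most $\|U\|_{\Cf^{k}_{l}}\le\|F\|_{\cP^{k}}$, combined with Lemma~\ref{lem:existence_of_smooth_kernel} applied pointwise in $(t,\gamma)$ with a constant independent of $(t,\gamma)$. Your additional check, that restricting $F$ from $H^{-I-\delta}$-valued to $H^{-I}$-valued paths does not increase the $\cP^{k}$ seminorm, is correct bookkeeping; the paper states this inside the proof of Lemma~\ref{lem:differentiability_of_U}.
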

	
	\begin{proof}
		The statement directly follows from Lemmas \ref{lem:differentiability_of_U}
		and \ref{lem:existence_of_smooth_kernel}.
	\end{proof}
	
	We finish this section with the vertical path-differentiability property
	of the function $U$. For $0\le s\le r\le T$, define
	\[
	\Lambda_{s,r}:D([s,r],\H[-I])\to D([s,T],\H[-I])
	\]
	by
	\[
	\Lambda_{s,r}h=h^{[s,r)}+(\cS_{r,\cdot}h_r)^{[r,T]},\quad h\in D([s,r],\H[-I]).
	\]
	
	\begin{lem}
		\label{lem:path_regulariy_of_U_r} Let $k\in\N$ and $I>\frac{d}{2}+m_{\partial}$.
		Assume that 
		\[
		F\in\cP^{k}(D([0,T],\H[-I])).
		\]
		Then, 
		\[
		U\in\cP^{k}(\Upsilon([0,T],\H[-I])).
		\]
		Moreover, for every $\ell\in[k]$, $r\in[0,T],$ $s\in[0,r]$, $\gamma\in D([0,r],\H[-I])$,
		and $h_{1},\ldots,h_{\ell}\in D([s,r],\H[-I])$,
		\begin{equation}
			\nabla^{\ell}_{[s,r]}U_{r}(\gamma)[h_{1},\ldots,h_{\ell}]=\E\left[\nabla^{\ell}_{[s,T]}F\left(\gamma^{[0,r)}+\xi^{[r,T]}_{r,\cdot}(\gamma_{r})\right)[\Lambda_{s,r}h_{1},\ldots,\Lambda_{s,r}h_{\ell}]\right].\label{eq:path_derivative_of_U_r}
		\end{equation}
		Furthermore,
		\[
		\|U\|_{\cP^{k}(\Upsilon([0,T],\H[-I]))}\le\|F\|_{\cP^{k}(D([0,T],\H[-I]))}.
		\]
	\end{lem}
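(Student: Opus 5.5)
The approach is to view $U_r(\gamma)$, for fixed $r$ and fixed $\omega$, as the composition of $F$ with an affine map of the path. Concretely, for $\gamma\in D([0,r],\H[-I])$ and $h\in D([s,r],\H[-I])$ (extended by zero before $s$), the flow is affine in the initial value, so $\xi_{r,\cdot}(\gamma_r+h_r)=\xi_{r,\cdot}(\gamma_r)+\cS_{r,\cdot}h_r$. This gives the pathwise identity
\[
(\gamma+h)^{[0,r)}+\xi^{[r,T]}_{r,\cdot}(\gamma_r+h_r)=\gamma^{[0,r)}+\xi^{[r,T]}_{r,\cdot}(\gamma_r)+\Lambda_{s,r}h .
\]
The map $\Lambda_{s,r}:D([s,r],\H[-I])\to D([s,T],\H[-I])$ is linear and bounded with norm at most one. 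Indeed, $\sup_{u\ge r}\|\cS_{r,u}h_r\|_{\H[-I]}\le\|h_r\|_{\H[-I]}$ by the contraction property in Lemma~\ref{lem:modified_heat_semigroup}. Moreover $\Lambda_{s,r}h$ vanishes before $s$, so it is an admissible perturbation after time $s$ for $F$.

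Using the chain rule, $h\mapsto F(\tilde\gamma+\Lambda_{s,r}h)$ is $k$ times Fréchet differentiable at $0$, where $\tilde\gamma:=\gamma^{[0,r)}+\xi^{[r,T]}_{r,\cdot}(\gamma_r)$. Its $\ell$-th derivative is $\nabla^\ell_{[s,T]}F(\tilde\gamma)[\Lambda_{s,r}h_1,\ldots,\Lambda_{s,r}h_\ell]$, and this is bounded in operator norm by $\|F\|_{\cP^k}$ uniformly in $\omega$, because $\|\Lambda_{s,r}\|\le1$. The next step is to exchange expectation and differentiation. As in the proof of Lemma~\ref{lem:differentiability_of_U}, I will use Proposition~\ref{prop:interchange_integral_and_derivative} iteratively, with the uniform bound on the derivatives serving as the dominating function. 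Strong measurability comes from the fact that $\tilde\gamma$ lives in an affine translate of the separable space $C([r,T],\H[-I])$ and the derivatives of $F$ are norm-continuous. Interchanging in this way yields \eqref{eq:path_derivative_of_U_r}. The norm bound $\|U\|_{\cP^k}\le\|F\|_{\cP^k}$ then follows at once, again from $\|\Lambda_{s,r}\|\le1$. Continuity and measurability of $U_r$ on $D([0,r],\H[-I])$ are already given by Lemma~\ref{lem:continuity_of_U} restricted to fixed $r$.

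The main obstacle is the left-continuity of the family $\nabla^\ell_{[s,r]}U_r$, $s\in[0,r]$. Take $s_n\uparrow s$ and $\gamma^n\to\gamma$ in $D([0,r],\H[-I])$. The terminal time $r$ is fixed, so $\tilde\gamma^n\to\tilde\gamma$ holds pathwise: the difference of the continuations is bounded by $\|\gamma^n_r-\gamma_r\|$ by contraction, and there is no time shift of the initial point, so no loss of $\delta$ is needed here, unlike in Lemma~\ref{lem:differentiability_of_U}. One also checks the commutation $\Lambda_{s_n,r}\circ\iota^r_{s_n,s}=\iota_{s_n,s}\circ\Lambda_{s,r}$, which holds because the constant extension only alters the path before $s$. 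Then
\[
\nabla^\ell_{[s_n,r]}U_r(\gamma^n)\circ(\iota^r_{s_n,s})^{\otimes\ell}=\E\Bigl[\nabla^\ell_{[s_n,T]}F(\tilde\gamma^n)\circ(\iota_{s_n,s})^{\otimes\ell}\circ\Lambda_{s,r}^{\otimes\ell}\Bigr].
\]
The left-continuity of $\nabla^\ell_{[\cdot,T]}F$ in operator norm, together with $\|\Lambda_{s,r}\|\le1$, gives convergence of the integrand in operator norm for every $\omega$. Dominated convergence, using the uniform bound on the derivatives, then gives convergence in $\cL_\ell(D([s,r],\H[-I]))$, which completes the argument.
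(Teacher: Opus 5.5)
Your proposal is correct and follows the paper's proof essentially step for step. The shared ingredients are: the affine-flow identity reducing $U_r(\gamma+h)$ to $\E F(\tilde\gamma^{r}+\Lambda_{s,r}h)$ with $\|\Lambda_{s,r}\|\le 1$; iterated use of Proposition~\ref{prop:interchange_integral_and_derivative}, with the same separability argument for strong measurability; continuity of $U_r$ taken from Lemma~\ref{lem:continuity_of_U}; and left-continuity obtained from the compatibility $\Lambda_{s_n,r}\circ\iota^{r}_{s_n,s}=\iota^{T}_{s_n,s}\circ\Lambda_{s,r}$, pathwise convergence of the continuations, and dominated convergence. Your remark that no loss $\delta$ is needed because the terminal time $r$ is fixed is exactly why the paper can argue directly in $H^{-I}$ here, unlike in Lemma~\ref{lem:differentiability_of_U}.
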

	
	\begin{proof}
		Fix $r\in[0,T]$. We first note that, for every $s\in[0,r]$, the
		map 
		\[
		\Lambda_{s,r}:D([s,r],\H[-I])\to D([s,T],\H[-I])
		\]
		is linear and bounded. Indeed, by Lemma \ref{lem:modified_heat_semigroup},
		\begin{align*}
			\bnorm{\Lambda_{s,r}h}_{D([s,T],\H[-I])} & =\max\left\{ \sup_{u\in[s,r]}\bnorm{h_{u}}_{\H[-I]},\sup_{u\in[r,T]}\bnorm{\cS_{r,u}h_{r}}_{\H[-I]}\right\} \le\bnorm h_{D([s,r],\H[-I])}.
		\end{align*}
		Hence 
		\begin{equation}
			\bnorm{\Lambda_{s,r}}_{\cL_{1}(D([s,r],\H[-I]),D([s,T],\H[-I]))}\le1.\label{eq:bound_E_sr}
		\end{equation}
		
		We next prove the vertical path differentiability of $U_{r}$. Fix
		$s\in[0,r]$, $\gamma\in D([0,r],\H[-I])$, and $h\in D([s,r],\H[-I])$,
		where $h$ is extended by zero to $[0,s)$. Using Proposition \ref{prop:OU_started_at_s},
		for every $u\in[r,T]$ we have 
		\[
		\xi_{r,u}(\gamma_{r}+h_{r})=\xi_{r,u}(\gamma_{r})+\cS_{r,u}h_{r}.
		\]
		Consequently, 
		\[
		(\gamma+h)^{[0,r)}+\xi^{[r,T]}_{r,\cdot}(\gamma_{r}+h_{r})=\gamma^{[0,r)}+\xi^{[r,T]}_{r,\cdot}(\gamma_{r})+\Lambda_{s,r}h=\widetilde{\gamma}^{r}+\Lambda_{s,r}h,
		\]
		where 
		\[
		\widetilde{\gamma}^{r}:=\gamma^{[0,r)}+\xi^{[r,T]}_{r,\cdot}(\gamma_{r}).
		\]
		Therefore, 
		\begin{equation}
			U_{r}(\gamma+h)=\E F\left(\widetilde{\gamma}^{r}+\Lambda_{s,r}h\right).\label{eq:Ur_perturbation}
		\end{equation}
		
		Since $F\in\cP^{k}(D([0,T],\H[-I]))$, for every fixed realization
		of $\widetilde{\gamma}^{r}$ the map 
		\[
		h\mapsto F\left(\widetilde{\gamma}^{r}+\Lambda_{s,r}h\right),\quad h\in D([s,r],\H[-I]),
		\]
		is $k$ times Fréchet differentiable. By the chain rule, for $\ell\in[k]$,
		\[
		D^{\ell}\left[F\left(\widetilde{\gamma}^{r}+\Lambda_{s,r}(\cdot)\right)\right](0)[h_{1},\ldots,h_{\ell}]=\nabla^{\ell}_{[s,T]}F(\widetilde{\gamma}^{r})[\Lambda_{s,r}h_{1},\ldots,\Lambda_{s,r}h_{\ell}].
		\]
		
		Moreover, by (\ref{eq:bound_E_sr}), 
		\[
		\left|\nabla^{\ell}_{[s,T]}F(\widetilde{\gamma}^{r})[\Lambda_{s,r}h_{1},\ldots,\Lambda_{s,r}h_{\ell}]\right|\le\bnorm F_{\cP^{k}(D([0,T],\H[-I]))}\prod^{\ell}_{j=1}\bnorm{h_{j}}_{D([s,r],\H[-I])},
		\]
		uniformly in the realization of $\widetilde{\gamma}^{r}$. 
		
		The derivative-valued random variables are strongly measurable by the
		same separability argument as in the proof of
		Lemma~\ref{lem:differentiability_of_U}. Hence
		Proposition~\ref{prop:interchange_integral_and_derivative} may be
		applied iteratively to (\ref{eq:Ur_perturbation}).
		This yields (\ref{eq:path_derivative_of_U_r}). Thus $U_{r}$ is vertically
		path-differentiable up to order $k$.
		
		We next verify the boundedness of these derivatives. From (\ref{eq:path_derivative_of_U_r})
		and (\ref{eq:bound_E_sr}), 
		\begin{align*}
			\bnorm{\nabla^{\ell}_{[s,r]}U_{r}(\gamma)}_{\cL_{\ell}(D([s,r],\H[-I]))} & \le\E\bnorm{\nabla^{\ell}_{[s,T]}F(\widetilde{\gamma}^{r})}_{\cL_{\ell}(D([s,T],\H[-I]))}\\
			& \le\sup_{\vartheta\in D([0,T],\H[-I])}\sup_{u\in[0,T]}\bnorm{\nabla^{\ell}_{[u,T]}F(\vartheta)}_{\cL_{\ell}(D([u,T],\H[-I]))}.
		\end{align*}
		Taking the supremum over $s\in[0,r]$ and $\gamma\in D([0,r],\H[-I])$,
		and then summing over $\ell\in[k]$, gives 
		\begin{equation}
			\bnorm{U_{r}}_{\cP^{k}(D([0,r],\H[-I]))}\le\bnorm F_{\cP^{k}(D([0,T],\H[-I]))}.\label{eq:path_norm_Ur}
		\end{equation}
		It remains to establish the left-continuity of the path derivatives.
		
		Fix $\ell\in[k]$ and let $s_{n}\uparrow s$ in $[0,r]$. Let $\gamma^{n},\gamma\in D([0,r],\H[-I])$
		satisfy 
		\[
		\bnorm{\gamma^{n}-\gamma}_{D([0,r],\H[-I])}\to0.
		\]
		Set 
		\[
		\widetilde{\gamma}^{n,r}:=(\gamma^{n})^{[0,r)}+\xi^{[r,T]}_{r,\cdot}(\gamma^{n}_{r}),\quad\widetilde{\gamma}^{r}:=\gamma^{[0,r)}+\xi^{[r,T]}_{r,\cdot}(\gamma_{r}).
		\]
		Using Proposition \ref{prop:OU_started_at_s}, we have 
		\[
		\xi_{r,u}(\gamma^{n}_{r})-\xi_{r,u}(\gamma_{r})=\cS_{r,u}(\gamma^{n}_{r}-\gamma_{r}),\quad u\in[r,T].
		\]
		Hence, by the contraction property, 
		\begin{align*}
			\bnorm{\widetilde{\gamma}^{n,r}-\widetilde{\gamma}^{r}}_{D([0,T],\H[-I])} & \le\max\left\{ \sup_{u\in[0,r)}\bnorm{\gamma^{n}_{u}-\gamma_{u}}_{\H[-I]},\sup_{u\in[r,T]}\bnorm{\cS_{r,u}(\gamma^{n}_{r}-\gamma_{r})}_{\H[-I]}\right\} \\
			& \le\bnorm{\gamma^{n}-\gamma}_{D([0,r],\H[-I])}.
		\end{align*}
		Therefore, 
		\begin{equation}
			\widetilde{\gamma}^{n,r}\to\widetilde{\gamma}^{r}\quad\text{in }D([0,T],\H[-I])\quad\text{a.s.}\label{eq:tilde_gamma_fixed_r_convergence}
		\end{equation}
		
		We also need the compatibility of the operators $\Lambda_{s,r}$ with
		the constant-extension maps. For $s_{n}\le s$ and $h\in D([s,r],\H[-I])$,
		it is easy to see that
		\begin{equation}
			\Lambda_{s_{n},r}\left(\iota^{r}_{s_{n},s}h\right)=\iota^{T}_{s_{n},s}\left(\Lambda_{s,r}h\right).\label{eq:E_iota_compatibility}
		\end{equation}
		
		Using (\ref{eq:path_derivative_of_U_r}) and (\ref{eq:E_iota_compatibility}),
		we obtain 
		\[
		\left(\nabla^{\ell}_{[s_{n},r]}U_{r}(\gamma^{n})\circ(\iota^{r}_{s_{n},s})^{\otimes\ell}\right)[h_{1},\ldots,h_{\ell}]=\E\Big[\nabla^{\ell}_{[s_{n},T]}F(\widetilde{\gamma}^{n,r})\big[\iota^{T}_{s_{n},s}\Lambda_{s,r}h_{1},\ldots,\iota^{T}_{s_{n},s}\Lambda_{s,r}h_{\ell}\big]\Big].
		\]
		Therefore, 
		\begin{align}
			& \bnorm{\nabla^{\ell}_{[s_{n},r]}U_{r}(\gamma^{n})\circ(\iota^{r}_{s_{n},s})^{\otimes\ell}-\nabla^{\ell}_{[s,r]}U_{r}(\gamma)}_{\cL_{\ell}(D([s,r],\H[-I]))}\nonumber \\
			& \qquad\le\E\bnorm{\nabla^{\ell}_{[s_{n},T]}F(\widetilde{\gamma}^{n,r})\circ(\iota^{T}_{s_{n},s})^{\otimes\ell}-\nabla^{\ell}_{[s,T]}F(\widetilde{\gamma}^{r})}_{\cL_{\ell}(D([s,T],\H[-I]))},\label{eq:estimate_for_path_derivative_for_U}
		\end{align}
		where we used again $\bnorm{\Lambda_{s,r}}\le1.$
		
		By (\ref{eq:tilde_gamma_fixed_r_convergence}) and the left-continuity
		of the family $\nabla^{\ell}_{[u,T]}F$, the integrand on the right-hand
		side of \ref{eq:estimate_for_path_derivative_for_U} converges to
		zero almost surely. Moreover, it is bounded by 
		\[
		2\sup_{\vartheta\in D([0,T],\H[-I])}\sup_{u\in[0,T]}\bnorm{\nabla^{\ell}_{[u,T]}F(\vartheta)}_{\cL_{\ell}(D([u,T],\H[-I]))}<\infty.
		\]
		Hence, by the dominated convergence theorem, the right hand side of
		\ref{eq:estimate_for_path_derivative_for_U} converges to zero. Thus
		the family $\nabla^{\ell}_{[s,r]}U_{r}$, $s\in[0,r]$, is left-continuous.
		
		Since this holds for every $\ell\in[k]$, all vertical path derivatives
		of $U_{r}$ up to order $k$ are bounded and left-continuous. Together
		with the continuity of $U_{r}$, which follows from Lemma \ref{lem:continuity_of_U},
		this proves 
		\[
		U_{r}\in\cP^{k}(D([0,r],\H[-I])).
		\]
		This completes the proof.
	\end{proof}

	\subsection{Backward Kolmogorov equation}
	
	We now turn to the path-dependent backward Kolmogorov equation associated
	with the modified Ornstein--Uhlenbeck dynamics. The aim of this subsection
	is to show that the continuation functional introduced in (\ref{eq:definition_of_U})
	is horizontally absolutely continuous and that its weak horizontal
	derivative is given by the negative of the corresponding Ornstein--Uhlenbeck
	generator.
	
	In this section, we fix $m_{\partial},m_{\Delta}\in\N$, where $m_{\Delta}$
	is odd. For $I>\frac{d}{2}+m_{\partial}$ and
	\begin{equation}\label{eq:function_Phi_for_Kolmogorov_equation}
		\Phi=(\Phi_{j})_{j\in[d]}:[0,\infty)\to L^{2}(\Td;\R^{d})
	\end{equation}
	measurable and locally bounded we extend the time-dependent generator
	$\cG^{\Phi}_{t}$ to sufficiently regular non-anticipative functionals,
	following (\ref{eq:generator_G_Phi}). For $0\le a<b\le T$, 
	\[
	F\in\Cf^{2}_{l}\left(\Upsilon([a,b],\H[-I-2m_{\Delta}])\right)\cup\cP^{2}\left(\Upsilon([a,b],\H[-I-2m_{\Delta}])\right)
	\]
	and $(t,\gamma)\in\Upsilon([a,b],\H[-I])$, we set
	\begin{align*}
		\cG^{\Phi}_{t}F(\gamma) & :=\frac{1}{2}\binn{\Delta^{(2m_{\Delta})}_{n}\nabla F_{t}(\gamma),\gamma_{t}}+\frac{1}{4}\sum^{d}_{j=1}\binn{\left(\partial^{(m_{\partial})}_{n,j}\right)^{\odot2}\nabla^{2}F_{t}(\gamma;\cdot),\Phi^{2}_{j}(t)}.
	\end{align*}
	
	The first term is well defined because $\nabla F_{t}(\gamma)\in\H[I+2m_{\Delta}]$,
	while the second one is well defined by restricting $\nabla^{2}F_{t}(\gamma)$ to $\H[-I]\times\H[-I]$
	and using its kernel representation together with the assumption $I>\frac{d}{2}+m_{\partial}$
	(see Lemma \ref{lem:existence_of_smooth_kernel}).

	\begin{rem}
		The map
		\[
		(t,\gamma)\mapsto\cG^{\Phi}_{t}F(\gamma)
		\]
		is a non-anticipative functional on $\Upsilon([a,b],\H[-I])$. Indeed, for fixed directions, the first and second vertical derivatives are measurable as pointwise limits of the corresponding first and second difference quotients of $F$. Together with Lemma~\ref{lem:existence_of_smooth_kernel}, the continuity of the relevant operators and pairings, and the measurability of $t\mapsto\Phi^2_j(t)$ in $L^1(\Td)$, this implies the measurability of both terms in $\cG^\Phi_tF(\gamma)$.
	\end{rem}

	Before we formulate the main statement of this section we will prove
	an auxiliary statement. 
	\begin{lem}
		\label{lem:regularity_for_G_Phi}Let $0\le a<b\le T$, $I>\frac{d}{2}+m_{\partial}$
		and let $\Phi$ be continuous. Then for each 
		\[
		F\in\Cf^{2}_{l}\left(\Upsilon([a,b],\H[-I-2m_{\Delta}])\right),
		\]
		the non-anticipative functional
		\[
		(t,\gamma)\mapsto\cG^{\Phi}_{t}F(\gamma)
		\]
		on $\Upsilon([a,b],\H[-I])$ is left-continuous. Moreover, there exists
		a constant $C_{m_{\partial},m_{\Delta},I}>0$ such that
		\begin{equation}
			\left|\cG^{\Phi}_{t}F(\gamma)\right|\le C_{m_{\partial},m_{\Delta},I}\left(\bnorm{\Phi(t)}^{2}_{L^{2}(\Td;\R^{d})}+\norm{\gamma_{t}}_{\H[-I]}\right)\norm F_{\Cf^{2}_{l}(\Upsilon([a,b],\H[-I-2m_{\Delta}]))}\label{eq:bound_of_generator}
		\end{equation}
		for all $(t,\gamma)\in\Upsilon([a,b],\H[-I])$.
	\end{lem}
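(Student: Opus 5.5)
I plan to treat the two terms of $\cG^{\Phi}_tF(\gamma)$ separately, proving the bound and the left-continuity for each. Write
\[
T^1_t(\gamma):=\tfrac12\binn{\Delta^{(2m_{\Delta})}_{n}\nabla F_{t}(\gamma),\gamma_{t}},\qquad
T^2_t(\gamma):=\tfrac14\sum_{j=1}^d\binn{\bigl(\partial^{(m_{\partial})}_{n,j}\bigr)^{\odot2}\nabla^{2}F_{t}(\gamma;\cdot),\Phi^{2}_{j}(t)}.
\]
Here $\nabla F_t(\gamma)$ and $\nabla^2F_t(\gamma)$ are the vertical derivatives of $F$ regarded on $\H[-I-2m_\Delta]$, evaluated at $\gamma\in D([a,t],\H[-I])\subset D([a,t],\H[-I-2m_\Delta])$.

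\textbf{First term.} The map $\nabla F_t(\gamma)$ belongs to $\cL_1(\H[-I-2m_\Delta])\cong\H[I+2m_\Delta]$, with norm at most $\|F\|_{\Cf^2_l}$. The operator $\Delta^{(2m_\Delta)}_n$ is a polynomial in $\partial_j$ of order $2m_\Delta$. I would check its Fourier symbol against Lemma~\ref{lem:estimate_of_eigenvalues}, where $|\lambda^{(2m_\Delta)}_{k,n}|\le C(1+|k|^2)^{m_\Delta}$ for $n\ge3$; the coefficient $n^{2-2k}$ is at most $1$. This gives that $\Delta^{(2m_\Delta)}_n$ is bounded from $\H[I+2m_\Delta]$ to $\H[I]$ uniformly in $n$. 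Hence
\[
|T^1_t(\gamma)|\le C\|\nabla F_t(\gamma)\|_{\H[I+2m_\Delta]}\|\gamma_t\|_{\H[-I]}.
\]

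For left-continuity, take $t_k\uparrow t$ and $\gamma^k\to\gamma$ in $\Upsilon([a,b],\H[-I])$. This also gives convergence in $\Upsilon([a,b],\H[-I-2m_\Delta])$, so $\nabla F_{t_k}(\gamma^k)\to\nabla F_t(\gamma)$ in $\H[I+2m_\Delta]$ by the left-continuity of $\nabla F$. Moreover $\gamma^k_{t_k}\to\gamma_t$ in $\H[-I]$, because
\[
d_\Upsilon(\gamma^k,\gamma)\ge\|\gamma^k_{t_k}-\gamma_{t_k\wedge t}\|=\|\gamma^k_{t_k}-\gamma_{t_k}\|.
\]
The uniform metric controls $\gamma^k_{t_k}$ against $\gamma_{t_k}$, not against $\gamma_t$. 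Since $t_k\uparrow t$, $\gamma_{t_k}\to\gamma_{t-}$, which may differ from $\gamma_t$. I would handle this through the definition of $d_\Upsilon$: the supremum runs over $s\in[a,b]$ of $\|\gamma^k_{s\wedge t_k}-\gamma_{s\wedge t}\|$, and taking $s=t$ gives $\|\gamma^k_{t_k}-\gamma_t\|\le d_\Upsilon$. So the terminal values do converge. Continuity of the pairing then gives $T^1$ left-continuous.

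\textbf{Second term.} By Lemma~\ref{lem:existence_of_smooth_kernel} applied with $I+2m_\Delta>\frac d2+m_\partial$, the kernel of $\nabla^2F_t(\gamma)$ lies in $\Cf^{m_\partial,m_\partial}((\Td)^2)$, with norm at most $C\|\nabla^2F_t(\gamma)\|_{\cL_2(\H[-I-2m_\Delta])}$. The operator $(\partial^{(m_\partial)}_{n,j})^{\odot2}$ is a finite combination of mixed derivatives of order at most $m_\partial$ in each variable, restricted to the diagonal, with coefficients $n^{1-k}/k!\le1$. Its sup-norm is therefore bounded by $C\|\nabla^2F_t(\gamma)\|$. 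Pairing with $\Phi^2_j(t)\in L^1$ gives
\[
|T^2_t(\gamma)|\le C\|\Phi(t)\|^2_{L^2}\|F\|_{\Cf^2_l},
\]
which together with the first-term estimate yields \eqref{eq:bound_of_generator}. The kernel map $\cK_2$ is bounded linear, so $\nabla^2F_{t_k}(\gamma^k)\to\nabla^2F_t(\gamma)$ in operator norm implies uniform convergence of the differentiated diagonal kernels. Continuity of $\Phi$, which I read as continuity into $L^2(\Td;\R^d)$, gives $\Phi^2_j(t_k)\to\Phi^2_j(t)$ in $L^1$. The pairing then converges.

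The main obstacle I expect is the terminal-value convergence in the first term, which I resolve through the metric as above, together with confirming that the constants are uniform in $n$.
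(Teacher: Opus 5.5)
Your proposal is correct and follows essentially the same route as the paper. It uses the same split into the drift and diffusion terms, the same $n$-uniform bounds for $\Delta^{(2m_\Delta)}_n\colon H^{I+2m_\Delta}\to H^{I}$ and for $(\partial^{(m_\partial)}_{n,j})^{\odot2}$ via the kernel lemma, and the same continuity argument: $\|\gamma^k_{t_k}-\gamma_t\|_{H^{-I}}\le d_{\Upsilon}(\gamma^k,\gamma)$ (take $s=t$ in the metric; your preliminary inequality with $s=t_k$ is unnecessary) together with $L^1$-convergence of $\Phi_j^2$. The only cosmetic differences are that you argue with sequences instead of $\varepsilon$--$\delta$, and that you apply the kernel lemma directly on $H^{-I-2m_\Delta}$ rather than first restricting $\nabla^2F_t(\gamma)$ to $H^{-I}\times H^{-I}$.
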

	
	\begin{proof}
		By the uniform boundedness of 
		\[
		\Delta^{(2m_{\Delta})}_{n}:\H[I+2m_{\Delta}]\to\H[I]
		\]
		in $n$ and Lemma \ref{lem:existence_of_smooth_kernel}, we have 
		\begin{align*}
			\left|\cG^{\Phi}_{t}F(\gamma)\right| & \le\frac{1}{2}\bnorm{\Delta^{(2m_{\Delta})}_{n}\nabla F_{t}(\gamma)}_{\cL_{1}(\H[-I])}\norm{\gamma_{t}}_{\H[-I]}\\
			& +\frac{1}{4}\sum^{d}_{j=1}\bnorm{\Phi_{j}(t)}^{2}_{\L}\bnorm{\left(\partial^{(m_{\partial})}_{n,j}\right)^{\odot2}\nabla^{2}F_{t}(\gamma;\cdot)}_{\Cf(\Td)}\\
			& \le C_{m_{\partial},m_{\Delta},I}\left(\norm{\gamma_{t}}_{\H[-I]}+\bnorm{\Phi(t)}^{2}_{L^{2}(\Td;\R^{d})}\right)\norm F_{\Cf^{2}_{l}(\Upsilon([a,b],\H[-I-2m_{\Delta}]))}.
		\end{align*}
		Here we used 
		\[
		\bnorm{\Delta^{(2m_{\Delta})}_{n}\nabla F_{t}(\gamma)}_{\cL_{1}(\H[-I])}\le C_{m_{\Delta}}\bnorm{\nabla F_{t}(\gamma)}_{\cL_{1}(\H[-I-2m_{\Delta}])}
		\]
		and, by Lemma \ref{lem:existence_of_smooth_kernel}, 
		\begin{align}
			\bnorm{\left(\partial^{(m_{\partial})}_{n,j}\right)^{\odot2}\nabla^{2}F_{t}(\gamma;\cdot)}_{\Cf(\Td)} & \le C_{m_{\partial}}\bnorm{\nabla^{2}F_{t}(\gamma;\cdot)}_{\Cf^{m_{\partial},m_{\partial}}((\Td)^{2})}\le C_{m_{\partial},I}\bnorm{\nabla^{2}F_{t}(\gamma)}_{\cL_{2}(\H[-I])}.\label{eq:estimate_of_diffusion_term-1}
		\end{align}
		This proves (\ref{eq:bound_of_generator}).
		
		It remains to prove left-continuity. Set 
		\[
		G^{(1)}_{t}(\gamma):=\binn{\Delta^{(2m_{\Delta})}_{n}\nabla F_{t}(\gamma),\gamma_{t}}.
		\]
		Fix $t\in(a,b]$, $\gamma\in D([a,t],\H[-I])$, and let $\eps>0$.
		By the left-continuity of $\nabla F$, there exists $\delta>0$ such
		that 
		\[
		\bnorm{\nabla F_{t}(\gamma)-\nabla F_{r}(\vartheta)}_{\cL_{1}(\H[-I-2m_{\Delta}])}<\eps
		\]
		whenever $a\le r\le t$ and $d_{\Upsilon([a,b],\H[-I])}(\gamma,\vartheta)<\delta.$
		Here and below we use the continuous embedding $\H[-I]\hookrightarrow\H[-I-2m_{\Delta}]$.
		
		For such $(r,\vartheta)$, using again the uniform boundedness of
		$\Delta^{(2m_{\Delta})}_{n}$, we obtain 
		\begin{align*}
			\left|G^{(1)}_{t}(\gamma)-G^{(1)}_{r}(\vartheta)\right| & \le\left|\binn{\Delta^{(2m_{\Delta})}_{n}\nabla F_{t}(\gamma),\gamma_{t}-\vartheta_{r}}\right|\\
			& +\left|\binn{\Delta^{(2m_{\Delta})}_{n}\bigl(\nabla F_{t}(\gamma)-\nabla F_{r}(\vartheta)\bigr),\vartheta_{r}}\right|\\
			& \le C_{m_{\Delta}}\bnorm{\nabla F_{t}(\gamma)}_{\cL_{1}(\H[-I-2m_{\Delta}])}\bnorm{\gamma_{t}-\vartheta_{r}}_{\H[-I]}\\
			& +C_{m_{\Delta}}\bnorm{\nabla F_{t}(\gamma)-\nabla F_{r}(\vartheta)}_{\cL_{1}(\H[-I-2m_{\Delta}])}\bnorm{\vartheta_{r}}_{\H[-I]}\\
			& \le C_{m_{\Delta}}\bnorm{\nabla F_{t}(\gamma)}_{\cL_{1}(\H[-I-2m_{\Delta}])}d_{\Upsilon([a,b],\H[-I])}(\gamma,\vartheta)\\
			& +\eps C_{m_{\Delta}}\left(d_{\Upsilon([a,b],\H[-I])}(\gamma,\vartheta)+\bnorm{\gamma_{t}}_{\H[-I]}\right).
		\end{align*}
		By decreasing $\delta$ if necessary, the right-hand side of the estimate
		above can be made arbitrarily small. Hence $G^{(1)}$ is left-continuous.
		
		For $j\in[d]$, set 
		\[
		G^{(2),j}_{t}(\gamma):=\binn{\left(\partial^{(m_{\partial})}_{n,j}\right)^{\odot2}\nabla^{2}F_{t}(\gamma;\cdot),\Phi^{2}_{j}(t)}.
		\]
		Using (\ref{eq:estimate_of_diffusion_term-1}), for $a\le r\le t$
		we obtain 
		\begin{align*}
			\left|G^{(2),j}_{t}(\gamma)-G^{(2),j}_{r}(\vartheta)\right| & \le C_{m_{\partial},I}\bnorm{\nabla^{2}F_{t}(\gamma)}_{\cL_{2}(\H[-I])}\bnorm{\Phi^{2}_{j}(t)-\Phi^{2}_{j}(r)}_{L^{1}(\Td)}\\
			& \quad+C_{m_{\partial},I}\bnorm{\nabla^{2}F_{t}(\gamma)-\nabla^{2}F_{r}(\vartheta)}_{\cL_{2}(\H[-I])}\bnorm{\Phi_{j}(r)}^{2}_{\L}.
		\end{align*}
		Since the embedding $\H[-I]\hookrightarrow\H[-I-2m_{\Delta}]$ is
		continuous, restricting a bilinear form on $\H[-I-2m_{\Delta}]\times\H[-I-2m_{\Delta}]$
		to $\H[-I]\times\H[-I]$ yields
		\[
		\bnorm{\nabla^{2}F_{t}(\gamma)-\nabla^{2}F_{r}(\vartheta)}_{\cL_{2}(\H[-I])}\le C_{m_{\Delta}}\bnorm{\nabla^{2}F_{t}(\gamma)-\nabla^{2}F_{r}(\vartheta)}_{\cL_{2}(\H[-I-2m_{\Delta}])}.
		\]
		Moreover, Hölder's inequality yields 
		\[
		\bnorm{\Phi^{2}_{j}(t)-\Phi^{2}_{j}(r)}_{L^{1}(\Td)}\le\bnorm{\Phi_{j}(t)-\Phi_{j}(r)}_{\L}\bnorm{\Phi_{j}(t)+\Phi_{j}(r)}_{\L}.
		\]
		Hence, the continuity of $\Phi$ and the left-continuity of $\nabla^{2}F$
		imply that $G^{(2),j}$ is left-continuous. Consequently, $\cG^{\Phi}F$
		is left-continuous.
	\end{proof}
	
	\begin{thm}[Backward Kolmogorov equation]
		\label{thm:backward_kolmogorov_equation}Let $I>\frac{d}{2}+m_{\partial}$, the function $\Phi$, defined by \eqref{eq:function_Phi_for_Kolmogorov_equation}, be continuous
		and $F\in\cP^{2}\left(D([0,T],\H[-I-2m_{\Delta}])\right)$. Then
		\[
		U\in\cP^{2}\left(\Upsilon([0,T],\H[-I-2m_{\Delta}])\right),
		\]
		and the restriction of $U$ to $\Upsilon([0,T],\H[-I])$ is horizontally
		absolutely continuous. Moreover, its weak horizontal derivative satisfies
		\[
		\cH U(\gamma)+\cG^{\Phi}U(\gamma)=0,\quad\gamma\in\Upsilon([0,T],\H[-I]).
		\]
		Equivalently, for every $0\le r\le t\le T$ and $\gamma\in D([0,r],\H[-I])$,
		\begin{equation}
			U_{r}(\gamma)=U_{t}(\Theta_{r,t}\gamma)+\int^{t}_{r}\cG^{\Phi}_{s}U_{s}(\Theta_{r,s}\gamma)ds.\label{eq:absolute_continuity_for_U}
		\end{equation}
	\end{thm}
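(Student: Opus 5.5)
The first claim, $U\in\cP^{2}(\Upsilon([0,T],\H[-I-2m_{\Delta}]))$, follows directly from Lemma~\ref{lem:path_regulariy_of_U_r} applied with $I+2m_{\Delta}$ in place of $I$ (note $I+2m_\Delta>\frac d2+m_\partial$). The same lemma, together with Lemma~\ref{lem:differentiability_of_U} (with the loss $\delta$ absorbed by working on $\H[-I]\subset\H[-I-2m_\Delta]$), gives $U\in\Cf^{2}_{l}(\Upsilon([0,T],\H[-I-2m_{\Delta}]))$. By Lemma~\ref{lem:regularity_for_G_Phi}, the functional $\cG^\Phi U$ is then well defined, left-continuous and locally bounded on $\Upsilon([0,T],\H[-I])$. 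In particular $s\mapsto \cG^\Phi_sU_s(\Theta_{r,s}\gamma)$ is integrable. It therefore suffices to prove \eqref{eq:absolute_continuity_for_U}; the statement about $\cH U$ follows from it by definition.

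To prove \eqref{eq:absolute_continuity_for_U}, fix $r\le t$ and $\gamma\in D([0,r],\H[-I])$. I use the Markov/flow property of $\xi$. For $u\in[r,T]$, set
\[
G^{u}_{s}(\vartheta):=F\bigl(\Theta_{r,u}\gamma^{[0,u)}+\Theta^{u}_{s,T}\vartheta\bigr)\quad\text{on }\Upsilon([u,T],\H[-I]).
\]
Conditioning on $\mathcal F_t$ in the definition of $U_r(\gamma)$ and using $\xi_{r,\cdot}=\xi_{t,\cdot}(\xi_{r,t})$ gives $U_r(\gamma)=\E\,U_t(\gamma^{[0,r)}+\xi^{[r,t]}_{r,\cdot}(\gamma_r))$. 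Hence the difference $U_r(\gamma)-U_t(\Theta_{r,t}\gamma)$ equals the expected change of the frozen terminal-time functional
\[
V(\vartheta):=U_t\bigl(\gamma^{[0,r)}+\vartheta\bigr),\qquad \vartheta\in D([r,t],\H[-I]),
\]
when the constant path $\jmath\gamma_r$ is replaced by the OU path $\xi_{r,\cdot}(\gamma_r)$ on $[r,t]$.

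I compute this change with a time-discretisation. Take a partition $r=s_0<\dots<s_N=t$ and replace $\xi_{r,\cdot}$ by its step approximation frozen at the left endpoints. Telescoping reduces the change to a sum over $i$ of increments of the functional $x\mapsto V(\text{past up to }s_i,\ x\text{ constant on }[s_i,t])$ from $\xi_{r,s_i}$ to $\xi_{r,s_{i+1}}$. These increments are exactly of the form handled by Lemma~\ref{lem:regularity_of_map_G}: the functional is horizontally constant, and its vertical derivatives are the path derivatives of $U_t$ applied to constant extensions. For each increment I apply Lemma~\ref{lem:martingale_problem_modified_OU}, i.e. Itô's formula in $\H[-I-2m_\Delta]$, to the $\Cf^2_{\rm der}$ state function $x\mapsto V(\dots,x)$ along $\xi_{s_i,\cdot}(\xi_{r,s_i})$ on $[s_i,s_{i+1}]$. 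This produces $\E\int_{s_i}^{s_{i+1}}\cG^\Phi_s(\cdots)\,ds$, where the generator is evaluated with vertical derivatives frozen at grid time $s_i$ but at the current state.

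Summing over $i$ and letting the mesh tend to zero, left-continuity of the vertical path derivatives of $U_t$ (Lemma~\ref{lem:path_regulariy_of_U_r}), continuity of $\Phi$, the moment bounds of Proposition~\ref{prop:OU_started_at_s}, and dominated convergence via \eqref{eq:bound_of_generator} give
\[
U_r(\gamma)-U_t(\Theta_{r,t}\gamma)=\E\int_r^t \mathcal{A}_s\,ds,
\]
where $\mathcal{A}_s$ is $\cG^\Phi$ applied to the path derivatives $\nabla^{k}_{[s,t]}U_t$ along constant extensions, evaluated at $\gamma^{[0,r)}+\xi_{r,\cdot}^{[r,s]}$ extended constantly. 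This representation yields absolute continuity, but it is not yet the integrand $\cG^\Phi_sU_s(\Theta_{r,s}\gamma)$.

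The main obstacle is identifying the integrand. I plan to do this by differentiating at the left endpoint. Since $\mathcal{A}_s$ involves $\nabla^k_{[s,t]}U_t$ composed with $\jmath_{[s,t]}$, formula \eqref{eq:path_derivative_of_U_r} combined with $\Lambda_{s,t}\jmath_{[s,t]}h=(\cS_{t,\cdot}h)$ extended constantly on $[s,t]$ shows that this derivative differs from $\nabla^kU_s$ (given by Lemma~\ref{lem:differentiability_of_U} with $\cS_{s,\cdot}h$) only through replacing $\cS_{s,u}h$ by $h$ on $[s,t]$ and $\cS_{s,u}h$ by $\cS_{t,u}h$ afterwards. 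That discrepancy is bounded by $\sup_{v\le t-s}\|\id-\cS_v\|_{\cL_1(\H[-I],\H[-I-2m_\Delta])}$, which vanishes as $t-s\to0$ by Lemma~\ref{lem:operator_norm_continuity_modified_semigroup}. Similarly, replacing the state $\xi_{r,s}$ by $\gamma_r$ costs $o(1)$ by Remark~\ref{rem:convergence_to_zero_of_sup_xi}.

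It follows that $\frac1{t-r}(U_r(\gamma)-U_t(\Theta_{r,t}\gamma))\to \cG^\Phi_rU_r(\gamma)$ as $t\downarrow r$, uniformly enough on the compact set of constantly extended histories. Applying this at every point of $s\mapsto U_s(\Theta_{r,s}\gamma)$ shows that this function, which is continuous by Lemma~\ref{lem:continuity_of_U}, has right derivative $-\cG^\Phi_sU_s(\Theta_{r,s}\gamma)$. This derivative is bounded and left-continuous in $s$. Exactly as in Lemma~\ref{lem:differentiability_and_absolute_continuity}, via \citep[Theorem 11]{Hagood_Thomson2006}, the function is absolutely continuous with that derivative, which yields \eqref{eq:absolute_continuity_for_U}.
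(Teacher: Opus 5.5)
\textbf{There is a genuine gap, in the identification of the integrand.} You claim that $\frac{1}{t-r}\bigl(U_r(\gamma)-U_t(\Theta_{r,t}\gamma)\bigr)\to\cG^\Phi_rU_r(\gamma)$ as $t\downarrow r$. From this you conclude that $s\mapsto U_s(\Theta_{r,s}\gamma)$ has right derivative $-\cG^\Phi_sU_s(\Theta_{r,s}\gamma)$ at \emph{every} point.

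After your two replacements you are left with $\frac1{t-r}\int_r^t\cG^\Phi_sU_s(\Theta_{r,s}\gamma)\,ds+o(1)$. Passing to $\cG^\Phi_rU_r(\gamma)$ then needs right-continuity in time, but only left-continuity is available. Right-continuity genuinely fails for the functionals this theorem is built for.

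Take $F(\gamma)=G(\gamma_{t_1})$ as in Example~\ref{exa:cylindrical_function}, and $r=t_1$.
\begin{itemize}
\item For every $t>t_1$, $U_t(\Theta_{t_1,t}\gamma)=G(\gamma_{t_1})=U_{t_1}(\gamma)$, so the right derivative at $t_1$ is $0$.
\item Equivalently, your $\mathcal{A}_s$ vanishes for $s>t_1$: path derivatives after $s$ no longer see $\gamma_{t_1}$, and no semigroup estimate can restore this.
\item On the other hand $U_{t_1}(\gamma^{[0,t_1)},x)=G(x)$, so $\cG^\Phi_{t_1}U_{t_1}(\gamma)=\cG^\Phi_{t_1}G(\gamma_{t_1})$, which is nonzero in general.
\end{itemize}
So the hypothesis you feed into the Hagood--Thomson argument is false at observation times. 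For a general $F\in\cP^2$ you have no control of the exceptional set.

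\textbf{How to repair it.} You already have the ingredients, and the repair is the paper's argument.
\begin{itemize}
\item Your representation, with \eqref{eq:bound_of_generator} and Proposition~\ref{prop:OU_started_at_s}, bounds the integrand uniformly in the frozen terminal time. Hence $f(s):=U_s(\Theta_{r,s}\gamma)$ is Lipschitz and differentiable a.e.
\item At a differentiability point $s_0$, compute $f'(s_0)$ from the \emph{left}. For $u\uparrow s_0$, use the representation with initial path $\Theta_{r,u}\gamma$ and terminal time $s_0$. The integrand is then $\cG^\Phi$ of the single frozen functional $G_v(\vartheta)=U_{s_0}(\gamma^{[0,r)}+\Theta^{r}_{v,s_0}\vartheta)$.
\item By Lemma~\ref{lem:regularity_of_map_G}, fed by Lemma~\ref{lem:path_regulariy_of_U_r} with no regularity loss, this functional lies in $\Cf^2_l$. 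At $v=s_0$ its vertical derivatives are exactly those of $U_{s_0}$.
\item Lemma~\ref{lem:regularity_for_G_Phi}, Remark~\ref{rem:convergence_to_zero_of_sup_xi} and uniform integrability then give $f'(s_0)=-\cG^\Phi_{s_0}U_{s_0}(\Theta_{r,s_0}\gamma)$.
\end{itemize}
This makes your comparison of $\nabla^k_{[s,t]}U_t\circ\jmath_{[s,t]}^{\otimes k}$ with $\nabla^kU_s$ unnecessary. That comparison was also incomplete: the base paths differ as well. Moreover, in the drift term the direction $\Delta^{(2m_\Delta)}_n\theta_s$ lies in $\H[-I-2m_\Delta]$, not $\H[-I]$, so your operator-norm bound does not apply directly.

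\textbf{Two smaller points.}
\begin{itemize}
\item Your claim $U\in\Cf^2_l(\Upsilon([0,T],\H[-I-2m_\Delta]))$ does not follow under these hypotheses; see the remark after the theorem. Lemma~\ref{lem:differentiability_of_U} with a loss $\delta$ only yields left-continuity of $\cG^\Phi U$ on $\Upsilon([0,T],\H[-I+\delta])$. You only need boundedness and measurability of $\cG^\Phi U$, which hold anyway.
\item Deriving the representation by grid-wise state It\^o formulas, instead of Proposition~\ref{prop:Ito_formula}, is a legitimate alternative. The step path must take right-endpoint values, not left-endpoint ones as you wrote. Then the vertical derivative is taken after the grid time $s_i\le s$, and left-continuity of $\nabla^k_{[\cdot,t]}U_t$ applies in the mesh limit.
\end{itemize}
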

	
	\begin{proof}
		By Lemma \ref{lem:path_regulariy_of_U_r}, 
		\[
		U\in\cP^{2}\left(\Upsilon([0,T],\H[-I-2m_{\Delta}])\right),
		\]
		and
		\begin{equation}
			\norm{U}_{\cP^{2}(\Upsilon([0,T],\H[-I-2m_{\Delta}]))}
			\le
			\norm F_{\cP^{2}(D([0,T],\H[-I-2m_{\Delta}]))}.
			\label{eq:bound_for_U_in_cP_norm}
		\end{equation}
		
		Fix $0\le t<T$ and $\gamma\in D([0,t],\H[-I])$. Let $r\in(t,T]$.
		By the flow property of the modified Ornstein--Uhlenbeck process
		and the independence of the driving noise on $[t,r]$ and $[r,T]$
		for $r\in(t,T]$, we obtain 
		\begin{align}
			U_{t}(\gamma) & =\E F\left(\gamma^{[0,t)}+\xi^{[t,T]}_{t,\cdot}(\gamma_{t})\right)=\E F\left(\gamma^{[0,t)}+\xi^{[t,r)}_{t,\cdot}(\gamma_{t})+\xi^{[r,T]}_{r,\cdot}\bigl(\xi_{t,r}(\gamma_{t})\bigr)\right)\nonumber \\
			& =\E U_{r}\left(\gamma^{[0,t)}+\xi^{[t,r]}_{t,\cdot}(\gamma_{t})\right).\label{eq:dynamic_property_of_U-3}
		\end{align}
		
		We next will apply Itô's formula to the right-hand side of (\ref{eq:dynamic_property_of_U-3}).
		For this we define 
		\[
		G^{r,\gamma}_{s}(\vartheta):=U_{r}\left(\gamma^{[0,t)}+\Theta^{t}_{s,r}\vartheta\right),\quad(s,\vartheta)\in\Upsilon([t,r],\H[-I]).
		\]
		
		By Lemma \ref{lem:regularity_of_map_G}, applied with the terminal
		time $r$ and $F=U_{r}$, the functional $G^{r,\gamma}$ belongs to
		$\Cf^{2}_{l}(\Upsilon([t,r],\H[-I-2m_{\Delta}]))$ and is horizontally
		differentiable with
		\[
		\cD_{s}G^{r,\gamma}(\vartheta)=0.
		\]
		Moreover, for $\ell=1,2$, the vertical derivatives of $G^{r,\gamma}_{s}$
		are given by
		\[
		\nabla^{\ell}G^{r,\gamma}_{s}(\vartheta)[h_{1},\ldots,h_{\ell}]=\nabla^{\ell}_{[s,r]}U_{r}\left(\gamma^{[0,t)}+\Theta^{t}_{s,r}\vartheta\right)\left[\jmath_{[s,r]}h_{1},\ldots,\jmath_{[s,r]}h_{\ell}\right].
		\]
		
		Furthermore, 
		\begin{equation}
			\bnorm{G^{r,\gamma}}_{\Cf^{2}_{l}(\Upsilon([t,r],\H[-I-2m_{\Delta}]))}\le\bnorm{U_{r}}_{\cP^{2}(D([0,r],\H[-I-2m_{\Delta}]))}.\label{eq:estimate_for_G_r_gamma}
		\end{equation}
		
		Applying Itô's formula (see Proposition \ref{prop:Ito_formula}) to
		\[
		s\mapsto G^{r,\gamma}_{s}\left(\xi^{[t,s]}_{t,\cdot}(\gamma_{t})\right),
		\]
		and taking expectations, we obtain 
		\begin{align}
			U_{t}(\gamma) & =\E G^{r,\gamma}_{r}\left(\xi^{[t,r]}_{t,\cdot}(\gamma_{t})\right)=G^{r,\gamma}_{t}(\gamma_{t})+\E\int^{r}_{t}\cG^{\Phi}_{s}G^{r,\gamma}_{s}\left(\xi^{[t,s]}_{t,\cdot}(\gamma_{t})\right)ds\nonumber \\
			& =U_{r}(\Theta_{t,r}\gamma)+\E\int^{r}_{t}L^{r}_{s}(\gamma)ds,\label{eq:integral_representation_for_U-1-2}
		\end{align}
		where
		\[
		L^{r}_{s}(\gamma):=\cG^{\Phi}_{s}G^{r,\gamma}_{s}\left(\xi^{[t,s]}_{t,\cdot}(\gamma_{t})\right),\quad s\in[t,r].
		\]
		
		By Lemma \ref{lem:regularity_for_G_Phi}, (\ref{eq:estimate_for_G_r_gamma}),
		and (\ref{eq:bound_for_U_in_cP_norm}), 
		\begin{align*}
			\left|L^{r}_{s}(\gamma)\right| & \le C_{m_{\partial},m_{\Delta},I}\left(\bnorm{\Phi(s)}^{2}_{L^{2}(\Td;\R^{d})}+\bnorm{\xi_{t,s}(\gamma_{t})}_{\H[-I]}\right)\bnorm{G^{r,\gamma}}_{\Cf^{2}_{l}(\Upsilon([t,r],\H[-I-2m_{\Delta}]))}\\
			& \le C_{m_{\partial},m_{\Delta},I}\left(\bnorm{\Phi(s)}^{2}_{L^{2}(\Td;\R^{d})}+\bnorm{\xi_{t,s}(\gamma_{t})}_{\H[-I]}\right)\bnorm F_{\cP^{2}(D([0,T],\H[-I-2m_{\Delta}]))}.
		\end{align*}
		Consequently, 
		\begin{align}
			\E\left|L^{r}_{s}(\gamma)\right| & \le C_{m_{\partial},m_{\Delta},I}\left(\bnorm{\Phi(s)}^{2}_{L^{2}(\Td;\R^{d})}+\E\bnorm{\xi_{t,s}(\gamma_{t})}_{\H[-I]}\right)\bnorm F_{\cP^{2}(D([0,T],\H[-I-2m_{\Delta}]))}\nonumber \\
			& \le C_{m_{\partial},m_{\Delta},I,\Phi,\gamma}\bnorm F_{\cP^{2}(D([0,T],\H[-I-2m_{\Delta}]))}=:C_{m_{\partial},m_{\Delta},I,\Phi,\gamma,F}.\label{eq:boundedness_of_L_r}
		\end{align}
		Here the last estimate follows from Proposition \ref{prop:OU_started_at_s}
		and the boundedness of $\Phi$ on $[0,T]$.
		
		We now define 
		\[
		f(r):=U_{r}(\Theta_{t,r}\gamma),\quad r\in[t,T],
		\]
		and show that $f$ is absolutely continuous on $[t,T]$.
		
		Let $t\le a<b\le T$ and set 
		\[
		\vartheta^{a}:=\Theta_{t,a}\gamma\in D([0,a],\H[-I]).
		\]
		Then 
		\[
		\vartheta^{a}_{a}=\gamma_{t},\quad f(a)=U_{a}(\vartheta^{a}),\quad f(b)=U_{b}(\Theta_{a,b}\vartheta^{a}).
		\]
		Applying (\ref{eq:integral_representation_for_U-1-2}) with initial
		time $a$, terminal time $b$, and initial path $\vartheta^{a}$,
		we obtain 
		\[
		f(a)-f(b)=\int^{b}_{a}\E L^{b}_{s}(\vartheta^{a})ds.
		\]
		Since $\vartheta^{a}_{a}=\gamma_{t}$, estimate (\ref{eq:boundedness_of_L_r})
		yields 
		\[
		\sup_{s\in[a,b]}\E\left|L^{b}_{s}(\vartheta^{a})\right|\le C_{m_{\partial},m_{\Delta},I,\Phi,\gamma,F}.
		\]
		Therefore, 
		\[
		|f(b)-f(a)|\le C_{m_{\partial},m_{\Delta},I,\Phi,\gamma,F}(b-a).
		\]
		Hence $f$ is Lipschitz continuous on $[t,T]$ and thus absolutely
		continuous. Consequently, $f'(s)$ exists for a.e. $s\in[t,T]$ and
		\begin{equation}
			f(b)-f(a)=\int^{b}_{a}f'(s)ds\label{eq:absolute_continuity_for_f-1-2}
		\end{equation}
		for all $t\le a\le b\le T$.
		
		We finally identify the derivative of $f$. Fix $r\in(t,T)$ at which
		$f$ is differentiable. By (\ref{eq:integral_representation_for_U-1-2}),
		for every $u\in[t,r)$, 
		\[
		f(u)-f(r)=\int^{r}_{u}\E L^{r}_{s}(\vartheta^{u})\,ds,
		\]
		where $\vartheta^{u}=\Theta_{t,u}\gamma.$ Hence,
		\begin{equation}
			f'(r)=-\lim_{u\to r-}\frac{1}{r-u}\int^{r}_{u}\E L^{r}_{s}(\vartheta^{u})ds.\label{eq:expression_for_derivative_of_f-1-1}
		\end{equation}
		
		We claim that
		\begin{equation}
			\sup_{s\in[u,r]}\E\left|L^{r}_{s}(\vartheta^{u})-\cG^{\Phi}_{r}U_{r}(\Theta_{t,r}\gamma)\right|\to0\quad\text{as }u\to r-.\label{eq:convergence_of_L-1}
		\end{equation}
		For $u\le s\le r$, we have 
		\begin{align*}
			L^{r}_{s}(\vartheta^{u}) & =\cG^{\Phi}_{s}G^{r,\vartheta^{u}}_{s}\left(\xi^{[u,s]}_{u,\cdot}(\gamma_{t})\right)=\cG^{\Phi}_{s}G^{r,\gamma}_{s}\left(\jmath_{[t,u)}\gamma_{t}+\xi^{[u,s]}_{u,\cdot}(\gamma_{t})\right),
		\end{align*}
		whereas 
		\[
		\cG^{\Phi}_{r}U_{r}(\Theta_{t,r}\gamma)=\cG^{\Phi}_{r}G^{r,\gamma}_{r}\left(\jmath_{[t,r]}\gamma_{t}\right).
		\]
		Moreover, 
		\begin{align*}
			& d_{\Upsilon([t,r],\H[-I])}\left(\jmath_{[t,u)}\gamma_{t}+\xi^{[u,s]}_{u,\cdot}(\gamma_{t}),\jmath_{[t,r]}\gamma_{t}\right)\\
			& \qquad\le\sup_{v\in[u,s]}\bnorm{\xi_{u,v}(\gamma_{t})-\gamma_{t}}_{\H[-I]}+(r-s)\\
			& \qquad\le\sup_{v\in[u,r]}\bnorm{\xi_{u,v}(\gamma_{t})-\gamma_{t}}_{\H[-I]}+(r-u).
		\end{align*}
		Hence, by Remark \ref{rem:convergence_to_zero_of_sup_xi},
		\begin{equation}
			\sup_{s\in[u,r]}\E d^{2}_{\Upsilon([t,r],\H[-I])}\left(\jmath_{[t,u)}\gamma_{t}+\xi^{[u,s]}_{u,\cdot}(\gamma_{t}),\jmath_{[t,r]}\gamma_{t}\right)\to0\label{eq:convergence_for_d_xi}
		\end{equation}
		as $u\to r-$.
		
		Let $u_{k}\le s_{k}\le r$ be arbitrary sequences such that $u_{k}\to r-$.
		Then necessarily $s_{k}\to r-$, and (\ref{eq:convergence_for_d_xi})
		implies that 
		\[
		\jmath_{[t,u_{k})}\gamma_{t}+\xi^{[u_{k},s_{k}]}_{u_{k},\cdot}(\gamma_{t})\to\jmath_{[t,r]}\gamma_{t}
		\]
		in probability in $\Upsilon([t,r],\H[-I])$. Hence, by the left-continuity
		of $\cG^{\Phi}G^{r,\gamma}$, 
		\[
		L^{r}_{s_{k}}(\vartheta^{u_{k}})\to\cG^{\Phi}_{r}U_{r}(\Theta_{t,r}\gamma)
		\]
		in probability.
		
		On the other hand, by (\ref{eq:bound_of_generator}), (\ref{eq:estimate_for_G_r_gamma}),
		and (\ref{eq:bound_for_U_in_cP_norm}), 
		\begin{align*}
			\left|L^{r}_{s}(\vartheta^{u})\right| & \le C_{m_{\partial},m_{\Delta},I}\left(\bnorm{\Phi(s)}^{2}_{L^{2}(\Td;\R^{d})}+\bnorm{\xi_{u,s}(\gamma_{t})}_{\H[-I]}\right)\bnorm F_{\cP^{2}(D([0,T],\H[-I-2m_{\Delta}]))}.
		\end{align*}
		Since $\Phi$ is bounded on $[0,T]$ and, by Proposition \ref{prop:OU_started_at_s},
		\[
		\sup_{t\le u\le s\le r}\E\bnorm{\xi_{u,s}(\gamma_{t})}^{2}_{\H[-I]}<\infty,
		\]
		the family 
		\[
		\left\{ L^{r}_{s}(\vartheta^{u}):t\le u\le s\le r\right\} 
		\]
		is uniformly integrable. Consequently, 
		\[
		\E\left|L^{r}_{s_{k}}(\vartheta^{u_{k}})-\cG^{\Phi}_{r}U_{r}(\Theta_{t,r}\gamma)\right|\to0.
		\]
		Since the sequences $\{u_{k}\}_{k\ge1}$ and $\{s_{k}\}_{k\ge1}$
		were arbitrary, this proves (\ref{eq:convergence_of_L-1}).
		
		Therefore, 
		\[
		\left|\frac{1}{r-u}\int^{r}_{u}\E L^{r}_{s}(\vartheta^{u})\,ds-\cG^{\Phi}_{r}U_{r}(\Theta_{t,r}\gamma)\right|\le\sup_{s\in[u,r]}\E\left|L^{r}_{s}(\vartheta^{u})-\cG^{\Phi}_{r}U_{r}(\Theta_{t,r}\gamma)\right|\to0
		\]
		as $u\to r-$. Together with (\ref{eq:expression_for_derivative_of_f-1-1}),
		this yields 
		\[
		f'(r)=-\cG^{\Phi}_{r}U_{r}(\Theta_{t,r}\gamma)
		\]
		for a.e. $r\in[t,T]$.
		
		Since $f$ is absolutely continuous, we conclude that, for every $r\in[t,T]$,
		\[
		U_{r}(\Theta_{t,r}\gamma)-U_{t}(\gamma)=-\int^{r}_{t}\cG^{\Phi}_{s}U_{s}(\Theta_{t,s}\gamma)ds.
		\]
		Thus, the restriction of $U$ to $\Upsilon([0,T],\H[-I])$ is horizontally
		absolutely continuous, with weak horizontal derivative 
		\[
		\cH U=-\cG^{\Phi}U.
		\]
		In particular, 
		\[
		\cH U(\gamma)+\cG^{\Phi}U(\gamma)=0,\quad\gamma\in\Upsilon([0,T],\H[-I]),
		\]
		and (\ref{eq:absolute_continuity_for_U}) follows. This completes
		the proof.
	\end{proof}
	
	\begin{rem}
		The assumptions of the theorem may not, in general, imply that
		\[
		U\in\Cf^{2}_{l}\left(\Upsilon([0,T],\H[-I-2m_{\Delta}])\right),
		\]
		since the left-continuity of the vertical derivatives of $U$ relies
		on the convergence $\cS_{t}\to\id$ in the operator norm as $t\to0+.$
		However, if for some $\delta>0$,
		\[
		F\in\cP^{2}\left(D([0,T],\H[-I-2m_{\Delta}-\delta])\right),
		\]
		then Lemma \ref{lem:differentiability_of_U} applied with $I$ replaced
		by $I+2m_{\Delta}$, yields
		\[
		U\in\Cf^{2}_{l}\left(\Upsilon([0,T],\H[-I-2m_{\Delta}])\right).
		\]
		Thus, the additional loss of $\delta>0$ is used only to establish
		this stronger regularity property and is not needed for the path-dependent
		backward Kolmogorov equation itself.
	\end{rem}

	\section{Quantitative Gaussian approximation on path space}
	
	This section is devoted to the proof of the quantitative Gaussian
	approximation on path space. The argument combines several ingredients
	developed in the previous sections.
	
	We first compare the generator of the microscopic fluctuation field
	with that of the modified Ornstein--Uhlenbeck dynamics. The resulting
	estimate isolates a correlation correction term and quantifies all
	remaining approximation errors. We then establish a quantitative Gaussian
	approximation for the initial fluctuation field by Stein's method.
	
	These two estimates are combined with the backward Kolmogorov equation
	and a path-dependent Trotter-type argument to prove Theorem~\ref{thm:main_result}.
	In particular, we first obtain the approximation with the correlation
	correction and then estimate this term separately. We conclude the
	section with examples of admissible path-dependent test functions
	and with a lower bound showing that the correlation correction can
	indeed be of order $n^{-2}$.
	
	\subsection{Comparison of the microscopic and Gaussian generators \label{subsec:generator_comparison}}
	
	In this subsection, we compare the generator of the fluctuation field
	with that of the modified Ornstein--Uhlenbeck dynamics introduced
	in Section~\ref{sec:microscopic_and_modified_gaussian_dynamics}.
	The comparison is carried out for functions on a negative Sobolev
	space and provides the main generator estimate used in the proof of
	the quantitative Gaussian approximation.
	
	Let $m_{\tau}\in\N_{0}$ and $m_{\partial},m_{\Delta}\in\N$, with
	$m_{\Delta}$ odd, be fixed, and assume that 
	\[
	I>\frac{d}{2}+m_{\partial}.
	\]
	For $\varrho\in\Cf^{m_{\tau}}(\Td)$, set 
	\[
	\Phi^{n,\varrho}_{j}:=\sqrt{\Gamma^{(m_{\tau})}_{n,j}(\varrho)^{+}},\quad j\in[d],
	\]
	where $\Gamma^{(m_{\tau})}_{n,j}$ was defined in (\ref{eq:definition_of_Gamma_m_tau}).
	
	Motivated by the generator expansion in Proposition~\ref{prop:expansion_of_generator},
	we define, for $F\in\Cf^{2}_{\text{der}}(\H[-I-2m_{\Delta}])$, 
	\begin{equation}
		\cG^{n}_{\varrho}F(\xi):=\frac{1}{2}\left\langle \Delta^{(2m_{\Delta})}_{n}DF(\xi),\xi\right\rangle +\frac{1}{4}\sum^{d}_{j=1}\left\langle (\partial^{(m_{\partial})}_{n,j})^{\odot2}D^{2}F(\xi;\cdot),\Gamma^{(m_{\tau})}_{n,j}(\varrho)^{+}\right\rangle .\label{eq:generator_of_OU_process_with_Gamma}
	\end{equation}
	
	Let now 
	\[
	[0,T]\ni t\mapsto\varrho_{t}\in\Cf^{m_{\tau}}(\Td)
	\]
	be continuous, and set 
	\begin{equation}
		\Phi^{n}(t):=\left(\Phi^{n,\varrho_{t}}_{j}\right)_{j\in[d]},\quad t\in[0,T].\label{eq:definition_of_Phi_n}
	\end{equation}
	Then $\Phi^{n}$ is a continuous $L^{2}(\Td;\R^{d})$-valued function
	and 
	\[
	\cG^{n}_{\varrho_{t}}=\cG^{\Phi^{n}}_{t},\quad t\in[0,T],
	\]
	where $\cG^{\Phi}$ was defined in (\ref{eq:generator_G_Phi}). Consequently,
	$\left(\cG^{n}_{\varrho_{t}}\right)_{t\in[0,T]}$ is the family of
	generators associated with the modified Ornstein--Uhlenbeck dynamics
	\begin{align}
		\partial_{t}\xi_{t} & =\frac{1}{2}\Delta^{(2m_{\Delta})}_{n}\xi_{t}+\frac{1}{\sqrt{2}}\divv^{(m_{\partial})}_{n}\left(\Phi^{n}(t)\dot{W}_{t}\right),\quad t\in[0,T],\label{eq:SPDE_for_OU_process_with_Gamma}\\
		\xi_{0} & =g.\nonumber 
	\end{align}
	
	\begin{rem}
		The coefficient $\Gamma^{(m_{\tau})}_{n,j}(\varrho)$ is the higher-order
		approximation of the microscopic covariance $\Gamma_{n,j}(\rho)$.
		Unlike the shift operator $\tau_{n,j}$, its truncated approximation
		$\tau^{(m_{\tau})}_{n,j}$ is a differential operator and does not
		preserve positivity or the interval $[0,1]$. Moreover, the modified
		profile $\varrho$ need not satisfy a maximum principle. Consequently,
		$\Gamma^{(m_{\tau})}_{n,j}(\varrho)$ need not be non-negative. Since
		it determines the diffusion coefficient in (\ref{eq:SPDE_for_OU_process_with_Gamma}),
		we therefore use $\left(\Gamma^{(m_{\tau})}_{n,j}(\varrho)\right)^{+}$
		in the definition of $\cG^{n}_{\varrho}$. This choice is not essential
		for the estimates below. The same arguments apply with $\left|\Gamma^{(m_{\tau})}_{n,j}(\varrho)\right|$
		in place of $\left(\Gamma^{(m_{\tau})}_{n,j}(\varrho)\right)^{+}$.
		
		We introduce the correction operator 
		\[
		\cC^{n}F(\bar{\zeta}):=\frac{1}{2n^{d}}\sum^{d}_{j=1}\binn{\left(\partial^{(m_{\partial})}_{n,j}\right)^{\odot2}D^{2}F(\bar{\zeta};\cdot),\zeta\tau_{n,j}\zeta}_{n},\quad\zeta\in\Ln,
		\]
		for $F\in\Cf^{2}_{\text{der}}(\H[-I])$ and $I>\frac{d}{2}+m_{\partial}$.
		The next result collects the error terms arising from the comparison
		of the fluctuation generator with the modified Ornstein--Uhlenbeck
		generator. Recall that $ \cL^{n}_{\rho}$ was defined in (\ref{eq:definition_of_cL_rho}).
	\end{rem}
	
	\begin{prop}
		\label{prop:comparison_of_generators}Let $m_{\Delta},m_{\partial},m_{W},M_{1}\in\N$,
		$I>\frac{d}{2}+m_{\partial}$, and $m_{\tau}\in\N_{0}$ be such that
		\[
		M_{1}>\frac{d}{2}+\max\left\{ \lceil I\rceil+2m_{\Delta}+2,\,m_{\partial}+\lceil I\rceil,\,m_{\partial}+m_{W}\right\} ,
		\]
		and let $F\in\Cf^{3}_{\text{der}}(\H[-M_{1}])$. Then, for every $\eta\in\{0,1\}^{\Tnd}$,
		$\rho\in[0,1]^{\Tnd}$, $\zeta:=n^{\frac{d}{2}}(\eta-\rho)$ and $\varrho\in\Cf^{M_{2}}(\Td)$,
		where 
		\[
		M_{2}:=\max\{m_{W},m_{\tau}+1\},
		\]
		one has 
		\[
		\left|\left(\cL^{n}_{\rho}-\cG^{n}_{\varrho}+\cC^{n}\right)F(\bar{\zeta})\right|\le C_{M_{1}}E_{n}\bnorm F_{\Cf^{3}_{\text{der}}(\H[-M_{1}])},
		\]
		where 
		\begin{equation}
			E_{n}:=E^{n}_{\Delta}+E^{n}_{\partial}+E^{n}_{\mathrm{fluc}}+E^{n}_{\mathrm{prof}}+E^{n}_{\mathrm{disc}}+E^{n}_{+}+E^{n}_{\tau}+R^{n},\label{eq:general_error_E_n}
		\end{equation}
		\begin{align*}
			E^{n}_{\Delta} & :=\frac{1}{n^{2m_{\Delta}}}\bnorm{\bar{\zeta}}_{\H[-I]},\qquad E^{n}_{\partial}:=\frac{1}{n^{m_{\partial}}},\\
			E^{n}_{\mathrm{fluc}} & :=\frac{1}{n^{\frac{d}{2}}}\left(1+\norm{\ex_{n}\rho}_{\Cf^{\lceil I\rceil}(\Td)}\right)\norm{\bar{\zeta}}_{\H[-I]},\\
			E^{n}_{\mathrm{prof}} & :=\left(1+\bnorm{\ex_{n}\rho}_{\Cf(\Td)}+\norm{\varrho}_{\Cf(\T^{d})}\right)\bnorm{\ex_{n}\rho-\varrho}_{\Cf(\Td)},\\
			E^{n}_{\mathrm{disc}} & :=\frac{1}{n^{m_{W}}}\bnorm{\varrho}_{\Cf^{m_{W}}(\Td)}\left(1+\bnorm{\varrho}_{\Cf^{m_{W}}(\Td)}\right)\\
			E^{n}_{+} & :=\left(\bnorm{\varrho^{-}}_{\Cf(\Td)}+\bnorm{(1-\varrho)^{-}}_{\Cf(\Td)}\right)\left(1+\norm{\varrho}_{\Cf(\Td)}\right)\\
			E^{n}_{\tau} & :=\frac{1}{n^{m_{\tau}+1}}\norm{\varrho}_{\Cf^{m_{\tau}+1}(\Td)}\left(1+\norm{\varrho}_{\Cf(\Td)}\right)
		\end{align*}
		and
		\[
		R^{n}:=\frac{1}{n^{\frac{d}{2}+1}}.
		\]
	\end{prop}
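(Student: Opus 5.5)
Starting from Proposition~\ref{prop:expansion_of_generator} (applicable since $M_1>\frac d2+3$ is implied by the assumption), I write
\[
\cL^n_\rho F(\bar\zeta)=\tfrac12\inn{\Delta_nDF(\bar\zeta;\cdot),\bar\zeta}+\tfrac14\sum_{j}\inn{\partial^{\odot2}_{n,j}D^2F(\bar\zeta;\cdot),(\tau_{n,j}\eta-\eta)^2}_n+R_n,
\]
with $|R_n|\le C n^{-d/2-1}\|F\|$, which accounts for $R^n$. The comparison with $\cG^n_\varrho F$ then splits into a drift part and a diffusion part, and every term is estimated by a telescoping chain of replacements. The kernels $D^kF(\bar\zeta;\cdot)$ lie in $\Cf^{m,\dots,m}$ with $m$ up to $M_1-\frac d2$, by Lemma~\ref{lem:existence_of_smooth_kernel}. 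The three entries in the maximum in the hypothesis on $M_1$ supply exactly enough regularity for the drift, the derivative replacement, and the discretization steps below.

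\emph{Drift.} The difference is $\frac12\inn{(\Delta_n-\Delta_n^{(2m_\Delta)})DF(\bar\zeta),\bar\zeta}$. I bound it by $\|(\Delta_n-\Delta^{(2m_\Delta)}_n)DF(\bar\zeta;\cdot)\|_{\H[I]}\|\bar\zeta\|_{\H[-I]}$. Lemma~\ref{lem:approximation_of_discrete_derivatives} with $\ell=\lceil I\rceil$ gives $n^{-2m_\Delta}$ times the $\Cf^{\lceil I\rceil+2m_\Delta+2}$-norm of the kernel. That norm is controlled by $\|D F\|_{\cL_1(\H[-M_1])}$ because $M_1>\frac d2+\lceil I\rceil+2m_\Delta+2$, and the result is $E^n_\Delta$.

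\emph{Diffusion.} I expand $(\tau_{n,j}\eta-\eta)^2=\Gamma_{n,j}(\eta)$ with $\eta=\rho+n^{-d/2}\zeta$:
\[
\Gamma_{n,j}(\eta)=\Gamma_{n,j}(\rho)+n^{-d/2}\bigl[(1-2\tau_{n,j}\rho)\zeta+(1-2\rho)\tau_{n,j}\zeta\bigr]-2n^{-d}\zeta\tau_{n,j}\zeta.
\]
The quadratic term, paired with $\frac14\partial^{\odot2}_{n,j}D^2F$, becomes $-\cC^n F$ once $\partial_{n,j}$ is replaced by $\partial^{(m_\partial)}_{n,j}$. Since $n^{-d}|\zeta\tau_{n,j}\zeta|\le 1$, the replacement costs $n^{-m_\partial}$ via Lemma~\ref{lem:approximation_of_discrete_derivatives} and contributes to $E^n_\partial$. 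The linear term equals $n^{-d/2}\inn{\psi,\bar\zeta}$ with
\[
\psi=(1-2\tau_{n,j}\rho)\,\partial^{\odot2}_{n,j}D^2F+\tau_{n,j}^{*}\bigl[(1-2\rho)\,\partial^{\odot2}_{n,j}D^2F\bigr].
\]
To estimate it I replace $\rho$ by $\ex_n\rho$ (they agree on $\Tnd$, and only lattice values enter). Then $\|\psi\|_{\H[I]}\lesssim\|\psi\|_{\Cf^{\lceil I\rceil+\dots}}$ is bounded by $(1+\|\ex_n\rho\|_{\Cf^{\lceil I\rceil}})\|F\|$, using the $\Cf^{m_\partial+\lceil I\rceil}$ kernel regularity. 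This gives $E^n_{\mathrm{fluc}}$. Some care is needed here because $\bar\zeta$ is an empirical measure, so the $\H[-I]$ pairing with a continuous $\psi$ is legitimate.

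\emph{Deterministic part.} For $\frac14\inn{\partial^{\odot2}_{n,j}D^2F,\Gamma_{n,j}(\rho)}_n$, the target is $\frac14\inn{(\partial^{(m_\partial)}_{n,j})^{\odot2}D^2F,\Gamma^{(m_\tau)}_{n,j}(\varrho)^+}$, and I reach it in five steps.
\begin{enumerate}
\item Replace $\partial_{n,j}$ by $\partial^{(m_\partial)}_{n,j}$ ($E^n_\partial$, since $\Gamma_{n,j}(\rho)\in[0,1]$).
\item Replace $\Gamma_{n,j}(\rho)=\Gamma_{n,j}(\ex_n\rho)$ on the lattice by $\Gamma_{n,j}(\varrho)$, using Lemma~\ref{lem:bounds_for_Gamma}(i) with $\ell=0$ ($E^n_{\mathrm{prof}}$).
\item Pass from $\inn{\cdot,\cdot}_n$ to $\inn{\cdot,\cdot}$ by Lemma~\ref{lem:bounds_for_Gamma}(iii) with $m=m_W$, which needs $\varrho\in\Cf^{m_W}$ and kernel regularity $m_\partial+m_W$ ($E^n_{\mathrm{disc}}$).
\item Replace $\Gamma_{n,j}(\varrho)$ by $\Gamma^{(m_\tau)}_{n,j}(\varrho)$ via Lemma~\ref{lem:bounds_for_Gamma}(ii) ($E^n_\tau$).
\item Replace $\Gamma^{(m_\tau)}_{n,j}(\varrho)$ by its positive part. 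Since $|a-a^+|=a^-$, I use $a^-\le(a-b)^-+b^-\le|a-b|+b^-$ with $b=\Gamma_{n,j}(\varrho)$. This costs $E^n_\tau$ plus Lemma~\ref{lem:bounds_for_Gamma}(iv), i.e.\ $E^n_+$.
\end{enumerate}

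The main obstacle is the fluctuation-linear term, together with keeping track of which norm of $\rho$ enters. The lattice data $\rho$ must be realized as a smooth function so that Sobolev pairings apply. The shifted term $\inn{\phi,\tau_{n,j}\zeta}_n$ has to be rewritten as $\inn{\tau^{-1}_{n,j}\phi,\zeta}_n$, with the $\Cf^{\lceil I\rceil}$ bounds uniform in $n$. The remaining steps are direct bookkeeping with the cited lemmas.
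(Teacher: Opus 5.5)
Your proposal is correct and follows the paper's proof essentially step by step. You use the same starting expansion from Proposition~\ref{prop:expansion_of_generator}, the same drift estimate, and the same split of the covariance error into fluctuation, profile, discretization, positivity and shift contributions via Lemma~\ref{lem:bounds_for_Gamma}. The differences are only in the order of the steps:
\begin{itemize}
\item The paper replaces $\partial_{n,j}^{\odot2}$ by $(\partial^{(m_\partial)}_{n,j})^{\odot2}$ once, against the whole of $(\tau_{n,j}\eta-\eta)^2\in\{0,1\}$, before expanding; you expand first and replace the derivative separately in the quadratic and deterministic terms.
\item The paper handles the positivity of $\Gamma_{n,j}(\varrho)$ before the shift replacement, using that $x\mapsto x^+$ is Lipschitz. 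You do the shift replacement first and then bound $\Gamma^{(m_\tau)}_{n,j}(\varrho)^-$ by $|\Gamma^{(m_\tau)}_{n,j}(\varrho)-\Gamma_{n,j}(\varrho)|+\Gamma_{n,j}(\varrho)^-$.
\end{itemize}
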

	
	\begin{proof}
		Let $\eta\in\{0,1\}^{\Tnd}$, $\rho\in[0,1]^{\Tnd}$, $\zeta=n^{d/2}(\eta-\rho)$,
		and $\varrho\in\Cf^{M_{2}}(\Td)$. Set 
		\[
		M_{0}:=\max\left\{ \lceil I\rceil+2m_{\Delta}+2,\,m_{\partial}+\lceil I\rceil,\,m_{\partial}+m_{W}\right\} .
		\]
		Since $M_{1}>\frac{d}{2}+M_{0}$, Lemma \ref{lem:existence_of_smooth_kernel}
		implies that, for every $\ell\in[3]$, the Fréchet derivative $D^{\ell}F(\bar{\zeta})$
		admits a kernel 
		\[
		D^{\ell}F(\bar{\zeta};\cdot)\in\Cf^{M_{0},\ldots,M_{0}}((\Td)^{\ell}),
		\]
		satisfying 
		\[
		\bnorm{D^{\ell}F(\bar{\zeta};\cdot)}_{\Cf^{M_{0},\ldots,M_{0}}((\Td)^{\ell})}\le C_{M_{1}}\bnorm{D^{\ell}F(\bar{\zeta})}_{\cL_{\ell}(\H[-M_{1}])}.
		\]
		In particular, setting 
		\[
		\varphi_{j}:=\left(\partial^{(m_{\partial})}_{n,j}\right)^{\odot2}D^{2}F(\bar{\zeta};\cdot),\quad j\in[d],
		\]
		we have 
		\[
		\varphi_{j}\in\Cf^{\max\{\lceil I\rceil,m_{W}\}}(\Td).
		\]
		
		Using Proposition~\ref{prop:expansion_of_generator}, we decompose
		the difference of the two generators as 
		\begin{align*}
			& \left|\cL^{n}_{\rho}F(\bar{\zeta})-\cG^{n}_{\varrho}F(\bar{\zeta})+\frac{1}{2}\sum^{d}_{j=1}\binn{\left(\partial^{(m_{\partial})}_{n,j}\right)^{\odot2}D^{2}F(\bar{\zeta};\cdot),(\eta-\rho)\tau_{n,j}(\eta-\rho)}_{n}\right|\\
			& \qquad\le\frac{1}{2}|S_{1}|+\frac{1}{4}\sum^{d}_{j=1}\left(|S^{j}_{2}|+|S^{j}_{3}|\right)+R,
		\end{align*}
		where 
		\begin{align*}
			S_{1} & :=\binn{\Delta_{n}DF(\bar{\zeta})-\Delta^{(2m_{\Delta})}_{n}DF(\bar{\zeta}),\bar{\zeta}},\\
			S^{j}_{2} & :=\binn{\left[\left(\partial^{(m_{\partial})}_{n,j}\right)^{\odot2}-\partial^{\odot2}_{n,j}\right]D^{2}F(\bar{\zeta};\cdot),(\tau_{n,j}\eta-\eta)^{2}}_{n},\\
			S^{j}_{3} & :=\binn{\varphi_{j},(\tau_{n,j}\eta-\eta)^{2}}_{n}-\binn{\varphi_{j},\left(\Gamma^{(m_{\tau})}_{n,j}(\varrho)\right)^{+}}+2\binn{\varphi_{j},(\eta-\rho)\tau_{n,j}(\eta-\rho)}_{n},
		\end{align*}
		and 
		\[
		R\le\frac{C_{M_{1}}}{n^{\frac{d}{2}+1}}\bnorm F_{\Cf^{3}_{\text{der}}(\H[-M_{1}])}.
		\]
		The terms $S_{1}$ and $S^{j}_{2}$ correspond to the approximation
		of the drift and covariance operators, respectively, whereas $S^{j}_{3}$
		collects the errors arising from the covariance structure. The term
		$R$ is the third-order Taylor remainder.
		
		Using Lemma~\ref{lem:approximation_of_discrete_derivatives}, we
		obtain 
		\begin{align*}
			|S_{1}| & \le\bnorm{\Delta_{n}DF(\bar{\zeta})-\Delta^{(2m_{\Delta})}_{n}DF(\bar{\zeta})}_{\H[I]}\bnorm{\bar{\zeta}}_{\H[-I]}\\
			& \le\bnorm{\Delta_{n}DF(\bar{\zeta};\cdot)-\Delta^{(2m_{\Delta})}_{n}DF(\bar{\zeta};\cdot)}_{\Cf^{\lceil I\rceil}(\Td)}\bnorm{\bar{\zeta}}_{\H[-I]}\\
			& \le\frac{C_{M_{1}}}{n^{2m_{\Delta}}}\bnorm{DF(\bar{\zeta};\cdot)}_{\Cf^{\lceil I\rceil+2m_{\Delta}+2}(\Td)}\bnorm{\bar{\zeta}}_{\H[-I]}\\
			& \le\frac{C_{M_{1}}}{n^{2m_{\Delta}}}\bnorm{DF(\bar{\zeta})}_{\cL_{1}(\H[-M_{1}])}\bnorm{\bar{\zeta}}_{\H[-I]}.
		\end{align*}
		
		Similarly, since $(\tau_{n,j}\eta-\eta)^{2}\in\{0,1\}^{\Tnd}$, 
		\begin{align*}
			|S^{j}_{2}| & \le\bnorm{\left[\left(\partial^{(m_{\partial})}_{n,j}\right)^{\odot2}-\partial^{\odot2}_{n,j}\right]D^{2}F(\bar{\zeta};\cdot)}_{\Cf((\Td)^{2})}\\
			& \le\bnorm{\left[\left(\partial^{(m_{\partial})}_{n,j}\right)^{\otimes2}-\partial^{\otimes2}_{n,j}\right]D^{2}F(\bar{\zeta};\cdot)}_{\Cf((\Td)^{2})}.
		\end{align*}
		By the triangle inequality and Lemma~\ref{lem:approximation_of_discrete_derivatives},
		\begin{align*}
			|S^{j}_{2}| & \le\bnorm{\left[\left(\partial^{(m_{\partial})}_{n,j}\right)^{\otimes2}-\partial_{n,j}\otimes\partial^{(m_{\partial})}_{n,j}\right]D^{2}F(\bar{\zeta};\cdot)}_{\Cf((\Td)^{2})}\\
			& +\bnorm{\left[\partial_{n,j}\otimes\partial^{(m_{\partial})}_{n,j}-\partial^{\otimes2}_{n,j}\right]D^{2}F(\bar{\zeta};\cdot)}_{\Cf((\Td)^{2})}\\
			& \le\frac{C_{M_{1}}}{n^{m_{\partial}}}\bnorm{D^{2}F(\bar{\zeta};\cdot)}_{\Cf^{m_{\partial}+1,m_{\partial}+1}((\Td)^{2})}\\
			& \le\frac{C_{M_{1}}}{n^{m_{\partial}}}\bnorm{D^{2}F(\bar{\zeta})}_{\cL_{2}(\H[-M_{1}])}.
		\end{align*}
		
		The terms $S^{j}_{3}$ collect the errors arising from the covariance
		structure. To distinguish their different origins, we further decompose
		them into five contributions
		\begin{align*}
			S^{j}_{3} & =\binn{\varphi_{j},\eta+\tau_{n,j}\eta-2\eta\tau_{n,j}\eta}_{n}-\binn{\varphi_{j},\Gamma^{(m_{\tau})}_{n,j}(\varrho)^{+}}\\
			& +2\binn{\varphi_{j},(\eta-\rho)\tau_{n,j}(\eta-\rho)}_{n}\\
			& =\binn{\varphi_{j},\Gamma_{n,j}(\eta)-\Gamma_{n,j}(\rho)+2(\eta-\rho)\tau_{n,j}(\eta-\rho)}_{n}\\
			& +\binn{\varphi_{j},\Gamma_{n,j}(\rho)-\Gamma_{n,j}(\varrho)}_{n}\\
			& +\left[\binn{\varphi_{j},\Gamma_{n,j}(\varrho)}_{n}-\binn{\varphi_{j},\Gamma_{n,j}(\varrho)}\right]\\
			& +\binn{\varphi_{j},\Gamma_{n,j}(\varrho)-\Gamma_{n,j}(\varrho)^{+}}\\
			& +\binn{\varphi_{j},\Gamma_{n,j}(\varrho)^{+}-\Gamma^{(m_{\tau})}_{n,j}(\varrho)^{+}}\\
			& =:S^{j}_{3,1}+\cdots+S^{j}_{3,5},
		\end{align*}
		where $\Gamma_{n,j}$ is extended to functions on $\Td$ by the same
		formula as in (\ref{eq:definition_of_Gamma}).
		
		The terms $S^{j}_{3,1},\ldots,S^{j}_{3,5}$ correspond, respectively,
		to the fluctuation error, the profile approximation error, the noise
		discretization error, the positivity correction error, and the shift
		approximation error. We estimate them separately. Among these terms,
		only $S^{j}_{3,1}$ depends on the microscopic fluctuations. The remaining
		contributions are deterministic approximation errors.
		
		We start with the fluctuation error $S^{j}_{3,1}$. The cancellation
		of the quadratic fluctuation contribution is precisely the reason
		for introducing the correction term in the statement. Using the identity
		\[
		\eta=\rho+n^{-d/2}\zeta,
		\]
		we rewrite $S^{j}_{3,1}$ as 
		\begin{align*}
			S^{j}_{3,1} & =\binn{\varphi_{j},\Gamma_{n,j}\left(\rho+\frac{1}{n^{d/2}}\zeta\right)-\Gamma_{n,j}(\rho)+2(\eta-\rho)\tau_{n,j}(\eta-\rho)}_{n}\\
			& =\frac{1}{n^{\frac{d}{2}}}\binn{\varphi_{j},\zeta+\tau_{n,j}\zeta}_{n}-\frac{2}{n^{\frac{d}{2}}}\binn{\varphi_{j},\rho\tau_{n,j}\zeta+\zeta\tau_{n,j}\rho}_{n}.
		\end{align*}
		Consequently, 
		\begin{align*}
			|S^{j}_{3,1}| & \le\frac{1}{n^{\frac{d}{2}}}\left|\binn{\varphi_{j},\bar{\zeta}+\overline{\tau_{n,j}\zeta}}\right|\\
			& +\frac{2}{n^{\frac{d}{2}}}\left|\binn{\varphi_{j}\ex_{n}\rho,\overline{\tau_{n,j}\zeta}}\right|+\frac{2}{n^{\frac{d}{2}}}\left|\binn{\varphi_{j}\ex_{n}(\tau_{n,j}\rho),\bar{\zeta}}\right|\\
			& \le\frac{2}{n^{\frac{d}{2}}}\left[\bnorm{\varphi_{j}}_{\Cf^{\lceil I\rceil}(\Td)}+\bnorm{\varphi_{j}\ex_{n}\rho}_{\Cf^{\lceil I\rceil}(\Td)}+\bnorm{\varphi_{j}\ex_{n}(\tau_{n,j}\rho)}_{\Cf^{\lceil I\rceil}(\Td)}\right]\bnorm{\bar{\zeta}}_{\H[-I]}\\
			& \le\frac{C_{M_{1}}}{n^{\frac{d}{2}}}\left(1+\bnorm{\ex_{n}\rho}_{\Cf^{\lceil I\rceil}(\Td)}\right)\bnorm{\varphi_{j}}_{\Cf^{\lceil I\rceil}(\Td)}\bnorm{\bar{\zeta}}_{\H[-I]}\\
			& \le\frac{C_{M_{1}}}{n^{\frac{d}{2}}}\left(1+\bnorm{\ex_{n}\rho}_{\Cf^{\lceil I\rceil}(\Td)}\right)\bnorm{D^{2}F(\bar{\zeta})}_{\cL_{2}(\H[-M_{1}])}\bnorm{\bar{\zeta}}_{\H[-I]}.
		\end{align*}
		
		By Lemma~\ref{lem:bounds_for_Gamma}~(i) with $\ell=0$, the profile
		approximation error $S^{j}_{3,2}$ is bounded by 
		\begin{align*}
			|S^{j}_{3,2}| & =\left|\binn{\varphi_{j},\Gamma_{n,j}(\ex_{n}\rho)-\Gamma_{n,j}(\varrho)}_{n}\right|\\
			& \le\bnorm{\varphi_{j}}_{\Cf(\Td)}\bnorm{\Gamma_{n,j}(\ex_{n}\rho)-\Gamma_{n,j}(\varrho)}_{\Cf(\Td)}\\
			& \le C_{M_{1}}\bnorm{D^{2}F(\bar{\zeta})}_{\cL_{2}(\H[-M_{1}])}\left(1+\bnorm{\ex_{n}\rho}_{\Cf(\Td)}+\bnorm{\varrho}_{\Cf(\Td)}\right)\bnorm{\ex_{n}\rho-\varrho}_{\Cf(\Td)}.
		\end{align*}
		Thus, the profile approximation error is controlled by the accuracy
		with which $\varrho$ approximates $\ex_{n}\rho$.
		
		The noise discretization error $S^{j}_{3,3}$ is estimated using Lemma
		\ref{lem:bounds_for_Gamma}~(iii): 
		\begin{align*}
			|S^{j}_{3,3}| & =\left|\binn{\varphi_{j},\Gamma_{n,j}(\varrho)}_{n}-\binn{\varphi_{j},\Gamma_{n,j}(\varrho)}\right|\\
			& \le\frac{C_{M_{1}}}{n^{m_{W}}}\bnorm{\varphi_{j}}_{\Cf^{m_{W}}(\Td)}\bnorm{\varrho}_{\Cf^{m_{W}}(\Td)}\left(1+\bnorm{\varrho}_{\Cf^{m_{W}}(\Td)}\right)\\
			& \le\frac{C_{M_{1}}}{n^{m_{W}}}\bnorm{D^{2}F(\bar{\zeta})}_{\cL_{2}(\H[-M_{1}])}\bnorm{\varrho}_{\Cf^{m_{W}}(\Td)}\left(1+\bnorm{\varrho}_{\Cf^{m_{W}}(\Td)}\right).
		\end{align*}
		This is the only term in which the discrete inner product is approximated
		by its continuous counterpart. Consequently, the noise discretization
		error is controlled by the $C^{m_{W}}$-regularity of $\varrho$.
		
		Using Lemma~\ref{lem:bounds_for_Gamma}~(iv), we estimate the positivity
		correction error $S^{j}_{3,4}$ as follows: 
		\begin{align*}
			|S^{j}_{3,4}| & =\left|\binn{\varphi_{j},\Gamma_{n,j}(\varrho)-\Gamma_{n,j}(\varrho)^{+}}\right|\\
			& \le\bnorm{\varphi_{j}}_{\Cf(\Td)}\bnorm{\Gamma_{n,j}(\varrho)-\Gamma_{n,j}(\varrho)^{+}}_{\Cf(\Td)}\\
			& =\bnorm{\varphi_{j}}_{\Cf(\Td)}\bnorm{\Gamma_{n,j}(\varrho)^{-}}_{\Cf(\Td)}\\
			& \le C_{M_{1}}\bnorm{D^{2}F(\bar{\zeta})}_{\cL_{2}(\H[-M_{1}])}\left(\bnorm{\varrho^{-}}_{\Cf(\Td)}+\bnorm{(1-\varrho)^{-}}_{\Cf(\Td)}\right)\left(1+\bnorm{\varrho}_{\Cf(\Td)}\right).
		\end{align*}
		
		Finally, we estimate the shift approximation error $S^{j}_{3,5}$.
		By Lemma~\ref{lem:bounds_for_Gamma}~(ii) with $\ell=0$ and the
		Lipschitz continuity of the map $x\mapsto x^{+}$, we obtain 
		\begin{align*}
			|S^{j}_{3,5}| & =\left|\binn{\varphi_{j},\Gamma_{n,j}(\varrho)^{+}-\Gamma^{(m_{\tau})}_{n,j}(\varrho)^{+}}\right|\\
			& \le\bnorm{\varphi_{j}}_{\Cf(\Td)}\bnorm{\Gamma_{n,j}(\varrho)^{+}-\Gamma^{(m_{\tau})}_{n,j}(\varrho)^{+}}_{\Cf(\Td)}\\
			& \le\bnorm{\varphi_{j}}_{\Cf(\Td)}\bnorm{\Gamma_{n,j}(\varrho)-\Gamma^{(m_{\tau})}_{n,j}(\varrho)}_{\Cf(\Td)}\\
			& \le\frac{C_{M_{1}}}{n^{m_{\tau}+1}}\bnorm{D^{2}F(\bar{\zeta})}_{\cL_{2}(\H[-M_{1}])}\bnorm{\varrho}_{\Cf^{m_{\tau}+1}(\Td)}\left(1+\bnorm{\varrho}_{\Cf(\Td)}\right).
		\end{align*}
		
		Combining the estimates for $S_{1}$, $S^{j}_{2}$, $S^{j}_{3,1},\ldots,S^{j}_{3,5}$,
		$j\in[d]$, together with the bound on the remainder $R$, completes
		the proof.
	\end{proof}
	
	We conclude this subsection with a direct consequence of Proposition~\ref{prop:comparison_of_generators}
	and the definition of the space $\cP^{3}\left(\Upsilon([0,T],\H[-M_{1}])\right)$.
	For a non-anticipative functional $F$, we define 
	\begin{equation}
		\cL^{n}_{\rho}F(\gamma):=\cL^{n}_{\rho}F(\gamma^{[0,t)};\cdot)(\gamma_{t}),\quad\text{and}\quad\cC^{n}F(\gamma):=\cC^{n}F(\gamma^{[0,t)};\cdot)(\gamma_{t}),\label{eq:extension_of_cC_and_cL}
	\end{equation}
	for $(t,\gamma)\in\Upsilon([0,T],\H[-M_{1}])$.
	\begin{cor}
		\label{cor:pathwise_comparison_of_generators} Let $I,m_{\Delta},m_{\partial},m_{W},m_{\tau},M_{1},M_{2}$
		be as in Proposition~\ref{prop:comparison_of_generators}, let $T>0$,
		and let 
		\[
		F\in\cP^{3}\left(\Upsilon([0,T],\H[-M_{1}])\right).
		\]
		Then, for every 
		\[
		\eta\in D\left([0,T],\{0,1\}^{\Tnd}\right),\quad\rho\in D\left([0,T],[0,1]^{\Tnd}\right),
		\]
		and 
		\[
		\varrho\in D\left([0,T],\Cf^{M_{2}}(\Td)\right),
		\]
		setting 
		\[
		\zeta:=n^{\frac{d}{2}}(\eta-\rho),\quad(\bar{\zeta})_{t}:=\overline{\zeta_{t}},
		\]
		one has 
		\[
		\sup_{t\in[0,T]}\left|\left(\cL^{n}_{\rho_{t}}-\cG^{n}_{\varrho_{t}}+\cC^{n}\right)F\left(\bar{\zeta}^{[0,t]}\right)\right|\le C_{M_{1}}\sup_{t\in[0,T]}E_{n}(t)\,\bnorm F_{\cP^{3}\left(\Upsilon([0,T],\H[-M_{1}])\right)},
		\]
		where $E_{n}(t)$ is given by (\ref{eq:general_error_E_n}) with $(\rho,\bar{\zeta},\varrho)$
		replaced by $(\rho_{t},\bar{\zeta}_{t},\varrho_{t})$.
	\end{cor}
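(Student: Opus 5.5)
The plan is to fix $t\in[0,T]$ and reduce the claim to Proposition~\ref{prop:comparison_of_generators}, applied to the terminal section
\[
G:=F_t\bigl(\bar\zeta^{[0,t)},\cdot\bigr):\H[-M_1]\to\R,\qquad g\mapsto F_t\bigl(\bar\zeta^{[0,t)}+g\I_{\{t\}}\bigr),
\]
with the data $\eta:=\eta_t$, $\rho:=\rho_t$, $\varrho:=\varrho_t$, and then to take the supremum over $t$. By \eqref{eq:extension_of_cC_and_cL} and the definition of $\cG^n_{\varrho_t}$ acting on non-anticipative functionals through vertical derivatives, each of the three operators $\cL^n_{\rho_t}$, $\cG^n_{\varrho_t}$, $\cC^n$ applied to $F$ at $\bar\zeta^{[0,t]}$ coincides with the same operator applied to the state function $G$ at $\bar\zeta_t$. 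Indeed, $\cL^n_{\rho_t}$ only evaluates $G$ at the finitely many points $\bar\zeta_t^{x\lrarr x+e_j^n}$ and uses $DG$, while $\cG^n$ and $\cC^n$ use only $DG$ and $D^2G$. This reduction is purely notational once we note that $D^kG(g)=\nabla^kF_t(\bar\zeta^{[0,t)},g)$.

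The key step is to check that $G\in\Cf^3_{\text{der}}(\H[-M_1])$ with
\[
\|G\|_{\Cf^3_{\text{der}}(\H[-M_1])}\le\|F\|_{\cP^3(\Upsilon([0,T],\H[-M_1]))}.
\]
Since $F_t\in\cP^3(D([0,t],\H[-M_1]))$, the map $g\mapsto F_t(\bar\zeta^{[0,t)}+g\I_{\{t\}})$ is the composition of $F_t$ with the affine map $g\mapsto\bar\zeta^{[0,t)}+\jmath_{[t,t]}g$, whose linear part has norm one. By the definition of vertical path derivatives after $t$, we get
\[
D^kG(g)[h_1,\dots,h_k]=\nabla^k_{[t,t]}F_t(\cdot)[\jmath_{[t,t]}h_1,\dots,\jmath_{[t,t]}h_k],
\]
which gives the bound for $k=1,2,3$. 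This is the same mechanism as in Lemma~\ref{lem:regularity_of_map_G} with $s=b=t$. Continuity of $D^kG$ in $g$ is the one point that needs care, since the $\cP^3$ class only postulates left-continuity in time. My first attempt will be to note that Proposition~\ref{prop:comparison_of_generators} in fact only uses existence and boundedness of $D^\ell G$: Taylor's formula with a mean-value or integral remainder requires three-fold differentiability, and the bound $\sup\|D^3G\|$ is all that enters. If genuine continuity is needed, I will obtain it from the left-continuity of $\nabla^k_{[s,t]}F_t$ applied with the constant sequence $s_n=s=t$ and $\gamma^n\to\gamma$, which yields continuity of $g\mapsto\nabla^k_{[t,t]}F_t$ in operator norm.

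With these two facts in place, Proposition~\ref{prop:comparison_of_generators} gives, for each $t$,
\[
\bigl|(\cL^n_{\rho_t}-\cG^n_{\varrho_t}+\cC^n)F(\bar\zeta^{[0,t]})\bigr|\le C_{M_1}E_n(t)\,\|F\|_{\cP^3},
\]
and $E_n(t)\le\sup_s E_n(s)$. The constant $C_{M_1}$ does not depend on $t$ or on the paths. Taking the supremum over $t\in[0,T]$ concludes the proof. I expect the only real obstacle to be the regularity verification for $G$ described above; everything else is a direct specialization of the proposition.
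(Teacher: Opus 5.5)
Your proposal is correct and is essentially the paper's proof: fix $t$, apply Proposition~\ref{prop:comparison_of_generators} to the terminal section $g\mapsto F_t(\bar\zeta^{[0,t)},g)$, whose $\Cf^3_{\text{der}}(\H[-M_1])$-seminorm is bounded by $\|F\|_{\cP^3(\Upsilon([0,T],\H[-M_1]))}$, and take the supremum over $t\in[0,T]$. Your extra argument for the regularity of this section fills in a step the paper only asserts. You identify its derivatives with $\nabla^k_{[t,t]}F_t$, and you get continuity from left-continuity along the constant sequence $s_n=t$ (constant sequences are allowed under the paper's convention for $\uparrow$).
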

	
	\begin{proof}
		Fix $t\in[0,T]$ and $\gamma\in D([0,t],\H[-M_{1}])$.
		The terminal section
		\[
		f_{t,\gamma}(g):=F_{t}(\gamma^{[0,t)},g),
		\quad g\in\H[-M_{1}],
		\]
		belongs to $\Cf^{3}_{\text{der}}(\H[-M_{1}])$, and
		\[
		\bnorm{f_{t,\gamma}}_{\Cf^{3}_{\text{der}}(\H[-M_{1}])}
		\le
		\bnorm{F_{t}}_{\cP^{3}(D([0,t],\H[-M_{1}]))}
		\le
		\bnorm F_{\cP^{3}(\Upsilon([0,T],\H[-M_{1}]))}.
		\]
		Applying Proposition~\ref{prop:comparison_of_generators} to this
		section with $\gamma=\bar{\zeta}^{[0,t]}$, we obtain
		\[
		\left|
		\left(\cL^{n}_{\rho_{t}}-\cG^{n}_{\varrho_{t}}+\cC^{n}\right)
		F\left(\bar{\zeta}^{[0,t]}\right)
		\right|
		\le
		C_{M_{1}}E_{n}(t)\,
		\bnorm F_{\cP^{3}(\Upsilon([0,T],\H[-M_{1}]))}.
		\]
		Taking the supremum over $t\in[0,T]$ completes the proof.
	\end{proof}

	\subsection{Gaussian approximation of the initial fluctuations}
	
	In this subsection, we compare the law of the initial fluctuation
	field with its Gaussian approximation. Since the particles are initially
	independent, the problem reduces to a quantitative central limit theorem
	for an empirical measure-valued random variable. Our approach is based
	on Stein's method using an exchangeable-pair construction. The argument
	follows the proof of \citep[Proposition 4.1]{Gess_Konarovskyi2026},
	adapted to the empirical measure framework considered here.
	
	We will use a finite-dimensional version of Stein's method. We first
	recall the following result from \citep[Theorem 3]{Meckes:2009}.
	Let $(X,X')$ be an exchangeable pair of random variables in $\R^{q}$,
	that is, 
	\[
	\law(X,X')=\law(X',X),
	\]
	such that 
	\[
	\E[X'-X\mid X]=-\Lambda X
	\]
	and 
	\[
	\E\left[(X'-X)(X'-X)^{T}\mid X\right]=2\Lambda\Sigma+E'
	\]
	for some invertible matrix $\Lambda$, a symmetric non-negative definite
	matrix $\Sigma$, and a $\sigma(X)$-measurable random matrix $E'$.
	Then, for every $f\in\Cf^{3}(\R^{q})$, 
	\begin{align}
		&\left|\E f(X)-\E f(Z)\right|\nonumber\\
		&\qquad\le\left\Vert \Lambda^{-1}\right\Vert _{op}\bigg[\frac{1}{4}\sup_{z\in\R^{q}}\bnorm{D^{2}f}_{HS}\E\bnorm{E'}_{HS}+\frac{1}{9}\sup_{z\in\R^{q}}\bnorm{D^{3}f}_{op}\E\left|X'-X\right|^{3}\bigg]\label{eq:berry_essen_bound}
	\end{align}
	where $Z$ is a centered Gaussian random variable in $\R^{q}$ with
	covariance matrix $\Sigma$. Here $\|\cdot\|_{HS}$ and $\|\cdot\|_{op}$
	denote the Hilbert--Schmidt and operator norms, respectively. We
	also denote by $\|\cdot\|_{\cL_{k}(\H[-I])}$ the norm on the space
	of bounded $k$-linear forms on $\H[-I]$.
	
	We now state the quantitative Gaussian approximation for the initial
	fluctuation field.
	\begin{prop}
		\label{prop:berry_essen_bound}Let $m_{0}\in\N$, $I>\frac{d}{2}+m_{0}$,
		and $M_{0}\in\N$ be such that 
		\[
		M_{0}>\frac{d}{2}+2\lceil I/2\rceil.
		\]
		Let $\varrho\in\Cf^{m_{0}}(\Td;[0,1])$, and let $\xi$ be a centered Gaussian
		random variable in $\H[-I]$ satisfying 
		\[
		\E\left[\binn{\xi,\varphi}\binn{\xi,\psi}\right]=\binn{\varrho(1-\varrho)\varphi,\psi},\quad\varphi,\psi\in\H[I].
		\]
		Let $\eta^{n}(x)$, $x\in\Tnd$, be independent Bernoulli random variables
		with parameters $\varrho(x)$, respectively, and set 
		\[
		\rho^{n}:=\varrho\big|_{\Tnd},\quad\zeta^{n}:=n^{\frac{d}{2}}\left(\eta^{n}-\rho^{n}\right).
		\]
		Then, for every $F\in\Cf^{3}_{\text{der}}(\H[-M_{0}])$, 
		\[
		\left|\E F\left(\bar{\zeta}^{n}\right)-\E F(\xi)\right|\le C_{M_{0},m_{0}}\|F\|_{\Cf^{3}_{\text{der}}(\H[-M_{0}])}\left[\frac{1}{n^{d/2}}+\frac{1}{n^{m_{0}}}\left(1+\|\varrho\|^{2}_{\Cf^{m_{0}}(\Td)}\right)\right].
		\]
	\end{prop}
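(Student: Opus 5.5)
The plan is to split the error into a \emph{replacement step} and a \emph{covariance step}:
\[
\E F(\bar\zeta^n)-\E F(\xi)=\bigl[\E F(\bar\zeta^n)-\E F(\bar G^n)\bigr]+\bigl[\E F(\bar G^n)-\E F(\xi)\bigr].
\]
Here $\bar G^n:=n^{-d}\sum_{x\in\Tnd}G(x)\delta_x$, and $G(x)$, $x\in\Tnd$, are independent centered Gaussians with variances $n^{d}\rho^n(x)(1-\rho^n(x))$. Then $\bar G^n$ has exactly the same covariance as $\bar\zeta^n$, namely
\[
\E\bigl[\inn{\bar\zeta^n,\varphi}\inn{\bar\zeta^n,\psi}\bigr]=\binn{\rho^n(1-\rho^n)\varphi,\psi}_n .
\]
The first bracket should be of order $n^{-d/2}$. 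The second bracket compares two centered Gaussians whose covariances differ only in replacing $\inn{\cdot,\cdot}_n$ by $\inn{\cdot,\cdot}$, and it should give $n^{-m_0}$.

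\textbf{Replacement step.} Both $\bar\zeta^n$ and $\bar G^n$ are images of a vector in $\R^{\Tnd}$ under the linear map $L y:=n^{-d}\sum_x y(x)\delta_x$ into $\H[-M_0]$. I would apply Stein's bound \eqref{eq:berry_essen_bound} to $f:=F\circ L$, as in \citep[Proposition 4.1]{Gess_Konarovskyi2026}. The exchangeable pair is obtained by choosing a site uniformly and resampling its occupation.
\begin{itemize}
\item This gives $\Lambda=n^{-d}\,\mathrm{Id}$ and an error matrix $E'$ that is diagonal with entries $n^{-d}\bigl(\zeta(x)^2-\E\zeta(x)^2\bigr)$. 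These entries are centered and independent across sites.
\item $\E|X'-X|^3\lesssim n^{-d}n^{3d/2}$ for the unnormalised coordinates $\zeta(x)$.
\item The derivatives of $f$ are computed through the kernels of Lemma~\ref{lem:existence_of_smooth_kernel}: $D^kf(y)[e_{x_1},\dots,e_{x_k}]=n^{-kd}D^kF(Ly;x_1,\dots,x_k)$. Here $\bnorm{\delta_x}_{\H[-M_0]}$ is uniformly bounded because $M_0>d/2$.
\end{itemize}
The only $\R^{\Tnd}$-operator norm that enters is that of $D^3f$, and since $\E|X'-X|^3$ is computed with the Euclidean norm, it cannot be avoided. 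The bound is
\[
\|D^3f\|_{op}\le n^{-3d}\bigl(\sum_{x_1,x_2,x_3}D^3F(Ly;x_1,x_2,x_3)^2\bigr)^{1/2}\le n^{-3d/2}\sup\|D^3F(Ly;\cdot)\|_{\Cf}\lesssim n^{-3d/2}\|F\|_{\Cf^3_{\rm der}}.
\]
The first inequality is Cauchy--Schwarz, using that $\|v\|_{\R^{\Tnd}}\le1$ forces $\sum_x|v(x)|\le n^{d/2}$; this is the HS bound on the kernel's sample matrix. With $\|\Lambda^{-1}\|=n^d$, the cubic term is then of order
\[
n^{d}\cdot n^{-3d/2}\cdot n^{-d}n^{3d/2}=n^{d/2}.
\]
That is, the bound \eqref{eq:berry_essen_bound} read naively in these coordinates \emph{does not} close. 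I would not pass from the scale $n^{d/2}$ down to $n^{-d/2}$ by juggling normalizations, since every rescaling is cancelled by $\|\Lambda^{-1}\|$. Instead I would use the refinement the paper advertises: expand the Stein identity with the site kernels, so that the cubic term becomes
\[
\E\sum_x n^{-3d}D^3F(\cdot;x,x,x)\,|\zeta'(x)-\zeta(x)|^3\cdot n^{d}\cdot n^{-d}.
\]
This is a single diagonal sum. It exploits the fact that $X'-X$ is supported on one coordinate, which is the point where the naive $\|\cdot\|_{op}$ bound loses. Since $\E|\zeta'(x)-\zeta(x)|^3\lesssim n^{3d/2}$, the term is of order $n^{d}\cdot n^{-3d}\cdot n^{3d/2}=n^{-d/2}$. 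The $E'$ term is handled the same way: its contribution is $\tfrac14 n^{d}\,\E\bigl|n^{-d}\sum_x n^{-2d}D^2F(\cdot;x,x)\bigl(\zeta(x)^2-\E\zeta(x)^2\bigr)\bigr|$. The summands are independent, centered, and of size $n^{-2d}\cdot n^{d}$, so Cauchy--Schwarz gives order
\[
n^{d}\cdot n^{-d}\cdot n^{-2d}\cdot n^{d}\cdot n^{d/2}=n^{-d/2}.
\]
In each case I must check that the $C^0$ sup norms of the kernels are bounded by $\|F\|_{\Cf^3_{\rm der}(\H[-M_0])}$, which holds because $M_0>d/2$. As a fallback, if this single-coordinate reduction does not go through cleanly, a Lindeberg one-site-at-a-time replacement gives the same estimate: first and second moments match exactly, and each of the $n^d$ swaps costs $n^{-3d/2}\|D^3F\|\sup_x\|\delta_x\|^3$.

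\textbf{Covariance step.} Realize both Gaussians on one probability space, independently, and interpolate with $Z_s:=\sqrt s\,\bar G^n+\sqrt{1-s}\,\xi$. The standard Gaussian interpolation (Itô/heat-equation) identity gives
\[
\E F(\bar G^n)-\E F(\xi)=\tfrac12\int_0^1\E\Bigl[\binn{D^2F(Z_s;x,x),\rho^n(1-\rho^n)}_n-\binn{D^2F(Z_s;x,x),\varrho(1-\varrho)}\Bigr]ds .
\]
I would justify this by truncating to finitely many Fourier modes and passing to the limit, using that the trace of a bounded form against a trace-class covariance is continuous, as in the proof of Lemma~\ref{lem:martingale_problem_modified_OU}. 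The integrand is the difference between the discrete and continuous integrals of $h(x):=D^2F(Z_s;x,x)\varrho(x)(1-\varrho(x))$. By the second estimate of Lemma~\ref{lem:approximation_for_ex} with $k=0$, it is bounded by $C n^{-m_0}\|h\|_{\Cf^{m_0}}$.

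The condition on $M_0$ gives $M_0>d/2+I>d+m_0$. Hence Lemma~\ref{lem:existence_of_smooth_kernel} puts the kernel in $\Cf^{m_0,m_0}$, so the diagonal $x\mapsto D^2F(Z_s;x,x)$ is $\Cf^{m_0}$ with norm $\lesssim\|F\|_{\Cf^3_{\rm der}}$. The product rule then gives
\[
\|h\|_{\Cf^{m_0}}\lesssim\|F\|_{\Cf^3_{\rm der}}\bigl(1+\|\varrho\|^2_{\Cf^{m_0}}\bigr),
\]
which is the second term in the statement.

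The main obstacle is the replacement step, specifically the infinite-dimensional-to-site-coordinate bookkeeping. A careless operator-norm bound loses a factor $n^{d}$, as the computation above shows, so the argument must use the diagonal/single-site structure of $X'-X$ and of $E'$. A secondary point is rigorously justifying the interpolation identity in $\H[-M_0]$.
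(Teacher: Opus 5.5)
Your proposal is correct provided you lead with the Lindeberg replacement rather than the site-coordinate Stein argument. It takes a genuinely different route from the paper.

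\textbf{The paper's route.} The paper makes a single application of Meckes' bound \eqref{eq:berry_essen_bound}. It first projects onto finitely many modes ($M\ge n$) and works in the $H^{-I}$-normalized real Fourier coordinates $X_k=\inn{\zeta,\tilde\varsigma_k''}_n$. It takes $\Sigma$ to be the \emph{continuum} covariance $\inn{\varrho(1-\varrho)\tilde\varsigma_k'',\tilde\varsigma_l''}$.
\begin{itemize}
\item In these coordinates $|X'-X|\lesssim n^{-d/2}$ and $\|D^3f\|_{op}\le\|D^3F\|$, so the crude cubic term already gives $n^{-d/2}$. This is exactly how the paper avoids the loss you detected in site coordinates.
\item The quadrature error between $\inn{\cdot,\cdot}_n$ and $\inn{\cdot,\cdot}$ is absorbed into $E'$ as $E_2^*$.
\item The fluctuation part $E_1^*$ has Hilbert--Schmidt norm $O(n^{-d/2})$ by averaging over sites.
\item The projection is removed by letting $M\to\infty$ and using uniform integrability.
\end{itemize}

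\textbf{Comparison.} Your split (Lindeberg to the discrete Gaussian $\bar G^n$, then Gaussian interpolation to $\xi$) separates the CLT error from the quadrature error and is more elementary. Each Lindeberg swap costs $O(n^{-3d}\cdot n^{3d/2})$ because the expansion point is independent of the swapped site, and you only need $\sup_x\|\delta_x\|_{H^{-M_0}}<\infty$. The interpolation step only uses $C^{m_0}$-regularity of the diagonal $x\mapsto D^2F(g;x,x)$ together with a trace identity. The paper's hypotheses $M_0>\frac d2+2\lceil I/2\rceil$ and $I>\frac d2+m_0$ are tailored to its Hilbert--Schmidt bound on $D^2f$ in weighted Fourier coordinates and to the summability in $E_2^*$. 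The paper's version, in turn, stays within the exchangeable-pair framework of \citep{Gess_Konarovskyi2026} and handles both error sources in one identity.

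\textbf{Corrections to your primary route.} Two things need fixing.
\begin{itemize}
\item In site coordinates $|X'-X|=|\tilde\zeta(U)-\zeta(U)|$, so $\E|X'-X|^3\asymp n^{3d/2}$, not $n^{-d}n^{3d/2}$. The naive cubic term is therefore of order $n^{d}$.
\item More seriously, your treatment of the $E'$ term is wrong as written. In the Stein identity the coefficient of $E'_{xx}$ is $\partial_x^2g(X)$ for the Stein solution $g$. This depends on all of $X$, in particular on $\zeta(x)$. So the summands are neither independent nor centered, and bounding the coefficient by its supremum only gives $O(1)$.
\end{itemize}
The $E'$ term can be repaired. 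Write the sum as $\inn{c(X;\cdot),\bar w}$ with $\bar w=n^{-d}\sum_x\bigl((\eta-\rho)^2-\rho(1-\rho)\bigr)(x)\delta_x$. Then use $|\inn{c,\bar w}|\le\|c\|_{H^{J}}\|\bar w\|_{H^{-J}}$ for some $J>d/2$, together with $\E\|\bar w\|_{H^{-J}}^2\lesssim n^{-d}$. This is in effect the paper's Fourier-coordinate mechanism.

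Likewise, your diagonal cubic estimate requires re-deriving Meckes' bound with directional third derivatives of the Stein solution (via the Mehler formula), not citing it as stated. The Lindeberg swap sidesteps both issues, so present it as the main argument.
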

	
	\begin{proof}
		For simplicity of notation, we omit the superscript $n$ in $\zeta^{n}$,
		$\eta^{n}$, and $\rho^{n}$. Since the version of Stein's method
		used here is finite-dimensional, we first project the random fields
		onto finitely many Fourier modes. Fix an integer $M\ge n$ and set
		\[
		\pr_{M}f=\sum_{k\in\Z^{d}_{M}}\binn{f,\varsigma_{k}}\varsigma_{k},\quad f\in\H[-I].
		\]
		We will estimate 
		\[
		\left|\E F\left(\pr_{M}\bar{\zeta}\right)-\E F\left(\pr_{M}\xi\right)\right|.
		\]
		
		To apply (\ref{eq:berry_essen_bound}) to the Fourier coefficients
		of $\pr_{M}\bar{\zeta}$, we work with a real trigonometric basis
		$\{\tilde{\varsigma}_{k}\}_{k\in\Zd}$, obtained from the complex
		Fourier basis by taking sine and cosine components and chosen to be
		orthonormal in $L^{2}(\Td)$. Then 
		\[
		\pr_{M}\bar{\zeta}=\sum_{k\in\Z^{d}_{M}}\binn{\bar{\zeta},\tilde{\varsigma}_{k}}\tilde{\varsigma}_{k}=\sum_{k\in\Z^{d}_{M}}\binn{\zeta,\tilde{\varsigma}_{k}}_{n}\tilde{\varsigma}_{k}.
		\]
		Introduce 
		\[
		\tilde{\varsigma}_{k}':=\left(1+|k|^{2}\right)^{I/2}\tilde{\varsigma}_{k},\quad\tilde{\varsigma}_{k}'':=\left(1+|k|^{2}\right)^{-I/2}\tilde{\varsigma}_{k},\quad k\in\Zd.
		\]
		Then $\{\tilde{\varsigma}_{k}'\}_{k\in\Zd}$ is an orthonormal basis
		of $\H[-I]$, and 
		\[
		\pr_{M}\bar{\zeta}=\sum_{k\in\Z^{d}_{M}}\binn{\bar{\zeta},\tilde{\varsigma}_{k}''}\tilde{\varsigma}_{k}'=\sum_{k\in\Z^{d}_{M}}\binn{\zeta,\tilde{\varsigma}_{k}''}_{n}\tilde{\varsigma}_{k}'.
		\]
		We identify $\pr_{M}\bar{\zeta}$ with its coordinate vector with
		respect to this basis and define 
		\[
		X_{k}:=\binn{\zeta,\tilde{\varsigma}_{k}''}_{n},\quad k\in\Z^{d}_{M}.
		\]
		Thus, $X=(X_{k})_{k\in\Z^{d}_{M}}$ is a random vector in $\R^{\Z^{d}_{M}}$
		satisfying 
		\[
		|X|^{2}=\bnorm{\pr_{M}\bar{\zeta}}^{2}_{\H[-I]}.
		\]
		
		We now construct an exchangeable pair $(X,X')$. Let $\tilde{\zeta}$
		be an independent copy of $\zeta$, and let $U$ be uniformly distributed
		on $\Tnd$, independently of $\zeta$ and $\tilde{\zeta}$. Define
		\[
		\zeta'=\zeta-\zeta(U)\I_{\{U\}}+\tilde{\zeta}(U)\I_{\{U\}},
		\]
		and 
		\[
		X_{k}':=\binn{\zeta',\tilde{\varsigma}_{k}''}_{n}=X_{k}+\frac{1}{n^{d}}\left(\tilde{\zeta}(U)-\zeta(U)\right)\tilde{\varsigma}_{k}''(U),\quad k\in\ZdM,
		\]
		where $\I_{A}$ denotes the indicator function of $A$.
		
		We next introduce a finite-dimensional test function. Define $f:\R^{\ZdM}\to\R$
		by 
		\[
		f(z):=F(\kappa(z)),\quad\kappa(z):=\sum_{k\in\ZdM}z_{k}\tilde{\varsigma}_{k}',\quad z=(z_{k})_{k\in\ZdM}.
		\]
		Then 
		\[
		F(\pr_{M}\bar{\zeta})=f(X).
		\]
		Moreover, $f\in\Cf^{3}(\R^{\ZdM})$, and, for $k,l\in\ZdM$, 
		\[
		\partial_{z_{k}}f(z)=DF(\kappa(z))[\tilde{\varsigma}_{k}']
		\]
		and 
		\[
		\partial_{z_{k}}\partial_{z_{l}}f(z)=D^{2}F(\kappa(z))[\tilde{\varsigma}_{k}',\tilde{\varsigma}_{l}'].
		\]
		
		Since $M\ge n$, the Fourier coefficients contained in $X$ determine
		the lattice field $\zeta$. In particular, $\zeta$ is $\sigma(X)$-measurable.
		Therefore, for every $k\in\ZdM$, 
		\[
		\E[X_{k}'-X_{k}\mid X]=-\frac{1}{n^{d}}X_{k}.
		\]
		Furthermore, for $k,l\in\ZdM$, 
		\[
		\E\left[(X_{k}'-X_{k})(X_{l}'-X_{l})\mid X\right]=\frac{1}{n^{d}}\binn{\rho(1-\rho)\tilde{\varsigma}_{k}'',\tilde{\varsigma}_{l}''}_{n}+\frac{1}{n^{2d}}\binn{\zeta^{2}\tilde{\varsigma}_{k}'',\tilde{\varsigma}_{l}''}_{n}.
		\]
		
		The covariance matrix $\Sigma=(\Sigma_{k,l})_{k,l\in\ZdM}$ of the
		Gaussian vector 
		\[
		Z:=\left(\binn{\pr_{M}\xi,\tilde{\varsigma}_{k}''}\right)_{k\in\ZdM}=\left(\binn{\xi,\tilde{\varsigma}_{k}''}\right)_{k\in\ZdM}
		\]
		is given by 
		\[
		\Sigma_{k,l}=\binn{\varrho(1-\varrho)\tilde{\varsigma}_{k}'',\tilde{\varsigma}_{l}''}.
		\]
		Consequently, 
		\[
		\E\left[(X_{k}'-X_{k})(X_{l}'-X_{l})\mid X\right]=\frac{2}{n^{d}}\Sigma_{k,l}+\frac{1}{n^{d}}E^{*}_{k,l},
		\]
		where 
		\begin{align*}
			E^{*}_{k,l} & :=\binn{\left((\eta-\rho)^{2}-\rho(1-\rho)\right)\tilde{\varsigma}_{k}'',\tilde{\varsigma}_{l}''}_{n}\\
			& +2\left[\binn{\rho(1-\rho)\tilde{\varsigma}_{k}'',\tilde{\varsigma}_{l}''}_{n}-\binn{\varrho(1-\varrho)\tilde{\varsigma}_{k}'',\tilde{\varsigma}_{l}''}\right]\\
			& =:E^{*}_{1,k,l}+2E^{*}_{2,k,l}.
		\end{align*}
		In particular, $E^{*}$ is $\sigma(X)$-measurable.
		
		Applying (\ref{eq:berry_essen_bound}) with 
		\[
		\Lambda:=\frac{1}{n^{d}}\mathrm{Id},\quad E':=\frac{1}{n^{d}}E^{*},
		\]
		where $\mathrm{Id}$ denotes the identity matrix on $\R^{\ZdM}$,
		we obtain 
		\begin{align}
			\left|\E F(\pr_{M}\bar{\zeta})-\E F(\pr_{M}\xi)\right| & =\left|\E f(X)-\E f(Z)\right|\nonumber \\
			& \le\frac{1}{4}\sup_{z\in\R^{\ZdM}}\|\nabla^{2}f(z)\|_{HS}\E\|E^{*}\|_{HS}\label{eq:berry_essen_bound_application}\\
			&+\frac{n^{d}}{9}\sup_{z\in\R^{\ZdM}}\|\nabla^{3}f(z)\|_{op}\E|X'-X|^{3}.\nonumber 
		\end{align}
		
		We now estimate the three quantities appearing on the right-hand side
		of (\ref{eq:berry_essen_bound_application}): the Hessian term, the
		third-order term, and the covariance error $E^{*}$.
		
		Set $m:=\left\lceil \frac{I}{2}\right\rceil .$ By Lemma~\ref{lem:existence_of_smooth_kernel},
		$D^{2}F(\kappa(z))$ admits a kernel 
		\[
		D^{2}F(\kappa(z);\cdot)\in\Cf^{2m,2m}((\Td)^{2}).
		\]
		Hence, 
		\begin{align*}
			\|\nabla^{2}f(z)\|^{2}_{HS} & =\sum_{k,l\in\ZdM}\left(D^{2}F(\kappa(z))[\tilde{\varsigma}_{k}',\tilde{\varsigma}_{l}']\right)^{2}\\
			& \le\sum_{k,l\in\Zd}(1+|k|^{2})^{I}(1+|l|^{2})^{I}\left[\int_{\Td}\int_{\Td}D^{2}F(\kappa(z);x,y)\tilde{\varsigma}_{k}(x)\tilde{\varsigma}_{l}(y)dxdy\right]^{2}\\
			& \le C_{I}\sum_{k,l\in\Zd}\left[\int_{\Td}\int_{\Td}\bigl((1-\Delta)^{m}\otimes(1-\Delta)^{m}\bigr)D^{2}F(\kappa(z);x,y)\tilde{\varsigma}_{k}(x)\tilde{\varsigma}_{l}(y)dxdy\right]^{2}\\
			& =C_{I}\int_{\Td}\int_{\Td}\left|\bigl((1-\Delta)^{m}\otimes(1-\Delta)^{m}\bigr)D^{2}F(\kappa(z);x,y)\right|^{2}dxdy\\
			& \le C_{I}\|D^{2}F(\kappa(z);\cdot)\|^{2}_{\Cf^{2m,2m}((\Td)^{2})}\\
			& \le C_{M_{0}}\|D^{2}F(\kappa(z))\|^{2}_{\cL_{2}(\H[-M_{0}])}.
		\end{align*}
		
		We next estimate the third-order term in (\ref{eq:berry_essen_bound_application}).
		For $a_{1},a_{2},a_{3}\in\R^{\ZdM}$, 
		\[
		D^{3}f(z)[a_{1},a_{2},a_{3}]=D^{3}F(\kappa(z))[\kappa(a_{1}),\kappa(a_{2}),\kappa(a_{3})].
		\]
		Since $\kappa:\R^{\ZdM}\to\H[-I]$ is an isometry onto its range and
		$\H[-I]\hookrightarrow\H[-M_{0}]$, we have 
		\[
		\|\kappa(a)\|_{\H[-M_{0}]}\le\|\kappa(a)\|_{\H[-I]}=|a|.
		\]
		Consequently, 
		\[
		\|D^{3}f(z)\|_{op}\le\|D^{3}F(\kappa(z))\|_{\cL_{3}(\H[-M_{0}])}.
		\]
		
		Furthermore, 
		\begin{align*}
			\E|X'-X|^{3} & =\frac{1}{n^{3d}}\E\left[|\tilde{\zeta}(U)-\zeta(U)|^{3}\left(\sum_{k\in\ZdM}|\tilde{\varsigma}_{k}''(U)|^{2}\right)^{3/2}\right]\\
			& =\frac{1}{n^{3d}}\E\left[|\tilde{\zeta}(U)-\zeta(U)|^{3}\left(\sum_{k\in\ZdM}\frac{|\tilde{\varsigma}_{k}(U)|^{2}}{(1+|k|^{2})^{I}}\right)^{3/2}\right]\\
			& \le\frac{C_{I,d}}{n^{3d/2}},
		\end{align*}
		where we used $I>d/2$, the uniform boundedness of the trigonometric
		basis, and 
		\[
		|\tilde{\zeta}(U)-\zeta(U)|\le2n^{d/2}.
		\]
		
		It remains to estimate the covariance error $E^{*}=E^{*}_{1}+2E^{*}_{2}$,
		which we decompose into the random fluctuation contribution $E^{*}_{1}$
		and the deterministic discretization error $E^{*}_{2}$. By Jensen's
		inequality, 
		\[
		\E\|E^{*}\|_{HS}\le\left(\E\|E^{*}_{1}\|^{2}_{HS}\right)^{1/2}+2\|E^{*}_{2}\|_{HS}.
		\]
		
		We first estimate $E^{*}_{1}$. Since the random variables 
		\[
		(\eta(x)-\rho(x))^{2}-\rho(x)(1-\rho(x)),\quad x\in\Tnd,
		\]
		are independent, centered, and uniformly bounded, we obtain 
		\begin{align*}
			\E\|E^{*}_{1}\|^{2}_{HS} & =\sum_{k,l\in\ZdM}\E\left[\binn{\left((\eta-\rho)^{2}-\rho(1-\rho)\right)\tilde{\varsigma}_{k}'',\tilde{\varsigma}_{l}''}^{2}_{n}\right]\\
			& =\frac{1}{n^{2d}}\sum_{k,l\in\ZdM}\frac{1}{(1+|k|^{2})^{I}}\frac{1}{(1+|l|^{2})^{I}}\\
			& \qquad\qquad\cdot\E\left[\left(\sum_{x\in\Tnd}\left((\eta(x)-\rho(x))^{2}-\rho(x)(1-\rho(x))\right)\tilde{\varsigma}_{k}(x)\tilde{\varsigma}_{l}(x)\right)^{2}\right]\\
			& =\frac{1}{n^{2d}}\sum_{k,l\in\ZdM}\frac{1}{(1+|k|^{2})^{I}}\frac{1}{(1+|l|^{2})^{I}}\\
			& \qquad\qquad\cdot\sum_{x\in\Tnd}\E\left[\left((\eta(x)-\rho(x))^{2}-\rho(x)(1-\rho(x))\right)^{2}\tilde{\varsigma}^{2}_{k}(x)\tilde{\varsigma}^{2}_{l}(x)\right]\\
			& \le\frac{C}{n^{d}}\sum_{k,l\in\Zd}\frac{1}{(1+|k|^{2})^{I}}\frac{1}{(1+|l|^{2})^{I}}\\
			& \le\frac{C_{I,d}}{n^{d}},
		\end{align*}
		where in the last inequality we used $I>d/2$.
		
		We next estimate $E^{*}_{2}$. By Lemma~\ref{lem:approximation_for_ex},
		\begin{align*}
			& \left|\binn{\rho(1-\rho)\tilde{\varsigma}_{k},\tilde{\varsigma}_{l}}_{n}-\binn{\varrho(1-\varrho)\tilde{\varsigma}_{k},\tilde{\varsigma}_{l}}\right|\\
			& \qquad=\left|\binn{\varrho(1-\varrho)\tilde{\varsigma}_{k}\tilde{\varsigma}_{l},\varsigma_{0}}_{n}-\binn{\varrho(1-\varrho)\tilde{\varsigma}_{k}\tilde{\varsigma}_{l},\varsigma_{0}}\right|\\
			& \qquad\le\frac{C_{m_{0},d}}{n^{m_{0}}}\left\Vert \varrho(1-\varrho)\tilde{\varsigma}_{k}\tilde{\varsigma}_{l}\right\Vert _{\Cf^{m_{0}}(\Td)}\\
			& \qquad\le\frac{C_{m_{0},d}}{n^{m_{0}}}(1+|k|^{m_{0}})(1+|l|^{m_{0}})\left(1+\|\varrho\|^{2}_{\Cf^{m_{0}}(\Td)}\right).
		\end{align*}
		Consequently, since $I>\frac{d}{2}+m_{0}$, 
		\begin{align*}
			\|E^{*}_{2}\|^{2}_{HS} & \le\frac{C_{m_{0},d}}{n^{2m_{0}}}\sum_{k,l\in\ZdM}\frac{(1+|k|^{m_{0}})^{2}(1+|l|^{m_{0}})^{2}}{(1+|k|^{2})^{I}(1+|l|^{2})^{I}}\left(1+\|\varrho\|^{2}_{\Cf^{m_{0}}(\Td)}\right)^{2}\\
			& \le\frac{C_{I,m_{0},d}}{n^{2m_{0}}}\left(1+\|\varrho\|^{2}_{\Cf^{m_{0}}(\Td)}\right)^{2}.
		\end{align*}
		
		Combining the estimates for $\nabla^{2}f$, $\nabla^{3}f$, $X'-X$,
		$E^{*}_{1}$, and $E^{*}_{2}$, we obtain 
		\begin{align*}
			& \left|\E F\left(\pr_{M}\bar{\zeta}\right)-\E F\left(\pr_{M}\xi\right)\right|\\
			& \quad\le C_{M_{0}}\sup_{g\in\H[-M_{0}]}\|D^{2}F(g)\|_{\cL_{2}(\H[-M_{0}])}\left[\frac{1}{n^{d/2}}+\frac{1}{n^{m_{0}}}\left(1+\|\varrho\|^{2}_{\Cf^{m_{0}}(\Td)}\right)\right]\\
			& \quad+\frac{C_{M_{0}}}{n^{d/2}}\sup_{g\in\H[-M_{0}]}\|D^{3}F(g)\|_{\cL_{3}(\H[-M_{0}])}.
		\end{align*}
		The right-hand side is independent of $M$. Moreover, 
		\[
		\pr_{M}\bar{\zeta}\to\bar{\zeta}\quad\text{and}\quad\pr_{M}\xi\to\xi\quad\text{in }\H[-I]
		\]
		as $M\to\infty$. Since $M_{0}>I$, the embedding $\H[-I]\hookrightarrow\H[-M_{0}]$
		is continuous, and hence the same convergences hold in $\H[-M_{0}]$.
		
		Since $DF$ is bounded, $F$ has at most linear growth on $\H[-M_{0}]$.
		Moreover, the families 
		\[
		\left\{ \|\pr_{M}\bar{\zeta}\|_{\H[-M_{0}]}\right\} _{M\ge n}\quad\text{and}\quad\left\{ \|\pr_{M}\xi\|_{\H[-M_{0}]}\right\} _{M\ge n}
		\]
		are bounded in $L^{2}(\Omega)$. Consequently, $\{F(\pr_{M}\bar{\zeta})\}_{M\ge n}$
		and $\{F(\pr_{M}\xi)\}_{M\ge n}$ are uniformly integrable. Letting
		$M\to\infty$ therefore yields 
		\[
		\left|\E F(\bar{\zeta})-\E F(\xi)\right|\le C_{M_{0},m_{0}}\|F\|_{\Cf^{3}_{\text{der}}(\H[-M_{0}])}\left[\frac{1}{n^{d/2}}+\frac{1}{n^{m_{0}}}\left(1+\|\varrho\|^{2}_{\Cf^{m_{0}}(\Td)}\right)\right],
		\]
		which proves the claim.
	\end{proof}

	\subsection{Proof of Theorem \ref{thm:main_result}}
	
	We split the proof of Theorem \ref{thm:main_result} into two parts.
	First, using the backward Kolmogorov equation and a path-dependent
	Trotter-type argument, we prove (\ref{eq:main_estimate_with_correlaton_term}),
	which compares the SSEP fluctuation process with its Gaussian approximation
	up to a correlation correction term. The second part estimates this
	correction term and thereby yields (\ref{eq:main_estimate_without_correlation}).
	
	\subsubsection{Proof of estimate (\ref{eq:main_estimate_with_correlaton_term})}
	
	We now turn to the proof of Theorem~\ref{thm:main_result}. Throughout
	this section, we fix $T>0$ and the parameters appearing in the statement
	of the theorem. Let $\rhoini\in\Cf^{M_{2}}(\Td;[0,1])$ be the initial
	density profile. For each $n\ge3$, let $(\eta^{n}_{t})_{t\ge0}$
	be the SSEP on $\Tnd$ such that $\eta^{n}_{0}(x)$, $x\in\Tnd$,
	are independent Bernoulli random variables with parameters $\rhoini(x)$.
	We set 
	\[
	\rho^{n}_{t}:=\E\eta^{n}_{t},\quad t\ge0,
	\]
	so that $(\rho^{n}_{t})_{t\ge0}$ solves the discrete heat equation
	(\ref{eq:discrete_heat_equation}) with initial condition 
	\[
	\rho^{n}_{0}=\rhoini|_{\Tnd}.
	\]
	The corresponding fluctuation field is 
	\[
	\zeta^{n}_{t}:=n^{d/2}\left(\eta^{n}_{t}-\rho^{n}_{t}\right),\quad t\ge0,
	\]
	and we denote by $\bar{\zeta}^{n}_{t}$ its canonical embedding into
	$\H[-I]$ via the empirical distribution.
	
	We further recall that $(\varrho^{n}_{t})_{t\ge0}$ is the solution
	to the modified heat equation (\ref{eq:modified_heat_equation-1}).
	In particular, $(\varrho^{n}_{t})_{t\ge0}$ provides a higher-order
	approximation of the smooth interpolation $(\ex_{n}\rho^{n}_{t})_{t\ge0}$.
	More precisely, by Lemma~\ref{lem:bound_for_varrho_and_rho}, there
	exists a constant $C_{m_{\Delta},\ell,T}>0$ such that 
	\begin{equation}
		\sup_{t\in[0,T]}\bnorm{\ex_{n}\rho^{n}_{t}-\varrho^{n}_{t}}_{\Cf^{\ell}(\Td)}\le\frac{C_{m_{\Delta},\ell,T}}{n^{2m_{\Delta}}}\bnorm{\rhoini}_{\Cf^{\ell+d+2m_{\Delta}+3}(\Td)}\label{eq:estimate_of_difference_of_ex_nrho^n_and_varrho_n}
	\end{equation}
	for all $n\ge3$.
	
	Let $(\xi^{n}_{t})_{t\ge0}$ be the Gaussian process defined by (\ref{eq:SPDE_for_OU_process_with_Gamma-1}),
	with coefficients determined by $(\varrho^{n}_{t})_{t\ge0}$. Its
	initial condition $\xiini$ is a centered Gaussian random variable
	in $\H[-I]$ with covariance 
	\[
	\E\left[\binn{\xiini,\varphi}\binn{\xiini,\psi}\right]=\binn{\rhoini(1-\rhoini)\varphi,\psi},\quad\varphi,\psi\in\H[I].
	\]
	
	Fix 
	\[
	F\in\cP^{3}\left(D([0,T],\H[-M_{1}])\right).
	\]
	For $(t,\gamma)\in\Upsilon([0,T],\H[-M_{1}])$, define 
	\[
	U^{n}_{t}(\gamma):=\E F\left(\gamma^{[0,t)}+\left(\xi^{n}_{t,\cdot}(\gamma_{t})\right)^{[t,T]}\right),
	\]
	where $(\xi^{n}_{t,s}(g))_{s\ge t}$ denotes the solution to (\ref{eq:SPDE_for_OU_process_with_Gamma-1})
	started from $g$ at time $t$.
	
	By Lemma \ref{lem:path_regulariy_of_U_r}, 
	\[
	U^{n}\in\cP^{3}\left(\Upsilon([0,T],\H[-M_{1}])\right).
	\]
	and
	\[
	\|U^{n}\|_{\cP^{3}(\Upsilon([0,T],\H[-M_{1}]))}\le\|F\|_{\cP^{3}(D([0,T],\H[-M_{1}]))}.
	\]
	
	Since $M_{1}-2m_{\Delta}\ge I>\frac{d}{2}+m_{\partial}$, Theorem
	\ref{thm:backward_kolmogorov_equation}, applied with $I=M_{1}-2m_{\Delta}$,
	shows that the restriction of $U^{n}$ to 
	\[
	\Upsilon([0,T],\H[-M_{1}+2m_{\Delta}])
	\]
	is horizontally absolutely continuous and satisfies the backward Kolmogorov
	equation 
	\begin{equation}
		\cH U^{n}+\cG^{\Phi^{n}}U^{n}=0.\label{eq:kolmogorov_equation}
	\end{equation}
	Here $\Phi^{n}$ is defined by (\ref{eq:definition_of_Phi_n}) with
	$\varrho_{t}=\varrho^{n}_{t}$, $t\in[0,T]$, so that 
	\[
	\cG^{\Phi^{n}}_{t}=\cG^{n}_{\varrho^{n}_{t}}.
	\]
	
	Our next goal is to combine the path-dependent martingale problem
	from Proposition~\ref{prop:matingale_proplem_for_nonanticipative_functional}
	with the backward Kolmogorov equation to derive a path-dependent Trotter
	formula.
	
	Choose $\delta>0$ such that 
	\begin{equation}
		M_{1}-\delta>\frac{d}{2}+m_{\partial}
		+\max\{2m_{\Delta},d+1\}.\label{eq:choise_of_delta}
	\end{equation}
	By Lemma~\ref{lem:differentiability_of_U}, applied with $I=M_{1}-\delta$,
	we have 
	\[
	U^{n}\in\Cf^{2}_{l}\left(\Upsilon([0,T],\H[-M_{1}+\delta])\right).
	\]
	Hence, Lemma~\ref{lem:regularity_for_G_Phi}, applied with $I=M_{1}-2m_{\Delta}-\delta$,
	shows that 
	\[
	\cG^{\Phi^{n}}U^{n}=-\cH U^{n}
	\]
	is left-continuous and boundedness preserving on $\Upsilon([0,T],\H[-M_{1}+2m_{\Delta}+\delta]).$
	
	For simplicity of notation, we write $\bar{\rho}^{n,[0,t]}$, $\bar{\eta}^{n,[0,t]}$,
	and $\bar{\zeta}^{n,[0,t]}$ instead of $(\bar{\rho}^{n})^{[0,t]}$,
	$(\bar{\eta}^{n})^{[0,t]}$, and $(\bar{\zeta}^{n})^{[0,t]}$, respectively.
	For 
	\[
	(t,\gamma)\in\Upsilon([0,T],\H[-M_{1}+2m_{\Delta}+\delta]),
	\]
	define 
	\[
	\widetilde{U}^{n}_{t}(\gamma):=U^{n}_{t}\left(n^{d/2}\left(\gamma^{[0,t]}-\bar{\rho}^{n,[0,t]}\right)\right).
	\]
	We verify that $\widetilde{U}^{n}$ satisfies the assumptions of Proposition~\ref{prop:matingale_proplem_for_nonanticipative_functional}.
	
	Since 
	\[
	\bar{\rho}^{n}_{t}=\bar{\rho}^{n}_{0}+\frac{1}{2}\int^{t}_{0}\overline{\Delta_{n}\rho^{n}_{s}}\,ds,
	\]
	and $s\mapsto\overline{\Delta_{n}\rho^{n}_{s}}$ is bounded and Bochner
	integrable, Lemma~ \ref{lem:absolute_continuity_for_translation_of_path}
	shows that $\widetilde{U}^{n}$ is horizontally absolutely continuous,
	with weak horizontal derivative 
	\begin{align}
		\cH_{t}\widetilde{U}^{n}(\gamma) & =\cH_{t}U^{n}\left(n^{d/2}\left(\gamma^{[0,t]}-\bar{\rho}^{n,[0,t]}\right)\right)\nonumber\\
		&-\frac{n^{d/2}}{2}\nabla U^{n}_{t}\left(n^{d/2}\left(\gamma^{[0,t]}-\bar{\rho}^{n,[0,t]}\right)\right)\left[\overline{\Delta_{n}\rho^{n}_{t}}\right]\label{eq:horizontal_derivative_U_tilde} \\
		& =\cH_{t}U^{n}\left(n^{d/2}\left(\gamma^{[0,t]}-\bar{\rho}^{n,[0,t]}\right)\right)\nonumber\\
		&-\frac{n^{d/2}}{2}\binn{\Delta_{n}\nabla U^{n}_{t}\left(n^{d/2}\left(\gamma^{[0,t]}-\bar{\rho}^{n,[0,t]}\right);\cdot\right),\rho^{n}_{t}}_{n}.\nonumber
	\end{align}
	Moreover, by the vertical chain rule, 
	\[
	\nabla\widetilde{U}^{n}_{t}(\gamma)=n^{d/2}\nabla U^{n}_{t}\left(n^{d/2}\left(\gamma^{[0,t]}-\bar{\rho}^{n,[0,t]}\right)\right),
	\]
	and hence $\nabla\widetilde{U}^{n}$ is bounded. Since the state space
	of the SSEP is finite, the boundedness of $\nabla\widetilde{U}^{n}$
	and the mean value theorem imply 
	\[
	\sup_{(t,\gamma)}\int\left|\widetilde{U}^{n}_{t}(\gamma^{[0,t)},y)-\widetilde{U}^{n}_{t}(\gamma)\right|\mu(\gamma_{t},dy)<\infty.
	\]
	Thus, condition~(\ref{eq:extra_asusmption_in_martingale_problem})
	is satisfied.
	
	Applying Proposition~ \ref{prop:matingale_proplem_for_nonanticipative_functional}
	to the embedded SSEP $(\bar{\eta}^{n}_{t})_{t\ge0}$ and the functional
	$\widetilde{U}^{n}$, we obtain the local martingale 
	\begin{align*}
		M^{n}_{t}&:= \widetilde{U}^{n}_{t}(\bar{\eta}^{n,[0,t]})-\widetilde{U}^{n}_{0}(\bar{\eta}^{n}_{0})\\
		&-\int^{t}_{0}\left[\cH_{s}\widetilde{U}^{n}(\bar{\eta}^{n,[0,s]})+\cL^{0,n}\widetilde{U}^{n}_{s}(\bar{\eta}^{n,[0,s)};\bar{\cdot})(\eta^{n}_{s})\right]ds,\quad t\in[0,T].
	\end{align*}
	Using~(\ref{eq:horizontal_derivative_U_tilde}), (\ref{eq:definition_of_cL_rho}),
	and (\ref{eq:extension_of_cC_and_cL}), we obtain 
	\begin{align*}
		M^{n}_{t} &= U^{n}_{t}(\bar{\zeta}^{n,[0,t]})-U^{n}_{0}(\bar{\zeta}^{n}_{0})\\
		&-\int^{t}_{0}\left[\cH_{s}U^{n}(\bar{\zeta}^{n,[0,s]})+\cL^{n}_{\rho^{n}_{s}}U^{n}_{s}(\bar{\zeta}^{n,[0,s]})\right]ds,\quad t\in[0,T].
	\end{align*}
	Finally, the backward Kolmogorov equation~(\ref{eq:kolmogorov_equation})
	and the identity $\cG^{\Phi^{n}}_{s}=\cG^{n}_{\varrho^{n}_{s}}$ yield
	\begin{align}
		M^{n}_{t}= & U^{n}_{t}(\bar{\zeta}^{n,[0,t]})-U^{n}_{0}(\bar{\zeta}^{n}_{0})-\int^{t}_{0}\left(\cL^{n}_{\rho^{n}_{s}}-\cG^{n}_{\varrho^{n}_{s}}\right)U^{n}_{s}(\bar{\zeta}^{n,[0,s]})\,ds,\quad t\in[0,T].\label{eq:martingale_U_main}
	\end{align}

	For each fixed $n$, set
	\[
	K_{n}:=
	\left\{
	n^{d/2}\left(\bar{\eta}-\bar{\rho}^{n}_{t}\right):
	\eta\in\{0,1\}^{\Tnd},\ t\in[0,T]
	\right\}.
	\]
	Since the configuration space is finite and
	$t\mapsto\bar{\rho}^{n}_{t}$ is continuous, $K_{n}$ is compact in
	$\H[-M_{1}+2m_{\Delta}+\delta]$. Moreover,
	$\bar{\zeta}^{n}_{t}\in K_{n}$ for every $t\in[0,T]$.
	Therefore, the boundedness-preserving property of $U^{n}$,
	Lemma~\ref{lem:regularity_for_G_Phi}, and
	(\ref{eq:martingale_U_main}) imply that $M^{n}$ is bounded on
	$[0,T]$. Consequently, $(M^{n}_{t})_{t\in[0,T]}$ is a martingale.
	Taking expectations in (\ref{eq:martingale_U_main}) at time $T$,
	we obtain 
	\begin{align}
		\E U^{n}_{T}(\bar{\zeta}^{n,[0,T]})-\E U^{n}_{0}(\bar{\zeta}^{n}_{0})=\E\int^{T}_{0}\left(\cL^{n}_{\rho^{n}_{s}}-\cG^{n}_{\varrho^{n}_{s}}\right)U^{n}_{s}(\bar{\zeta}^{n,[0,s]})\,ds.\label{eq:trotter_preliminary}
	\end{align}
	
	By the definition of $U^{n}$, 
	\[
	U^{n}_{T}(\gamma)=F(\gamma),\quad\gamma\in D([0,T],\H[-M_{1}]),
	\]
	whereas 
	\[
	\E U^{n}_{0}(\xiini)=\E F(\xi^{n,[0,T]}).
	\]
	Therefore, adding and subtracting $\E U^{n}_{0}(\xiini)$ in (\ref{eq:trotter_preliminary}),
	we obtain 
	\begin{align}
		\E F(\bar{\zeta}^{n,[0,T]})-\E F(\xi^{n,[0,T]})&=\E\left[U^{n}_{0}(\bar{\zeta}^{n}_{0})-U^{n}_{0}(\xiini)\right]\nonumber\\
		&+\E\int^{T}_{0}\left(\cL^{n}_{\rho^{n}_{s}}-\cG^{n}_{\varrho^{n}_{s}}\right)U^{n}_{s}(\bar{\zeta}^{n,[0,s]})\,ds.\label{eq:path_dependent_trotter_formula}
	\end{align}
	
	Since 
	\[
	U^{n}_{0}\in\Cf^{3}_{\text{der}}(\H[-M_{1}])
	\]
	and 
	\[
	\norm{U^{n}_{0}}_{\Cf^{3}_{\text{der}}(\H[-M_{1}])}\le\norm F_{\cP^{3}(D([0,T],\H[-M_{1}]))},
	\]
	Proposition~\ref{prop:berry_essen_bound}, applied with $M_{0}=M_{1}$
	and $m_{0}=m_{\partial}$, yields 
	\begin{align*}
		\left|\E U^{n}_{0}(\bar{\zeta}^{n}_{0})-\E U^{n}_{0}(\xiini)\right| & \le C_{M_{1},m_{\partial}}\norm F_{\cP^{3}(D([0,T],\H[-M_{1}]))}\left[\frac{1}{n^{d/2}}+\frac{1}{n^{m_{\partial}}}\left(1+\norm{\rhoini}^{2}_{\Cf^{M_{2}}(\Td)}\right)\right].
	\end{align*}
	Applying Corollary~\ref{cor:pathwise_comparison_of_generators}
	with $m_{W}=d+1$, we obtain
	\[
	\sup_{s\in[0,T]}\left|\E\left[\left(\cL^{n}_{\rho^{n}_{s}}-\cG^{n}_{\varrho^{n}_{s}}+\cC^{n}\right)U^{n}_{s}(\bar{\zeta}^{n,[0,s]})\right]\right|\le C_{M_{1}}\sup_{s\in[0,T]}\E[E_{n}(s)]\norm F_{\cP^{3}(D([0,T],\H[-M_{1}]))}.
	\]
	Combining this estimate with (\ref{eq:path_dependent_trotter_formula}),
	we obtain 
	\begin{align}
		& \left|\E F(\bar{\zeta}^{n,[0,T]})-\E F(\xi^{n,[0,T]})+\int^{T}_{0}\E\cC^{n}U^{n}_{s}(\bar{\zeta}^{n,[0,s]})\,ds\right|\nonumber \\
		& \qquad\le C_{M_{1},m_{\partial},T}\left[\frac{1}{n^{d/2}}+\frac{1}{n^{m_{\partial}}}\left(1+\norm{\rhoini}^{2}_{\Cf^{M_{2}}(\Td)}\right)+\sup_{s\in[0,T]}\E[E_{n}(s)]\right]\label{eq:trotter_estimate}\\
		&\qquad\qquad\cdot\norm F_{\cP^{3}(D([0,T],\H[-M_{1}]))}.\nonumber
	\end{align}
	
	It remains to estimate $\sup_{t\in[0,T]}\E[E_{n}(t)].$
	By \citep[Lemma~2.5]{Gess_Konarovskyi2026}, for every $t\in[0,T]$,
	\begin{align*}
		\E\left[\bnorm{\bar{\zeta}^{n}_{t}}^{2}_{-I}\right] & \le C_{I}\E\left[\bnorm{\ex_{n}\zeta^{n}_{t}}^{2}_{\H[-I]}\right]\\
		& \le C_{I}(1+t)\left(1+\bnorm{\nabla_{n}\rho^{n}_{0}}^{2}_{\Cf(\Tnd)}\right)\\
		& \le C_{I}(1+t)\left(1+\bnorm{\rhoini}^{2}_{\Cf^{1}(\Td)}\right).
	\end{align*}
	Moreover, Corollary~\ref{cor:regularity_of_modified_semigroup} implies
	that, for every $\ell\in\N_{0}$, 
	\begin{equation}
		\norm{\varrho^{n}_{t}}_{\Cf^{\ell}(\Td)}\le C_{\ell}\norm{\rhoini}_{\Cf^{\ell+1+\lceil d/2\rceil}(\Td)},\quad t\in[0,T].\label{eq:estimate_for_varrho_n}
	\end{equation}
	By Lemma~\ref{lem:bound_for_varrho_and_rho}, 
	\begin{equation}
		\bnorm{(\varrho^{n}_{t})^{-}}_{\Cf(\Td)}+\bnorm{(1-\varrho^{n}_{t})^{-}}_{\Cf(\Td)}\le\frac{C_{m_{\Delta},T}}{n^{2m_{\Delta}}}\bnorm{\rhoini}_{\Cf^{2m_{\Delta}+d+3}(\Td)}.\label{eq:negative_part_varrho_n}
	\end{equation}
	Furthermore, combining (\ref{eq:estimate_for_varrho_n}) with (\ref{eq:estimate_of_difference_of_ex_nrho^n_and_varrho_n}),
	we obtain 
	\begin{equation}
		\bnorm{\ex_{n}\rho^{n}_{t}}_{\Cf^{\ell}(\Td)}\le C_{m_{\Delta},\ell,T}\bnorm{\rhoini}_{\Cf^{\ell+d+2m_{\Delta}+3}(\Td)},\quad t\in[0,T].\label{eq:estimate_for_ex_n_rho_n}
	\end{equation}
	
	Using these estimates in the definition of $E_{n}$, we obtain, uniformly
	in $t\in[0,T]$, 
	\begin{align*}
		\E E^{n}_{\Delta}(t) & \le\frac{C_{I,T}}{n^{2m_{\Delta}}}\left(1+\bnorm{\rhoini}_{\Cf^{1}(\Td)}\right),\\
		\E E^{n}_{\mathrm{fluc}}(t) & \le\frac{C_{I,T}}{n^{d/2}}\left(1+\bnorm{\rhoini}^{2}_{\Cf^{\lceil I\rceil+d+2m_{\Delta}+3}(\Td)}\right),\\
		E^{n}_{\mathrm{prof}}(t) & \le\frac{C_{m_{\Delta},T}}{n^{2m_{\Delta}}}\left(1+\bnorm{\rhoini}^{2}_{\Cf^{d+2m_{\Delta}+3}(\Td)}\right),\\
		E^{n}_{\mathrm{disc}}(t)&\le\frac{C}{n^{d+1}}
		\left(
		1+\bnorm{\rhoini}^{2}_{\Cf^{d+2+\lceil d/2\rceil}(\Td)}
		\right),\\
		E^{n}_{+}(t) & \le\frac{C_{m_{\Delta},T}}{n^{2m_{\Delta}}}\left(1+\bnorm{\rhoini}^{2}_{\Cf^{2m_{\Delta}+d+3}(\Td)}\right),\\
		E^{n}_{\tau}(t) & \le\frac{C}{n^{m_{\tau}+1}}\left(1+\bnorm{\rhoini}^{2}_{\Cf^{m_{\tau}+2+\lceil d/2\rceil}(\Td)}\right).
	\end{align*}
	Finally, 
	\[
	E^{n}_{\partial}(t)=\frac{1}{n^{m_{\partial}}},\qquad R^{n}=\frac{1}{n^{d/2+1}}.
	\]
	
	Consequently, by the choice of $M_{2}$, 
	\[
	\sup_{t\in[0,T]}\E[E_{n}(t)]\le C\left(\frac{1}{n^{d/2}}+\frac{1}{n^{2m_{\Delta}}}+\frac{1}{n^{m_{\partial}}}+\frac{1}{n^{m_{\tau}+1}}\right)\left(1+\norm{\rhoini}^{2}_{\Cf^{M_{2}}(\Td)}\right).
	\]
	Setting 
	\[
	\alpha:=\min\left\{ \frac{d}{2},\,2m_{\Delta},\,m_{\partial},\,m_{\tau}+1\right\} ,
	\]
	we conclude that 
	\[
	\sup_{t\in[0,T]}\E[E_{n}(t)]\le\frac{C}{n^{\alpha}}\left(1+\norm{\rhoini}^{2}_{\Cf^{M_{2}}(\Td)}\right).
	\]
	
	Combining this estimate with~(\ref{eq:trotter_estimate}) yields
	\begin{align*}
		&\left|\E F(\bar{\zeta}^{n,[0,T]})-\E F(\xi^{n,[0,T]})+\int^{T}_{0}\E\cC^{n}U^{n}_{s}(\bar{\zeta}^{n,[0,s]})\,ds\right|\\
		&\qquad\le\frac{C}{n^{\alpha}}\left(1+\norm{\rhoini}^{2}_{\Cf^{M_{2}}(\Td)}\right)\norm F_{\cP^{3}(D([0,T],\H[-M_{1}]))},
	\end{align*}
	which proves the claim.
	
	\subsubsection{Estimate of correlation correction term}
	
	We now estimate the correlation correction term 
	\[
	\int^{T}_{0}\E\cC^{n}U^{n}_{t}(\bar{\zeta}^{n,[0,t]})\,dt=\frac{1}{2n^{d}}\sum^{d}_{j=1}\int^{T}_{0}\E\binn{\left(\partial^{(m_{\partial})}_{n,j}\right)^{\odot2}\nabla^{2}U^{n}_{t}\left(\bar{\zeta}^{n,[0,t]};\cdot\right),\zeta^{n}_{t}\tau_{n,j}\zeta^{n}_{t}}_{n}\,dt.
	\]
	For $t\in[0,T]$ and $j\in[d]$, set 
	\[
	Z_{n,j}(t):=\left(\partial^{(m_{\partial})}_{n,j}\right)^{\odot2}\nabla^{2}U^{n}_{t}\left(\bar{\zeta}^{n,[0,t]};\cdot\right).
	\]
	Applying Corollary~\ref{cor:existence_of_regular_kernel} with $k=2$,
	$m_{\text{Cor.}}=d+1+m_\partial$, and $I_{\text{Cor.}}=M_{1}-\delta$, where $\delta>0$ is chosen as in~(\ref{eq:choise_of_delta}),
	we obtain that 
	\[
	Z_{n,j}(t)\in\Cf^{d+1}(\Td)
	\]
	almost surely and 
	\[
	\bnorm{Z_{n,j}(t)}_{\Cf^{d+1}(\Td)}\le C_{M_{1}}\bnorm{U^{n}}_{\Cf^{2}_{l}(\Upsilon([0,T],\H[-M_{1}+\delta]))}\le C_{M_{1}}\bnorm F_{\cP^{3}(D([0,T],\H[-M_{1}]))}
	\]
	for every $t\in[0,T]$, almost surely.
	
	Next, let 
	\[
	I_{1}:=\frac{d}{2}+1>\frac{d}{2}.
	\]
	Following the proof of \citep[Lemma 2.6]{Gess_Konarovskyi2026}, we
	obtain 
	\begin{align}
		& \left|\frac{1}{n^{d}}\E\binn{Z_{n,j}(t),\zeta^{n}_{t}\tau_{n,j}\zeta^{n}_{t}}_{n}\right|^{2}\nonumber \\
		& \quad=\frac{1}{n^{2d}}\left|\sum_{k\in\Znd}\E\left[\binn{Z_{n,j}(t),\varsigma_{k}}_{n}\binn{\varsigma_{k},\zeta^{n}_{t}\tau_{n,j}\zeta^{n}_{t}}_{n}\right]\right|^{2}\nonumber \\
		& \quad\le\frac{1}{n^{2d}}\sum_{k\in\Znd}(1+|k|^{2})^{I_{1}}\E\left[\left|\binn{Z_{n,j}(t),\varsigma_{k}}_{n}\right|^{2}\right]\label{eq:estimate_for_correlation_correction}\\
		& \qquad\qquad\cdot\sum_{k\in\Znd}(1+|k|^{2})^{-I_{1}}\E\left[\left|\binn{\varsigma_{k},\zeta^{n}_{t}\tau_{n,j}\zeta^{n}_{t}}_{n}\right|^{2}\right].\nonumber 
	\end{align}
	
	Using 
	\[
	\binn{Z_{n,j}(t),\varsigma_{k}}_{n}=\binn{\ex_{n}Z_{n,j}(t),\varsigma_{k}},\quad k\in\Znd,
	\]
	and Corollary~\ref{cor:bound_of_ex}, we obtain 
	\begin{align*}
		& \sum_{k\in\Znd}(1+|k|^{2})^{I_{1}}\E\left[\left|\binn{Z_{n,j}(t),\varsigma_{k}}_{n}\right|^{2}\right]\\
		& \qquad\le\E\left[\bnorm{\ex_{n}Z_{n,j}(t)}^{2}_{\H[I_{1}]}\right]\le C_{I_{1}}\E\left[\bnorm{Z_{n,j}(t)}^{2}_{\Cf^{d+1}(\Td)}\right]\\
		& \qquad\le C_{M_{1}}\bnorm F^{2}_{\cP^{3}(D([0,T],\H[-M_{1}]))},
	\end{align*}
	
	Moreover, since $I_{1}>d/2$, 
	\[
	\sum_{k\in\Znd}(1+|k|^{2})^{-I_{1}}\le C_{I_{1}},
	\]
	uniformly in $n$. Hence 
	\[
	\frac{1}{n^{2d}}\sum_{k\in\Znd}(1+|k|^{2})^{-I_{1}}\E\left[\left|\binn{\varsigma_{k},\zeta^{n}_{t}\tau_{n,j}\zeta^{n}_{t}}_{n}\right|^{2}\right]\le\frac{C_{I_{1}}}{n^{2d}}\max_{k\in\Znd}\E\left[\left|\binn{\varsigma_{k},\zeta^{n}_{t}\tau_{n,j}\zeta^{n}_{t}}_{n}\right|^{2}\right].
	\]
	
	We next estimate, for $k\in\Znd$, 
	\begin{align*}
		& \frac{1}{n^{2d}}\E\left[\left|\binn{\varsigma_{k},\zeta^{n}_{t}\tau_{n,j}\zeta^{n}_{t}}_{n}\right|^{2}\right]\\
		& \quad=\frac{1}{n^{2d}}\E\left[\binn{\varsigma_{k},\zeta^{n}_{t}\tau_{n,j}\zeta^{n}_{t}}_{n}\binn{\zeta^{n}_{t}\tau_{n,j}\zeta^{n}_{t},\varsigma_{k}}_{n}\right]\\
		& \quad=\frac{1}{n^{2d}}\sum_{x,y\in\Tnd}\varsigma_{k}(x)\varsigma_{-k}(y)\E\Big[\left(\eta^{n}_{t}(x)-\rho^{n}_{t}(x)\right)\left(\eta^{n}_{t}(x+e^{n}_{j})-\rho^{n}_{t}(x+e^{n}_{j})\right)\\
		& \hspace{4cm}\cdot\left(\eta^{n}_{t}(y)-\rho^{n}_{t}(y)\right)\left(\eta^{n}_{t}(y+e^{n}_{j})-\rho^{n}_{t}(y+e^{n}_{j})\right)\Big].
	\end{align*}
	
	By \citep[Theorem 2.1]{Ferrari_Presutti:1991}\footnote{The proof of the corresponding estimate on $\mathbb Z^d$ carries over to the discrete torus, with constants uniform in $n$ on bounded time intervals.} and \citep[Theorem 6.1]{Jara_Landim:2008},
	for $d=1$, and by \citep[Lemma 3.1]{Xu2025} for $d\ge2$, the
	expectation in the last display is bounded, whenever $x,x+e^{n}_{j},y,y+e^{n}_{j}$
	are pairwise distinct, by 
	\[
	\eps^{(c)}_{n}:=C_{T}\left(1+\bnorm{\rhoini}^{4}_{\Cf^{1}(\Td)}\right)\begin{cases}
		n^{-2}, & d=1,\\[1mm]
		n^{-3}\ln^{3}(1+n^{2}T), & d=2,\\[1mm]
		n^{-4}, & d\ge3.
	\end{cases}
	\]
	
	On the other hand, the number of pairs $(x,y)\in(\Tnd)^{2}$ for which
	$x,x+e^{n}_{j},y,y+e^{n}_{j}$ are not pairwise distinct is bounded
	by $Cn^{d}$. Since 
	\[
	|\eta^{n}_{t}(x)-\rho^{n}_{t}(x)|\le1,
	\]
	we therefore obtain 
	\[
	\frac{1}{n^{2d}}\E\left[\left|\binn{\varsigma_{k},\zeta^{n}_{t}\tau_{n,j}\zeta^{n}_{t}}_{n}\right|^{2}\right]\le\eps^{(c)}_{n}+\frac{C}{n^{d}}\le\frac{C_{T}}{n^{d\wedge4}}\left(1+\bnorm{\rhoini}^{4}_{\Cf^{1}(\Td)}\right).
	\]
	
	Combining the preceding estimates with (\ref{eq:estimate_for_correlation_correction})
	and taking square roots, we obtain 
	\[
	\sup_{t\in[0,T]}\left|\frac{1}{n^{d}}\E\binn{Z_{n,j}(t),\zeta^{n}_{t}\tau_{n,j}\zeta^{n}_{t}}_{n}\right|\le\frac{C_{T,M_{1}}}{n^{\frac{d}{2}\wedge2}}\left(1+\bnorm{\rhoini}^{2}_{\Cf^{1}(\Td)}\right)\bnorm F_{\cP^{3}(D([0,T],\H[-M_{1}]))}.
	\]
	
	Consequently, recalling the definition of $Z_{n,j}$ and summing over
	$j\in[d]$, we obtain 
	\[
	\left|\int^{T}_{0}\E\cC^{n}U^{n}_{t}(\bar{\zeta}^{n,[0,t]})\,dt\right|\le\frac{C_{T,M_{1}}}{n^{\frac{d}{2}\wedge2}}\left(1+\bnorm{\rhoini}^{2}_{\Cf^{1}(\Td)}\right)\bnorm F_{\cP^{3}(D([0,T],\H[-M_{1}]))}.
	\]
	Combining this estimate with (\ref{eq:main_estimate_with_correlaton_term})
	proves (\ref{eq:main_estimate_without_correlation}). This completes
	the proof of Theorem \ref{thm:main_result}.
	
	\subsection{Some examples of test functions}
	
	Although Theorem~\ref{thm:main_result} is formulated for the class
	of test functions $\cP^{3}(D([0,T],\H[-M_{1}]))$, we state the
	following examples for a general Sobolev space $\H[I]$.
	\begin{example}
		\label{exa:cylindrical_function}Let $I\in\R$, $k,N\in\N$, and 
		\[
		0\le t_{1}<\ldots<t_{N}\le T,
		\]
		and let $G\in\Cf^{k}_{\text{der}}((\H[I])^{N})$. Define 
		\[
		F(\gamma):=G(\gamma_{t_{1}},\ldots,\gamma_{t_{N}}),\quad\gamma\in D([0,T],\H[I]).
		\]
		Then 
		\[
		F\in\cP^{k}(D([0,T],\H[I])).
		\]
		
		Indeed, $F$ is continuous with respect to the uniform topology. Moreover,
		for $p\in[N]$ and $t\in(t_{p-1},t_{p}]$, where $t_{0}:=0$, we have
		\[
		\nabla_{[t,T]}F(\gamma)[h]=\sum^{N}_{i=p}D_{i}G(\gamma_{t_{1}},\ldots,\gamma_{t_{N}})[h_{t_{i}}],\quad h\in D([t,T],\H[I]).
		\]
		More generally, for every $\ell\in[k]$, 
		\[
		\nabla^{\ell}_{[t,T]}F(\gamma)[h^{(1)},\ldots,h^{(\ell)}]=\sum^{N}_{i_{1},\ldots,i_{\ell}=p}D_{i_{1}}\cdots D_{i_{\ell}}G(\gamma_{t_{1}},\ldots,\gamma_{t_{N}})[h^{(1)}_{t_{i_{1}}},\ldots,h^{(\ell)}_{t_{i_{\ell}}}].
		\]
		Since the derivatives of $G$ up to order $k$ are bounded, the vertical
		path derivatives of $F$ are bounded. The left continuity simply follows
		from the form of the vertical path derivatives. Hence 
		\[
		F\in\cP^{k}(D([0,T],\H[I])).
		\]
	\end{example}
	
	We next consider functionals depending on time averages of the path.
	\begin{example}
		Let $I\in\R$, $k\in\N$, let $\nu$ be a finite measure on $[0,T]$,
		and let $G\in\Cf^{k}_{\text{der}}(\H[I])$. Since every $\gamma\in D([0,T],\H[I])$
		is bounded and strongly measurable, it is Bochner integrable with
		respect to $\nu$. We define 
		\[
		F(\gamma):=G\left(\int_{[0,T]}\gamma_{s}\,\nu(ds)\right),\quad\gamma\in D([0,T],\H[I]).
		\]
		Then 
		\[
		F\in\cP^{k}(D([0,T],\H[I])).
		\]
		
		Indeed, for $\gamma,\widetilde{\gamma}\in D([0,T],\H[I])$, 
		\[
		\bnorm{\int_{[0,T]}(\gamma_{s}-\widetilde{\gamma}_{s})\,\nu(ds)}_{\H[I]}\le\nu([0,T])\bnorm{\gamma-\widetilde{\gamma}}_{D([0,T],\H[I])}.
		\]
		Hence the map $F$ is continuous due to the continuity of $G$.
		
		For $t\in[0,T]$, 
		\[
		\nabla_{[t,T]}F(\gamma)[h]=DG\left(\int_{[0,T]}\gamma_{s}\,\nu(ds)\right)\left[\int_{[t,T]}h_{s}\,\nu(ds)\right],\quad h\in D([t,T],\H[I]).
		\]
		More generally, for every $\ell\in[k]$, 
		\begin{align*}
			& \nabla^{\ell}_{[t,T]}F(\gamma)[h^{(1)},\ldots,h^{(\ell)}]\\
			& \qquad=D^{\ell}G\left(\int_{[0,T]}\gamma_{s}\,\nu(ds)\right)\left[\int_{[t,T]}h^{(1)}_{s}\,\nu(ds),\ldots,\int_{[t,T]}h^{(\ell)}_{s}\,\nu(ds)\right].
		\end{align*}
		Since $\nu$ is finite and the derivatives of $G$ are bounded, all
		vertical path derivatives of $F$ are bounded.
		
		We finally verify their left-continuity. Let $t_{n}\uparrow t$ and
		$\gamma^{n}\to\gamma$ in $D([0,T],\H[I])$. First, 
		\[
		\bnorm{\int_{[0,T]}(\gamma^{n}_{s}-\gamma_{s})\,\nu(ds)}_{\H[I]}\le\nu([0,T])\bnorm{\gamma^{n}-\gamma}_{D([0,T],\H[I])}\longrightarrow0.
		\]
		Moreover, for $h\in D([t,T],\H[I])$, 
		\begin{align*}
			& \bnorm{\int_{[t_{n},T]}(\iota_{t_{n},t}h)_{s}\,\nu(ds)-\int_{[t,T]}h_{s}\,\nu(ds)}_{\H[I]}=\nu([t_{n},t))\bnorm{h_{t}}_{\H[I]}\le\nu([t_{n},t))\bnorm h_{D([t,T],\H[I])}.
		\end{align*}
		Since $[t_{n},t)\downarrow\emptyset$ and $\nu$ is finite, 
		\[
		\nu([t_{n},t))\to0.
		\]
		Combining these observations with the continuity of $D^{\ell}G$ gives,
		for every $\ell\in[k]$, 
		\[
		\nabla^{\ell}_{[t_{n},T]}F(\gamma^{n})\circ(\iota_{t_{n},t})^{\otimes\ell}\to\nabla^{\ell}_{[t,T]}F(\gamma)
		\]
		in $\cL_{\ell}(D([t,T],\H[I]))$. Thus the vertical path derivatives
		are left-continuous, and consequently 
		\[
		F\in\cP^{k}(D([0,T],\H[I])).
		\]
	\end{example}

	\subsection{Estimate of correlation correction from below}
	
	We conclude this section by showing that the correlation correction
	may indeed be of order $n^{-2}$ for smooth test functionals with
	bounded second and third vertical path derivatives. Thus, the $n^{-2}$
	order of the correlation correction is, in this sense, sharp. The
	main ingredient is a lower bound on the nearest-neighbor two-point
	correlations, which we establish first.
	
	We define the two-point correlation function by 
	\[
	V^{n}_{t}(x,y):=\E\left[\left(\eta^{n}_{t}(x)-\rho^{n}_{t}(x)\right)\left(\eta^{n}_{t}(y)-\rho^{n}_{t}(y)\right)\right],\quad x,y\in\Tnd.
	\]
	Following the proof of the main theorem in \citep[p. 32]{Ravishankar:1992},
	we have, for $x,y\in\Tnd$ with $x\not= y$,
	\begin{align}
		V^{n}_{t}(x,y) & =-\frac{1}{2}\int^{t}_{0}\sum^{d}_{j=1}\sum_{u\in\Tnd}P^{n}_{t-s}(x,y;u,u+e^{n}_{j})\left(\partial_{n,j}\rho^{n}_{s}(u)\right)^{2}\,ds\nonumber \\
		& -\frac{1}{2}\int^{t}_{0}\sum^{d}_{j=1}\sum_{u\in\Tnd}P^{n}_{t-s}(x,y;u,u-e^{n}_{j})\left(\partial_{n,j}\rho^{n}_{s}(u-e^{n}_{j})\right)^{2}\,ds,\label{eq:representation_for_V_n}
	\end{align}
	where $P^{n}_{t}(x,y;u,v)$ denotes the transition probability of
	the two-particle exclusion process from $(x,y)$ to $(u,v)$ in time
	$t$. In particular, 
	\begin{equation}
		V^{n}_{t}(x,y)\le0,\quad x,y\in\Tnd, \quad x\not=y,\quad t\ge0.\label{eq:negativity_of_V}
	\end{equation}
	
	The representation (\ref{eq:representation_for_V_n}) yields the following
	local lower bound.
	\begin{lem}
		\label{lem:two-point-correlation-function} Fix $t>0$, $j\in[d]$,
		$n\ge3$, and $x\in\Tnd$. Assume that there exist $\delta>0$ and
		$c\in(0,n^{2}t]$ such that 
		\[
		\left(\partial_{n,j}\rho^{n}_{t-s}(x)\right)^{2}\ge\delta,\quad s\in[0,c/n^{2}].
		\]
		Then 
		\[
		\left|V^{n}_{t}(x,x+e^{n}_{j})\right|\ge\frac{\delta}{2(2d-1)n^{2}}\left(1-e^{-(2d-1)c}\right).
		\]
	\end{lem}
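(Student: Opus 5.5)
Since $x\neq x+e^{n}_{j}$, the representation \eqref{eq:representation_for_V_n} applies, and both integrands there are nonnegative. We therefore bound $|V^{n}_{t}(x,x+e^{n}_{j})|$ from below by a single nonnegative contribution. In the first sum we keep only the index $j$, the term $u=x$, and the time window $s\in[t-c/n^{2},t]$, which lies inside $[0,t]$ because $c\le n^{2}t$. This gives
\[
|V^{n}_{t}(x,x+e^{n}_{j})|\ge\frac12\int_{t-c/n^{2}}^{t}P^{n}_{t-s}(x,x+e^{n}_{j};x,x+e^{n}_{j})\bigl(\partial_{n,j}\rho^{n}_{s}(x)\bigr)^{2}\,ds .
\]
After the substitution $r=t-s\in[0,c/n^{2}]$, the hypothesis gives $(\partial_{n,j}\rho^{n}_{t-r}(x))^{2}\ge\delta$. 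It then remains to bound from below the probability that the two-particle exclusion process starting from $(x,x+e^{n}_{j})$ is back at the same ordered pair at time $r$.

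For that lower bound we use the weaker event that no jump occurs on $[0,r]$. In the two-particle exclusion process with the time scaling of $\cL^{n,0}$, each bond is exchanged at rate $n^{2}/2$. A nontrivial move is an exchange along a bond with exactly one endpoint occupied. The two particles share one occupied bond, namely $\{x,x+e^{n}_{j}\}$, and swapping across it leaves the occupation unchanged. We must check whether such a swap counts as a move for the labelled pair. In the stirring/labelled description it interchanges the labels. We therefore argue with the unordered configuration and note that $P^{n}$ in \eqref{eq:representation_for_V_n} is symmetric in its arguments, as Ravishankar's formula uses the symmetric two-point function. Alternatively, we adopt the exclusion convention that particles do not swap, so this bond produces no transition.

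Each particle has $2d-1$ other neighbouring bonds, all leading to empty sites, since $n\ge3$ ensures the neighbours are distinct. The total rate of leaving the configuration is therefore $2(2d-1)\cdot n^{2}/2=(2d-1)n^{2}$. Hence
\[
P^{n}_{r}(x,x+e^{n}_{j};x,x+e^{n}_{j})\ge e^{-(2d-1)n^{2}r}.
\]

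Integrating, we obtain
\[
|V^{n}_{t}|\ge\frac{\delta}{2}\int_{0}^{c/n^{2}}e^{-(2d-1)n^{2}r}\,dr=\frac{\delta}{2(2d-1)n^{2}}\bigl(1-e^{-(2d-1)c}\bigr),
\]
which is the claimed bound.

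The main point needing care is the exit rate. It depends on the exact time scaling of the two-particle process, which here is the same $n^{2}$-accelerated dynamics with bond rate $n^{2}/2$. It also requires confirming that the internal bond contributes no transition, and that wrap-around on the torus for $n\ge3$ does not make two of the counted bonds coincide.
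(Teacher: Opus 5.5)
Your proposal is correct and follows the paper's proof essentially verbatim: keep only the term $u=x$ in direction $j$ of the representation, bound $P^{n}_{r}(x,x+e^{n}_{j};x,x+e^{n}_{j})$ from below by the no-exit probability $e^{-(2d-1)n^{2}r}$, and integrate over $[0,c/n^{2}]$. The paper obtains the exit rate by counting $4d-2$ admissible jumps at rate $n^{2}/2$, which is the exclusion convention you adopt; your checks that the internal bond produces no transition and that the counted bonds are distinct for $n\ge3$ make this step more explicit than the paper does.
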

	
	\begin{proof}
		Taking $u=x$ and fixing the direction $j$ in (\ref{eq:representation_for_V_n}),
		we obtain 
		\begin{align*}
			\left|V^{n}_{t}(x,x+e^{n}_{j})\right| & \ge\frac{1}{2}\int^{t}_{0}P^{n}_{t-s}(x,x+e^{n}_{j};x,x+e^{n}_{j})\left(\partial_{n,j}\rho^{n}_{s}(x)\right)^{2}\,ds\\
			& =\frac{1}{2}\int^{t}_{0}P^{n}_{s}(x,x+e^{n}_{j};x,x+e^{n}_{j})\left(\partial_{n,j}\rho^{n}_{t-s}(x)\right)^{2}\,ds.
		\end{align*}
		Let 
		\[
		\sigma:=\inf\left\{ s\ge0:(X_{s},Y_{s})\neq(x,x+e^{n}_{j})\right\} .
		\]
		Since the particles are at neighboring sites, there are $4d-2$ admissible
		jumps, each occurring at rate $n^{2}/2$. Hence 
		\[
		\sigma\sim\mathrm{Exp}((2d-1)n^{2})
		\]
		and, consequently, 
		\[
		P^{n}_{s}(x,x+e^{n}_{j};x,x+e^{n}_{j})\ge\p_{x,x+e^{n}_{j}}\{\sigma>s\}=e^{-(2d-1)n^{2}s}.
		\]
		Since $c/n^{2}\le t$, the assumption yields 
		\[
		\left|V^{n}_{t}(x,x+e^{n}_{j})\right|\ge\frac{\delta}{2}\int^{c/n^{2}}_{0}e^{-(2d-1)n^{2}s}\,ds=\frac{\delta}{2(2d-1)n^{2}}\left(1-e^{-(2d-1)c}\right),
		\]
		which proves the claim.
	\end{proof}
	
	We now use the preceding estimate to show that, for every nonconstant
	initial profile, the correlation correction is of order at least $n^{-2}$
	for a suitable smooth test functional. Notice that, in the following
	proposition, boundedness is only required for the second and third
	vertical path derivatives.
	\begin{prop}
		\label{prop:bound_from_belov_of_correlation_correction} Let the assumptions
		of Theorem~\ref{thm:main_result} be satisfied and assume that $\rhoini$
		is not constant. Then there exist $F\in\Cf^{3}(D([0,T],\H[-M_1]))$,
		whose vertical path derivatives $\nabla^{\ell}_{[t,T]}F$, $t\in[0,T]$,
		$\ell\in[3]$, are left-continuous, such that $\nabla^{2}_{[t,T]}F$
		and $\nabla^{3}_{[t,T]}F$ are uniformly bounded, while $\nabla_{[t,T]}F$
		has at most linear growth, and a constant $c>0$ such that 
		\[
		\left|\int^{T}_{0}\E\cC^{n}U^{n}_{t}(\bar{\zeta}^{n,[0,t]})\,dt\right|\ge\frac{c}{n^{2}}
		\]
		for all sufficiently large $n$. 
	\end{prop}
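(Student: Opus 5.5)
We take the quadratic functional $F(\gamma)=\frac12\inn{\gamma_{t_0},\varphi}^2$ with $\varphi\in\Cf^\infty(\Td)$ and $t_0\in(0,T]$ still to be chosen. Then $\nabla_{[t,T]}F(\gamma)[h]=\inn{\gamma_{t_0},\varphi}\inn{h_{t_0},\varphi}$ for $t\le t_0$, which grows at most linearly. The second derivative is the constant form $\inn{h_{t_0},\varphi}\inn{k_{t_0},\varphi}$, the third derivative vanishes, and left-continuity holds as in Example~\ref{exa:cylindrical_function}. The continuation is computed explicitly, since $\xi^n$ is affine. For $t\le t_0$,
\[
U^n_t(\gamma)=\tfrac12\inn{\gamma_t,\cS_{t,t_0}\varphi}^2+\text{const}(t),
\]
so $\nabla^2U^n_t(\gamma;x,y)=\psi_t(x)\psi_t(y)$ with $\psi_t:=\cS^{n,m_\Delta}_{t_0-t}\varphi$. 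For $t>t_0$ the path is frozen, so $\nabla^2U^n_t=0$. This gives
\[
\int_0^T\E\cC^nU^n_t\,dt=\frac{1}{2n^d}\sum_{j=1}^d\int_0^{t_0}\frac1{n^d}\sum_{x\in\Tnd}\bigl(\partial^{(m_\partial)}_{n,j}\psi_t(x)\bigr)^2\,n^dV^n_t(x,x+e^n_j)\,dt,
\]
because $\E[\zeta_t(x)\zeta_t(x+e^n_j)]=n^dV^n_t(x,x+e^n_j)$. The right-hand side equals $\frac12\sum_j\int_0^{t_0}\inn{(\partial^{(m_\partial)}_{n,j}\psi_t)^2,V^n_t(\cdot,\cdot+e^n_j)}_n\,dt$.

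By \eqref{eq:negativity_of_V}, every summand is nonpositive. The integrand is therefore of one sign, and it suffices to bound the contribution of a single direction $j$, a set of sites and a time window from below. Since $\rhoini$ is not constant, some $\partial_j\rhoini$ is nonzero somewhere. By continuity of the heat flow, $|\partial_j\varrho_s|\ge2\sqrt\delta$ on a ball $B$ for $s\in[0,s_0]$, with some $s_0>0$. Lemma~\ref{lem:bound_for_varrho_and_rho} (or Corollary~\ref{cor:comparison_of_rho_and_varrho} together with Lemma~\ref{lem:approximation_of_discrete_derivatives}) gives $\partial_{n,j}\rho^n_s\to\partial_j\varrho_s$ uniformly, where $\varrho$ is the limiting heat profile. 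Hence, for large $n$, $(\partial_{n,j}\rho^n_s(x))^2\ge\delta$ for $x\in B\cap\Tnd$ and $s\le s_0$.

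Choose $t_0:=\min\{s_0,T\}$ and restrict to $t\in[t_0/2,t_0]$. Lemma~\ref{lem:two-point-correlation-function} with $c=1$ is applicable because $t-s\in[0,t_0]$ for $s\le 1/n^2\le t$. It gives $|V^n_t(x,x+e^n_j)|\ge c_1n^{-2}$ for $x\in B$ and $t\in[t_0/2,t_0]$.

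It remains to keep $(\partial^{(m_\partial)}_{n,j}\psi_t)^2$ bounded below on part of $B$. Take $\varphi$ with $\partial_j\varphi\ne0$ on a smaller ball $B'\subset B$, for example a bump with a linear profile in $x_j$. Then $\partial^{(m_\partial)}_{n,j}\psi_t=\partial_j\psi_t+O(n^{-1})$ uniformly, by Lemma~\ref{lem:approximation_of_discrete_derivatives} and Corollary~\ref{cor:regularity_of_modified_semigroup}. Moreover, $\psi_t\to\varphi$ in $\Cf^1$ uniformly in $n$ as $t_0-t\to0$, by Lemma~\ref{lem:operator_norm_continuity_modified_semigroup} and Sobolev embedding.

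This is the main obstacle: the time window must be small enough for $\psi_t$ to stay close to $\varphi$, yet fixed independently of $n$. We therefore restrict instead to $t\in[t_0-\epsilon,t_0]$, with $\epsilon>0$ small but independent of $n$, chosen so that $|\partial_j\psi_t|\ge\frac12|\partial_j\varphi|$ on $B'$. The gradient lower bound on $\rho^n$ holds uniformly on $[0,t_0]$, so it still applies on this shorter window.

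Summing over $x\in B'\cap\Tnd$ (a fraction of $n^d$ points, averaged by $n^{-d}$) and integrating over the window gives a lower bound $c\,\epsilon\,n^{-2}$. Since all other terms have the same sign, they can only increase the modulus. This proves the bound for all large $n$.
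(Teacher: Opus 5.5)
Your proposal is correct and takes essentially the same route as the paper. The shared steps are the quadratic functional $F(\gamma)=\frac12\inn{\varphi,\gamma_{t_0}}^2$, the explicit kernel $\psi_t\otimes\psi_t$ with $\psi_t=\cS^{n,m_\Delta}_{t_0-t}\varphi$, the sign argument via \eqref{eq:negativity_of_V}, Lemma~\ref{lem:two-point-correlation-function} combined with a uniform gradient lower bound for $\rho^n$, and Lemma~\ref{lem:operator_norm_continuity_modified_semigroup} to keep $\partial^{(m_\partial)}_{n,j}\psi_t$ away from zero. The only cosmetic difference is that the paper shrinks $t_0$ itself and integrates over $[t_0/2,t_0]$, whereas you fix $t_0$ and integrate over a short window $[t_0-\epsilon,t_0]$.

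There is one small citation slip. Lemma~\ref{lem:bound_for_varrho_and_rho} compares $\ex_n\rho^n_s$ with the modified profile $\cS^{n,m_\Delta}_s\rhoini$, not with the limiting heat flow. Combining it with the uniform-in-$n$ short-time continuity of $\cS^{n,m_\Delta}$, or with the standard convergence of the discrete heat semigroup that the paper invokes, gives the gradient lower bound you need.
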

	
	\begin{proof}
		Since $\rhoini$ is not constant, there exist $x_{0}\in\Td$ and $i\in[d]$
		such that $\partial_{i}\rhoini(x_{0})\neq0$. Hence, by continuity,
		there exist $r>0$ and $\delta>0$ such that 
		\begin{equation}
			\left(\partial_{i}\rhoini(y)\right)^{2}\ge\delta,\quad y\in B_{r}(x_{0}).\label{eq:assumption_on_rhoini}
		\end{equation}
		We choose $\varphi\in\Cf^{\infty}(\Td)$ such that, for some $\kappa>0$,
		\begin{equation}
			\left(\partial_{i}\varphi(y)\right)^{2}\ge\kappa,\quad y\in B_{r}(x_{0}).\label{eq:assumption_on_phi}
		\end{equation}
		For $t_{0}\in(0,T]$, to be specified below, define 
		\[
		F(\gamma):=\frac{1}{2}\binn{\varphi,\gamma_{t_{0}}}^{2},\quad\gamma\in D([0,T],\H[-M_1]).
		\]
		
		A direct calculation shows that $F$ has left-continuous vertical path
		derivatives. More precisely, for $t\in[0,T]$ and
		$h,h^{(1)},h^{(2)}\in D([t,T],\H[-M_1])$,
		\[
		\nabla_{[t,T]}F(\gamma)[h]
		=
		\binn{\varphi,\gamma_{t_{0}}}
		\binn{\varphi,h_{t_{0}}}
		\I_{[0,t_{0}]}(t),
		\]
		\[
		\nabla^{2}_{[t,T]}F(\gamma)
		\left[h^{(1)},h^{(2)}\right]
		=
		\binn{\varphi,h^{(1)}_{t_{0}}}
		\binn{\varphi,h^{(2)}_{t_{0}}}
		\I_{[0,t_{0}]}(t),
		\]
		and
		\[
		\nabla^{3}_{[t,T]}F(\gamma)=0.
		\]
		Thus, $\nabla_{[t,T]}F$ has at most linear growth, whereas
		$\nabla^{2}_{[t,T]}F$ and $\nabla^{3}_{[t,T]}F$ are uniformly bounded.
		The left-continuity of these derivatives follows directly from the
		above expressions.
		
		We next compute the second vertical derivative of the continuation
		functional. For $t\le t_{0}$,
		\[
		U^{n}_{t}(\gamma)
		=
		\frac{1}{2}\E\left[
		\binn{\varphi,\xi^{n}_{t,t_{0}}(\gamma_{t})}^{2}
		\right].
		\]
		By the linear dependence of the modified Ornstein--Uhlenbeck dynamics
		on its initial condition,
		\[
		\xi^{n}_{t,t_{0}}(\gamma_{t})
		=
		\cS^{n,m_{\Delta}}_{t,t_{0}}\gamma_{t}
		+
		Z^{n}_{t,t_{0}},
		\]
		where $Z^{n}_{t,t_{0}}$ does not depend on $\gamma_{t}$. Therefore,
		direct differentiation gives
		\[
		\nabla^{2}U^{n}_{t}(\gamma)[h^{(1)},h^{(2)}]
		=
		\binn{\varphi,\cS^{n,m_{\Delta}}_{t,t_{0}}h^{(1)}}
		\binn{\varphi,\cS^{n,m_{\Delta}}_{t,t_{0}}h^{(2)}}
		\I_{[0,t_{0}]}(t).
		\]
		Since $\cS^{n,m_{\Delta}}_{t,t_{0}}$ is self-adjoint, for $t\in[0,t_{0}]$,
		the kernel of $\nabla^{2}U^{n}_{t}(\gamma)$ is defined by
		\[
		\nabla^{2}U^{n}_{t}(\gamma;x,y)=\varphi^{n}_{t}(x)\varphi^{n}_{t}(y),\quad x,y\in\Td,
		\]
		where 
		\[
		\varphi^{n}_{t}:=\cS^{n,m_{\Delta}}_{t,t_{0}}\varphi.
		\]
		Consequently, 
		\[
		\left(\partial^{(m_{\partial})}_{n,j}\right)^{\odot2}\nabla^{2}U^{n}_{t}(\gamma;\cdot)(x)=\left(\partial^{(m_{\partial})}_{n,j}\varphi^{n}_{t}(x)\right)^{2}=:\psi^{n}_{t,j}(x)\ge0
		\]
		for $j\in[d]$ and $x\in\Td$. Hence, by (\ref{eq:negativity_of_V}),
		for $t\in[0,t_{0}]$, 
		\[
		\E\left[\cC^{n}U^{n}_{t}(\bar{\zeta}^{n,[0,t]})\right]=\frac{1}{2n^{d}}\sum^{d}_{j=1}\sum_{x\in\Tnd}\psi^{n}_{t,j}(x)V^{n}_{t}(x,x+e^{n}_{j})\le0.
		\]
		Moreover, the expression above vanishes for $t>t_{0}$. Therefore,
		\begin{align}
			& \left|\int^{T}_{0}\E\left[\cC^{n}U^{n}_{t}(\bar{\zeta}^{n,[0,t]})\right]dt\right|=-\frac{1}{2n^{d}}\sum^{d}_{j=1}\sum_{x\in\Tnd}\int^{t_{0}}_{0}\psi^{n}_{t,j}(x)V^{n}_{t}(x,x+e^{n}_{j})\,dt.\label{eq:correction_lower_bound_start}
		\end{align}
		
		Recall that $(\rho^{n}_{t})_{t\ge0}$ is the solution to (\ref{eq:discrete_heat_equation})
		started from $\rhoini|_{\Tnd}$. Let $(\tilde{\varrho}_{t})_{t\ge0}$
		solve 
		\[
		\partial_{t}\tilde{\varrho}_{t}=\frac{1}{2}\Delta\tilde{\varrho}_{t},\quad\tilde{\varrho}_{0}=\rhoini.
		\]
		By the standard convergence of the discrete heat semigroup and its
		spatial derivatives, 
		\begin{equation}
			\sup_{t\in[0,T]}\sup_{x\in\Tnd}\left|\partial_{n,i}\rho^{n}_{t}(x)-\partial_{i}\tilde{\varrho}_{t}(x)\right|\to0,\quad n\to\infty.\label{eq:convergence_gradient_discrete_heat}
		\end{equation}
		Since $\partial_{i}\tilde{\varrho}$ is continuous and $\tilde{\varrho}_{0}=\rhoini$,
		it follows from (\ref{eq:assumption_on_rhoini}) that there exists
		$t'_{0}\in(0,T]$ such that 
		\[
		\left(\partial_{i}\tilde{\varrho}_{t}(x)\right)^{2}\ge\frac{3\delta}{4},\quad t\in[0,t'_{0}],\quad x\in B_{r}(x_{0}).
		\]
		Consequently, by (\ref{eq:convergence_gradient_discrete_heat}), there
		exists $N_{1}\in\N$ such that 
		\begin{equation}
			\left(\partial_{n,i}\rho^{n}_{t}(x)\right)^{2}\ge\frac{\delta}{2},\quad t\in[0,t'_{0}],\quad x\in B_{r}(x_{0})\cap\Tnd,\label{eq:lower_bound_discrete_gradient}
		\end{equation}
		for all $n\ge N_{1}$.
		
		We next choose $t_{0}$. Since 
		\[
		\varphi^{n}_{t}=\cS^{n,m_{\Delta}}_{t,t_{0}}\varphi=\cS^{n,m_{\Delta}}_{t_{0}-t}\varphi,
		\]
		the uniform convergence of $\cS^{n,m_{\Delta}}_{s}$ to the identity
		(see Lemma \ref{lem:operator_norm_continuity_modified_semigroup})
		and the Sobolev embedding imply that 
		\[
		\sup_{n\ge3}\sup_{t\in[0,t_{0}]}\bnorm{\varphi^{n}_{t}-\varphi}_{C^{1}(\Td)}\to0\quad\text{as }t_{0}\to0+.
		\]
		Hence, by (\ref{eq:assumption_on_phi}), we may choose $t_{0}\in(0,t'_{0}]$
		such that 
		\[
		\left(\partial_{i}\varphi^{n}_{t}(x)\right)^{2}\ge\frac{3\kappa}{4},\quad t\in[0,t_{0}],\quad x\in B_{r}(x_{0}),
		\]
		for every $n\ge3$. By the definition of $\partial^{(m_{\partial})}_{n,i}$
		and the uniform regularity of $(\varphi^{n}_{t})_{t\in[0,t_{0}]}$,
		there exists $N_{2}\in\N$ such that
		\begin{equation}
			\psi^{n}_{t,i}(x)=\left(\partial^{(m_{\partial})}_{n,i}\varphi^{n}_{t}(x)\right)^{2}\ge\frac{\kappa}{2},\quad t\in[0,t_{0}],\quad x\in B_{r}(x_{0})\cap\Tnd,\label{eq:lower_bound_psi}
		\end{equation}
		for all $n\ge N_{2}$.
		
		For all sufficiently large $n$, we have 
		\[
		\frac{1}{n^{2}}\le\frac{t_{0}}{2}.
		\]
		Let $t\in[t_{0}/2,t_{0}]$ and $x\in B_{r}(x_{0})\cap\Tnd$. Then,
		for $s\in[0,n^{-2}]$, 
		\[
		t-s\in[0,t_{0}],
		\]
		and hence, by (\ref{eq:lower_bound_discrete_gradient}), 
		\[
		\left(\partial_{n,i}\rho^{n}_{t-s}(x)\right)^{2}\ge\frac{\delta}{2}.
		\]
		Lemma~\ref{lem:two-point-correlation-function} therefore yields
		\begin{equation}
			-V^{n}_{t}(x,x+e^{n}_{i})=\left|V^{n}_{t}(x,x+e^{n}_{i})\right|\ge\frac{c_{0}}{n^{2}},\label{eq:lower_bound_V_for_correction}
		\end{equation}
		for all sufficiently large $n$, uniformly in $t\in[t_{0}/2,t_{0}]$
		and $x\in B_{r}(x_{0})\cap\Tnd$, where 
		\[
		c_{0}:=\frac{\delta}{4(2d-1)}\left(1-e^{-(2d-1)}\right)>0.
		\]
		
		Keeping only the term $j=i$ in (\ref{eq:correction_lower_bound_start})
		and using (\ref{eq:lower_bound_psi}) and (\ref{eq:lower_bound_V_for_correction}),
		we obtain 
		\begin{align*}
			& \left|\int^{T}_{0}\E\left[
			\cC^{n}U^{n}_{t}\left(\bar{\zeta}^{n,[0,t]}\right)
			\right]dt\right|\\
			& \qquad\ge\frac{1}{2n^{d}}\sum_{x\in B_{r}(x_{0})\cap\Tnd}\int^{t_{0}}_{t_{0}/2}\psi^{n}_{t,i}(x)\left|V^{n}_{t}(x,x+e^{n}_{i})\right|\,dt\\
			& \qquad\ge\frac{\kappa c_{0}t_{0}}{8n^{d+2}}\left|B_{r}(x_{0})\cap\Tnd\right|.
		\end{align*}
		Since 
		\[
		\frac{1}{n^{d}}\left|B_{r}(x_{0})\cap\Tnd\right|\to\Leb(B_{r}(x_{0}))>0,
		\]
		the statement follows.
	\end{proof}

	For the quadratic functional considered above, we can estimate the weak error directly. Its Gaussian continuation remains quadratic up to the observation time, which allows us to use the standard time-dependent Dynkin formula.
	
	\begin{prop}[Quadratic weak error]
		\label{prop:quadratic_weak_error}
		Let the parameters, the initial profile $\rhoini$ and the processes $\bar\zeta^n$ and $\xi^n$ be as in
		Theorem~\ref{thm:main_result}. Fix $t_0\in(0,T]$ and
		$\varphi\in\Cf^\infty(\Td)$, and define
		\[
		F(\gamma):=\frac12\binn{\varphi,\gamma_{t_0}}^2,\qquad
		\gamma\in D([0,T],\H[-M_1]).
		\]
		Let $U^n$ be the Gaussian continuation of $F$, and set
		\[
		C_n(F):=\int_0^T\E\cC^nU_t^n(\bar\zeta^{n,[0,t]})\,dt.
		\]
		There exists a constant $C$, depending only on
		$d,T,I,m_\Delta,m_\partial,m_\tau,M_1,M_2$ and $\varphi$, such that
		\begin{align}
			&\left|\E F(\bar\zeta^{n,[0,T]})-\E F(\xi^{n,[0,T]})+C_n(F)\right|\nonumber\\
			&\qquad\le C\left(1+\|\rhoini\|_{\Cf^{M_2}(\Td)}^2\right)
			\left(n^{-2m_\Delta}+n^{-m_\partial}+n^{-(m_\tau+1)}\right)
			\label{eq:quadratic_corrected_error}
		\end{align}
		for all $n\ge3$. In particular, for
		$\beta:=\min\{2m_\Delta,m_\partial,m_\tau+1\}$ and $d\ge3$,
		\[
		\left|\E F(\bar\zeta^{n,[0,T]})-\E F(\xi^{n,[0,T]})\right|\le C\left(1+\|\rhoini\|_{\Cf^{M_2}(\Td)}^2\right)
		\left(n^{-\beta}+n^{-2}\right),\qquad n\ge3.
		\]
	\end{prop}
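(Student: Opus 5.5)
The approach is to bypass the general machinery of Theorem~\ref{thm:main_result}. The reason is that $F$ is not in $\cP^{3}$ ($\nabla F$ grows linearly), but its Gaussian continuation is explicit. For $t\le t_0$, write $\xi^n_{t,t_0}(g)=\cS^{n,m_\Delta}_{t_0-t}g+Z^n_{t,t_0}$ and set $\varphi^n_t:=\cS^{n,m_\Delta}_{t_0-t}\varphi$ (the operator is self-adjoint). Then
\[
U^n_t(\gamma)=u(t,\gamma_t),\qquad u(t,g):=\tfrac12\binn{\varphi^n_t,g}^2+c^n_t,\qquad c^n_t:=\tfrac12\E\binn{\varphi,Z^n_{t,t_0}}^2,
\]
while for $t>t_0$ the path functional no longer changes. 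A direct computation, or Theorem~\ref{thm:backward_kolmogorov_equation} specialised to this case, gives $\partial_t u+\cG^n_{\varrho^n_t}u=0$ on $[0,t_0]$. Here $\partial_t\varphi^n_t=-\frac12\Delta^{(2m_\Delta)}_n\varphi^n_t$ cancels the drift part, and $\dot c^n_t$ cancels the diffusion part.

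Since $u$ is a smooth time-dependent function of the current state only, I will use the ordinary time-dependent Dynkin formula for the SSEP fluctuation field on $[0,t_0]$. The state space is finite, so all terms are bounded. This gives the signed identity
\[
\E F(\bar\zeta^{n,[0,T]})-\E F(\xi^{n,[0,T]})=\E\bigl[u(0,\bar\zeta^n_0)-u(0,\xiini)\bigr]+\int_0^{t_0}\E\bigl(\cL^n_{\rho^n_t}-\cG^n_{\varrho^n_t}\bigr)u(t,\cdot)(\bar\zeta^n_t)\,dt .
\]
The initial term is a difference of variances:
\[
\binn{\rho^n_0(1-\rho^n_0),(\varphi^n_0)^2}_n-\binn{\rhoini(1-\rhoini),(\varphi^n_0)^2}.
\]
By Lemma~\ref{lem:approximation_for_ex} (zeroth Fourier coefficient), with $\varphi^n_0$ bounded in $\Cf^m$ uniformly in $n$ by Corollary~\ref{cor:regularity_of_modified_semigroup}, it is $O(n^{-m_\partial})$ with a constant involving $\|\rhoini\|^2_{\Cf^{M_2}}$.

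For the generator term I will repeat the decomposition in the proof of Proposition~\ref{prop:comparison_of_generators}, taking expectations before estimating absolute values. Since $D^3u=0$, the remainder $R$ vanishes identically. The term $S^j_{3,1}$ is linear in $\zeta^n_t$ with deterministic coefficients, so $\E S^j_{3,1}=0$ because $\E\zeta^n_t=0$; this removes $E^n_{\mathrm{fluc}}$, the only source of $n^{-d/2}$. The term $S_1$ equals
\[
\binn{(\Delta_n-\Delta^{(2m_\Delta)}_n)\varphi^n_t,\bar\zeta^n_t}\binn{\varphi^n_t,\bar\zeta^n_t}.
\]
By Cauchy--Schwarz and the second-moment bound $\E\|\bar\zeta^n_t\|^2_{\H[-I]}\le C$ cited from \citep[Lemma~2.5]{Gess_Konarovskyi2026}, its expectation is $O(n^{-2m_\Delta})$. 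The remaining terms $S^j_2$, $S^j_{3,2}$, $S^j_{3,3}$ (choosing $m_W\ge m_\partial$ compatible with $M_2$), $S^j_{3,4}$ and $S^j_{3,5}$ are deterministic. Bounding them exactly as in Proposition~\ref{prop:comparison_of_generators}, and inserting \eqref{eq:estimate_of_difference_of_ex_nrho^n_and_varrho_n}, \eqref{eq:negative_part_varrho_n} and \eqref{eq:estimate_for_varrho_n}, gives \eqref{eq:quadratic_corrected_error}. The $\cC^n$ term appears exactly as $C_n(F)$, because the correlation term of the expansion is kept signed.

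For the final claim it remains to show $|C_n(F)|\le Cn^{-2}$ for $d\ge3$. The bound already proved in the theorem gives only $n^{-(d/2\wedge2)}$, which is $n^{-3/2}$ when $d=3$, so it is not enough. Here $C_n(F)$ is deterministic:
\[
C_n(F)=\frac1{2n^d}\sum_j\sum_x\int_0^{t_0}\psi^n_{t,j}(x)\,V^n_t(x,x+e^n_j)\,dt,
\]
with $\psi^n_{t,j}$ bounded uniformly. So it suffices to show $\sup_{t,x}|V^n_t(x,x+e^n_j)|\le Cn^{-2}$, and I expect this to be the main obstacle. I will use the representation \eqref{eq:representation_for_V_n} and bound $(\partial_{n,j}\rho^n_s)^2\le\|\rhoini\|^2_{\Cf^1}$. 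This reduces $|V^n_t|$ to the expected time, up to $t$, that the two-particle exclusion started at neighbouring sites spends in nearest-neighbour configurations. In the accelerated time scale the difference of the two particles is, away from contact, a random walk at rate $n^2$, and it is transient on the lattice for $d\ge3$. So the number of returns to distance one is bounded uniformly in $n$ (uniformly on the torus over bounded times), and each visit lasts on average $O(n^{-2})$. This gives the $n^{-2}$ bound. Alternatively, for pairs of neighbouring sites, the same estimate should follow from the nearest-neighbour case of \citep[Lemma~3.1]{Xu2025}.
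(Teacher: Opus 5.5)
Your proposal is correct and follows essentially the same route as the paper. The shared steps are: the explicit quadratic continuation $u(t,g)=\frac12\binn{\varphi^n_t,g}^2+c^n_t$; the time-dependent Dynkin formula on $[0,t_0]$; the vanishing, after taking expectations, of the terms linear in $\zeta^n_t$; the cancellation of the nearest-neighbour product by $\cC^n$; the remaining covariance errors handled as in Proposition~\ref{prop:comparison_of_generators}; and finally a uniform $n^{-2}$ bound on $V^n_t(x,x+e^n_j)$ for $d\ge3$.

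The only real difference is in that last step. The paper simply invokes a torus version of \citep[Lemma~3.1]{Xu2025}. Your primary argument instead derives the bound from the representation \eqref{eq:representation_for_V_n}, using $(\partial_{n,j}\rho^n_s)^2\le\|\rhoini\|^2_{\Cf^1(\Td)}$ and transience of the two-particle difference walk. This is a valid, self-contained alternative. The torus uniformity comes from the standard estimate that the expected occupation time of the neighbour set up to microscopic time $n^2T$ is at most $C(1+Tn^{2-d})$. Your other small deviations are equally admissible: $m_W=m_\partial$ instead of $m_W=\beta$, and an $n^{-m_\partial}$ rather than $n^{-M_2}$ bound for the initial quadrature error.
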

	
	\begin{rem}\label{rem:bound_from_belov}
		If $\rhoini$ is nonconstant and $\beta>2$, the choice of $t_0$
		and $\varphi$ in
		Proposition~\ref{prop:bound_from_belov_of_correlation_correction}
		satisfies
		\begin{equation}
			\left|
			\E F(\bar\zeta^{n,[0,T]})-\E F(\xi^{n,[0,T]})
			\right|
			\ge \frac{c}{2n^2}
			\label{eq:quadratic_weak_error_lower_bound}
		\end{equation}
		for all sufficiently large $n$, where $c>0$ is the constant
		in that proposition. Indeed, the correlation correction has
		magnitude at least $cn^{-2}$, whereas
		\eqref{eq:quadratic_corrected_error} is $o(n^{-2})$.
		This lower bound holds in every dimension. For $d\ge3$,
		the upper bound in Proposition~\ref{prop:quadratic_weak_error}
		therefore shows that the direct weak error for this observable
		has exact order $n^{-2}$.
	\end{rem}

	\begin{proof}[Proof of Proposition~\ref{prop:quadratic_weak_error}]
		The first derivative of $F$ need not be bounded, so we cannot apply
		Theorem~\ref{thm:main_result} directly. However, its quadratic form
		allows us to repeat the comparison argument, with the terms of order
		$n^{-d/2}$ vanishing in every dimension. Throughout the proof, $C$
		may depend on the parameters listed in the statement.
		
		For $0\le t\le t_0$, set
		\[
		\varphi_t^n:=\cS^{n,m_\Delta}_{t,t_0}\varphi,\qquad
		\psi_{t,j}^n:=\left(\partial_{n,j}^{(m_\partial)}\varphi_t^n\right)^2,
		\qquad j\in[d].
		\]
		The modified semigroup is self-adjoint, and the stochastic convolution
		is centered. Hence $U_t^n(\gamma)=u_t^n(\gamma_t)$, where
		\begin{equation}
			u_t^n(g):=\frac12\binn{\varphi_t^n,g}^2
			+\frac14\sum_{j=1}^d\int_t^{t_0}
			\binn{\psi_{s,j}^n,\left(\Gamma_{n,j}^{(m_\tau)}(\varrho_s^n)\right)^+}\,ds.
			\label{eq:quadratic_state_continuation}
		\end{equation}
		The factor $1/4$ follows from the factor $1/2$ in $F$ and the
		noise covariance in \eqref{eq:SPDE_for_OU_process_with_Gamma-1}.
		In particular,
		\[
		Du_t^n(g)=\binn{\varphi_t^n,g}\varphi_t^n,\qquad
		D^2u_t^n=\varphi_t^n\otimes\varphi_t^n,\qquad D^3u_t^n=0.
		\]
		Thus the second derivative of $u^n_t$ is deterministic and $Du_t^n(0)=0$.
		For $t>t_0$, the continuation is
		$U_t^n(\gamma)=\frac12\binn{\varphi,\gamma_{t_0}}^2$ and is independent
		of the terminal state, so its generator and correction terms vanish.
		
		Since
		$\partial_t\varphi_t^n=-\frac12\Delta_n^{(2m_\Delta)}\varphi_t^n$,
		differentiating \eqref{eq:quadratic_state_continuation} gives
		\begin{equation}\label{eq:backward_equation_for_u}
			\partial_tu_t^n+\cG^n_{\varrho_t^n}u_t^n=0,\qquad 0\le t<t_0.
		\end{equation}
		
		To apply Dynkin's formula, define, on the finite configuration space,
		\[
		v_t^n(\eta):=u_t^n\bigl(n^{d/2}(\bar\eta-\bar\rho_t^n)\bigr).
		\]
		The identity $\partial_t\rho_t^n=\frac12\Delta_n\rho_t^n$ and
		\eqref{eq:definition_of_cL_rho} imply
		\[
		(\partial_t+\cL^{n,0})v_t^n(\eta)
		=\left(\partial_tu_t^n+\cL^n_{\rho_t^n}u_t^n\right)
		\bigl(n^{d/2}(\bar\eta-\bar\rho_t^n)\bigr).
		\]
		For each fixed $n$, the function $v_t^n$ and its time derivative are
		bounded on $[0,t_0]$ times the configuration space. Applying Dynkin's
		formula up to $t_0$, using the equation (\ref{eq:backward_equation_for_u}) and
		$\E u_0^n(\xiini)=\E F(\xi^{n,[0,T]})$, and adding $C_n(F)$, we obtain
		\begin{equation}
			\begin{aligned}
				\E F(\bar\zeta^{n,[0,T]})&-\E F(\xi^{n,[0,T]})+C_n(F)\\
				&=\E u_0^n(\bar\zeta_0^n)-\E u_0^n(\xiini)+\int_0^{t_0}\E\left[
				(\cL^n_{\rho_t^n}-\cG^n_{\varrho_t^n}+\cC^n)
				u_t^n(\bar\zeta_t^n)\right]dt.
			\end{aligned}
			\label{eq:quadratic_trotter_identity}
		\end{equation}
		The Gaussian expectations are finite by the second-moment estimates
		for the modified dynamics. Thus this identity does not require the
		path-dependent backward equation or the bounded-first-derivative
		assumption in Theorem~\ref{thm:main_result}.
		
		We first estimate the initial term. Independence of the initial
		occupations and the covariance of $\xiini$ give the exact identity
		\[
		\E u_0^n(\bar\zeta_0^n)-\E u_0^n(\xiini)
		=\frac12\left(
		\binn{\rhoini(1-\rhoini)(\varphi_0^n)^2,1}_n
		-\binn{\rhoini(1-\rhoini)(\varphi_0^n)^2,1}\right).
		\]
		By Corollary~\ref{cor:regularity_of_modified_semigroup}, every fixed
		$\Cf^\ell$ norm of $\varphi_t^n$ is bounded uniformly in $n$ and
		$0\le t\le t_0\le T$. Lemma~\ref{lem:approximation_for_ex}, with
		$m=M_2$ and $k=0$, therefore yields
		\[
		\left|\E u_0^n(\bar\zeta_0^n)-\E u_0^n(\xiini)\right|
		\le Cn^{-M_2}\left(1+\|\rhoini\|_{\Cf^{M_2}(\Td)}^2\right).
		\]
		Only the covariance quadrature error remains at the initial time.
		
		We next compute the expected generator difference in
		\eqref{eq:quadratic_trotter_identity}. Write
		\[
		A_n:=\Delta_n-\Delta_n^{(2m_\Delta)},\qquad
		\widetilde\psi_{t,j}^n:=\left(\partial_{n,j}\varphi_t^n\right)^2,
		\]
		and define the deterministic function on $\Tnd$
		\[
		Q_{t,j}^n(x):=\E\left[
		\left(\eta_t^n(x+e_j^n)-\eta_t^n(x)\right)^2\right].
		\]
		Since the occupation variables take values in $\{0,1\}$,
		\[
		0\le Q_{t,j}^n\le1,\qquad
		Q_{t,j}^n(x)=\Gamma_{n,j}(\rho_t^n)(x)-2V_t^n(x,x+e_j^n).
		\]
		The microscopic Taylor expansion is exact because $D^3u_t^n=0$.
		Using the last identity and the definition of $\cC^n$, we obtain
		\begin{equation}
			\begin{aligned}
				\E\left[
				(\cL^n_{\rho_t^n}-\cG^n_{\varrho_t^n}+\cC^n)
				u_t^n(\bar\zeta_t^n)\right]&=\frac12\E\left[
				\binn{\varphi_t^n,\bar\zeta_t^n}
				\binn{A_n\varphi_t^n,\bar\zeta_t^n}\right]\\
				&\qquad\quad+\frac14\sum_{j=1}^d
				\left(\binn{\widetilde\psi_{t,j}^n-\psi_{t,j}^n,Q_{t,j}^n}_n
				+B_{t,j}^n\right),
			\end{aligned}
			\label{eq:quadratic_expected_generator_difference}
		\end{equation}
		where
		\[
		B_{t,j}^n:=
		\binn{\psi_{t,j}^n,\Gamma_{n,j}(\rho_t^n)}_n
		-\binn{\psi_{t,j}^n,
			\left(\Gamma_{n,j}^{(m_\tau)}(\varrho_t^n)\right)^+}.
		\]
		Here the quadratic covariance contribution is canceled by $\cC^n$.
		The terms linear in the centered field have zero expectation because
		$\psi_{t,j}^n$ is deterministic. Thus no fluctuation error of order
		$n^{-d/2}$ occurs in this identity.
		
		For the drift term, the fixed-time second-moment estimate used in
		the proof of Theorem~\ref{thm:main_result} gives
		\[
		\sup_{t\le T}\E\|\bar\zeta_t^n\|_{\H[-I]}^2
		\le C\left(1+\|\rhoini\|_{\Cf^1(\Td)}^2\right).
		\]
		Consequently, Lemma~\ref{lem:approximation_of_discrete_derivatives}
		and the uniform smoothness of $\varphi_t^n$ imply
		\begin{align*}
			\left|\E\left[
			\binn{\varphi_t^n,\bar\zeta_t^n}
			\binn{A_n\varphi_t^n,\bar\zeta_t^n}\right]\right|
			&\le \|\varphi_t^n\|_{\H[I]}\|A_n\varphi_t^n\|_{\H[I]}
			\E\|\bar\zeta_t^n\|_{\H[-I]}^2\\
			&\le Cn^{-2m_\Delta}
			\left(1+\|\rhoini\|_{\Cf^1(\Td)}^2\right).
		\end{align*}
		The same approximation lemma, applied to the discrete derivative,
		gives
		\[
		\left|\binn{\widetilde\psi_{t,j}^n-\psi_{t,j}^n,Q_{t,j}^n}_n\right|
		\le\|\widetilde\psi_{t,j}^n-\psi_{t,j}^n\|_{\Cf(\Td)}
		\le Cn^{-m_\partial}.
		\]
		
		It remains to estimate the deterministic covariance difference
		$B_{t,j}^n$. The estimates of $S^j_{3,2},\ldots,S^j_{3,5}$ in the proof
		of Proposition~\ref{prop:comparison_of_generators}, with
		$\varphi_j=\psi_{t,j}^n$ and quadrature order $m_W=\beta$, together
		with the profile estimates in the proof of Theorem~\ref{thm:main_result}, give
		\begin{align*}
			|B_{t,j}^n|\le Cn^{-\beta}
			\left(1+\|\rhoini\|_{\Cf^{M_2}(\Td)}^2\right).
		\end{align*}
		This choice of quadrature order is permitted by the uniform smoothness
		of $\psi_{t,j}^n$ and the assumption $M_2\ge\beta+1+\lceil d/2\rceil$.
		Combining these bounds with
		\eqref{eq:quadratic_expected_generator_difference} and
		\eqref{eq:quadratic_trotter_identity}, and using $M_2\ge\beta$, proves
		\eqref{eq:quadratic_corrected_error}.
		
		We now estimate the correlation correction from above when $d\ge3$.
		The deterministic Hessian gives
		\begin{equation}
			C_n(F)=\frac1{2n^d}\sum_{j=1}^d\sum_{x\in\Tnd}
			\int_0^{t_0}\psi_{t,j}^n(x)V_t^n(x,x+e_j^n)\,dt.
			\label{eq:quadratic_correlation_representation}
		\end{equation}
		Using the torus version of \citep[Lemma 3.1]{Xu2025} for two-point
		correlations, we get
		\[
		\sup_{t\le T}\sup_{\substack{x,y\in\Tnd\\x\ne y}}
		|V_t^n(x,y)|
		\le C_{d,T}n^{-2}(1+\|\rhoini\|_{\Cf^1(\Td)}^2).
		\]
		Together with the uniform bound on $\psi_{t,j}^n$ and
		\eqref{eq:quadratic_correlation_representation}, this gives
		\[
		|C_n(F)|\le Cn^{-2}(1+\|\rhoini\|_{\Cf^1(\Td)}^2).
		\]
		Thus, the claimed upper bound follows
		from \eqref{eq:quadratic_corrected_error}.
	\end{proof}

	\appendix
	
	\section{Auxiliary facts on differential calculus}
	
	We recall that $\cL(X,Y)$ denotes the space of bounded linear operators
	from $X$ to $Y$ equipped with the operator norm $\norm{\cdot}_{\cL(X,Y)}$.
	\begin{prop}
		\label{prop:interchange_integral_and_derivative}Let $X,Y$ be Banach
		spaces, $U$ be an open set in $X$ and $(\Omega,\F,Q)$ be a measure
		space. For a function $F:\Omega\times U\to Y$ assume that the following
		assumptions are satisfied:
		\begin{enumerate}
			\item For $Q$-a.e. $\omega$ the map $z\mapsto F(\omega,z)$ is Frechet
			differentiable on $U$.
			\item For every $x\in U$, $F(\cdot,x)$ is $Q$-strongly measurable and
			Bochner integrable.
			\item There exists a $Q$-integrable function $G:\Omega\to\R$ such that
			\[
			\sup_{x\in U}\norm{DF(\cdot,x)}_{\cL(X,Y)}\le G\quad Q\text{-a.e.}
			\]
		\end{enumerate}
		Then $DF(\cdot,x)[h]$ is $Q$-strongly measurable for each $x\in U$
		and $h\in X$, and is Bochner integrable. Moreover, the function 
		\[
		\bar{F}(x)=\int_{\Omega}F(\omega,x)Q(d\omega),\quad x\in U,
		\]
		is Frechet differentiable on $U$ and
		\[
		D\bar{F}(x)[h]=\int_{\Omega}DF(\omega,x)[h]Q(d\omega).
		\]
	\end{prop}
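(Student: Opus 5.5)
The plan is to prove the statement by reducing Fréchet differentiability of $\bar F$ to a mean-value estimate that is integrated against $Q$. I would proceed in three steps.

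\emph{Step 1: measurability of $DF(\cdot,x)[h]$.} Fix $x\in U$ and $h\in X$. Choose $\eps_k\downarrow0$ with $x+\eps_kh\in U$. For $Q$-a.e.\ $\omega$, differentiability gives
\[
DF(\omega,x)[h]=\lim_{k\to\infty}\eps_k^{-1}\bigl(F(\omega,x+\eps_kh)-F(\omega,x)\bigr).
\]
Each difference quotient is strongly measurable by assumption 2, and an a.e.\ pointwise limit of strongly measurable functions is strongly measurable. Moreover, $\norm{DF(\omega,x)[h]}_Y\le G(\omega)\norm h_X$, so by assumption 3 this function is Bochner integrable.

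\emph{Step 2: candidate derivative.} Define $L_x h:=\int_\Omega DF(\omega,x)[h]\,Q(d\omega)$. This map is linear in $h$. It is bounded, with $\norm{L_x}\le\int G\,dQ$. Hence $L_x\in\cL(X,Y)$.

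\emph{Step 3: the Fréchet estimate.} Fix $x$ and a ball $B_r(x)\subset U$, and take $\norm h<r$. The segment $[x,x+h]$ then lies in the convex set $B_r(x)$. Apply the mean-value inequality to $z\mapsto F(\omega,z)-DF(\omega,x)[z]$, which is differentiable on the ball, on the segment. This gives
\[
\norm{F(\omega,x+h)-F(\omega,x)-DF(\omega,x)[h]}_Y\le\norm h\sup_{\theta\in[0,1]}\norm{DF(\omega,x+\theta h)-DF(\omega,x)}.
\]
Call the right-hand side $\norm h\,\Delta_h(\omega)$. It satisfies $\Delta_h(\omega)\le 2G(\omega)$.

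We also have $\Delta_h(\omega)\to0$ as $h\to0$ at every $\omega$ where $DF(\omega,\cdot)$ is continuous at $x$. However, assumption 1 gives only Fréchet differentiability, not continuous differentiability. This is the \textbf{main obstacle}. I would handle it by bounding instead with the remainder directly:
\[
r_h(\omega):=\frac{\norm{F(\omega,x+h)-F(\omega,x)-DF(\omega,x)[h]}_Y}{\norm h}.
\]
This quantity tends to $0$ pointwise for a.e.\ $\omega$ by differentiability at $x$. By the mean-value inequality it is dominated by $2G$, since $\norm{F(\omega,x+h)-F(\omega,x)}\le G(\omega)\norm h$ by assumption 3 on the segment.

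Integrating the bound for $r_h$ gives
\[
\norm{\bar F(x+h)-\bar F(x)-L_xh}_Y\le\norm h\int r_h\,dQ.
\]
For any sequence $h_k\to0$, dominated convergence yields $\int r_{h_k}\,dQ\to0$. The measurability of $r_{h_k}$ follows from Steps 1--2 together with the strong measurability of the norm of a strongly measurable function. Since the sequence $h_k$ was arbitrary, $\int r_h\,dQ\to0$ as $h\to0$.

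Therefore $\bar F$ is Fréchet differentiable at $x$ with $D\bar F(x)=L_x$, which is the claimed formula.
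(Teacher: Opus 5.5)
Your proposal is correct and follows essentially the same route as the paper: strong measurability of $DF(\cdot,x)[h]$ as an a.e. limit of difference quotients, domination of the normalized remainder by $2G$ via the mean-value inequality, and dominated convergence to conclude. The preliminary $\Delta_h$ bound would require continuity of $DF(\omega,\cdot)$ and is not needed; you correctly set it aside in favour of the remainder estimate, which is exactly the paper's argument.
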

	
	\begin{proof}
		By \citep[Corollary 1.1.23]{Veraar:2016}, $DF(\cdot,x)[h]$ is $Q$-strongly
		measurable as the $Q$-almost everywhere limit
		\[
		DF(\cdot,x)[h]:=\lim_{n\to\infty}n\left(F\left(\cdot,x+\frac{h}{n}\right)-F(\cdot,x)\right)
		\]
		of $Q$-strongly measurable functions. The Bochner integrability of
		$DF(\cdot,x)[h]$ follows from the assumption 3. and \citep[Proposition 1.2.2]{Veraar:2016}
		because
		\[
		\int_{\Omega}\bnorm{DF(\omega,x)[h]}_{Y}Q(d\omega)\le\norm h_{X}\int_{\Omega}G(\omega)Q(d\omega)<\infty.
		\]

		We next note that the linear map
		\[
		h\mapsto\int_{\Omega}DF(\omega,x)[h]Q(d\omega)
		\]
		is bounded, due to
		\begin{align*}
			\bnorm{\int_{\Omega}DF(\omega,x)[h]Q(d\omega)}_{Y}&\le\int_{\Omega}\bnorm{DF(\omega,x)}_{\cL(X,Y)}Q(d\omega)\norm h_{X}\\
			&\le\norm h_{X}\int_{\Omega}G(\omega)Q(d\omega).
		\end{align*}
		We denote for fixed $x\in U$
		\[
		R(\omega,h):=\frac{1}{\norm h_{X}}\left[F(\omega,x+h)-F(\omega,x)-DF(\omega,x)[h]\right].
		\]
		Since $F(\omega,\cdot)$ is Frechet differentiable on $U$ for $Q$-a.e.
		$\omega$, we conclude
		\[
		R(\cdot,h)\to0\quad\text{as}\ h\to0
		\]
		$Q$-a.e. Moreover, by the mean-value theorem (see \citep[Proposition 3.3.1]{Cartan:1971}),
		we have
		\[
		\bnorm{F(\omega,x+h)-F(\omega,x)}_{Y}\le\norm h_{X}\sup_{y\in U}\norm{DF(\omega,y)}_{\cL(X,Y)}\le G(\omega)\norm h_{X}
		\]
		for $Q$-a.e. $\omega.$ Thus,
		\[
		\bnorm{R(\cdot,h)}_{Y}\le2G(\cdot)
		\]
		$Q$-a.e. By the dominated convergence theorem (see \citep[Proposition 1.2.5]{Veraar:2016}),
		we get
		\begin{align*}
			&\lim_{h\to0}\frac{1}{\norm h_{X}}
			\bnorm{\bar{F}(x+h)-\bar{F}(x)
				-\int_{\Omega}DF(\omega,x)[h]Q(d\omega)}_{Y}\\
			&\qquad=\lim_{h\to0}
			\bnorm{\int_{\Omega}R(\omega,h)Q(d\omega)}_{Y}\le\lim_{h\to0}
			\int_{\Omega}\bnorm{R(\omega,h)}_{Y}Q(d\omega).
		\end{align*}
		This completes the proof of the proposition.
	\end{proof}
	
	\begin{rem}
		\label{rem:measurability_for_separable_spaces}If the space $Y$ is
		separable, then the $Q$-strong measurability of $F$ in Proposition
		\ref{prop:interchange_integral_and_derivative} can be replaced by
		a usual measurability according to the Pettis measurability theorem
		(see \citep[Theorem 1.1.6 and Corollary 1.1.10]{Veraar:2016}).
	\end{rem}

	\section{Functional Itô formula}\label{sec:functional_itos_formula}
	
	In this section, we record a Hilbert-space extension of the functional
	Itô formula for continuous semimartingales. It follows from the finite-dimensional
	result by a standard approximation argument, and we only provide the
	main details.
	
	Let $H$ and $E$ be separable Hilbert spaces, and let $(\Omega,\F,(\F_{t})_{t\in[0,T]},\p)$
	be a filtered probability space. Let $(W_{t})_{t\in[0,T]}$ be a cylindrical
	Wiener process on $H$, let $\Phi$ be an $L_{HS}(H,E)$-valued predictable
	process satisfying 
	\[
	\int^{T}_{0}\|\Phi_{t}\|^{2}_{L_{HS}(H,E)}\,dt<\infty\quad\text{\ensuremath{\p}-a.s.},
	\]
	and let $\Psi$ be an $E$-valued predictable process such that 
	\[
	\int^{T}_{0}\|\Psi_{t}\|_{E}\,dt<\infty\quad\text{\ensuremath{\p}-a.s.}
	\]
	Here $L_{HS}(H,E)$ denotes the space of Hilbert-Schmidt operators
	from $H$ to $E$. If $\xi_{0}$ is an $\F_{0}$-measurable $E$-valued
	random variable, we set 
	\[
	\xi_{t}:=\xi_{0}+\int^{t}_{0}\Psi_{s}\,ds+\int^{t}_{0}\Phi_{s}\,dW_{s},\quad t\in[0,T].
	\]
	
	We use the notions of horizontal and vertical derivatives introduced
	in Sections \ref{subsec:horizontal_derivative} and \ref{subsec:vertical_derivatives}.
	The following result is the Hilbert-space counterpart of the functional
	Itô formula for continuous finite-dimensional semimartingales obtained
	in \citep{Cont_Fournie2010,Cont_Fournie:2013}.
	\begin{prop}
		\label{prop:Ito_formula} Let $F$ be a non-anticipative functional
		on $\Upsilon([0,T],E)$ which is horizontally differentiable and twice vertically
		differentiable. Assume that $F$, $\nabla F$, and $\nabla^{2}F$
		are left-continuous, $\cD F$ is continuous at each fixed time, and $\cD F$, $\nabla F$ and $\nabla^{2}F$
		are boundedness preserving. Then, for every $t\in[0,T]$, $\p$-a.s.,
		\begin{align*}
			F_{t}(\xi^{[0,t]}) & =F_{0}(\xi_{0})+\int^{t}_{0}\cD_{s}F(\xi^{[0,s]})\,ds+\int^{t}_{0}\nabla F_{s}(\xi^{[0,s]})[\Psi_{s}]\,ds\\
			& +\int^{t}_{0}\nabla F_{s}(\xi^{[0,s]})[\Phi_{s}\,dW_{s}]+\frac{1}{2}\int^{t}_{0}\Tr\left(\Phi^{*}_{s}\nabla^{2}F_{s}(\xi^{[0,s]})\Phi_{s}\right)ds.
		\end{align*}
	\end{prop}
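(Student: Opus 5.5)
The plan is to adapt the discretization proof of Cont and Fournié directly to the Hilbert-space setting, rather than approximating $E$ by finite-dimensional projections. The reason is that the classical Itô formula in a separable Hilbert space (e.g. \citep[Theorem 4.32]{Prato:2014}) is available for $C^{2}$ functions of the state. In the argument below it is only ever applied to the terminal section $x\mapsto F_{t}(\gamma^{[0,t)},x)$ with the past frozen.

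\textbf{Localization.} Since $\xi$ has continuous paths in $E$, the range $\{\xi_{s}:s\in[0,T]\}$ is a.s. compact. However, the boundedness-preserving hypothesis refers to deterministic compact sets, so we localize. Fix $\omega$-wise compact sets by introducing stopping times
\[
\tau_{K}:=\inf\Bigl\{t:\int_{0}^{t}\|\Psi_{s}\|_{E}\,ds+\int_{0}^{t}\|\Phi_{s}\|^{2}_{L_{HS}}\,ds+\|\xi_{t}\|_{E}\ge K\Bigr\}\wedge T.
\]
On each random compact range we use that $\cD F$, $\nabla F$, $\nabla^{2}F$ are bounded on paths with values in a compact set. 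To make this uniform, I would use the fact that for continuous $\xi$ the set of stopped paths $\{\xi^{[0,s]}\}$ is contained in $D([0,s],\overline{\xi([0,T])})$. The boundedness constants therefore become a.s. finite random variables, which is enough for pathwise Riemann-sum convergence and for stochastic integrals in probability.

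\textbf{Telescoping over partitions.} Take dyadic partitions $0=t_{0}<\dots<t_{k}=t$ with mesh $\to0$. Let $\xi^{n}$ be the piecewise-constant approximation with $\xi^{n}_{s}=\xi_{t_{i+1}}$ on $[t_{i},t_{i+1})$, in the left-limit convention of Cont–Fournié, and write
\[
F_{t_{i+1}}(\xi^{n,[0,t_{i+1}]})-F_{t_{i}}(\xi^{n,[0,t_{i}]})
\]
as a sum of a horizontal and a vertical increment. The horizontal increment is $F_{t_{i+1}}(\Theta_{t_{i},t_{i+1}}\cdot)-F_{t_{i}}(\cdot)$. By horizontal differentiability, continuity at fixed times of $\cD F$ and Lemma~\ref{lem:differentiability_and_absolute_continuity}, it equals $\int_{t_{i}}^{t_{i+1}}\cD_{s}F(\Theta\cdots)\,ds$.

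The vertical increment is $\phi(\xi_{t_{i+1}})-\phi(\xi_{t_{i}})$ with $\phi(x)=F_{t_{i+1}}(\text{frozen past},x)$, a $C^{2}$ function on $E$. To it I apply the Hilbert-space Itô formula along $s\mapsto\xi_{s}$ on $[t_{i},t_{i+1}]$. This gives a drift term $\nabla F[\Psi]$, a stochastic integral $\nabla F[\Phi\,dW]$ and the trace term $\tfrac12\Tr(\Phi^{*}\nabla^{2}F\Phi)$, all evaluated at the frozen-past path with moving terminal value.

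\textbf{Passage to the limit.} Summing over $i$ produces integrals whose integrands are $\cD F$, $\nabla F$, $\nabla^{2}F$ evaluated at approximating paths. These paths converge to $\xi^{[0,s]}$ in $d_{\Upsilon}$, uniformly in $s$, by uniform continuity of $\xi$ on $[0,T]$. Left-continuity of $\nabla F,\nabla^{2}F$ and continuity at fixed times of $\cD F$ then give pointwise convergence of the integrands for a.e. $s$. Boundedness preservation gives an a.s. dominating constant. Dominated convergence handles the $ds$-terms and the trace term, using $\|\Tr(\Phi^{*}A\Phi)\|\le\|A\|\|\Phi\|^{2}_{L_{HS}}$. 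For the stochastic integral I would use convergence in probability of $\int\|\Phi_{s}^{*}(\nabla F^{n}_{s}-\nabla F_{s})\|^{2}ds\to0$ together with the standard continuity of the stochastic integral; localization via $\tau_{K}$ makes these estimates legitimate. Finally, $F$ at the left endpoint converges by left-continuity of $F$.

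\textbf{Main obstacle.} The hardest step is the trace term. The classical Itô formula within one subinterval uses $\nabla^{2}\phi$ at the frozen past, not at the true path. Controlling this discrepancy requires left-continuity of $\nabla^{2}F$ in operator norm, uniformly along the partition, on a set of paths that is only compact $\omega$-wise. The boundedness-preserving property must then be combined with a random compact set, and measurability/predictability of the resulting integrands must be checked. I would handle this by working pathwise on the a.s. compact range and appealing to Egorov-type uniformity of the pointwise convergence.
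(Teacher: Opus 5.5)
Your route is genuinely different from the paper's. The paper never redoes the partition argument in $E$. It projects onto $P^{E}_{n}E$ and $P^{H}_{n}H$, applies the finite-dimensional theorem \citep[Theorem~4.1]{Cont_Fournie:2013} to the restriction of $F$, and then lets $n\to\infty$. In that limit the projected processes converge to $\xi$ uniformly in probability, localization on compacts is obtained from tightness, BDG handles the stochastic integrals and dominated convergence the rest. There the integrands are compared at the same time $s$ (e.g.\ $\nabla F_{s}$ at the projected path versus $\nabla F_{s}(\xi^{[0,s]})$), so only continuity at fixed times enters, and the delicate bookkeeping stays inside the finite-dimensional theorem. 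Transplanting the Cont--Fourni\'e discretization directly to $E$ is a legitimate alternative. However, as written your central step is mis-indexed, and the index is not cosmetic.

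Under your convention $\xi^{n}_{s}=\xi_{t_{i+1}}$ on $[t_{i},t_{i+1})$, the function $\phi(x)=F_{t_{i+1}}(\xi^{n,[0,t_{i+1})},x)$ depends on $\xi_{t_{i+1}}$. Hence $u\mapsto\nabla\phi(\xi_{u})$ is not adapted on $[t_{i},t_{i+1}]$, and no It\^o formula applies there. If you instead take $\xi^{n}_{s}=\xi_{t_{i}}$ (horizontal step first, then vertical), $\phi$ becomes $\F_{t_{i}}$-measurable. But then the It\^o integrands $\nabla F_{t_{i+1}}(\xi^{n,[0,t_{i+1})},\xi_{u})$ live at time $t_{i+1}\ge u$. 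Their convergence to $\nabla F_{u}(\xi^{[0,u]})$ would need continuity from the right in time, whereas left-continuity only controls approximants at times $r\le u$. The correct splitting puts the vertical step first, at the left endpoint:
\begin{align*}
&F_{t_{i+1}}\bigl(\xi^{n,[0,t_{i+1})},\xi_{t_{i+1}}\bigr)-F_{t_{i}}\bigl(\xi^{n,[0,t_{i})},\xi_{t_{i}}\bigr)\\
&\qquad=\Bigl[F_{t_{i}}\bigl(\xi^{n,[0,t_{i})},\xi_{t_{i+1}}\bigr)-F_{t_{i}}\bigl(\xi^{n,[0,t_{i})},\xi_{t_{i}}\bigr)\Bigr]+\Bigl[F_{t_{i+1}}\bigl(\Theta_{t_{i},t_{i+1}}\xi^{n,[0,t_{i}]}\bigr)-F_{t_{i}}\bigl(\xi^{n,[0,t_{i}]}\bigr)\Bigr],
\end{align*}
with $\phi(x)=F_{t_{i}}(\xi^{n,[0,t_{i})},x)$ depending only on $\xi_{t_{1}},\ldots,\xi_{t_{i}}$. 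The It\^o integrands on $[t_{i},t_{i+1}]$ then sit at time $t_{i}\le u$, on paths converging to $\xi^{[0,u]}$ in $d_{\Upsilon([0,T],E)}$; this is exactly what left-continuity of $\nabla F,\nabla^{2}F$ handles. The horizontal integrand $\cD_{s}F(\xi^{n,[0,s]})$ sits at the same time $s$, which is where continuity at fixed times of $\cD F$ is used.

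The second point to repair is the It\^o formula you invoke on each subinterval. \citep[Theorem~4.32]{Prato:2014} is for deterministic functions that, together with their derivatives, are uniformly continuous on bounded subsets. Your $\phi$ is random ($\F_{t_{i}}$-measurable) and only $C^{2}$: continuity of $\nabla\phi,\nabla^{2}\phi$ in $x$ comes from left-continuity with $r=t$, and boundedness on compacts comes from boundedness preservation. In infinite dimensions this is strictly weaker than uniform continuity on bounded sets. You need the version along a continuous It\^o process proved by Taylor expansion. There, uniform continuity of $\nabla^{2}\phi$ near the compact range of $\xi(\omega)$ replaces uniform continuity on bounded sets, and adaptedness of left-point Riemann sums takes care of the $\F_{t_i}$-measurability of $\phi$.

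With this and the re-indexing, your ``main obstacle'' disappears. Left-continuity gives $\nabla^{2}F_{t_{i}}(\xi^{n,[0,t_{i})},\xi_{u})\to\nabla^{2}F_{u}(\xi^{[0,u]})$ in operator norm for every $u$. Pathwise domination by $C(\omega)\|\Phi_{u}\|^{2}_{L_{HS}(H,E)}$, with $C(\omega)$ from boundedness preservation on the compact range of $\xi(\omega)$, then suffices. No uniformity along the partition, Egorov argument or $\tau_{K}$-localization is needed.

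In short, your route avoids approximating the stochastic integrals by projected ones. The price is an infinite-dimensional It\^o formula for random $C^{2}$ terminal sections and careful time-indexing. The paper's route pays instead with the projection and tightness argument, and inherits both of these points from the finite-dimensional theorem.
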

	
	\begin{proof}
		We only provide a sketch of the standard finite-dimensional approximation
		argument. Let $(h_{k})_{k\in\N}$ and $(e_{k})_{k\in\N}$ be orthonormal
		bases of $H$ and $E$, respectively, and denote by $P^{H}_{n}$ and
		$P^{E}_{n}$ the corresponding orthogonal projections onto the first
		$n$ basis vectors. Set 
		\[
		\xi^{n}_{t}:=P^{E}_{n}\xi_{0}+\int^{t}_{0}P^{E}_{n}\Psi_{s}\,ds+\int^{t}_{0}P^{E}_{n}\Phi_{s}P^{H}_{n}\,dW_{s}.
		\]
		Identifying $P^{E}_{n}E$ with $\R^{n}$ and restricting $F$ to
		$\Upsilon([0,T],P^{E}_{n}E)$, the finite-dimensional functional
		Itô formula \citep[Theorem~4.1]{Cont_Fournie:2013} yields
		\begin{align*}
			F_{t}(\xi^{n,[0,t]}) & =F_{0}(\xi^{n}_{0})+\int^{t}_{0}\cD_{s}F(\xi^{n,[0,s]})\,ds+\int^{t}_{0}\nabla F_{s}(\xi^{n,[0,s]})[P^{E}_{n}\Psi_{s}]\,ds\\
			& \quad+\int^{t}_{0}\nabla F_{s}(\xi^{n,[0,s]})[P^{E}_{n}\Phi_{s}P^{H}_{n}\,dW_{s}]\\
			& \quad+\frac{1}{2}\int^{t}_{0}\Tr\left((P^{E}_{n}\Phi_{s}P^{H}_{n})^{*}\nabla^{2}F_{s}(\xi^{n,[0,s]})(P^{E}_{n}\Phi_{s}P^{H}_{n})\right)\,ds.
		\end{align*}
		
		The standard convergence properties of orthogonal projections and
		stochastic integrals imply
		\[
		\sup_{t\in[0,T]}\|\xi^{n}_{t}-\xi_{t}\|_{E}\to0
		\]
		in probability. Using the tightness of
		$\{\xi^{n}\}_{n\in\N}\cup\{\xi\}$ in $C([0,T],E)$, we may localize
		to compact sets; the left-continuity and boundedness-preserving
		properties of $F$ and its derivatives then allow us to pass to the
		limit in the deterministic terms by the dominated convergence theorem.
		Moreover,
		\[
		P^{E}_{n}\Phi_{s}P^{H}_{n}\to\Phi_{s}\quad\text{in }L_{HS}(H,E)
		\]
		for almost every $s$, and hence the stochastic integrals converge
		by the Burkholder--Davis--Gundy inequality. Finally, 
		\[
		\Tr\left((P^{E}_{n}\Phi_{s}P^{H}_{n})^{*}\nabla^{2}F_{s}(\xi^{n,[0,s]})(P^{E}_{n}\Phi_{s}P^{H}_{n})\right)\to\Tr\left(\Phi^{*}_{s}\nabla^{2}F_{s}(\xi^{[0,s]})\Phi_{s}\right),
		\]
		while, after localization, the absolute values of these terms are
		bounded by $C\|\Phi_{s}\|^{2}_{L_{HS}(H,E)}.$ The dominated convergence
		theorem therefore applies also to the trace term. Passing to the limit
		in the finite-dimensional formula proves the claim.
	\end{proof}

	\section*{Acknowledgments}
	
	Benjamin Gess acknowledges support from the Max Planck Society through the Research Group ``Stochastic Analysis in the Sciences''. Vitalii Konarovskyi thanks TU Berlin for its warm hospitality, where part of this research was carried out.
	
	\section*{Funding}
	
	The work of the first author was funded by the European Union (ERC, FluCo, grant agreement No. 101088488). Views and opinions expressed are, however, those of the author(s) only and do not necessarily reflect those of the European Union or the European Research Council. Neither the European Union nor the granting authority can be held responsible for them.
	
	The second author was supported by the Deutsche Forschungsgemeinschaft (DFG, German Research Foundation) through CRC/TRR 388 ``Rough Analysis, Stochastic Dynamics and Related Fields'', Project A11.
	
	\section*{Declaration on the use of generative AI}
	The authors have developed this project since 2024 without AI assistance. No AI was used in the development of the central ideas and main proofs. Generative AI tools were used only in the latest stage to improve the language, exposition, organization, and presentation of the manuscript.\\
	In addition to this, the results and proofs in Appendix B and Proposition 5.9 were developed through iterative mathematical discussions with ChatGPT, closely guided by arguments already available to the authors.\\
	The authors have independently verified all mathematical statements and proofs in the final manuscript, including those developed with AI assistance. They take full responsibility for the mathematical content, references, and final text of the article.

\end{document}